\newcommand\gts[1]{``#1''} 
\theoremstyle{definition} 
\newtheorem{Def}{Definition}[subsection] 
\newtheorem{Def,Thm}[Def]{Theorem-definition} 
\newtheorem{Def,Prop}[Def]{Proposition-definition} 
\newtheorem{Not}[Def]{Notation} 
\newtheorem{Hyp}[Def]{Hypothesis} 
\theoremstyle{plain} 
\newtheorem{Prop}[Def]{Proposition} 
\newtheorem{Lem}[Def]{Lemma} 
\newtheorem{Thm}[Def]{Theorem} 
\newtheorem{Cor}[Def]{Corollary} 
\theoremstyle{remark} 
\newtheorem{Ex}[Def]{Example} 
\newtheorem{Rq}[Def]{Remark} 
\title{Explicit generators of some pro-$p$ groups via Bruhat-Tits theory} 
\author{Benoit Loisel} 
\date{\today}
\begin{document} 

\maketitle 

\abstract{
Given a semisimple group over a local field of residual characteristic $p$, its topological group of rational points admits maximal pro-$p$ subgroups.
Quasi-split simply-connected semisimple groups can be described in the combinatorial terms of valued root groups, thanks to Bruhat-Tits theory.
In this context, it becomes possible to compute explicitly a minimal generating set of the (all conjugated) maximal pro-$p$ subgroups thanks to parametrizations of a suitable maximal torus and of corresponding root groups.
We show that the minimal number of generators is then linear with respect to the rank of a suitable root system.
}

\tableofcontents

\section{Introduction}

In this paper, a smooth connected affine group scheme of finite type over a field $K$ will be called a $K$-group.
Given a base field $K$ and an $K$-group denoted by $G$, we get an abstract group called the group of rational points, denoted by $G(K)$.
When $K$ is a non-Archimedean local field, this group inherits a topology from the field.
In particular, the topological group $G(K)$ is totally disconnected and locally compact.
The maximal compact or pro-$p$ subgroups of such a group $G(K)$, when they exist, provide a lot of examples of profinite groups.
Thus, one can investigate maximal pro-$p$ subgroups from the profinite group theory point of view.

\subsection{Minimal number of generators}
\label{sec:intro:minimal:number}

When $H$ is a profinite group, we say that $H$ is \textbf{topologically generated} by a subset $X$ if $H$ is equal to its smallest closed subgroup containing $X$; such a set $X$ is called a \textbf{generating set}.
We investigate the minimal number of generators of a maximal pro-$p$ subgroup of the group of rational points of an algebraic group over a local field.

Suppose that $K = \mathbb{F}_q((t))$ is a nonzero characteristic local field, where $q = p^m$ and $G$ is a simple $K$-split simply-connected $K$-group of rank $l$.
By a recent result of Capdeboscq and R\'emy \cite[2.5]{CapdeboscqRemy}, we know that any maximal pro-$p$ subgroup of $G(K)$ admits a finite generating set $X$;
moreover, the minimal number of elements of such a $X$ is $m(l+1)$.

In the general situation of a smooth algebraic $K$-group scheme $G$, we know by \cite[1.4.3]{Loisel-maximaux} that an algebraic group over a local field admits maximal pro-$p$ subgroups (called pro-$p$ Sylows) if, and only if, it is quasi-reductive (the split unipotent radical is trivial).
When $K$ is of characteristic $0$, this corresponds to reductive groups because a unipotent group is always split over a perfect field.
To provide explicit descriptions of a pro-$p$ Sylow thanks to Bruhat-Tits theory, we restrict the study to the case of a semisimple group $G$ over a local field $K$.

Such a group $G$ can be decomposed as an almost direct product of almost-$K$-simple groups.
Moreover, by \cite[6.21]{BorelTits}, we know that for any almost-$K$-simple simply connected group $H$, there exists a finite extension of local fields $K'/K$ and an absolutely simple $K'$-group $H'$ such that $H$ is isomorphic to the Weil restriction $R_{K'/K}(H')$, that means $H'$ seen as a $K$-group.
Since $H(K) = H'(K')$ by definition of the Weil restriction, we can assume that $G$ is absolutely simple.

In the Bruhat-Tits theory, given a reductive $K$-group $G$, we define a poly-simplicial complex $X(G,K)$ (a Euclidean affine building), called the Bruhat-Tits building of $G$ over $K$ together with a suitable action of $G(K)$ onto $X(G,K)$.
There exists a non-ramified extension $K'/K$ such that the $K$-group $G$ quasi-splits over $K'$.
There are two steps in the theory.
The first part, corresponding to chapter 4 of \cite{BruhatTits2}, provides the building $X(G',K')$ of $G_{K'}$ by gluing together affine spaces, called apartments.
The second part, corresponding to chapter 5 of \cite{BruhatTits2}, applies a Galois descent to the base field $K$, using fixed point theorems.

In the non quasi-split case, the geometry of the building does not faithfully reflect the structure of the group.
There is an anisotropic kernel of the action of $G(K)$ on $X(G,K)$.
As an example, when $G$ is anisotropic over $K$, its Bruhat-Tits building is a point; the Bruhat-Tits theory completely fails to be explicit in combinatorial terms for anisotropic groups.
Thus, the general case may require, moreover, arithmetical methods.
Hence, to do explicit computations with a combinatorial method based on Lie theory, we have to assume that $G$ contains a torus with enough characters over $K$.
More precisely, we say that a reductive group $G$ is \textbf{quasi-split} if it admits a Borel subgroup defined over $K$ or, equivalently, if the centralizer of any maximal $K$-split torus is a torus \cite[4.1.1]{BruhatTits2}.

Now, assume that $K$ is any non-Archimedean local field of residual characteristic $p \neq 2$ and residue field $\kappa \simeq \mathbb{F}_q$ where $q = p^m$.
Let $G$ be an absolutely-simple simply-connected quasi-split $K$-group.
\begin{Thm}
Denote by $l$ the rank of the relative root system of $G$, and by $n$ the rank of the absolute root system of $G$.
Assume that $l \geq 2$.
If $G$ has a relative root system $\Phi$ of type $G_2$ or $BC_l$, assume that $p \neq 3$.
Let $P$ be a maximal pro-$p$ subgroup of $G(K)$.
Denote by $d(P)$ the minimal number of generators of $P$.
Then, we have:
$$d(P) = m(l+1) \text{ or } m(n+1)$$
depending on whether the minimal splitting field extension of short roots is ramified or not.
\end{Thm}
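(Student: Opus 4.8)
The plan is to reduce to one standard maximal pro-$p$ subgroup and then compute its Frattini quotient directly from the valued root group data. Since all maximal pro-$p$ subgroups of $G(K)$ are conjugate, I may fix $P$ to be the maximal pro-$p$ subgroup (pro-$p$ radical) of the Iwahori subgroup $I$ attached to a chosen alcove $\mathbf{a}$ of the Bruhat-Tits building. Using the Iwahori factorization and the parametrizations of the maximal torus $T=Z_G(S)$ and of the valued root groups $U_a$ ($a\in\Phi$) recalled above, $P$ is the closure of the subgroup generated by the bounded pro-$p$ torus $T(K)_1$ and the affine root groups $U_\psi$ for $\psi$ positive relative to $\mathbf{a}$; and, since every positive affine root is a sum of affine simple ones, the commutator relations express all these $U_\psi$ in terms of the $l+1$ affine simple root groups $U_{\psi_0},\dots,U_{\psi_l}$, where $\psi_1,\dots,\psi_l$ have vector parts the relative simple roots $\alpha_1,\dots,\alpha_l$ and $\psi_0$ has vector part $-\tilde\alpha$ for $\tilde\alpha$ the highest relative root.

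By the Burnside basis theorem $d(P)=\dim_{\mathbb{F}_p}P/\overline{\Phi(P)}$ with $\Phi(P)=\overline{P^p[P,P]}$, so the task is to identify $\overline{\Phi(P)}$. The main input is the system of commutator relations for quasi-split valued root groups, including the non-abelian multipliable case, which is exactly where the hypotheses $p\neq 2$ and (in types $G_2$, $BC_l$) $p\neq 3$ enter, ensuring that the structure constants in the parametrizations are invertible and that the root groups have exponent $p$. From these relations I would extract: (i) $T(K)_1\subseteq\overline{[P,P]}$, because the commutator maps $U_{a,r}\times U_{-a,s}\to T(K)$ surject onto each successive quotient of the filtration of $T(K)_1$ (here $l\geq 2$ and the coroot parametrization of $T$ are used); (ii) for each affine simple root $\psi_i$, the next filtration step $U_{\psi_i,+}$ below $U_{\psi_i}$ lies in $\overline{[T(K)_1,U_{\psi_i}]}$; (iii) $P^p$, together with $U_\psi$ for every non-simple positive affine root $\psi$, lies in the closed subgroup $N$ generated by $T(K)_1$, all the $U_{\psi_i,+}$, and all such $U_\psi$. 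Statements (i) and (ii) give the upper bound $d(P)\leq m\sum_i d_i$ (only $U_{\psi_i}/U_{\psi_i,+}$ survives modulo $\overline{\Phi(P)}$); granting $N\trianglelefteq P$ with $P/N\cong\bigoplus_{i=0}^{l}U_{\psi_i}/U_{\psi_i,+}$ elementary abelian, statement (iii) gives $\overline{\Phi(P)}=N$ and the matching lower bound, so
$$d(P)=\sum_{i=0}^{l}\dim_{\mathbb{F}_p}\!\bigl(U_{\psi_i}/U_{\psi_i,+}\bigr)=m\sum_{i=0}^{l}d_i,\qquad d_i:=[\kappa_{L_i}:\kappa],$$
where $L_i/K$ is the splitting field of the root $\bar\psi_i$, so that $U_{\psi_i}/U_{\psi_i,+}\cong\kappa_{L_i}$.

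It remains to evaluate $\sum_{i=0}^{l}d_i$. Since $G$ is absolutely simple and quasi-split, the highest root $\tilde\alpha$ and all long relative roots are restrictions of Galois-fixed absolute roots, so $L_i=K$ and $d_i=1$ for the affine node and for the long simple roots; a short simple root $\bar\psi_i$ has $L_i$ equal to the minimal splitting field $L$ of the short roots, and $[L:K]$ equals the cardinality of the Galois orbit of absolute simple roots lying over $\bar\psi_i$. Hence $\sum d_i\geq l+1$, with equality if and only if every $d_i=1$, i.e. $\kappa_L=\kappa$, i.e. $L/K$ is ramified, giving $d(P)=m(l+1)$ in that case. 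If $L/K$ is unramified, then $d_i=[\kappa_L:\kappa]=[L:K]=|\mathrm{orb}(\bar\psi_i)|$ for the short simple roots, and summing the partition $\sum_{\text{orbits }O}|O|=|\Delta_{\mathrm{abs}}|=n$ together with $d_0=1$ for the affine node yields $\sum_{i=0}^{l}d_i=n+1$, i.e. $d(P)=m(n+1)$; this count is a finite check over the possible Galois actions on the irreducible diagrams of types $A$, $D$, $E$.

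The main obstacle is step (iii) together with the assertions $N\trianglelefteq P$ and $P/N$ elementary abelian: this is a precise bookkeeping of the quasi-split commutator relations, most delicate for the multipliable roots, where $U_a$ is a non-abelian $\kappa$-group of Heisenberg type whose valuation filtration interleaves integral and shifted half-integral jumps and contains the long root group $U_{2a}$, and one must verify that no commutator and no $p$-th power acquires a nonzero component in any quotient $U_{\psi_i}/U_{\psi_i,+}$. By contrast, the surjectivity statements (i)--(ii) are routine once the parametrizations of $T$ and of the $U_a$ are in hand, and the combinatorial identification of $\sum_{i=0}^{l}d_i$ amounts to reading off the Galois action on Dynkin diagrams.
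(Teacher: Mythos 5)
Your skeleton is the same as the paper's (conjugate $P$ into the pro-$p$ radical of a standard Iwahori, compute $d(P)=\dim_{\mathbb{F}_p}P/\mathrm{Frat}(P)$ via a valued-root-group description of $\mathrm{Frat}(P)$, then do Galois-orbit bookkeeping), but the proposal leaves unproven exactly the part that carries the mathematical weight. Your step (iii) together with ``$N\trianglelefteq P$ and $P/N$ elementary abelian'' is the upper bound $\mathrm{Frat}(P)\subseteq N$, and it is not a bookkeeping exercise one can grant: in the paper it occupies all of Section \ref{sec:action:on:ball} (the Frattini subgroup fixes each panel residue of the alcove, Proposition \ref{prop:favourable:geometric:description} and Corollary \ref{cor:majoration:frattini}) plus the inversion of the quasi-split commutation relations in Section \ref{sec:commutation:relations:general}, and it is precisely where the multipliable roots force the exceptional shape of $V_{a,\mathbf{c}}$. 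Moreover your step (ii) is false as stated for a multipliable root $a$: the torus commutators give only $U_{a,l+1}\subseteq[T(K)_b^+,U_{a,l}]$ when $L_a/L_{2a}$ is unramified and $U_{a,l+3/2}$ when it is ramified (Lemma \ref{lem:commutation:non:reduced:torus:root:group}), never the next filtration jump $U_{a,l^+}=U_{a,l+\frac12}$; the paper has to use rank $\geq 2$ relations and the basis-change trick (Lemma \ref{lem:base:change:trick}, Theorem \ref{thm:minoration:derived:group:with:root:groups}) to reach $l^+$, and even then the Frattini group along that direction is $U_{a,l^+}U_{2a,2l}$ in the unramified case with $l\in\Gamma'_a$, so the surviving quotient is $X_{a,l}/X_{2a,2l}$ or $X_{2a,2l}$ (Proposition \ref{prop:first:root:group:quotient}), not your $U_{\psi_i}/U_{\psi_i,+}\cong\kappa_{L_i}$.

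A second concrete inaccuracy: the affine simple node is not always $-\tilde\alpha$ for $\tilde\alpha$ the highest relative root. When the splitting extension of the short roots is ramified (reduced case), the wall bounding the fundamental alcove in the affine direction is directed by the \emph{short} root $\theta$ whose image $\theta^{\delta}$ is the highest root of the dual system $\Phi^{\delta}_{\mathrm{nd}}$ (Section \ref{sec:description:alcove}), and in the non-reduced case by the multipliable root $\theta$ with $h=2\theta$; your surviving quotient at the affine node is therefore attached to the wrong root group, and the subsequent claim that all non-simple positive affine root groups are generated by commutators from the simple ones needs the $\Phi^{\delta}$ bookkeeping of Notation \ref{not:negative:bounds} and Lemma \ref{lem:short:root:combinatorial} to be carried out. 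The final numbers $m(l+1)$ and $m(n+1)$ happen to be insensitive to these misattributions (the paper's case analysis in Corollary \ref{cor:minimal:number:topological:generators} shows the multipliable simple root and the affine node together always contribute $(f'+1)m$, which coincides with your naive count), but as it stands the proposal asserts rather than proves both the elementary-abelian quotient and the identification of that quotient, so the argument has a genuine gap rather than an alternative route.
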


This theorem is formulated more precisely and proven in Corollary \ref{cor:minimal:number:topological:generators}.
According to \cite[4.2]{SerreCohomologieGaloisienne}, we know that $d(P)$ can also be computed via cohomological methods: $d(P) = \mathrm{dim}_{\mathbb{Z} / p \mathbb{Z} } H^1(P, \mathbb{Z} / p \mathbb{Z})=\mathrm{dim}_{\mathbb{Z} / p \mathbb{Z} } \mathrm{Hom}(P,\mathbb{Z} / p \mathbb{Z})$.

From now on, we need to be more explicit.
In the following, given a local field $L$, we denote by $\omega_L$ the discrete valuation on $L$, by $\mathcal{O}_L$ the ring of integers, by $\mathfrak{m}_L$ its maximal ideal, by $\varpi_L$ a uniformizer, and by $\kappa_L = \mathcal{O}_L / \mathfrak{m}_L$ the residue field.
Because we have to compare valuations of elements in $L^*$, we will normalize the discrete valuation $\omega_L : L^* \rightarrow \mathbb{Q}$ so that $\omega_L(L^*) = \mathbb{Z}$.
When $l \in \mathbb{R}$, we denote by $\lfloor l \rfloor$ the largest integer less than or equal to $l$ and by $\lceil l \rceil$ the smallest integer greater than or equal to $l$. 

If it is clear in the context, we can omit the index $L$ in these notations.
When $L/K$ is an extension, we denote by $G_L$ the extension of scalars of $G$ from $K$ to $L$.
When $H$ is an algebraic $L$-group, we denote by $R_{L/K}(G)$ the $K$-group obtained by the Weil restriction functor $R_{L/K}$ defined in \cite[I§1 6.6]{DemazureGabriel}.

\subsection{Pro-\texorpdfstring{$p$}{p} Sylows and their Frattini subgroups}
\label{sec:intro:profinite:theory}

In a general context, let $K$ be a global field and $\mathcal{V}$ its set of places (i.e. valuations of $K$). Let $R \leq K$ be a Dedekind domain bounded except over a finite set $\mathcal{S} \subset \mathcal{V}$ of places.
For any $v \in \mathcal{V} \setminus \mathcal{S}$, we consider the $v$-completion $R_v$ of $R$.
We get a first completion $\widetilde{G(R)} = \prod_{v \in \mathcal{V} \setminus \mathcal{S}} G(R_v)$.
We get a second completion of $G(R)$ by considering its profinite completion denoted by $\widehat{G(R)}$.
The \textit{congruence subgroup problem} is to know when the natural map $\widehat{G(R)} \rightarrow \widetilde{G(R)}$ is surjective with finite kernel.
For example, when $G=\mathrm{SL_n}$ with $n \geq 2$ and $R=\mathbb{Z}$, by a theorem of Matsumoto \cite{Matsumoto-ProblemeSGC}, the surjective map $\widehat{\mathrm{SL_n}(\mathbb{Z})} \rightarrow \prod_{p} \mathrm{SL}_n(\mathbb{Z}_p)$ has finite kernel if, and only if, $n \geq 3$.

Here, we focus on a single factor and, more precisely, on a pro-$p$ Sylow of a factor $G(R_v)$.
More precisely, $K$ is a non-Archimedean local field and $G$ is a semisimple $K$-group.
We consider a maximal pro-$p$ subgroup $P$ of $G(K)$.
When $G$ is simply connected, we know by \cite[1.5.3]{Loisel-maximaux}, that there exists a model $\mathcal{G}$ provided by Bruhat-Tits theory, that means a $\mathcal{O}_K$-group with generic fiber $\mathcal{G}_{K} = G$, such that we can identifies $P$ with the kernel of the natural surjective quotient morphism $\mathcal{G}(\mathcal{O}_K) \rightarrow \Big( \mathcal{G}_{\kappa} / \mathcal{R}_u \big( \mathcal{G}_\kappa \big) \Big) (\kappa)$.
In another words, the pro-$p$ Sylow $P$ is the inverse image of a $p$-Sylow among the surjective homomorpshism $\mathcal{G}(\mathcal{O}_K) \rightarrow \mathcal{G}(\kappa)$.

To compute the minimal number of generators, the theory of profinite groups provides a method consisting of computing the Frattini subgroup.
The Frattini subgroup of a pro-$p$ group $P$ consists of non-generating elements and can be written as $\mathrm{Frat}(P) = \overline{[P,P] P^p}$, the smallest closed subgroup generated by $p$-powers and commutators of elements of $P$ \cite[1.13]{DixonDuSautoyMannSegal}.
Once the group $\mathrm{Frat}(P)$ has been determined, it becomes immediate to provide a minimal topologically generating set $X$ of $P$, arising from finite generating set of $P / \mathrm{Frat}(P)$.

From this writing, we observe that the computation of the Frattini subgroup of $P$ is mostly the computation of its derived subgroup.
Despite $P$ is close to be an Iwahori subgroup $I$ of $G(K)$ (in fact, $I = \mathcal{N}_{G(K)}(P)$ is an Iwahori subgroup and $P$ has finite index in $I$), we cannot use the results of \cite[§6]{PrasadRaghunathan1} because there are less toric elements in $P$ than in $I$.
However, computations of Section \ref{sec:commutation:relations:general} have some similarities with compurations of Prasad and Raghunathan.

We say that $P$ is finitely presented as pro-$p$ group if there exists a closed normal subgroup $R$ of the free pro-$p$ group $\widehat{F_n}^p$ generated by $n$ elements such that $P \simeq \widehat{F_n}^p / R$ and $R$ is finitely generated as a pro-$p$ group.
Let $r(P)$ be the minimum of all the $d(R)$ among the $R$ and $n \geq d(P)$.
According to \cite[4.3]{SerreCohomologieGaloisienne}, $P$ is finitely presented as pro-$p$ group if, and only if $H^2(G,\mathbb{Z}/p \mathbb{Z})$ is finite.
In this case, we get $r(P) =\mathrm{dim}_{\mathbb{Z} / p \mathbb{Z} } H^2(G,\mathbb{Z}/p \mathbb{Z})$ and, for any $R$, we have $d(R) = n - d(P) + r(P)$.
Note that $r(P)$ does not depend on the choice of a generating set and we can choose simultaneously a minimal generating set and a minimal family of relations.
More generally, Lubotzky has shown \cite[2.5]{Lubotzky-ProfinitePresentations} that any finitely presented profinite group $P$ can be presented by a minimal presentation as a profinite group.
If we can show that $H^2(G,\mathbb{Z}/p \mathbb{Z})$ is finite, then, by \cite[12.5.8]{Wilson}, we get the Golod-Shafarevich inequality $r(P) \geq \frac{d(G)^2}{4}$.
This has to be the case according to study of $\mathcal{O}_K$-standard groups of Lubotzky and Shalev \cite{LubotzkyShalev}.

Here, the main result is a description of the Frattini subgroup of $P$, denoted by $\mathrm{Frat}(P)$, in terms of valued root groups datum.
We assume that $K$ is a non-Archimedean local field of residue characteristic $p$ and that $G$ is a semisimple and simply-connected $K$-group.
To simplify the statements, we assume, moreover, that $G$ is absolutely almost simple;
this is equivalent to assuming that the absolute root system $\widetilde{\Phi}$ is irreducible.
We know that it is possible to describe a maximal pro-$p$ subgroup $P$ of $G(K)$ in terms of the valued root groups datum \cite[3.2.9]{Loisel-maximaux}.
A maximal poly-simplex of the building $X(G,K)$, seen as poly-simplicial complex, is called an alcove.
We denote by $\mathbf{c}_{\mathrm{af}}$ a well-chosen alcove to be a fundamental domain of the action of $G(K)$ on $X(G,K)$.
Any maximal pro-$p$ subgroup of$G(K)$ fixes a unique alcove.
Up to conjugation, we can assume that $\mathbf{c} = \mathbf{c}_{\mathrm{af}}$ is the only alcove fixed by $P$.
It is then possible to describe the Frattini subgroup in terms of the valued root groups datum, as stated in the following two theorems:

\begin{Thm}
We assume that $p \neq 2$ and,
if $\Phi$ is of type $G_2$ or $BC_l$, we assume that $p \geq 5$.

Then the pro-$p$ group $P$ is topologically of finite type and, in particular, $\mathrm{Frat}(P) = P^p [P,P]$.
Moreover, when $K$ is of characteristic $p > 0$, we have $P^p \subset [P,P]$.

The Frattini subgroup $\mathrm{Frat}(P)$ can be written as a directly generated product in terms of the valued root groups datum.

When $\Phi$ is reduced (that means is not of type $BC_l$), then $\mathrm{Frat}(P)$ is the maximal pro-$p$ subgroup of the pointwise stabilizer in $G(K)$ of the combinatorial ball centered at $\mathbf{c}$ of radius $1$.
\end{Thm}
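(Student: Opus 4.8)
The plan is to derive the whole statement from the explicit combinatorial description of $P$ furnished by \cite[3.2.9]{Loisel-maximaux} together with a computation of $\overline{P^p[P,P]}$, which equals $\mathrm{Frat}(P)$ by \cite[1.13]{DixonDuSautoyMannSegal}. After conjugating so that $P$ fixes the fundamental alcove $\mathbf{c}=\mathbf{c}_{\mathrm{af}}$, one has a directly generated decomposition $P=T^{(1)}\cdot\prod_{a\in\Phi}U_{a,f_{\mathbf{c}}(a)}$, where $f_{\mathbf{c}}$ is the concave function of $\mathbf{c}$, the $U_{a,r}$ are the terms of the valued filtration of the root group $U_a$, $T^{(1)}$ is the maximal pro-$p$ subgroup of the maximal bounded subgroup $T(K)_b$ of $T(K)$ for $T$ the centralizer of a maximal split torus, and the product is taken in a fixed order on $\Phi$ with the usual bookkeeping for divisible roots in type $BC_l$. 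Throughout, the hypotheses $p\neq 2$, resp.\ $p\geq 5$ in types $G_2$ and $BC_l$, serve to keep invertible all structure constants occurring in the commutator and power maps below.

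Since $P$ is topologically generated by $T^{(1)}$ and the $U_{a,f_{\mathbf{c}}(a)}$, the elementary abelian pro-$p$ quotient $P/\overline{P^p[P,P]}$ is topologically generated by the images of these factors; hence it suffices to control the commutators $[U_{a,f_{\mathbf{c}}(a)},U_{b,f_{\mathbf{c}}(b)}]$ for $a,b\in\Phi$ (including $b=-a$, which yields torus elements, and $b=a$ with $2a\in\Phi$, which yields elements of $U_{2a}$), the commutators $[T^{(1)},U_{a,f_{\mathbf{c}}(a)}]$, and the $p$-th powers of elements of $T^{(1)}$ and of the $U_{a,f_{\mathbf{c}}(a)}$. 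These are the computations of Section~\ref{sec:commutation:relations:general}: the Chevalley-type commutation relations over the valued field give $[U_{a,f_{\mathbf{c}}(a)},U_{b,f_{\mathbf{c}}(b)}]\subseteq\prod_{i,j\geq 1}U_{ia+jb,\,if_{\mathbf{c}}(a)+jf_{\mathbf{c}}(b)}$, with the exact level at each $ia+jb$ dictated by the structure constants and, in the quasi-split case, by the trace and norm of the minimal splitting field $L_a$ of $a$; one has $[T^{(1)},U_{a,f_{\mathbf{c}}(a)}]=U_{a,\,f_{\mathbf{c}}(a)+\delta_a}$ for a positive $\delta_a$ read off from the valuations of $a(t)-1$, $t\in T^{(1)}$, which is finite because $G$ simply connected forces $T^{(1)}$ to carry enough characters; and $U_{a,f_{\mathbf{c}}(a)}^{p}=U_{a,\,f_{\mathbf{c}}(a)+e_a}$ with $e_a$ governed by $\omega_K(p)$ and the ramification of $L_a/K$ (while $U_{a,f_{\mathbf{c}}(a)}^{p}=\{1\}$ in equal characteristic, since for $p$ odd the root groups are $p$-torsion, including the twisted Heisenberg ones of type $BC_l$). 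Assembling these containments and using that the key commutator formulas are in fact surjective onto their stated levels — this is where invertibility of the structure constants is used — one obtains $\overline{P^p[P,P]}=T^{(1)}\cdot\prod_{a\in\Phi}U_{a,g(a)}$ for an explicit concave function $g$ with $f_{\mathbf{c}}\leq g$ and $g(a)\leq f_{\mathbf{c}}(a)+\delta_a<+\infty$; in particular this is an open subgroup, so $P/\mathrm{Frat}(P)$ is finite, $P$ is topologically of finite type, and $\mathrm{Frat}(P)=P^p[P,P]$ without closure. When $\mathrm{char}\,K=p$, the $p$-th powers of root group elements are trivial and those of $T^{(1)}$ lie deep inside $T^{(1)}$; combining this with $T^{(1)}\subseteq[P,P]$ — already visible in a single relation $[U_{a,f_{\mathbf{c}}(a)},U_{-a,f_{\mathbf{c}}(-a)}]$ for $a$ simple, since the simple coroots span the cocharacter lattice — gives $P^p\subseteq[P,P]$.

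For the last assertion, assume $\Phi$ is reduced and let $P_1$ be the maximal pro-$p$ subgroup of the pointwise stabilizer in $G(K)$ of the combinatorial ball $B(\mathbf{c},1)$; it is well defined because $P_1=P\cap\mathrm{Stab}_{G(K)}(B(\mathbf{c},1))$ is normal of index prime to $p$ in that stabilizer, and in root group terms it equals $T^{(1)}\cdot\prod_{a}U_{a,f_{B(\mathbf{c},1)}(a)}$ — here $T^{(1)}$, but not all of $T(K)_b$, acts trivially on every panel-residue of $\mathbf{c}$. For the inclusion $\mathrm{Frat}(P)\subseteq P_1$: since $G$ is simply connected, $G(K)$ acts type-preservingly on $X(G,K)$, so the stabilizer of any chamber is its pointwise fixer; $P$ fixes $\mathbf{c}$, hence permutes the chambers of $B(\mathbf{c},1)$, acting on each panel-residue — which, $\Phi$ being reduced, is the Moufang set of a quasi-split rank-one group over the residue field — through a $p$-subgroup that is abelian of exponent $p$. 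Therefore $[P,P]$ and $P^p$ act trivially on all these residues, fix every chamber adjacent to $\mathbf{c}$, and so fix $B(\mathbf{c},1)$ pointwise, whence $\mathrm{Frat}(P)=P^p[P,P]\subseteq P_1$. The reverse inclusion amounts to identifying the concave function $g$ produced above with $f_{B(\mathbf{c},1)}$, a finite computation in the affine root system comparing the walls of $\mathbf{c}$ with those of its neighbours; when $\Phi$ is of type $BC_l$ this step fails, because at a multipliable panel the residue is a rank-one unitary group whose pro-$p$ root group is a non-abelian Heisenberg group, so $[P,P]$ no longer fixes the adjacent chambers — which is why that case is omitted from the geometric description.

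I expect the main obstacle to be the valued-root-group bookkeeping of the second paragraph in the quasi-split setting: extracting exact filtration levels — rather than mere containments — out of the commutator and power maps forces one to keep track of the trace and norm maps and of the positions of the filtration jumps of the splitting fields $L_a$, to handle the non-abelian root groups of type $BC_l$, and to deal with the residual small primes (chiefly $p=3$) in types $G_2$ and $BC_l$ — precisely the situations ruled out by the hypothesis on $p$.
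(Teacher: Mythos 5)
Your outline follows the same general route as the paper (decompose $P$ along the valued root groups datum, identify $P^p[P,P]$ by commutator and power computations, then read off the geometric description from the action on panel residues), but the two steps you treat as routine are precisely the substance of the proof, and as stated they do not go through. The most serious one is the torus. You claim $T(K)_b^+\subseteq[P,P]$ is \emph{already visible in a single relation} $[U_{a,f_{\mathbf{c}}(a)},U_{-a,f_{\mathbf{c}}(-a)}]$. That relation (Lemmas \ref{lem:commutator:relation:reduced:case:opposite:unipotent} and \ref{lem:commutation:non:reduced:opposite:root:groups}) only says that each such commutator lies in the product set $U_{-a,\ast}\,T^a(K)_b^+\,U_{a,\ast}$: it puts no \emph{pure} toric element inside $[P,P]$, and it does not show that every element of $T^a(K)_b^+$ occurs as a toric component. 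To get $T^a(K)_b^+\subseteq P^p[P,P]$ one must first know that the higher-level root groups already lie in $[P,P]$ so the unipotent factors can be stripped off, and then \emph{invert} the relation, producing parameters of prescribed valuation for a given $t=\widetilde{a}(1+w)$. This is exactly what Proposition \ref{prop:frattini:computation:reduced:case} does in the reduced case (the explicit $\mathrm{SL}_2$ identities together with the $\gcd(4,p)$ argument using $p$-th powers), and what Lemmas \ref{lem:good:element:t}--\ref{lem:inversion:relation:non:reduced:torus} and Propositions \ref{prop:non:reduced:frattini:rank:one}, \ref{prop:improvement:upper:bound:bounded:torus:non:reduced:case} do in the non-reduced case (minimal-trace uniformizers, Hensel square roots, solving ${}^\tau u x - v y = w$). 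Invertibility of the structure constants is not the relevant mechanism here: those constants control commutators of non-collinear root groups, not this rank-one extraction. Your proposal offers no substitute, and the characteristic-$p$ statement $P^p\subseteq[P,P]$ inherits the gap since it rests on $T(K)_b^+\subseteq[P,P]$.

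The second gap is the claimed exact shape $\overline{P^p[P,P]}=T(K)_b^+\prod_a U_{a,g(a)}$. Surjectivity of the commutation relations onto their stated levels (Propositions \ref{prop:generated:commutator:root:group}, \ref{prop:generated:commutator:positive:root:groups}, \ref{prop:generated:commutator:negative:root:groups}) only yields the \emph{lower} bound; you never argue why $\mathrm{Frat}(P)$ meets each $U_a$ in no more than the asserted subgroup --- e.g.\ why $U_{a,f_{\mathbf{c}}(a)}\not\subseteq\mathrm{Frat}(P)$ for $a$ simple. The paper gets this upper bound from the action on panel residues (Proposition \ref{prop:favourable:geometric:description}, Corollary \ref{cor:majoration:frattini}), which you invoke only afterwards and only for the reduced case; some such argument (geometric, or a verification that your candidate is a normal subgroup with elementary abelian quotient) is indispensable. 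Moreover the asserted form with a single level $g(a)$ per root is false in type $BC_l$: for a multipliable $a\in\Delta\cup\{-\theta\}$ the answer can be $U_{a,f_{\mathbf{c}}(a)^+}U_{2a,2f_{\mathbf{c}}(a)}$ (Theorem \ref{thm:frattini:descriptions:non:reduced:case}), which is not a filtration step of $U_a$, and correspondingly $[T(K)_b^+,U_{a,l}]$ is only pinned between two levels (Lemma \ref{lem:commutation:non:reduced:torus:root:group}), so the \emph{exact level} bookkeeping you rely on breaks exactly where the hypotheses $p\geq 5$ and the extra lemmas are needed. A smaller omission of the same kind: for the reverse inclusion in the geometric description you must also bound the toric part of the ball stabilizer (the $\pm 1+\mathfrak{m}_{L_a}$ computation in Proposition \ref{prop:combinatorial:ball:fixator}), not merely match root-group levels.
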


For a more precise version, see Theorems \ref{thm:frattini:descriptions:reduced:case} and \ref{thm:frattini:descriptions:non:reduced:case}.

\subsection{Structure of the paper}
\label{sec:intro:structure}

We assume that $G$ is a simply-connected quasi-split semisimpe $K$-group.
We fix a maximal Borel subgroup $B$ of $G$ defined over $K$.
In particular, this choice determines an order $\Phi^+$ of the root system and a basis $\Delta$.
By \cite[20.5, 20.6 (iii)]{Borel},
there exists a maximal $K$-split torus $S$ in $G$ such that its centralizer, denoted by $T = \mathcal{Z}_G(S)$, is a maximal $K$-torus of $G$ contained in $B$.
We fix a separable closure $K_s$ of $K$;
by \cite[8.11]{Borel}, there exists a unique smallest Galois extension of $K$, denoted by $\widetilde{K}$, splitting $T$, hence also splitting $G$ by \cite[18.7]{Borel}.
We call \textbf{the relative root system}, denoted by $\Phi$, the root system of $G$ relatively to $S$.
We call \textbf{the absolute root system}, denoted by $\widetilde{\Phi}$, the root system of $G_{\widetilde{K}}$ relatively to $T_{\widetilde{K}}$.
In Section \ref{sec:star:action}, we define a $\mathrm{Gal}(K_s / K)$-action on $\widetilde{\Phi}$ which preserves the Dynkin diagram structure of $\mathrm{Dyn}(\widetilde{\Delta})$ and on its basis $\widetilde{\Delta}$ corresponding to the Borel subgroup $B$.
According to \cite[4.2.23]{BruhatTits2}, when $G$ is absolutely simple (hence $\mathrm{Dyn}(\widetilde{\Delta})$ is connected), the group $\mathrm{Aut}\big(\mathrm{Dyn}(\widetilde{\Delta})\big)$ is a finite group of order $d \leq 6$.
As a consequence, the degree of each splitting field extension is small and does not interact a lot with Lie theory.
One can note that a major part of proofs in this paper is taken by the non-reduced $BC_l$ cases and the trialitarian $D_4$ cases.

From this action and thanks to a rank $1$ consideration, we define, according to \cite[§4.2]{BruhatTits2}, a coherent system of parametrizations of root groups in Section \ref{sec:parametrization} together with a filtration of the root groups in Section \ref{sec:valuation:of:root:groups:datum}.
This provides us a generating valued root groups datum $\Big( T(K), \big( U_a(K), \varphi_a \big)_{a\in \Phi} \Big)$ built from $(G,S,K,\widetilde{K})$.
This filtration corresponds to a prescribed affinisation of the spherical root system $\Phi$.
From this, we compute, in Sections \ref{sec:reduced:case} and \ref{sec:non:reduced:case}, various commutation relations between unipotent and semisimple elements in rank $1$.
This will be useful to describe, in Section \ref{sec:action:on:ball}, the action of $P$ onto a combinatorial ball centered at $\mathbf{c}$ of radius $1$.
This will also be useful in Section \ref{sec:frattini:subgroup} to generate semisimple elements of $\mathrm{Frat}(P)$.

We denote by $\mathbb{A} = A(G,S,K)$ the \gts{standard} apartment and we choose a fundamental alcove $\mathbf{c}_{\mathrm{af}} \subset \mathbb{A}$,
to be a fundamental domain of the action of $G(K)$ on $X(G,K)$.
Those objects will be described in Section \ref{sec:walls} and \ref{sec:description:alcove} respectively thanks to the sets of values, defined in Section \ref{sec:set:of:values}, which measure where the gaps between two terms of the filtration are and, in the non-reduced case, what kind of gaps we must deal with.
From this, we deduce, in Section \ref{sec:counting:alcoves}, the geometrical description of the combinatorial ball centered at $\mathbf{c}$ of radius $1$.
Consequently, the geometric situation provides, in Section \ref{sec:action:on:ball}, an upper bound for $\mathrm{Frat}(P)$, that means a group $Q$ containing $\mathrm{Frat}(P)$.

Thus, we seek a generating set of $Q$ contained in $\mathrm{Frat}(P)$.
From the writing $\displaystyle \mathrm{Frat}(P) = \overline{P^p [P,P]}$, we seek such a generating set by commuting elements of $P$.
In Section \ref{sec:commutation:relations}, we invert the commutation relations provided by \cite[A]{BruhatTits2} in the quasi-split case from which we deduce, in Section \ref{sec:explicit:computation:with:commutation:relations}, a list of unipotent elements contained in $[P,P]$.

From these unipotent elements and from semisimple elements obtained by the rank $1$ case, we obtain, in Section \ref{sec:frattini:subgroup}, a generating set and a description of the Frattini subgroup as a directly generated product.
In Section \ref{sec:counting:alcoves}, we go a bit further than Bruhat-Tits in the study of quotient subgroups of filtered root groups.
From this, we can compute the finite quotient $P / \mathrm{Frat}(P)$ and deduce, in Section \ref{sec:minimal:number}, a minimal generating set of $P$.
The minimal number of elements of such a family is stated in Corollary \ref{cor:minimal:number:topological:generators}.

We summarize this in the following graph:

\begin{tikzpicture}[->,>=stealth',shorten >=1pt,auto,node distance=2cm,
  thick,main node/.style={rectangle,fill=white!100,draw,font=\sffamily\Large\bfseries}]

  \node[main node] (1) {\ref{sec:valued:root:groups:datum}};
  \node[main node] (3) [below left of=1] {\begin{tabular}{c}\ref{sec:walls} \\ \ref{sec:description:alcove}\end{tabular}};
  \node[main node] (4) [below right of=1] {\begin{tabular}{c}\ref{sec:reduced:case} \\ \ref{sec:non:reduced:case}\end{tabular}};
  \node[main node] (2) [left of=3] {\ref{sec:counting:alcoves}};
  \node[main node] (5) [right of=4] {\ref{sec:commutation:relations}};
  \node[main node] (6) [below of=3] {\ref{sec:action:on:ball}};
  \node[main node] (7) [below of=5] {\ref{sec:explicit:computation:with:commutation:relations}};
  \node (0) [below of=4] {};
  \node[main node] (8) [below of=0] {\ref{sec:frattini:subgroup}};
  \node[main node] (9) [below of=6] {\ref{sec:minimal:number}};

  \path[every node/.style={font=\sffamily\small}]
	(1)
		edge (2)
		edge (3)
		edge (4)
		edge (5)
	   
	(2)
		edge (6)
		edge (9)
	(3)
		edge (2)
		edge (6)
	(4)
		edge (6)
		edge (8)
	(5)
		edge (7)
	(6)
		edge (8)
	(7)
		edge (8)
	(8)
		edge (9);
	
\end{tikzpicture}

\section{Rank \texorpdfstring{$1$}{1} subgroups inside a valued root group datum} 
\label{sec:rank:one:case}

We keep notations of Section \ref{sec:intro:structure}.
In particular, we always denote by $K$ a field and by $G$ a semisimple $K$-group.
From Section \ref{sec:valuation:of:root:groups:datum}, we will assume that $K$ is a non-Archimedean local field, and we will assume that $G$ is simply-connected, almost-$K$-simple.
In order to compute the Frattini subgroup of a maximal pro-$p$ subgroup of $G(K)$, we adopt the point of view of valued root groups datum. In Section \ref{sec:valued:root:groups:datum}, we recall how we define a valuation on root groups, and how these groups can be parametrized. Thanks to these parametrizations, given in Section \ref{sec:parametrization}, we compute explicitly, in Sections \ref{sec:reduced:case} and \ref{sec:non:reduced:case}, the various possible commutators, and the $p$-powers of elements in a rank $1$ subgroup corresponding to a given root.
The rank $1$ case is not only useful to define filtrations of root groups, but also useful to compute elements in the Frattini subgroup corresponding to elements of the maximal torus $T$.
There are exactly two root systems, up to isomorphism, whose types are named $A_1$ and $BC_1$, corresponding to groups $\mathrm{SL}_{2}$ (Section \ref{sec:reduced:case}) and $\mathrm{SU}(h) \subset \mathrm{SL}_3$ (Section \ref{sec:non:reduced:case}) respectively.

We denote by $T(K)_b$ the maximal bounded subgroup of $T(K)$, defined in \cite[4.4.1]{BruhatTits2}.
We denote by $T(K)_b^+$ the (unique) maximal pro-$p$ subgroup of $T(K)_b$.

\subsection{Valued root groups datum}
\label{sec:valued:root:groups:datum}

We want to describe precisely the derived group of a maximal pro-$p$ subgroup.
We do it in combinatorial terms, thanks to a filtration of root groups.
Because we have to deal with non-reduced root systems, we recall the following definitions:
\begin{Def}
\label{def:multipliable:divisible:root}
Let $\Phi$ be a root system.
A root $a \in \Phi$ is said to be \textbf{multipliable} if $2a \in \Phi$; otherwise, it is said to be \textbf{non-multipliable}.
A root $a \in \Phi$ is said to be \textbf{divisible} if $\frac{1}{2}a \in \Phi$; otherwise, it is said to be \textbf{non-divisible}.
\end{Def}

The set of non-divisible roots, denoted by $\Phi_{\mathrm{nd}}$, is a root system; the set of non-multipliable roots, denoted by $\Phi_{\mathrm{nm}}$, is a root system.

\subsubsection{Root groups datum}
\label{sec:root:groups:datum}

For each root $a \in \Phi$, there is a unique unipotent subgroup $U_a$ of $G$ whose Lie algebra is a weight subspace with respect to $a$.
In order to define an action of $G(K)$ on a spherical building with suitable properties, it suffices to have suitable relations of the various root groups $U_a(K)$.
These required relations are the axioms given in the definition of a root groups datum. More precisely:

\begin{Def}
\label{def:root:groups:datum} \cite[6.1.1]{BruhatTits1}
Let $G$ be an abstract group and $\Phi$ be a root system.
A \textbf{root groups datum} of $G$ of type $\Phi$ is a system
$(T, (U_a, M_a)_{a \in \Phi})$ satisfying the following axioms:
\begin{description}
\item[(RGD 1)] $T$ is a subgroup of $G$ and, for any $a \in \Phi$, the set $U_a$ is a non-trivial subgroup of $G$, called the root group of $G$ associated to $a$.
\item[(RGD 2)] For any $a,b \in \Phi$, the group of commutators $[U_a, U_b]$ is contained in the group generated by the groups $U_{ra+sb}$ where $r,s \in \mathbb{N}^*$ and $ra+sb \in \Phi$.
\item[(RGD 3)] If $a$ is a multipliable root, we have $U_{2a} \subset U_a$ and $U_{2a} \neq U_a$.
\item[(RGD 4)] For any $a \in \Phi$, the set $M_a$ is a right coset of $T$ in $G$ and we have $U_{-a} \setminus \{ 1 \} \subset U_a M_a U_a$.
\item[(RGD 5)] For any $a,b \in \Phi$ and $n \in M_a$, we have $n U_b n^{-1} = U_{r_a(b)}$ where $r_a \in W(\Phi)$ is the orthogonal reflection with respect to $a^\perp$ and $W(\Phi)$ is the Weyl group of $\Phi$.
\item[(RGD 6)] We have $T U_{\Phi^+} \cap U_{\Phi^-} = \{1\}$ where $\Phi^+$ is an order of the root system $\Phi$ and $\Phi^- = - \Phi^+ = \Phi \setminus \Phi^+$.
\end{description}

A root groups datum is said to be \textbf{generating} if the groups $U_a$ and $T$ generate $G$.
\end{Def}

Now, given a reductive group $G$ over a field $K$, with a relative root system $\Phi$, we provide a root groups datum of $G(K)$.
Let $a \in \Phi$. By \cite[14.5 and 21.9]{Borel}, there exists a unique closed $K$-subgroup of $G$, denoted by $U_a$, which is connected, unipotent, normalized by $T$ and whose Lie algebra is $\mathfrak{g}_a + \mathfrak{g}_{2a}$.
This group $U_a$ is called the \textbf{root group} of $G$ associated to $a$.
By \cite[4.1.19]{BruhatTits2}, there exists cosets $M_a$ such that $\Big(T(K),\big(U_a(K),M_a\big)_{a\in\Phi}\Big)$ is a generating root groups datum of $G(K)$ of type $\Phi$.

\subsubsection{The \texorpdfstring{$*$}{*}-action on the absolute root system and splitting extension fields of root groups}
\label{sec:star:action}

From now on, $G$ is a quasi-split semisimple group. As in Section \ref{sec:intro:structure}, we denote by $\widetilde{K}$ the minimal splitting field of $G$ over $K$ (uniquely defined in a given separable closure $K_s$ of $K$).

In a general context, there is a canonical action of the absolute Galois group $\Sigma = \mathrm{Gal}(K_s / K)$ on the algebraic group $G$.
When $G$ is quasi-split, we can choose a maximal $K$-split torus $S$ and we get a maximal torus $T =\mathcal{Z}_G(S)$ of $G$ defined over $K$.
Thus, we define an action of $\Sigma$ on $X^*(T_{K_s})$ by:
$$\forall \sigma \in \Sigma,\ \forall \chi \in X^*(T_{K_s}),\ \sigma \cdot \chi = t \mapsto \sigma\Big( \chi\big( \sigma^{-1}(t)\big)\Big)$$
In the same way, thanks to conjugacy of minimal parabolic subgroups (which are Borel subgroups when $G$ is quasi-split), we define an action of $\Sigma$ on the type of parabolic subgroups, from which we deduce an action on the (simple) absolute roots.

\begin{Not}[The $*$-action on the absolute root system]
\label{not:star:action}
This is a summary of \cite[§6]{BorelTits} for a quasi-split group $G$.
In particular, there exists a Borel subgroup $B$ of $G$ defined over $K$.
Denote by $\widetilde{\Delta}$ the set of absolute simple roots and by $\mathrm{Dyn}(\widetilde{\Delta})$ its associated Dynkin diagram.
There exists an action of the Galois group $\Sigma = \mathrm{Gal}(\widetilde{K} / K)$ on $\mathrm{Dyn}(\widetilde{\Delta})$ which preserves the diagram structure.
This action is called the $*$-action and it can be extended, by linearity, to an action of $\Sigma$ on $\widetilde{V}^* = X^*(T_{\widetilde{K}}) \otimes_\mathbb{Z} \mathbb{R}$, and on $\widetilde{\Phi}$.
The restriction morphism $j = \iota^* : X^*(T) \rightarrow X^*(S)$, where $\iota : S \subset T$ is the inclusion morphism, can be extended to an endomorphism of the Euclidean space $\rho : \widetilde{V}^* \rightarrow\ \widetilde{V}^*$.
This morphism $\rho$ is the orthogonal projection onto the subspace $V^*$ of fixed points by the action of $\Sigma$ on $\widetilde{V}^*$.
From a geometric realization of $\widetilde{\Phi}$ in the Euclidean space $\widetilde{V}^*$, we deduce a geometric realization of $\Phi = \rho(\widetilde{\Phi})$ in $V^*$.
The orbits of the action of $\Sigma$ on $\widetilde{\Phi}$ are the fibers of the map $\rho : \widetilde{\Phi} \rightarrow \Phi$.
\end{Not}

\begin{Not}[Some field extensions]
\label{not:L:prime}
According to \cite[4.1.2]{BruhatTits2}, by definition of $\widetilde{K}$ as minimal splitting extension, the $*$-action of $\Sigma = \mathrm{Gal}(\widetilde{K} / K)$ on $\mathrm{Dyn}(\widetilde{\Delta})$ is faithful.
Assume that $G$ is almost-$K$-simple, so that the relative root system $\Phi$ is irreducible.
Consider a connected component of $\mathrm{Dyn}(\widetilde{\Delta})$.
Denote by $\Sigma_0$ its pointwise stabilizer in $\Sigma$.
Denote by $\Sigma_d$ its setwise stabilizer, where $d \in \mathbb{N}^*$ is defined by $d=[\Sigma_d : \Sigma_0]$.
We denote $L_d = \widetilde{K}^{\Sigma_d}$ and $L_0 = \widetilde{K}^{\Sigma_0}$, so that $L_0 / L_d$ is a Galois extension of degree $d$.
Because of the classification of root systems, the index $d$ is an element of $\{1,2,3,6\}$.

If $d = 2$, we let $L' = L_0$; we fix $\tau \in \mathrm{Gal}(L_0 / L_d)$ to be the non-trivial element.

If $d \geq 3$, we let $L'$ be a separable sub-extension of $L_0$ (possibly non-Galois) of degree $3$ over $L_d$;
we fix $\tau \in \mathrm{Gal}(L_0 / L_d)$ to be an element of order $3$.

Thus, we denote $d' = [L':L_d] \in \{1,2,3\}$.
In practice, $d' = \min(d,3)$.
\end{Not}

\begin{Rq}
\label{rq:reduction:to:absolutely:simple:groups}
According to \cite[6.21]{BorelTits}, we can write $G = R_{L_d / K}(G')$ where $G'$ is an absolutely simple $L_d$-group.
Hence $G(K) \simeq G'(L_d)$.
Because, in this paper, we prove some results on rational points, we could assume that $G$ is absolutely simple.
Under this assumption, the root system $\widetilde{\Phi}$ is irreducible;
$\widetilde{K} = L_0$ and $L_d = K$.
Despite this, we will only assume that $G$ is $K$-simple in order to have more intrinsic statements.
\end{Rq}

\begin{Def}
\label{def:splitting:field}

Let $\alpha \in \widetilde{\Phi}$ be an absolute root.
Denote by $\Sigma_\alpha$ be the stabilizer of $\alpha$ for the $*$-action.
The \textbf{field of definition} of the root $\alpha$ is the subfield of $\widetilde{K}$ fixed by $\Sigma_\alpha$, denoted by $L_{\alpha} = \widetilde{K}^{\Sigma_\alpha}$.

Let $a = \alpha|_S$.
The \textbf{splitting field extension class} of $a$ is the isomorphism class of the field extension $L_\alpha / K$, denoted by $L_a/K$.
\end{Def}

\begin{proof}[Proof that this definition makes sense]
\label{rq:splitting:field:defined:up:to:isomorphism}
We know, by \cite[§6]{BorelTits}, that the set $\{\alpha \in \widetilde{\Phi},\ a = \alpha|_S \}$ is a non-empty orbit of the $*$-action on $\widetilde{\Phi}$.
Hence, by abuse of notation, we denote $a = \{\alpha \in \widetilde{\Phi},\ a = \alpha|_S \}$.
Thus, given any relative root $a \in \Phi$, the field extension class $L_{\alpha}/K$ does not depend of the choice of $\alpha \in a$.
\end{proof}

\begin{Rq}
\label{rq:splitting:field:multipliable:root}
If $a \in \Phi$ is a multipliable root,
then there exists $\alpha, \alpha' \in a$ such that $\alpha + \alpha' \in \widetilde{\Phi}$.
Because $a$ is an orbit, we can write $\alpha' = \sigma(\alpha)$ where $\sigma \in \Sigma$.
As a consequence, the extension of fields $L_{\alpha} / L_{\alpha+\alpha'}$ is quadratic.
By abuse of notation, we denote this extension class by $L_a / L_{2a}$;
the ramification of this extension will be considered later.
\end{Rq}

\subsubsection{Parametrization of root groups}
\label{sec:parametrization}

In order to value the root groups (we do it in Section \ref{sec:valuation:of:root:groups:datum}) thanks to the valuation of the local field, we have to define a parametrization of each root group.
Moreover, these valuations have to be compatible.
That is why we furthermore have to get relations between the parametrizations.

Let $(\widetilde{x}_{\alpha})_{\alpha \in \widetilde{\Phi}}$ be a Chevalley-Steinberg system of $G_{\widetilde{K}}$.
This is a parametrization of the absolute root groups $\widetilde{x}_{\alpha} : U_a \rightarrow \mathbb{G}_a$ satisfying some compatibility relations, that will be exploited to get commutation relations in Section \ref{sec:commutation:relations}.
We recall the precise definition and that such a system exists in Section \ref{sec:commutation:relations}).

Let $a\in \Phi$ be a relative root.
To compute commutators between elements of opposite root groups, or between elements of a torus and of a root group,
it is sufficient to compute inside the simply-connected semisimple $K$-group $\langle U_{-a} , U_a \rangle$ generated by the two opposite root groups $U_{-a}$ and $U_a$.
Let $\pi : G^a \rightarrow \langle U_{-a} , U_a \rangle$ be the universal covering of the quasi-split semisimple $K$-subgroup of relative rank $1$ generated by $U_a$ and $U_{-a}$.
The group $G^a$ splits over $L_a$ (this explains the terminology of \gts{splitting field} of a root).
A parametrization of the simply-connected group $G^a$ is given by \cite[4.1.1 to 4.1.9]{BruhatTits2}.
We now recall notations and the matrix realization that we will use later.

\paragraph{The non-multipliable case:}

Let $a\in\Phi$ be a relative root such that $2a \not\in \Phi$.
By \cite[4.1.4]{BruhatTits2}, the rank $1$ group $G^a$ is isomorphic to $R_{L_\alpha / K}(\mathrm{SL}_{2,L_\alpha})$.
It can be written as $G^a = R_{L_\alpha / K}(\widetilde{G}^{\alpha})$ with an isomorphism $\xi_\alpha : \mathrm{SL}_{2,L_\alpha} \stackrel{\simeq}{\longrightarrow} \widetilde{G}^\alpha$.

Inside the classical group $\mathrm{SL}_{2,L_\alpha}$, a maximal $L_\alpha$-split torus of $\mathrm{SL}_{2,L_\alpha}$ can be parametrized by the following homomorphism:
$$\begin{array}{cccc}z :&\mathbb{G}_{m,L_\alpha} & \rightarrow & \mathrm{SL}_{2,L_\alpha}\\&t&\mapsto&\begin{pmatrix}t&0\\0&t^{-1}\end{pmatrix}\end{array}$$
The corresponding root groups can be parametrized by the following homomorphisms:
$$\begin{array}{cccc}y_- :&\mathbb{G}_{a,L_\alpha} & \rightarrow & \mathrm{SL}_{2,L_\alpha}\\&v&\mapsto&\begin{pmatrix}1&0\\-v&1\end{pmatrix}\end{array} \text{ and }
\begin{array}{cccc}y_+ :&\mathbb{G}_{a,L_\alpha} & \rightarrow & \mathrm{SL}_{2,L_\alpha}\\&u&\mapsto&\begin{pmatrix}1&u\\0&1\end{pmatrix}\end{array}$$
According to \cite[4.1.5]{BruhatTits2}, there exists a unique $L_\alpha$-group homomorphism, denoted by $\xi_\alpha : \mathrm{SL}_{2,L_\alpha} \rightarrow \widetilde{G}^\alpha$, satisfying $\widetilde{x}_{\pm \alpha} = \pi \circ \xi_\alpha \circ y_{\pm}$.

Thus, we define a $K$-homomorphism $x_a = \pi \circ R_{L_\alpha / K}(\xi_\alpha)$ which is a $K$-group isomorphism between $R_{L_a/K}(\mathbb{G}_{a,L_a})$ and $U_a$.
We also define the following $K$-group isomorphism:
$$\widetilde{a} = \pi \circ R_{L_\alpha / K}(\xi_\alpha \circ z) :
 R_{L_\alpha / K}(\mathbb{G}_{m,L_\alpha}) \rightarrow T^a$$
 where $T^a = T \cap G^a$.

\paragraph{The multipliable case:}

Let $a\in\Phi$ be a relative root such that $2a \in \Phi$.
Let $\alpha \in a$ be an absolute root from which $a$ arises, and let $\tau \in \Sigma$ be an element of the Galois group such that $\alpha + \tau(\alpha)$ is again an absolute root.
To simplify notations, we let (up to compatible isomorphisms in $\Sigma$) $L = L_a = L_\alpha$ and $L_2 = L_{2a} = L_{\alpha + \tau(\alpha)}$.
By \cite[4.1.4]{BruhatTits2}, the $K$-group $G^a$ is isomorphic to $R_{L_2 / K}(\mathrm{SU}(h))$, where $h$ denotes the hermitian form on $L\times L \times L$ given by the formula:
$$h : (x_{-1},x_0,x_1) \mapsto \sum_{i=-1}^{1} x_i {^\tau}x_{-i}$$
The group $G^a_{L_2}$ can be written as $G^a_{L_2} = \prod_{\sigma \in \mathrm{Gal}(L_2 / K)} \widetilde{G}^{\sigma(\alpha),\sigma(\tau(\alpha))}$ where each $\widetilde{G}^{\sigma(\alpha),\sigma(\tau(\alpha))}$ denotes a simple factor isomorphic to $\mathrm{SU}(h)$, so that $\mathrm{SU}(h)_{L} \simeq \mathrm{SL}_{3,L}$.

We define a connected unipotent $L_{2}$-group scheme by providing the $L_{2}$-subvariety
$H_0(L,L_{2}) = \left\{ (u,v) \in L \times L,\ u {^\tau}u = v + {^\tau}v \right\}$ of $L_a \times L_a$ with the following group law:
$$(u,v),(u',v') \mapsto (u+u',v+v'+u {^\tau}u')$$
Then, we let $H(L,L_{2}) = R_{L_{2}/K}(H_0(L,L_{2}))$.
For the rational points, we get $H(L,L_{2})(K) = \{ (u,v) \in L \times L,\ u {^\tau}u = v + {^\tau}v \}$ and the group law is given by $x_a(u,v) x_a(u',v') = x_a(u+u',v+v'+u {^\tau}u')$.

\begin{Not}
\label{not:trace:subspace}
For any multipliable root $a \in \Phi$, in \cite[4.2.20]{BruhatTits2} are furthermore defined the following notations:
\begin{itemize}
\item $L^0 = \{ y \in L,\ y+{^\tau}y = 0 \}$, this is an $L_{2}$-vector space of dimension $1$;
\item $L^1 = \{ y \in L,\ y+{^\tau}y = 1 \}$, this is an $L^0$-affine space.
\end{itemize}
\end{Not}

Indeed, if $K$ is not of characteristic $2$, then $L^0 = \ker ( \tau + \mathrm{id})$ is of dimension $1$ because $L_{2} = \ker ( \tau - \mathrm{id})$ is of dimension $1$ and $\pm 1$ are the eigenvalues of $\tau \in \mathrm{GL}(L_a)$.
Moreover, the affine space $L^1$ is non-empty because it contains $\frac{1}{2}$.
If $K$ is of characteristic $2$, then $L^0 = \ker ( \tau + \mathrm{id}) = L_{2}$.

\begin{Rq}[Interest of such notations]
For any $\lambda \in L^0$ so that $\lambda \neq 0$, we have an isomorphism of abelian groups given by the relation $\begin{array}{ccc}L_{2} & \rightarrow & L^0 \\ y & \mapsto & \lambda y\end{array}$, so that $x_a(0,\lambda y) = x_{2a}(y)$.
This constitutes an additional uncertainty when we want to perform computations in $G(K)$.
Because of valuation considerations, we will have to choose a $\lambda$ whose valuation is equal to zero; in fact, this is always possible.
To avoid confusion, it is better to work with the isomorphism of abelian groups $\begin{array}{ccc} L_a^0 & \rightarrow & U_{2a}(K) \\ y & \mapsto & x_a(0,y)\end{array}$ in order to realize this group as a subgroup of $U_a(K)$.

The affine space $L^1$ has an interest in the context of a valued field.
In particular, as soon as we will know that $L^1$ is non-empty, we will write $L = L_{2} \lambda \oplus L^0$ with a suitable $\lambda \in L^1$.
\end{Rq}

We parametrize a maximal torus of $\mathrm{SU}(h)$ by the isomorphism
$$\begin{array}{cccc}z :&R_{L/L_2}(\mathbb{G}_{m,L}) & \rightarrow & \mathrm{SU}(h)\\&t&\mapsto&\begin{pmatrix}t&0&0\\0&t^{-1}{^\tau}t&0\\0&0&{^\tau}t^{-1}\end{pmatrix}\end{array}$$

We parametrize the corresponding root groups of $\mathrm{SU}(h)$ by the homomorphisms:
$$\begin{array}{cccc}y_- : &H_0(L,L_2) & \rightarrow & \mathrm{SU}(h)\\&(u,v)&\mapsto&\begin{pmatrix}1&0&0\\u&1&0\\-v&-{^\tau}u&1\end{pmatrix}\end{array}$$
and
$$
\begin{array}{cccc}y_+ :&H_0(L,L_2) & \rightarrow & \mathrm{SU}(h)\\&(u,v)&\mapsto&\begin{pmatrix}1&-{^\tau}u&-v\\0&1&u\\0&0&1\end{pmatrix}\end{array}$$
By \cite[4.1.9]{BruhatTits2}, there exists a unique $L_2$-group isomorphism, denoted by $\xi_\alpha : \mathrm{SU}(h) \rightarrow \widetilde{G}^{\alpha,\tau(\alpha)}$, satisfying $\widetilde{x}_{\pm \alpha} = \pi \circ \xi_\alpha \circ y_{\pm}$.
From this, we define a $K$-homomorphism $x_a = \pi \circ R_{L_2 / K}(\xi_\alpha)$ which is a $K$-group isomorphism between the $K$-group $H(L,L_{2})$ and the root group $U_a$.
We also define the following $K$-group isomorphism:
$$\widetilde{a} = \pi \circ R_{L_2 / K}(\xi_\alpha \circ z) :
 R_{L_\alpha / K}(\mathbb{G}_{m,L_\alpha}) \rightarrow T^a$$
 where $T^a = T \cap G^a$.

\subsubsection{Valuation of a root groups datum} 
\label{sec:valuation:of:root:groups:datum}

For each root group, we now use its parametrization to define an exhaustion by subgroups.
In order to define an action of $G(K)$ on an affine building with suitable properties, it suffices to have suitable relations between the terms of filtration of root groups.
More precisely:

\begin{Def}
\label{def:valued:root:groups:datum} \cite[6.2.1]{BruhatTits1}
Let $G$ be an abstract group, let $\Phi$ be a root system and let $\Big(T,\big(U_a,M_a\big)_{a\in\Phi}\Big)$ be a root groups datum of $G$ of type $\Phi$.
A \textbf{valued root groups datum} is a system $\Big(T,\big(U_a,M_a,\varphi_a\big)_{a\in\Phi}\Big)$, where each $\varphi_a$ is a map from $U_a$ to $\mathbb{R} \cup \{ \infty \}$, satisfying the following axioms:
\begin{description}
\item[(VRGD 0)] for any $a \in \Phi$, the image of $\varphi_a$ contains at least $3$ elements;
\item[(VRGD 1)] for any $a \in \Phi$ and any $l \in \mathbb{R} \cup \{ \infty \}$, the set $U_{a,l} = \varphi_a^{-1}([l;\infty])$ is a subgroup of $U_a$ and the group $U_{a,\infty}$ is $\{ \mathbf{1} \}$;
\item[(VRGD 2)] for any $a \in \Phi$ and any $m \in M_a$, the map $u \mapsto \varphi_{-a}(u) - \varphi_a(mum^{-1})$ is constant over $U_{-a} \setminus \{ \mathbf{1} \}$;
\item[(VRGD 3)] for any $a,b \in \Phi$ such that $b \not\in -\mathbb{R}_+ a$ and any $l,l' \in \mathbb{R}$, the group of commutators $[U_{a,l},U_{b,l'}]$ is contained is the group generated by the groups $U_{ra+sb,rl+sl'}$ where $r,s \in \mathbb{N}^*$ and $ra+sb \in \Phi$;
\item[(VRGD 4)] for any multipliable root $a \in \Phi$, the map $\varphi_{2a}$ is the restriction of the map $2 \varphi_a$ to the subgroup $U_{2a}$;
\item[(VRGD 5)] for any $a \in \Phi$, any $u \in U_a$ and any $u',u'' \in U_{-a}$ such that $u'uu'' \in M_a$, we have $\varphi_{-a}(u') = - \varphi_a(u)$.
\end{description}
\end{Def}

Now, given a reductive group $G$ over a non-Archimedean local field $K$, with a relative root system $\Phi$, we provide a valued root groups datum of $G(K)$.
We define a filtration $(\varphi_a)_{a \in \Phi}$ of the rational points $U_a(K)$ of each root group by:
\begin{itemize}
\item $\varphi_a(x_a(y)) = \omega(y)$ if $a$ is a non-multipliable and non-divisible root, and if $y \in L_a$;
\item $\varphi_{a}(x_a(y,y')) = \frac{1}{2} \omega(y')$ if $a$ is a multipliable root and if $(y,y') \in H(L_a,L_{2a})$;
\item $\varphi_{2a}(x_a(0,y')) = \omega(y')$ if $a$ is a multipliable root and if $y' \in L_a^0$.
\end{itemize}
By \cite[§4.2]{BruhatTits2}, the family $\Big(T,\big(U_a(K),M_a,\varphi_a\big)_{a\in\Phi}\Big)$ is a valued root groups datum.

\subsubsection{Set of values}
\label{sec:set:of:values}

If $L / K$ is a finite extension of local fields, the valuation $\omega$ over $K^\times$ can be extended uniquely to a valuation over $L^\times$, still denoted by $\omega$ because of its uniqueness.
We let $\Gamma_L = \omega(L^\times)$.

Because we considered a discrete valuation $\omega$, the terms of filtration indexed by $\mathbb{R}$ can, in fact, be indexed by discrete subsets.
These subsets will be used in Section \ref{sec:description:apartment}, to provide an \gts{affinisation} of the spherical root system.

Let $a \in \Phi$ be a root. We define the following sets of values:
\begin{itemize}
\item $\Gamma_a = \varphi_a(U_a(K) \setminus \{\mathbf{1}\})$;
\item $\Gamma'_a = \{\varphi_a(u),\ u \in U_a(K) \setminus\{\mathbf{1}\} \text{ and }\varphi_a(u) = \sup \varphi_a(u U_{2a}(K)) \}$.
\end{itemize}
Furthermore, for any value $l \in \mathbb{R}$, we denote $l^+ = \min \{l' \in \Gamma_a,\ l'> l \}$.
This is the lowest value, greater than $l$, for which we detect a change in the valued root groups $(U_{a,l'})_{l' > l}$.
In order to characterize $\Gamma'_a$, we complete the notations of \ref{not:trace:subspace} introducing the following $L^{1}_{a,\max} = \{z \in L_a^1,\ \omega(z) = \sup \{\omega(y),\ y\in L_a^1 \} \}$.
It is the subset of $L^1_a$ whose elements reach the maximum of the valuation.

\begin{Rq}
Be careful that the value $l^+$ also depends on $a$.

The sense of $\Gamma'_a$ will be provided by Lemma \ref{lem:equivalence:affine:root:systems}, as the set of values parametrizing the affine roots.
\end{Rq}

\begin{Lem}
\label{lem:sets:of:values:non:multipliable:root}
If $a$ is a non-multipliable non-divisible root, then we have $\Gamma_{a} = \Gamma'_a = \Gamma_{L_a}$.
\end{Lem}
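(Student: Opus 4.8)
The plan is to unwind the definitions of $\Gamma_a$, $\Gamma'_a$ and the parametrization $x_a$ in the non-multipliable, non-divisible case. Recall from Section \ref{sec:parametrization} that for such a root $a$ the $K$-group isomorphism $x_a$ identifies $U_a(K)$ with the additive group $R_{L_a/K}(\mathbb{G}_{a,L_a})(K) = L_a$, and from Section \ref{sec:valuation:of:root:groups:datum} that $\varphi_a(x_a(y)) = \omega(y)$ for $y \in L_a$. Since $a$ is non-multipliable there is no root $2a \in \Phi$, hence $U_{2a}(K) = \{\mathbf{1}\}$; consequently the conditions $\varphi_a(u) = \sup \varphi_a(u U_{2a}(K))$ appearing in the definition of $\Gamma'_a$ are vacuous and $\Gamma'_a = \Gamma_a$ immediately. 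It remains to prove $\Gamma_a = \Gamma_{L_a}$.

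For the inclusion $\Gamma_a \subseteq \Gamma_{L_a}$: any element of $\Gamma_a$ is of the form $\varphi_a(x_a(y)) = \omega(y)$ for some $y \in L_a \setminus \{0\}$ (the element $x_a(y)$ is nontrivial iff $y \neq 0$, as $x_a$ is an isomorphism), so it lies in $\omega(L_a^\times) = \Gamma_{L_a}$ by definition of the latter set in Section \ref{sec:set:of:values}. For the reverse inclusion $\Gamma_{L_a} \subseteq \Gamma_a$: given $\gamma \in \Gamma_{L_a} = \omega(L_a^\times)$, pick $y \in L_a^\times$ with $\omega(y) = \gamma$; then $x_a(y) \in U_a(K) \setminus \{\mathbf{1}\}$ and $\varphi_a(x_a(y)) = \omega(y) = \gamma$, so $\gamma \in \Gamma_a$. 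This establishes the equality $\Gamma_a = \Gamma'_a = \Gamma_{L_a}$.

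There is essentially no obstacle here: the statement is a direct bookkeeping consequence of the fact that $x_a$ is a group isomorphism onto $L_a$ and $\varphi_a$ is literally the valuation $\omega$ transported along $x_a$, together with the triviality of $U_{2a}$ when $a$ is non-multipliable. The only point that needs a word of care is checking that $\varphi_a$ indeed surjects onto all of $\omega(L_a^\times)$ rather than some proper subset — but this is immediate since $\omega : L_a^\times \to \mathbb{R}$ and $x_a : L_a \to U_a(K)$ are both surjective, so every value $\omega(y)$ with $y \ne 0$ is realized. (The content of the lemma lies in the contrast with the multipliable case, treated separately, where $\Gamma_a$ and $\Gamma'_a$ genuinely differ and involve $L_a^0$ and $L^1_{a,\max}$.)
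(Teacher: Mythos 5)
Your proof is correct and follows the same route as the paper, which simply remarks that the statement is immediate from the isomorphism $x_a : L_a \simeq U_a(K)$ together with $\varphi_a(x_a(y)) = \omega(y)$; your write-up just makes explicit the triviality of $U_{2a}(K)$ (so $\Gamma'_a = \Gamma_a$) and the transport of $\omega(L_a^\times)$ along $x_a$.
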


\begin{proof}
This is obvious by the isomorphism between $U_a(K)$ and $L_a$.
\end{proof}

Now, we assume that $a \in \Phi$ is a multipliable root.

Let $p$ be the residue characteristic of $K$.
Even if the sets of values can be computed for any $p$, we assume here that $p \neq 2$.
This assumption provides a short cut in the computation of sets of values (mostly because $\frac{1}{2} \in L^1_{a,\max}$ in this case), and will be necessary later for more algebraic reasons.

Since $\omega$ is a discrete valuation and since for any $y \in L^1_a$, we have $\omega(y) \leq 0$, it is clear that $L^1_{a,\max}$ is non-empty.
Moreover, when $p \neq 2$, we have $\frac{1}{2} \in L^1_{a,\max}$.
Hence, by \cite[4.2.21 (4)]{BruhatTits2}, we know that $\Gamma_a = \frac{1}{2} \Gamma_{L_a}$ and that $\Gamma'_a = \Gamma_{L_a}$.

By \cite[4.3.4]{BruhatTits2}, we know that:
\begin{itemize}
\item when the quadratic extension $L_a / L_{2a}$ is unramified, we have the equalities $\Gamma_{2a} = \Gamma'_{2a} = \omega({L^0}^\times) = \Gamma_{L_a} = \Gamma_{L_{2a}}$;
 \item when the quadratic extension $L_a / L_{2a}$ is ramified, we have the equalities  $\Gamma_{2a} = \Gamma'_{2a} = \omega({L^0}^\times) = \omega(\varpi_{L_a}) + \Gamma_{L_{2a}}$.
 \end{itemize}

\begin{Lem}[Summary]
\label{lem:sets:of:values:multipliable:root}
Let $a \in \Phi$ be a multipliable root.
If we normalize the valuation $\omega$ so that $\Gamma_{L_a} = \mathbb{Z}$, then we get:

\begin{tabular}{|c|c|c|}
\hline
$L_a/L_{2a}$ & unramified & ramified\\
\hline
$\Gamma_{L_a}$ & $\mathbb{Z}$ & $\mathbb{Z}$ \\
\hline
$\Gamma_{L_{2a}}$ & $\mathbb{Z}$ & $2 \mathbb{Z}$ \\
\hline
$\Gamma_a$ & $\frac{1}{2} \mathbb{Z}$ & $\frac{1}{2} \mathbb{Z}$ \\
\hline
$\Gamma_{2a}$ & $\mathbb{Z}$ & $1 + 2 \mathbb{Z}$\\
\hline
$\Gamma'_{a}$ & $\mathbb{Z}$ & $\mathbb{Z}$\\
\hline
\end{tabular}
\end{Lem}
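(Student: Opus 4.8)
The plan is to prove Lemma~\ref{lem:sets:of:values:multipliable:root} by simply assembling the facts recalled in the preceding paragraphs, once the normalization $\Gamma_{L_a} = \mathbb{Z}$ is fixed. First I would fix the uniformizer data: the extension $L_a / L_{2a}$ is quadratic (Remark~\ref{rq:splitting:field:multipliable:root}), so $[\Gamma_{L_a} : \Gamma_{L_{2a}}]$ is $1$ in the unramified case and $2$ in the ramified case; together with $\Gamma_{L_a} = \mathbb{Z}$ this gives $\Gamma_{L_{2a}} = \mathbb{Z}$ (unramified) and $\Gamma_{L_{2a}} = 2\mathbb{Z}$ (ramified), which is the first nontrivial row of the table, and in the ramified case it also records $\omega(\varpi_{L_a}) = 1$ while $\omega(\varpi_{L_{2a}}) = 2$.

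Next I would read off $\Gamma_a$ and $\Gamma'_a$ from the displayed consequence of \cite[4.2.21~(4)]{BruhatTits2}: since $p \neq 2$ we have $\tfrac12 \in L^1_{a,\max}$, hence $\Gamma_a = \tfrac12 \Gamma_{L_a} = \tfrac12 \mathbb{Z}$ and $\Gamma'_a = \Gamma_{L_a} = \mathbb{Z}$ in both columns; these are the last two rows. Then for the row $\Gamma_{2a}$ I would invoke \cite[4.3.4]{BruhatTits2} as quoted just above the statement: in the unramified case $\Gamma_{2a} = \omega({L^0}^\times) = \Gamma_{L_{2a}} = \mathbb{Z}$, and in the ramified case $\Gamma_{2a} = \omega(\varpi_{L_a}) + \Gamma_{L_{2a}} = 1 + 2\mathbb{Z}$, using the values of $\omega(\varpi_{L_a})$ and $\Gamma_{L_{2a}}$ computed in the first step. (One also gets $\Gamma'_{2a} = \Gamma_{2a}$, though only the $\Gamma_{2a}$ entry appears in the table.) Putting these rows side by side yields exactly the tabular display in the statement, so the proof is essentially bookkeeping.

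I do not expect any serious obstacle: every ingredient has already been stated, and the only point demanding a word of care is the consistency of the normalization across the tower $L_{2a} \subset L_a \subset \widetilde{K}$ — one must check that ``normalize $\omega$ so that $\Gamma_{L_a} = \mathbb{Z}$'' is compatible with the uniqueness of the extension of $\omega$ from $K$ recalled in Section~\ref{sec:set:of:values}, i.e.\ that rescaling by a rational constant does not disturb the index computations. That is immediate since all the relevant statements ($\Gamma_a = \tfrac12\Gamma_{L_a}$, the index of $\Gamma_{L_{2a}}$ in $\Gamma_{L_a}$, the formula $\Gamma_{2a} = \omega(\varpi_{L_a}) + \Gamma_{L_{2a}}$) are homogeneous under such a rescaling. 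The mild subtlety of invoking $p \neq 2$ to guarantee $\tfrac12 \in L^1_{a,\max}$ has already been discharged in the text, so it only needs to be cited.
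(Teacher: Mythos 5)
Your proposal is correct and matches the paper's treatment: the lemma is explicitly labelled a ``Summary'' and the paper gives no separate argument beyond the immediately preceding facts ($\tfrac12 \in L^1_{a,\max}$ since $p\neq 2$, hence $\Gamma_a = \tfrac12\Gamma_{L_a}$ and $\Gamma'_a = \Gamma_{L_a}$ by \cite[4.2.21(4)]{BruhatTits2}, and the two cases of \cite[4.3.4]{BruhatTits2} for $\Gamma_{2a}$), combined with the index of $\Gamma_{L_{2a}}$ in $\Gamma_{L_a}$ under the normalization $\Gamma_{L_a}=\mathbb{Z}$. Your bookkeeping, including $\omega(\varpi_{L_a})=1$ and $\Gamma_{L_{2a}}=2\mathbb{Z}$ giving $\Gamma_{2a}=1+2\mathbb{Z}$ in the ramified case, is exactly the intended derivation.
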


\begin{Rq}
The case of a divisible root has been treated. It is the case $2a$ of a multipliable root $a$.
\end{Rq}

\begin{Rq}[The case of residue characteristic $2$]
When the residue characteristic is any prime number (and in particular if $p = 2$), it can be seen via further investigations, that the set $L^1_{a,\max}$ is non-empty and we let $\{ \delta \} = \omega(L^1_{a,\max})$.
We can compute the sets of values, depending on $\delta$ and on the ramification of $L_a / L_{2a}$.
We get the following results:
\begin{itemize}
\item $\Gamma'_a = \frac{1}{2} \delta + \Gamma_{L_a}$;
\item $\Gamma_a = \Gamma'_a \cup \frac{1}{2} \Gamma_{2a} = \frac{1}{2} \Gamma_{L_a}$;
\item if $L_a / L_{2a}$ is ramified, then $\Gamma'_a \cap \frac{1}{2} \Gamma_{2a} = \emptyset$ and $\Gamma_{2a} = \delta + \omega(\varpi_{L_a}) + \Gamma_{L_{2a}}$;
\item if $L_a / L_{2a}$ is unramified, then $\Gamma'_a \cap \frac{1}{2} \Gamma_{2a} \neq \emptyset$ and $\Gamma_{2a} = \Gamma_{L_{2a}} = \Gamma_{L_{a}}$.

Because $\delta = 0$ when $p \neq 2$, this is, in fact, the generalisation to any residue characteristic.
\end{itemize}

\end{Rq}

\subsection{The reduced case}
\label{sec:reduced:case}

Let $a\in \Phi$ be a non-multipliable root of $\Phi$ arising from an absolute root $\alpha \in \widetilde{\Phi}$.
In this section, in order to simplify notation, we denote $L = L_{\alpha} = L_a$.
Denote by $G^a = \langle U_{-a}, U_a \rangle$ the $K$-subgroup of $G$ generated by $U_{-a}$ and $U_a$.
The universal covering $\pi : R_{L/K}(SL_{2,L}) \rightarrow G^a$ is a central $K$-isogeny, which allows us to compute relations between the elements of $U_a$, $U_{-a}$ and $T$ by the parametrizations $x_a$, $x_{-a}$ and $\widetilde{a}$ thanks to matrix realizations in $\mathrm{SL}_2$.

We denote by $T^a = T \cap G^a$ the maximal torus of $G^a$ and by $T^a(K)_b^+ = T(K)_b^+ \cap T^a(K)$ the maximal pro-$p$ subgroup of $T^a(K)$.
By \cite[3.2.10]{Loisel-maximaux} (because $G^a$ is simply-connected, the torus $T^a$ is an induced torus), we know that $\widetilde{a} : 1 + \mathfrak{m}_{L_a} \rightarrow T^a(K)_b^+$ is a group isomorphism.

\begin{Lem}[{Commutation relation $[T,U_a]$ in the reduced case}]~
\label{lem:commutator:relation:reduced:case:torus:unipotent}

(1) Let $t \in T(K)$.
Then, for any $x \in L_\alpha$, we have
$$\Big[ x_a(x) , t \Big] = x_a\Big( \big( 1 - \alpha(t)\big) x \Big)$$

(2) Normalize the valuation $\omega$ by $\Gamma_a = \Gamma_{L_a} = \mathbb{Z}$.
For any $l \in \Gamma_a$, we have:
$$\left[T(K)_b^+,U_{a,l}\right] \leq U_{a,l+1}$$
and this is an equality if $p \neq 2$.
\end{Lem}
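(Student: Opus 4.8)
The plan is to prove (1) by a direct matrix computation in $\mathrm{SL}_2$ via the parametrizations $x_a = \pi \circ R_{L/K}(\xi_\alpha \circ y_+)$ and $\widetilde{a} = \pi \circ R_{L/K}(\xi_\alpha \circ z)$, and then to deduce (2) by reading off valuations. For (1), since the universal covering $\pi$ is a central isogeny, the commutator $[x_a(x), t]$ can be computed inside $R_{L/K}(\mathrm{SL}_{2,L})$: the central kernel of $\pi$ disappears when taking commutators. So first I would reduce to the case $t \in T^a(K)$, writing the relevant component of $t$ as $z(s) = \mathrm{diag}(s, s^{-1})$ with $s \in L^\times$, and noting $\alpha(t) = s^2$ under this parametrization (this is the standard normalization for $\mathrm{SL}_2$, where the unique positive root sends $\mathrm{diag}(s,s^{-1})$ to $s^2$). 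Then the computation is
$$
z(s)\, y_+(x)\, z(s)^{-1} = \begin{pmatrix} s & 0 \\ 0 & s^{-1}\end{pmatrix}\begin{pmatrix} 1 & x \\ 0 & 1\end{pmatrix}\begin{pmatrix} s^{-1} & 0 \\ 0 & s\end{pmatrix} = \begin{pmatrix} 1 & s^2 x \\ 0 & 1 \end{pmatrix} = y_+(\alpha(t) x),
$$
so that $y_+(x)^{-1}\, z(s)\, y_+(x)\, z(s)^{-1} = y_+(-x)\, y_+(\alpha(t)x) = y_+\big((\alpha(t)-1)x\big)$. Depending on the sign convention for $[g,h]$, this gives $[x_a(x),t] = x_a\big((1-\alpha(t))x\big)$ after applying $\pi \circ R_{L/K}(\xi_\alpha)$ and using that $\xi_\alpha$ intertwines $y_+$ with $\widetilde{x}_\alpha$; one must also check that a general $t \in T(K)$, not just $t \in T^a(K)$, acts on $U_a$ through the character $\alpha$, which is exactly the defining property of the root group $U_a$ (it is normalized by $T$ and $T$ acts on its Lie algebra through $\alpha$), so the same formula holds verbatim.

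For (2), I would combine (1) with the description of $T(K)_b^+$ from the rank $1$ discussion just above the lemma: for $t \in T(K)_b^+$ one has $\omega\big(\alpha(t) - 1\big) \geq 1$, because $\alpha(t) \in 1 + \mathfrak{m}_{L_\alpha}$ (indeed $\widetilde{a}: 1 + \mathfrak{m}_{L_\alpha} \to T^a(K)_b^+$ is an isomorphism, and $\alpha(\widetilde{a}(u)) = u^2 \in 1 + \mathfrak{m}_{L_\alpha}$ for $u \in 1 + \mathfrak{m}_{L_\alpha}$, while $T(K)_b^+ \to T^a(K)_b^+$ compatibly restricts through this parametrization). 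Then for $x$ with $\omega(x) \geq l$, formula (1) gives $\omega\big((1-\alpha(t))x\big) \geq l + 1$, i.e. $[x_a(x),t] \in U_{a,l+1}$, which is the inclusion $[T(K)_b^+, U_{a,l}] \leq U_{a,l+1}$. For the equality when $p \neq 2$: here $1 + \mathfrak{m}_{L_\alpha}$ contains elements $u$ with $\omega(u-1) = 1$ exactly, and then $u^2 - 1 = (u-1)(u+1)$ with $\omega(u+1) = 0$ since $p \neq 2$ (so $u + 1 \equiv 2 \not\equiv 0$ in the residue field), giving $\omega(\alpha(t)-1) = 1$ precisely; choosing $x$ with $\omega(x) = l$ then yields $\omega\big((1-\alpha(t))x\big) = l+1$, so the commutator generates $U_{a,l+1}$ modulo $U_{a,l+2}$, and since the $U_{a,\cdot}$ filtration on a non-multipliable root group is the valuation filtration on $L_\alpha$, which is generated in each consecutive step, one gets the full group $U_{a,l+1}$.

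The main obstacle I anticipate is bookkeeping rather than conceptual: correctly tracking the central isogeny $\pi$ and the Weil restriction $R_{L/K}$ so that the $\mathrm{SL}_2$-level identity $z(s) y_+(x) z(s)^{-1} = y_+(s^2 x)$ transfers to the stated identity $[x_a(x),t] = x_a((1-\alpha(t))x)$ with the correct form of $\alpha(t)$ (and verifying that $\alpha(t)$ appearing here is genuinely the evaluation of the absolute root $\alpha$ on $t$, consistently with the $*$-action conventions of Section \ref{sec:star:action}), together with getting the commutator sign convention consistent with the rest of the paper. The sharpness claim in (2) also rests on the hypothesis $p \neq 2$ in an essential way through the factor $u + 1$ being a unit; this is the only place where the residue characteristic intervenes, and it is worth flagging that for $p = 2$ the squaring map $u \mapsto u^2$ on $1 + \mathfrak{m}$ raises valuations, which is why equality can fail.
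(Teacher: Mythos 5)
Your proposal is correct and follows essentially the same route as the paper: part (1) is the direct observation that $t$ conjugates $x_a(x)$ to $x_a(\alpha(t)x)$ (whether checked matrix-wise in $\mathrm{SL}_2$ through $\pi\circ R_{L/K}(\xi_\alpha)$ or taken as the defining property of the parametrization), and part (2) is the valuation estimate $\omega(1-\alpha(t))\geq 1$ for $t\in T(K)_b^+$ together with the choice $t=\widetilde{a}(1+\varpi)$, where $p\neq 2$ guarantees $\omega(\alpha(t)-1)=1$ exactly. The only stylistic difference is at the very end: rather than arguing modulo $U_{a,l+2}$ and invoking generation "in each consecutive step", the paper (and the cleaner way to finish your own argument) simply solves $(1-\alpha(t))x=y$ for any $y$ with $\omega(y)\geq l+1$, getting $\omega(x)=\omega(y)-1\geq l$, so that every element of $U_{a,l+1}$ is literally a single commutator with an element of $U_{a,l}$.
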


\begin{proof}
(1) By definitions, $t x_a(x) t^{-1} = x_a\big( \alpha(t) x \big)$.
Hence $\big[x_a(x), t \big] = x_a\big(x\big) x_a\big( - \alpha(t) x\big) =x_a\Big( \big(1 - \alpha(t) \big) x \Big)$.

(2) Let $t \in T(K)_b^+$ and $u \in U_{a,l}$.
Write $u = x_a(x)$ with $x \in L_a$ such that $\omega(x) \geq l$.
Write $t = \widetilde{a}(1+z)$ with $z \in \mathfrak{m}_{L_\alpha}$ so that $\alpha(t) = (1 + z)^2$.
In particular, $\omega\big(1 - \alpha(t)\big) \geq 1$.
Applying (1), we get
$\varphi_a\big([t,u]\big) = \omega\big( (1 - \alpha(t)) x \big)$.
Hence $\varphi_a\big([t,u]\big) \geq \omega(x) + 1 \geq l+1$.
This gives the inclusion $\left[T(K)_b^+,U_{a,l}\right] \subset U_{a,l+1}$.

Conversely, let $y \in L_\alpha$ be such that $\omega(y) \geq l+1$.
Let $\varpi$ be a uniformizer of $\mathcal{O}_{L_a}$.
Assume $p \neq 2$.
We have $\omega(2 \varpi + \varpi^2)=1$.
Set $t = \widetilde{a}(1 + \varpi)$ and $x = (2 \varpi + \varpi^2)^{-1} y$.
Then $\big[t,x_a(x)\big] = x_a(y)$ and $t \in T(K)_b^+$.
Hence $\omega(x) = \omega(y) - 1 \geq l$.
\end{proof}

\begin{Lem}[{Commutation relation $[U_{-a,l},U_{a,l'}]$ in the reduced case}]~\\
\label{lem:commutator:relation:reduced:case:opposite:unipotent}
Normalize $\omega$ by $\Gamma_a = \Gamma_{L_a} = \mathbb{Z}$.
Let $l,l' \in \Gamma_a = \mathbb{Z}$ such that $l+l' \geq 1$.
Then, for any $x,y \in L_\alpha$ such that $\omega(x) \geq l'$ and $\omega(y) \geq l$, we have:
$$\Big[ x_{-a}(y) , x_a(x) \Big] = x_{-a}\left( \frac{xy^2}{1+xy}\right) \widetilde{a}\left(1 + xy\right) x_a\left(\frac{-x^2y}{1+xy}\right)$$

In particular, $[U_{-a,l},U_{a,l'}] \subset U_{-a,l+1} T(K)_b^+ U_{a,l'+1}$.
\end{Lem}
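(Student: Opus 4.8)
The plan is to lift the identity to the universal covering $\pi\colon R_{L/K}(\mathrm{SL}_{2,L})\to G^a$ and reduce it to a computation with $2\times 2$ matrices. By the construction of Section \ref{sec:parametrization}, each of the three maps $x_a$, $x_{-a}$, $\widetilde{a}$ is obtained by composing the group homomorphism $\Pi=\pi\circ R_{L/K}(\xi_\alpha)$ with, respectively, the standard parametrizations $y_+$, $y_-$ and $z$ of $\mathrm{SL}_{2,L}$; since $\pi$ and $\xi_\alpha$ are homomorphisms, any word identity among the matrices $y_+(x_i),y_-(y_j),z(t_k)$ in $\mathrm{SL}_2(L)$ is carried by $\Pi$ to the same word in $x_a(x_i),x_{-a}(y_j),\widetilde{a}(t_k)$. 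Hence it suffices to establish in $\mathrm{SL}_2(L)$ the equality
$$y_-(y)\,y_+(x)\,y_-(y)^{-1}\,y_+(x)^{-1}=y_-\!\Big(\tfrac{xy^2}{1+xy}\Big)\,z(1+xy)\,y_+\!\Big(\tfrac{-x^2y}{1+xy}\Big).$$
This formula is meaningful because $1+xy\in L^\times$: from $\omega(x)\geq l'$, $\omega(y)\geq l$ and $l+l'\geq 1$ one gets $\omega(xy)\geq 1>0$, whence $\omega(1+xy)=0$.

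The verification is a direct matrix computation. Using $y_+(x)=\left(\begin{smallmatrix}1&x\\0&1\end{smallmatrix}\right)$, $y_-(y)=\left(\begin{smallmatrix}1&0\\-y&1\end{smallmatrix}\right)$ and $z(t)=\left(\begin{smallmatrix}t&0\\0&t^{-1}\end{smallmatrix}\right)$, one finds
$$y_-(y)\,y_+(x)\,y_-(y)^{-1}\,y_+(x)^{-1}=\begin{pmatrix}1+xy&-x^2y\\-xy^2&1-xy+x^2y^2\end{pmatrix},$$
a matrix of determinant $1$ by the elementary identity $(1+xy)(1-xy+x^2y^2)=1+(xy)^3$. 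Writing the right-hand side as $y_-(v)\,z(t)\,y_+(u)=\left(\begin{smallmatrix}t&tu\\-vt&-vtu+t^{-1}\end{smallmatrix}\right)$ and comparing the first row and first column forces $t=1+xy$, $u=-x^2y/(1+xy)$ and $v=xy^2/(1+xy)$; with these values the $(2,2)$-entry is $-vtu+t^{-1}=\big(1+(xy)^3\big)/(1+xy)=1-xy+x^2y^2$, so the two matrices coincide. Applying $\Pi$ transports this to the asserted commutation relation in $G^a(K)\subseteq G(K)$.

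For the final inclusion, read the three factors off the formula. The first factor satisfies $\varphi_{-a}\big(x_{-a}(xy^2/(1+xy))\big)=\omega(x)+2\omega(y)\geq l'+2l\geq l+1$ and the last satisfies $\varphi_a\big(x_a(-x^2y/(1+xy))\big)=2\omega(x)+\omega(y)\geq 2l'+l\geq l'+1$, both using $l+l'\geq 1$; moreover $1+xy\in 1+\mathfrak{m}_{L_a}$, so $\widetilde{a}(1+xy)\in T^a(K)_b^+\subseteq T(K)_b^+$ since $\widetilde{a}\colon 1+\mathfrak{m}_{L_a}\to T^a(K)_b^+$ is an isomorphism (recalled before the lemma). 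This yields $[U_{-a,l},U_{a,l'}]\subseteq U_{-a,l+1}\,T(K)_b^+\,U_{a,l'+1}$. No serious obstacle is expected: the only points needing care are making precise that the three parametrizations factor through the single group homomorphism $\Pi$ (so the identity can be checked upstairs in $\mathrm{SL}_2(L)$) and keeping the commutator convention and the signs in the matrices consistent.
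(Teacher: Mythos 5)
Your proposal is correct and follows essentially the same route as the paper: establish the identity as a $2\times 2$ matrix computation in $\mathrm{SL}_2(L_a)$ via the parametrizations $y_\pm$, $z$, push it forward through the covering map to get the relation among $x_{\pm a}$, $\widetilde{a}$, and then read off the valuations of the three factors using $l+l'\geq 1$ and $\omega(1+xy)=0$. The only difference is presentational: you verify the matrix identity entry by entry, whereas the paper simply states it.
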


\begin{proof}
We have $\omega(xy) = \omega(x) + \omega(y)> 0$, hence $xy \in \mathfrak{m}_{L_a}$.
Thus, $1 + xy \in \mathcal{O}_{L_a}^\times$ and in $\mathrm{SL}_2(L_a)$, we have:
$$\left[ \begin{pmatrix}1&0\\-y&1\end{pmatrix} , \begin{pmatrix}1&x\\0&1\end{pmatrix} \right] = 
\begin{pmatrix}1&0\\-\frac{xy^2}{1+xy}&1\end{pmatrix} 
\begin{pmatrix} 1 + xy & 0\\0& \frac{1}{1+xy}\end{pmatrix}
\begin{pmatrix} 1 & \frac{-x^2y}{1+xy}\\0&1\end{pmatrix}
$$
Applying $\pi$ to this equality, we get the desired equality.

We have $1 + xy \in 1 + \mathfrak{m}_{L_a}$,
hence $\widetilde{a}(1-xy) \in T(K)_b^+$.
Moreover,
$\omega\left( \frac{xy^2}{1+xy} \right) = \omega(x) + 2 \omega(y) \geq 1 + \omega(y)$ and $\omega\left( \frac{x^2y}{1+xy} \right) = 2 \omega(x) + \omega(y) \geq 1 + \omega(x)$.
Hence $x_{-a}\left( \frac{xy^2}{1+xy}\right) \in U_{-a, l+1}$ and $x_a\left(\frac{-x^2y}{1+xy}\right) \in U_{a,l'+1}$.
\end{proof}

\begin{Prop}
\label{prop:frattini:computation:reduced:case}
Assume that $p\neq 2$ and $\Gamma_a = \Gamma_{L_a} = \mathbb{Z}$.
Let $l \in \mathbb{Z} = \Gamma_a$.
Let $H$ be a compact open subgroup of $G^a(K)$ containing $U_{a,l}$, $T^a(K)_b^+$ and $U_{-a,-l+1}$.

Then the group $H^p [H,H]$ contains the subgroups $U_{a,l+1}$, $U_{-a, -l+2}$ and $T^a(K)_b^+$.

Moreover, in the case of equal characteristic $\mathrm{char}(K) = p$, we have the inclusion $H^p \subset [H,H]$.
\end{Prop}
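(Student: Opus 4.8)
The plan is to combine the two commutation lemmas with a direct computation of $p$-powers, all carried out inside $\mathrm{SL}_2(L_a)$ via the isogeny $\pi$. First I would handle the torus part: pick a uniformizer $\varpi$ of $\mathcal{O}_{L_a}$ and note that, since $p \neq 2$, $\omega(2\varpi + \varpi^2) = 1$, so $\widetilde a(1+\varpi)^2 = \widetilde a\big((1+\varpi)^2\big) = \widetilde a\big(1 + (2\varpi + \varpi^2)\big)$ lies in $T^a(K)_b^+$ and generates it modulo $T^a(K)_{b,1}^+$-type refinements. More simply, using that $\widetilde a : 1 + \mathfrak{m}_{L_a} \to T^a(K)_b^+$ is an isomorphism and $x \mapsto x^2$ is an automorphism of the pro-$p$ group $1 + \mathfrak{m}_{L_a}$ when $p \neq 2$, every element of $T^a(K)_b^+$ is a square of an element of $T^a(K)_b^+ \subset H$; hence $T^a(K)_b^+ \subset H^2 \subset H^p[H,H]$ (for $p = 2$ one would instead get it from commutators, but that case is excluded).

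Next I would extract the positive root group. By Lemma~\ref{lem:commutator:relation:reduced:case:torus:unipotent}(2), applied with this $l$, we have $[T^a(K)_b^+, U_{a,l}] = U_{a,l+1}$ since $p \neq 2$, and both $T^a(K)_b^+$ and $U_{a,l}$ are contained in $H$; therefore $U_{a,l+1} \subset [H,H] \subset H^p[H,H]$. Symmetrically, since $H$ also contains $U_{-a,-l+1}$ and $T^a(K)_b^+$, the same lemma applied to the root $-a$ with value $-l+1$ gives $[T^a(K)_b^+, U_{-a,-l+1}] = U_{-a,-l+2}$, whence $U_{-a,-l+2} \subset [H,H]$. (Here I use that $\widetilde{-a}$ parametrizes the same torus $T^a$, up to inversion, so $T^a(K)_b^+$ plays the symmetric role; alternatively one reruns the matrix computation of Lemma~\ref{lem:commutator:relation:reduced:case:torus:unipotent}(2) with lower-triangular unipotents.) This establishes all three asserted inclusions without needing Lemma~\ref{lem:commutator:relation:reduced:case:opposite:unipotent} at all — though that lemma could be used instead, taking $x \in U_{a,l'}$ with $l' = -l+1$ and $y \in U_{-a,-l+1}$ so that $l + l' \geq 1$ fails by one; so in fact the torus-commutator route is the clean one and I would present that.

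For the equal-characteristic statement $H^p \subset [H,H]$, the point is that in characteristic $p$ the root-group parametrizations are additive and the $p$-power map behaves well: for $u = x_a(x) \in U_a$ one computes directly in $\mathrm{SL}_2$ that $u^p = x_a(px) = x_a(0) = \mathbf 1$ since $\mathrm{char}(L_a) = p$, and likewise $x_{-a}(y)^p = \mathbf 1$; for a torus element $\widetilde a(s)$ with $s \in 1 + \mathfrak{m}_{L_a}$ one has $\widetilde a(s)^p = \widetilde a(s^p)$, and $s^p \in 1 + \mathfrak{m}_{L_a}^{(p)}$ still lies in $T^a(K)_b^+$, but one needs it to lie in $[H,H]$; here I would invoke that in characteristic $p$ the Frobenius $s \mapsto s^p$ on $1 + \mathfrak{m}_{L_a}$ has image inside the subgroup generated by commutators of $H$ because $T^a(K)_b^+$ itself is already shown to be $[H,H]$-captured — more carefully, one shows $T^a(K)_b^+ \subset [H,H]$ directly in this case by writing a suitable element as $[\widetilde a(1+\varpi), x_a(x)]$-type products and noting the semisimple factor $\widetilde a(1+xy)$ appearing in Lemma~\ref{lem:commutator:relation:reduced:case:opposite:unipotent} ranges over enough of $T^a(K)_b^+$. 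Since a general element of $H$ need not be one of these three types, the rigorous argument passes to $H/\!\operatorname{Frat}(H)$: the claim $H^p \subset [H,H]$ is equivalent to $H/[H,H]$ having exponent dividing... no — rather, it suffices to check that the image of the $p$-power map in the abelianization $H^{\mathrm{ab}}$ is trivial, and $H^{\mathrm{ab}}$ is generated by images of $U_{a,l}$, $U_{-a,-l+1}$, $T^a(K)_b^+$ whose $p$-powers we have just controlled.

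The main obstacle I expect is precisely this last point: making the passage from "the three distinguished generating subgroups have $p$-powers in $[H,H]$" to "$H^p \subset [H,H]$" for an arbitrary compact open $H$ that may be strictly larger than $\langle U_{a,l}, T^a(K)_b^+, U_{-a,-l+1}\rangle$. One cannot simply say $H^p$ is generated by $p$-powers of generators (that is false for nonabelian groups), so the argument must go through $H^{\mathrm{ab}} = H/\overline{[H,H]}$, show this is a commutative profinite group killed by $p$ in char $p$, and identify its generators; getting the generation statement for $H^{\mathrm{ab}}$ requires knowing that $H$ is generated by $T^a(K)_b^+$ together with the two filtered root groups it contains, which in turn uses the Iwahori-type decomposition for $G^a(K)$ — this is where some care with the hypotheses on $H$ is needed.
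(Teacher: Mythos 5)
Your step for the two root groups is fine: since the equality in Lemma~\ref{lem:commutator:relation:reduced:case:torus:unipotent}(2) is proved by exhibiting each element of $U_{a,l+1}$ as a single commutator $[\widetilde{a}(1+\varpi),x_a(x)]$ with $x_a(x)\in U_{a,l}$, and both factors lie in $H$, you correctly get $U_{a,l+1}\subset[H,H]$ and, applying the same lemma to $-a$ at level $-l+1$, also $U_{-a,-l+2}\subset[H,H]$. The genuine gap is the torus. Your argument ``every element of $T^a(K)_b^+$ is a square of an element of $T^a(K)_b^+\subset H$, hence $T^a(K)_b^+\subset H^2\subset H^p[H,H]$'' is invalid for odd $p$: $H^2$ is \emph{not} contained in $H^p[H,H]$, and in the quotient $H/\overline{H^p[H,H]}$, which is elementary abelian of exponent $p$, squaring is a bijection, so knowing an element is a square gives no information about its image there. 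Producing semisimple elements inside $H^p[H,H]$ is exactly the nontrivial content of the proposition; the paper does it with explicit $\mathrm{SL}_2$ identities that exhibit $\widetilde{a}(t^4)$ as a product of two commutators of elements of $H$, and then combines $\widetilde{a}(t^4)\in[H,H]$ with $\widetilde{a}(t^p)\in H^p$ and $\gcd(4,p)=1$ to get $\widetilde{a}(t)\in H^p[H,H]$. Nothing equivalent appears in your main argument. (Your passing remark about the semisimple factor in Lemma~\ref{lem:commutator:relation:reduced:case:opposite:unipotent} could in fact be turned into a correct, even characteristic-free, proof: once $U_{a,l+1},U_{-a,-l+2}\subset[H,H]$ are known, every $\widetilde{a}(1+w)$ with $w\in\mathfrak{m}_{L_a}$ equals $x_{-a}(A)^{-1}\,[x_{-a}(y),x_a(x)]\,x_a(B)^{-1}$ with $\omega(x)=l$, $\omega(y)=\omega(w)-l\geq -l+1$, $x_{-a}(A)\in U_{-a,-l+2}$, $x_a(B)\in U_{a,l+1}$, so all factors lie in $[H,H]$ — but you do not carry this out, and the route you actually present for the torus is the erroneous one.)

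The equal-characteristic claim $H^p\subset[H,H]$ is also left unproved in your proposal: you correctly observe that one cannot argue via $p$-powers of generators and that some decomposition of an arbitrary $g\in H$ is needed, but you stop at naming the obstacle. The paper's proof at this point writes $g=x_{-a}(v)\,\widetilde{a}(t)\,x_a(u)$ and computes in the abelianization $H/[H,H]$, where the unipotent factors have trivial $p$-th power in characteristic $p$ and $\widetilde{a}(t)^p=\widetilde{a}(t^p)$ lies in $[H,H]$ by the torus step; your sketch gestures at this but, as written, neither establishes the needed decomposition nor the membership $\widetilde{a}(t^p)\in[H,H]$ (which again depends on the torus step you have not proved). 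So the proposal establishes the two unipotent inclusions by a legitimate shortcut, but the inclusion $T^a(K)_b^+\subset H^p[H,H]$ and the final assertion $H^p\subset[H,H]$ are not proved.
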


\begin{proof}
Denote by $\varpi$ a uniformizer of $L_a$.
We firstly show that $T^a(K)_b^+$ is contained in $H^p[H,H]$.
For any $t \in 1 + \mathfrak{m}_{L_a},\ t \neq 1$ and any $u \in L_a$, one can check the following equalities inside $\mathrm{SL}_2$:

\begin{equation}\label{eqn1}
\left[
\begin{pmatrix} t & \frac{t u}{1-t^2}\\0&\frac{1}{t} \end{pmatrix} , 
\begin{pmatrix} 1 & 0 \\ -\frac{(1-t^2)^2}{t^2 u} & 1 \end{pmatrix} 
\right] =
\begin{pmatrix} t^2 & u \\ 0 & \frac{1}{t^2} \end{pmatrix}
\end{equation}

\begin{equation}\label{eqn2}
\left[
\begin{pmatrix} 1 & \frac{t^2-1}{t^2 v}\\0&1 \end{pmatrix} , 
\begin{pmatrix} \frac{1}{t} & 0 \\ -\frac{t v}{(t^2-1)} & t \end{pmatrix} 
\right] =
\begin{pmatrix} t^2 & 0 \\ v & \frac{1}{t^2} \end{pmatrix}
\end{equation}

We have $\omega(1+t) = \omega(2+s) = 0$ because $p \neq 2$.
Hence, for any $u \in \varpi^{l+1} \mathcal{O}_{L_a}$ and for any $t-1 = s \in \varpi \mathcal{O}_{L_a}$, we have the following:
$$\begin{array}{rcl}
\omega\left(\frac{t u}{1-t^2}\right) & = & \omega(t) + \omega(u) - \omega(1+t) - \omega(1-t) \\
& = & \omega(u) - \omega(s)\\
\omega\left(-\frac{(1-t^2)^2}{t^2 u}\right) & = & 2\omega(s) - \omega(u)\end{array}$$
Moreover, we have:
\begin{equation} \label{eqn3}
\begin{pmatrix}t^2&u\\0&t^{-2}\end{pmatrix}
\begin{pmatrix}t^2&-t^{-4}u\\0&t^{-2}\end{pmatrix} =
\begin{pmatrix}t^4&0\\0&t^{-4}\end{pmatrix}
\end{equation}

Let $t = 1 + s \in 1 + \varpi \mathcal{O}_L$.
Set $u = \varpi^{l+\omega(s)}$ so that $\omega\left(\frac{t u}{1-t^2}\right) \geq l$ and $\omega\left(-\frac{(1-t^2)^2}{t^2 u}\right) \geq -l + 1$.
Hence, $\pi \begin{pmatrix} t & \frac{t u}{1-t^2}\\0&\frac{1}{t} \end{pmatrix}  \in H$ and $\pi \begin{pmatrix} 1 & 0 \\ -\frac{(1-t^2)^2}{t^2 u} & 1 \end{pmatrix} \in H$.
Thus, according to the equation (\ref{eqn1}), we get $\pi \begin{pmatrix}t^2&u\\0&t^{-2}\end{pmatrix} \in [H,H]$ .
Similarly, substituting $u$ by $-t^4 u$, we get $\pi \begin{pmatrix}t^2&-t^{-4}u\\0&t^{-2}\end{pmatrix} \in [H,H]$.
As a consequence, for any $t \in 1 + \varpi \mathcal{O}_L$, we have $\widetilde{a}(t^4) \in [H,H]$ according to the equation (\ref{eqn3}).

Moreover, the elements $\widetilde{a}(t^p)$ where $t \in 1 + \mathfrak{m}_L$ are in $H^p$ because we assumed $H \supset T(K)_b^+$.
Since $4$ and $p$ are coprime, we have $\widetilde{a}(t) \in H^p [H,H]$.

In the case of equal characteristic $\mathrm{char}(K) = p >2$,
the group homomorphism $\left\{\begin{array}{ccc}
	1 + \mathfrak{m}_L & \rightarrow & 1 + \mathfrak{m}_L\\
	t						&	\mapsto		& t^2
\end{array}\right.$ is surjective.
Hence $\widetilde{a}(t)\in [H,H]$.

As a consequence, the elements:
$$x_a(u) = \widetilde{a}(t^{-2}) \cdot \left[\widetilde{a}(t) x_a\left(\frac{t^2 u}{1-t^2}\right),x_{-a}\left(-\frac{(1-t^2)^2}{t^4 u}\right)\right]$$
where $u \in \varpi^{l+1} \mathcal{O}_L$ and $t = 1 + \varpi^{\omega(u)}$, are in $H^p [H,H]$ (resp. in $[H,H]$ if $\mathrm{char}(K) = p$).
Hence, the group $H^p [H,H]$ (resp. $[H,H]$) contains $U_{a,l+1}$.

Similarly, it contains $U_{-a, (-l+1)+1} = U_{a,-l+2}$, using the equation (\ref{eqn2}) instead of (\ref{eqn1}).

It remains to prove that $H^p \subset [H,H]$ when $\mathrm{char}(K) = p > 2$.
Let $g \in H$ and write $g=x_{-a}(v) \widetilde{a}(t) x_a(u)$.
Consider the quotient morphism $\pi : H \rightarrow H / [H,H]$.
Then $\displaystyle \pi(g^p) = \pi(g)^p =
\Big(\pi\big(x_{-a}(v)\big) \pi\big(\widetilde{a}(t)\big) \pi\big(x_a(u)\big) \Big)^p$.
Since $H/[H,H]$ is commutative, we have $\pi(g^p) = \pi\big(x_{-a}(v)\big)^p \pi\big(\widetilde{a}(t)\big)^p \pi\big(x_a(u)\big)^p = \pi\big(x_{-a}(pv)\big) \pi\big(\widetilde{a}(t^p)\big) \pi\big(x_a(pu)\big) = \pi\big(\widetilde{a}(t^p)\big) = 1$ because we got $\widetilde{a}(t^p) \in [H,H]$.
Hence $g^p \in [H,H]$.
\end{proof}

\subsection{The non-reduced case}
\label{sec:non:reduced:case}

Let $a\in \Phi$ be a multipliable root of $\Phi$ arising from an absolute root $\alpha \in \widetilde{\Phi}$.

In this paragraph, we denote by $L = L_{\alpha} = L_a$ and $L_2 = L_{\alpha+{^{\tau}}\alpha} = L_{2a}$, where $\tau = \tau_a$ is the non trivial element of $\mathrm{Gal}(L/L_2)$.
In order to simplify notations, for any $x \in L$, we denote ${^\tau}x = \tau(x)$.
Denote by $h$ the $L_2$-Hermitian form:
$$\begin{array}{cccc}h : & L \times L \times L & \rightarrow & L \\ & (x_{-1},x_0,x_1) & \mapsto & \sum_{i=-1}^1 x_{-i} {^\tau}x_i\end{array}$$

Recall that the universal covering is a central $K$-isogeny $\pi : R_{L/k}(SU(h)) \rightarrow G^a$, from which we compute, inside $\mathrm{SU}(h)$, relations between elements of $U_a$, $U_{-a}$ and $T$ thanks to parametrizations $x_a$, $x_{-a}$ and $\widetilde{a}$.

Denote by $T^a = T \cap G^a$ and $T^a(K)_b^+ = T(K)_b^+ \cap T^a(K)$, so that $T^a(K)_b^+ = \widetilde{a}\left(1+\mathfrak{m}_{L_a}\right)$.
For any $l \in \mathbb{N}^*$, set $T^a(K)_b^l = \widetilde{a}\left(1+\mathfrak{m}_{L_a}^l\right)$.
Normalize $\omega$ by $\Gamma_a = \Gamma_{-a} = \frac{1}{2}\mathbb{Z}$, so that $\Gamma_L = \mathbb{Z}$ and $\Gamma_{L_2}=2\mathbb{Z}\text{ or }\mathbb{Z}$ depending on whether the extension $L/L_2$ is ramified or not.
The analogue to Proposition \ref{prop:frattini:computation:reduced:case}, in the non-reduced case, is the following:

\begin{Prop}
\label{prop:non:reduced:frattini:rank:one}
Assume that $p \geq 5$.
Let $l \in \Gamma_a = \frac{1}{2}\mathbb{Z}$.
Let $H$ be a compact open subgroup of $G(K)$ containing the following subgroups $T(K)_b^+$, $U_{-a,-l}$ and $U_{a,l+\frac{1}{2}}$.

If $L/L_2$ is not ramified, then there exists $l'' \in \mathbb{N}^*$ such that $H^p [H,H]$ contains the following subgroups $T^a(K)_b^{l''}$, $U_{-a,-l+1}$ and $U_{a,l+\frac{3}{2}}$.

If $L/L_2$ is ramified, then there exists $l'' \in \mathbb{N}^*$ such that $H^p [H,H]$ contains the following subgroups $T^a(K)_b^{l''}$, $U_{-a,-l+\frac{3}{2}}$ and $U_{a,l+2}$.

Precisely, up to exchanging $a$ with $-a$, we can assume $l \in \Gamma'_a = \mathbb{Z}$ and, in this case, we get $l'' = 3 + \varepsilon$
where
$$\varepsilon =
 \left\{\begin{array}{cl}
 1 & \text{ if } L/L_2 \text{ is ramified and } l \in 2 \mathbb{Z} + 1 = \Gamma_{L} \setminus \Gamma_{L_2}\\
 0 & \text{ otherwise}
 \end{array}\right.
$$

Moreover, when $\mathrm{char}(K) = p > 0$, we have $H^p \subset [H,H]$.
\end{Prop}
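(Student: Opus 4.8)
The plan is to mirror the proof of Proposition~\ref{prop:frattini:computation:reduced:case}, carrying out every computation inside $\mathrm{SU}(h)\subset\mathrm{SL}_{3}$ via the central isogeny $\pi$ and the matrix realizations of $z,y_{+},y_{-}$ from Section~\ref{sec:parametrization}. First I would normalize: replacing $a$ by $-a$ and $l$ by $-l-\tfrac12$ interchanges the two subgroups $U_{-a,-l}$ and $U_{a,l+\frac12}$ and turns a half-integer value into an integer one, so we may assume $l\in\mathbb{Z}=\Gamma'_{a}$ (which is why the final $l''$ is stated in terms of $\Gamma'_a$).

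\emph{Torus elements.} As in the reduced case, the core is an exact $\mathrm{SU}(h)$-identity of the shape
$$\left[\,\widetilde{a}(s)\,x_a(u,v),\ x_{-a}(u',v')\,\right]\;=\;\widetilde{a}(s^{k})\,x_a(u'',v''),$$
where $u,v,u',v'$ are prescribed rational functions of $s$ and a free parameter, chosen so that all mixed contributions telescope exactly to a pure torus factor $\widetilde{a}(s^{k})$ times a unipotent tail of controlled valuation; here $k$ is a small integer whose only possible prime divisors are $2$ and $3$, hence coprime to $p$ since $p\geq5$. Composing two such identities with the free parameter tuned so that the tails cancel (the analogue of equation~(\ref{eqn3})) yields $\widetilde{a}(s^{2k})\in[H,H]$; since $s\mapsto s^{2k}$ is a bijection of the pro-$p$ group $1+\mathfrak{m}_L$ and $\widetilde{a}(s^p)\in H^p$ (because $T(K)_b^+\subset H$), one gets $T^a(K)_b^{l''}\subset H^p[H,H]$. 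The actual work here is the valuation bookkeeping determining the optimal $l''$: one computes the least $\omega_L(s-1)$ for which $\widetilde{a}(s)\,x_a(u,v)$ can be placed in $T(K)_b^+\cdot U_{a,l+\frac12}$ while $x_{-a}(u',v')$ lies in $U_{-a,-l}$, using $\Gamma_a=\frac12\mathbb{Z}$, the Hermitian relation $u\,{^\tau}u=v+{^\tau}v$, and the values of $\omega({L^0}^\times)$, $\Gamma_{2a}$, $\Gamma_{L_2}$ from Lemma~\ref{lem:sets:of:values:multipliable:root}. One finds $\omega_L(s-1)=1$ works in the unramified case, while in the ramified case the best achievable precision depends on whether $l\in\Gamma_{L_2}=2\mathbb{Z}$, which is exactly where the correction $\varepsilon$, hence $l''=3+\varepsilon$, comes from.

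\emph{Unipotent terms.} Once $T^a(K)_b^{l''}\subset H^p[H,H]$, I would use the rearranged identity analogous to the last display in the proof of Proposition~\ref{prop:frattini:computation:reduced:case},
$$x_a(u'',v'')\;=\;\widetilde{a}(s)^{-k}\cdot\left[\,\widetilde{a}(s^{k})\,x_a(u,v),\ x_{-a}(u',v')\,\right],$$
which writes a general element of $U_a$ as a torus element at level $l''$ (in $H^p[H,H]$ by the previous step) times a commutator of two elements of $H$. Putting those two commutator factors in $U_{a,l+\frac12}\cdot T(K)_b^+$ and $U_{-a,-l}$ and reading how far the filtration moves---using the multipliable-root relation $[T,U_a]$ and the fact that $\widetilde{a}(s)$ scales the $x_{2a}$-component of $U_a$ by the norm $s\,{^\tau}s$ and the short component by $s^{-1}({^\tau}s)^{2}$---should give $U_{a,l+\frac32}$ and, symmetrically, $U_{-a,-l+1}$ in the unramified case, and $U_{a,l+2}$, $U_{-a,-l+\frac32}$ in the ramified case; the half-step jumps in the ramified case reflect that the short and long layers of $U_a$ sit on interleaved value sets ($\Gamma_a=\frac12\mathbb{Z}$ while $\Gamma_{2a}=1+2\mathbb{Z}$), so a commutator that fixes the $2a$-component can still move $\varphi_a$ by $\tfrac12$.

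\emph{Equal characteristic, and the main obstacle.} If $\mathrm{char}(K)=p>0$ then $s\mapsto s^{2}$ is already an automorphism of $1+\mathfrak{m}_L$ (as $p\neq2$), so running the torus step with squares in place of $p$-th powers gives $T^a(K)_b^{l''}\subset[H,H]$ outright; then, writing $g\in H$ as $g=x_{-a}(u',v')\,\widetilde{a}(s)\,x_a(u,v)$ and passing to the abelian quotient $H/[H,H]$, the identity $x_a(u,v)^p=x_a\big(pu,\ pv+\tbinom{p}{2}u\,{^\tau}u\big)=\mathbf{1}$ valid in characteristic $p$ (since $p\mid\binom p2$ for odd $p$) and its analogue for $U_{-a}$ give $g^p\equiv\widetilde{a}(s^p)\pmod{[H,H]}$ with $\widetilde{a}(s^p)\in T^a(K)_b^{l''}\subset[H,H]$ (as $p>l''$), whence $H^p\subset[H,H]$. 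I expect the main difficulty to lie entirely in the two middle steps: establishing the precise $\mathrm{SU}(h)$-matrix identities and then forcing the valuation estimates through them while respecting the Hermitian relation and the interplay of $\Gamma_L$, $\Gamma_{L_2}$, $\omega({L^0}^\times)$ and the parity of $l$---the genuine case split in the ramified situation according to whether $l\in\Gamma_{L_2}$ being what makes the bookkeeping, rather than the algebra, delicate.
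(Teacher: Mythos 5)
There is a genuine gap, and it sits exactly at the heart of your torus step. You assert the existence of an exact $\mathrm{SU}(h)$-identity $[\widetilde{a}(s)x_a(u,v),\,x_{-a}(u',v')]=\widetilde{a}(s^{k})\,x_a(u'',v'')$ — no $U_{-a}$-leftover, torus part exactly a power of $s$, all factors inside the available groups $T(K)_b^+\,U_{a,l+\frac12}$ and $U_{-a,-l}$, and valid already for $\omega_L(s-1)=1$ — and then cancel the tail by composing two such identities, as in equations (\ref{eqn1})--(\ref{eqn3}) of the reduced case. None of this is verified, and it is precisely where the difficulty lies: in $\mathrm{SU}(h)$ the torus part of a commutator of opposite unipotents is $\widetilde{a}(1-{^\tau}ux+vy)$ (Lemma \ref{lem:commutation:non:reduced:opposite:root:groups}), so prescribing it to be $s^{k}$ is the inversion problem ${^\tau}ux-vy=1-s^{k}$ subject to the Hermitian constraints $u{^\tau}u=v+{^\tau}v$, $x{^\tau}x=y+{^\tau}y$. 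This is a quadratic problem whose solvability with factors at filtration levels $l_0,l'_0$ requires $\omega(1-s^{k})\geq\max(1+2\varepsilon,\varepsilon+2(l_0+l'_0))$ (Lemmas \ref{lem:minimal:trace:uniformizer}, \ref{lem:parametrisation:optimized:unipotent}, \ref{lem:square:root}, \ref{lem:inversion:relation:non:reduced:torus}); moreover even when it is solvable the commutator still carries unipotent factors $x_{-a}(X,Y)$, $x_a(U,V)$ on \emph{both} sides, which cannot be tuned away by the free parameter but must be absorbed into $[H,H]$. Under the hypotheses of the proposition they can only be absorbed after one has already proved the unipotent inclusions, which forces $l_0=l+1$, $l'_0=-l+\frac12$ and hence $l''=3+\varepsilon$; your claim that $\omega_L(s-1)=1$ \gts{works} would give $T^a(K)_b^+\subset H^p[H,H]$, i.e. $l''=1$ in the unramified case, which the paper obtains only under the strictly stronger hypotheses of Proposition \ref{prop:improvement:upper:bound:bounded:torus:non:reduced:case}. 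This is a strong signal that the $\mathrm{SL}_2$-style telescoping does not transfer at the valuations you claim, and your sketch never confronts the Hermitian-constraint solvability that governs it.

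Relatedly, your order of steps is inverted relative to what the hypotheses support. The paper first obtains the unipotent inclusions $U_{a,l+\frac32}$ (resp. $U_{a,l+2}$) and $U_{-a,-l+1}$ (resp. $U_{-a,-l+\frac32}$) directly by inverting the torus--root-group commutation $[T(K)_b^+,U_{a,\cdot}]$ (second half of Lemma \ref{lem:commutation:non:reduced:torus:root:group}, resting on the good elements of Lemma \ref{lem:good:element:t}), and only then treats the torus, using those inclusions to swallow the two unipotent leftovers of the opposite-root-group commutator; the parameter $\varepsilon$ arises concretely from the trace defect $\omega(\mathrm{Tr}_{L/L_2}(u))=\omega(u)+\varepsilon$ in Lemma \ref{lem:parametrisation:optimized:unipotent}, not merely from \gts{whether $l\in\Gamma_{L_2}$}. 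Your scheme derives the unipotent inclusions \emph{from} the torus step, so once the rigged identities are in doubt the whole chain collapses. Your characteristic-$p$ argument ($x_a(u,v)^p=1$ and $(1+w)^p=1+w^p$ with $p\geq5\geq l''$) is fine and matches the paper, but it presupposes the torus step. To repair the proposal you would essentially have to prove the two commutation-inversion lemmas and the quadratic solvability lemma of Section \ref{sec:non:reduced:case}, at which point you are reproducing the paper's proof rather than shortcutting it.
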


\begin{Rq}
Since the maximal pro-$p$ subgroups are pairwise conjugated by \cite[1.2.1]{Loisel-maximaux}, by the choice of a maximal pro-$p$ subgroup corresponding to a suitable alcove, we can assume later that $\varepsilon = 0$.
Such a choice will be done in Section \ref{sec:description:alcove}.
Moreover, because of the lack of rigidity, computations in the rank $1$ case gives large inequalities for the commutator group.
In fact, when the rank is $\geq 2$, we can make a stronger assumption, to get a more precise computation of the Frattini subgroup, as stated in Proposition \ref{prop:improvement:upper:bound:bounded:torus:non:reduced:case}.
\end{Rq}

In order to simplify notation, denote by $H(L,L_2)$ the rational points of the $K$-group $H(L,L_2)$, instead of $H(L,L_2)(K)$.
For any $(x,y),(u,v) \in H(L,L_2)$ and for any $t \in 1 + \varpi_L \mathcal{O}_L$,
up to precomposing by $\pi$, we have the following matrix realization:

$$
\widetilde{a}(t) = \begin{pmatrix}t&0&0\\0&t^{-1} {^\tau}t&0\\0&0&{^\tau}t^{-1}\end{pmatrix} $$

$\displaystyle x_a(x,y) = \begin{pmatrix}1 & -{^\tau}x & - y \\ 0 & 1 & x \\ 0 & 0 & 1 \end{pmatrix} $
\hfill $\displaystyle x_{-a}(u,v) = \begin{pmatrix}1 & 0 & 0 \\ u & 1 & 0 \\ -v & -{^\tau}u & 1 \end{pmatrix} $

We want to obtain some unipotent elements, and some semisimple elements, by multiplying suitable commutators and $p$-powers of elements in $H$, as we have done, previously, in the reduced case.
In particular, in Lemma \ref{lem:commutation:non:reduced:torus:root:group} we bound explicitely, thanks to these parametrizations, the group generated by commutators of toric elements and unipotent elements in a given root group.
In Lemma \ref{lem:commutation:non:reduced:opposite:root:groups}, we provide an explicit formula for the commutators of unipotent elements taken in opposite root groups, in terms of the parametrizations.
Finally, thanks to Lemma \ref{lem:inversion:relation:non:reduced:torus}, we invert such a commutation relation.
At last, we prove Proposition \ref{prop:non:reduced:frattini:rank:one} thanks to these lemmas.

The following lemma provides the existence of elements with minimal valuation, used in the parametrization of coroots.

\begin{Lem}
\label{lem:good:element:t}
Let $L / K$ be a quadratic Galois extension of local fields and $\tau \in \mathrm{Gal}(L/K)$ be the non-trivial element.
Let $\varpi_L$ be a uniformizer of the local ring $\mathcal{O}_L$.
Denote by $p$ the residue characteristic and assume that $p \neq 2$.
\begin{description}
\item[(1)]
For any $\forall t \in 1 + \mathfrak{m}_L$, we have $\omega\left(t^2 - {^\tau}t \right) \geq \omega\left( \varpi_L \right)$
and $\omega\left(t {^\tau}t - 1 \right) \geq \omega\left( \varpi_L \right)$.

\item[(2)]
If the extension $L/K$ is unramified, then there exists $t \in 1 + \mathfrak{m}_L$ such that $ \omega\left(t {^\tau}t - 1 \right) =\omega\left(t^2 - {^\tau}t \right) = \omega\left( \varpi_L \right)$.

\item[(3)]
If the extension $L/K$ is ramified, then for any $t \in 1 + \mathfrak{m}_L$, we have the inequality $\omega\left(t {^\tau}t - 1 \right) \geq 2 \omega\left( \varpi_L \right)$.
If $p \geq 5$, then there exists $t \in 1 + \mathfrak{m}_L$ such that $\omega\left(t {^\tau}t - 1 \right) = 2 \omega\left(t^2 - {^\tau}t \right) = 2 \omega\left( \varpi_L \right)$.

\end{description}
\end{Lem}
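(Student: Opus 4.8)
The plan is to set $s := t-1 \in \mathfrak{m}_L$ and reduce everything to elementary valuation estimates, after recording the two identities $t^2 - {}^{\tau}t = 2s - {}^{\tau}s + s^2$ and $t\,{}^{\tau}t - 1 = s + {}^{\tau}s + s\,{}^{\tau}s = \mathrm{Tr}_{L/K}(s) + N_{L/K}(s)$. Throughout I use that the valuation is Galois-invariant (uniqueness of its extension to $L$), so $\omega({}^{\tau}x) = \omega(x)$, and that $\omega(2) = 0$ since $p \neq 2$. Since all the conclusions involve only the value $\omega(\varpi_L)$, which does not depend on the chosen uniformizer, I am also free to pick a convenient uniformizer in parts (2) and (3). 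Part (1) is then immediate: in $t^2 - {}^{\tau}t$ the terms $2s$ and ${}^{\tau}s$ have valuation $\omega(s) \geq \omega(\varpi_L)$ and $s^2$ has valuation $2\omega(s)$; likewise in $t\,{}^{\tau}t - 1$ one has $\omega(\mathrm{Tr}_{L/K}(s)) \geq \omega(s) \geq \omega(\varpi_L)$ and $\omega(N_{L/K}(s)) = 2\omega(s)$, so both displayed quantities have valuation $\geq \omega(\varpi_L)$.

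For part (2), the extension being unramified, a uniformizer of $K$ is a uniformizer of $L$, so I take $\varpi_L \in K$, whence ${}^{\tau}\varpi_L = \varpi_L$. Writing $s = \varpi_L\varepsilon$ with $\varepsilon \in \mathcal{O}_L^{\times}$, the leading terms become $2s - {}^{\tau}s = \varpi_L(2\varepsilon - {}^{\tau}\varepsilon)$ and $s + {}^{\tau}s = \varpi_L(\varepsilon + {}^{\tau}\varepsilon)$, while $s^2$ and $N_{L/K}(s)$ have valuation $2\omega(\varpi_L)$; hence it suffices to choose $\varepsilon$ with $2\varepsilon - {}^{\tau}\varepsilon$ and $\varepsilon + {}^{\tau}\varepsilon$ both units. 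As $\tau$ induces on the residue field the nontrivial element $x \mapsto x^q$ of $\mathrm{Gal}(\kappa_L/\kappa_K)$ (where $q = |\kappa_K|$), this amounts to finding $\bar\varepsilon \in \kappa_L^{\times}$ with $\bar\varepsilon^{q-1} \notin \{2, -1\}$; since $x \mapsto x^{q-1}$ is $(q-1)$-to-one on $\kappa_L^{\times}$, the forbidden set has at most $2(q-1) < q^2 - 1 = |\kappa_L^{\times}|$ elements, so a valid $\bar\varepsilon$, hence $\varepsilon$ and $t = 1 + \varpi_L\varepsilon$, exists, and the stated equalities $\omega(t^2 - {}^{\tau}t) = \omega(t\,{}^{\tau}t - 1) = \omega(\varpi_L)$ follow from the computation above.

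For part (3), the first inequality is immediate since $t\,{}^{\tau}t - 1 = N_{L/K}(t) - 1 \in K$, so its valuation — positive by part (1) — is a positive element of $\omega(K^{\times}) = 2\omega(\varpi_L)\mathbb{Z}$, hence at least $2\omega(\varpi_L)$. For the existence statement I use that, $p$ being odd, a ramified quadratic extension is $K(\sqrt{\delta})$ with $\omega(\delta)$ odd; rescaling $\delta$ by a square I may assume $\omega(\delta) = 2\omega(\varpi_L)$ and take $\varpi_L = \sqrt{\delta}$, so $\varpi_L^2 = \delta \in K^{\times}$ and ${}^{\tau}\varpi_L = -\varpi_L$. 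With $t = 1 + \varpi_L$ one gets $t^2 - {}^{\tau}t = 3\varpi_L + \varpi_L^2$, of valuation $\omega(\varpi_L)$ because $p \neq 3$, and $t\,{}^{\tau}t - 1 = -\varpi_L^2$, of valuation $2\omega(\varpi_L)$, giving $\omega(t\,{}^{\tau}t - 1) = 2\omega(t^2 - {}^{\tau}t) = 2\omega(\varpi_L)$; since for a prime one has $p \geq 5 \iff p \notin \{2,3\}$, this is exactly what the hypothesis supplies. The only step needing a genuine (if short) argument is the residue-field non-vanishing in part (2), handled by the counting above; the rest is bookkeeping of valuations together with the standard description of ramified quadratic extensions in odd residue characteristic.
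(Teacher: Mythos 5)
Your proposal is correct, and parts (1) and the congruence argument for the first inequality of (3) are essentially the paper's own ($t\,{}^{\tau}t-1=N_{L/K}(t)-1\in K$ and $\omega(K^\times)=2\omega(\varpi_L)\mathbb{Z}$). Where you genuinely diverge is the existence statement in (3): the paper keeps an \emph{arbitrary} uniformizer, first shows $\omega(2\varpi_L-{}^{\tau}\varpi_L)=\omega(\varpi_L)$ by a triangle-inequality contradiction using $p\neq 3$, and then plays two candidates $t=1+\varpi_L$ and $t'=1+\varpi_L+\varpi_L{}^{\tau}\varpi_L$ against each other (using $p\neq 2$) to guarantee that at least one of them has $\omega(t\,{}^{\tau}t-1)=2\omega(\varpi_L)$; you instead use tameness ($p$ odd) to pick a distinguished uniformizer $\varpi_L=\sqrt{\delta}$ with ${}^{\tau}\varpi_L=-\varpi_L$, which is legitimate since the conclusion only involves the value $\omega(\varpi_L)$, and then everything collapses to the exact identities $t^2-{}^{\tau}t=3\varpi_L+\varpi_L^2$ and $t\,{}^{\tau}t-1=-\varpi_L^2$. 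Your route is shorter and fully explicit, but it imports the standard structure theorem for ramified quadratic extensions in odd residue characteristic (every such $L$ is $K(\sqrt{\pi_K})$ for some uniformizer $\pi_K$ of $K$), which you should justify or cite (e.g.\ via tame ramification), whereas the paper's two-candidate argument is self-contained and uniformizer-independent. In part (2) your residue-field counting ($\bar\varepsilon^{q-1}\notin\{2,-1\}$, forbidden set of size at most $2(q-1)<q^2-1$) is correct but more than is needed: the choice $\varepsilon=1$, i.e.\ the paper's $t=1+\varpi_L$ with $\varpi_L\in K$, already makes $2\varepsilon-{}^{\tau}\varepsilon\equiv 1$ and $\varepsilon+{}^{\tau}\varepsilon\equiv 2$ units because $p\neq 2$.
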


\begin{proof}
(1) Write $t = 1 + s$ with $\omega(s) \geq \omega(\varpi_L)$.
Then $\omega(t^2 - {^\tau}t) = \omega(2s + s^2 - {^\tau}s) \geq \omega(s)$
and $\omega(t {^\tau}t - 1) = \omega(s + {^\tau}s + s {^\tau}s) \geq \omega(s)$.

(2) If $L/K$ is unramified, one can choose a uniformizer $\varpi_L \in \mathcal{O}_L \cap K$.
Let $t = 1 + \varpi_L$, so that $t^2 - {^\tau}t = \varpi_L + \varpi_L^2 $.
Since $p \neq 2$, then $\omega(2) = 0$.
Hence $\omega\left(t {^\tau}t - 1\right) = \omega\left(2 \varpi_L + \varpi_L^2\right) = \omega\left(\varpi_L\right)$.

(3) If $L/K$ is ramified, the inequality $\omega\left(t {^\tau}t - 1 \right) \geq \omega\left( \varpi_L \right)$ is never an equality because $t {^\tau}t - 1 \in K$.
Consequently, $\omega\left(t {^\tau}t - 1 \right) \geq 2 \omega\left( \varpi_L \right)$.
Remark that $\omega\left( \varpi_L + {^\tau}\varpi_L \right) \geq 2 \omega \left( \varpi_L \right) = \omega \left( \varpi_L {^\tau} \varpi_L \right)$.
Define $t = 1 + \varpi_L$, so that $t^2 - {^\tau}t = 2 \varpi_L - {^\tau} \varpi_L + \varpi_L^2$.

By contradiction, if we had $\omega\left(2 \varpi_L - {^\tau} \varpi_L \right) \geq 2 \omega \left( \varpi_L \right)$,
then, by triangle inequality, we would get $\omega \left( 3 \varpi_L \right) \geq \min \Big( \omega \left( \varpi_L + {^\tau} \varpi_L \right) , \omega \left( 2 \varpi_L - {^\tau}\varpi_L \right) \Big) \geq 2 \omega \left( \varpi_L \right)$.
When $p \neq 3$, we have $\omega\left(3 \varpi_L\right) = \omega \left( \varpi_L \right)$.
Hence, there is a contradiction with $\omega\left(\varpi_L\right) > 0$.
As a consequence, $\omega\left(2 \varpi_L - {^\tau} \varpi_L \right) = \omega \left( \varpi_L \right)$, for any uniformizer $\varpi_L \in \mathcal{O}_L$.

Define $\varpi_L' = \varpi_L + \varpi_L {^\tau}\varpi_L$.
This element $\varpi_L' \in \mathcal{O}_L$ is also a uniformizer.
Define $t' = 1 + \varpi_L'$.
We have seen that $\omega\left(t'^2 - {^\tau}t' \right) = \omega\left(\varpi_L\right)$.

\textbf{Claim:} Either $t$ or $t'$ satisfies the desired equalities.

Indeed, we have $t{^\tau}t - 1 = \varpi_L + {^\tau}\varpi_L + \varpi_L {^\tau} \varpi_L$ 
and $t'{^\tau}t' - 1 = \varpi_L + {^\tau}\varpi_L + 3\varpi_L {^\tau} \varpi_L + \mathrm{Tr}_{L/K}\left(\varpi_L^2 {^\tau} \varpi_L\right) + N_{L/K}\left( \varpi_L \right)^2$.

By contradiction, assume that we have $\omega\left( \varpi_L + {^\tau}\varpi_L + \varpi_L {^\tau} \varpi_L \right) > 2 \omega\left( \varpi_L \right)$ and $\omega\left( \varpi_L + {^\tau}\varpi_L + 3 \varpi_L {^\tau} \varpi_L \right) > 2 \omega\left( \varpi_L \right)$.
Then, by triangle inequality, we get $\omega\left( 2 \varpi_L {^\tau}\varpi_L \right) > 2 \omega\left( \varpi_L \right)$.
Since $p \neq 2$, we have $\omega\left(2 \varpi_L {^\tau}\varpi_L\right) = 2 \omega \left( \varpi_L \right)$ and there is a contradiction.

Hence, we have, at least, 
$\omega\left( \varpi_L + {^\tau}\varpi_L + \varpi_L {^\tau} \varpi_L \right) = 2 \omega\left( \varpi_L \right)$,
or
$\omega\left( \varpi_L + {^\tau}\varpi_L + 3 \varpi_L {^\tau} \varpi_L \right) = 2 \omega\left( \varpi_L \right)$.
So, at least one of the two following equalities $\omega\left(t {^\tau}t -1 \right) = 2 \omega\left( \varpi_L \right)$ or $\omega\left(t' {^\tau}t' -1 \right) = 2 \omega\left( \varpi_L \right)$ is satisfied.
Hence $t$ or $t'$ is suitable.
\end{proof}

Denote by $H(L,L_2)_l = \left\{ (u,v) \in H(L,L_2),\ \frac{1}{2} \omega(v) \geq l \right\}$ the filtered subgroup of $H(L,L_2)$.
Remark that $H(L,L_2)_l$ can be seen as the integral points of a $\mathcal{O}_K$-model of the $K$-group scheme $H(L,L_2)$, namely the group scheme $\mathcal{H}^l$  defined by \cite[4.23]{Landvogt}.
Recall that for any $l \in \mathbb{R}$, we have $H(L,L_2)_l \simeq U_{a,l}$, by definition of the filtration on root groups, through the isomorphism $(u,v) \mapsto x_a(u,v)$.
Recall that we also have an isomorphism $\widetilde{a} : 1 + \mathfrak{m}_L \simeq T^a(K)_b^+$.

\begin{Lem}
\label{lem:commutation:non:reduced:torus:root:group}
Let $l \in \Gamma_a = \frac{1}{2} \mathbb{Z}$.

If $L/L_2$ is unramified, we have
$$U_{a,l+1} \subset \left[ T(K)_b^+ , U_{a,l} \right] \subset U_{a,l+\frac{1}{2}}$$

If $L/L_2$ is ramified, we have
$$U_{a,l+\frac{3}{2}} \subset \left[ T(K)_b^+ , U_{a,l} \right] \subset U_{a,l+\frac{1}{2}}$$
\end{Lem}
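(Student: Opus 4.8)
I plan to work entirely inside $\mathrm{SU}(h)$ through the matrix realizations $\widetilde a$ and $x_a$ recalled above, reduce both inclusions to valuation estimates in $L = L_a$, and then feed in Lemma \ref{lem:good:element:t} and Lemma \ref{lem:sets:of:values:multipliable:root} for the sharp choices of toric elements. The first step is to compute the conjugation action: multiplying the diagonal matrix $\widetilde a(t) = \mathrm{diag}(t, t^{-1}\,{^\tau}t, {^\tau}t^{-1})$ against $x_a(x,y)$ and against its inverse $x_a(-x,{^\tau}y)$ gives
$$\widetilde a(t)\, x_a(x,y)\, \widetilde a(t)^{-1} = x_a\big(c_1 x,\ c_2 y\big), \qquad c_1 = ({^\tau}t)^2 t^{-1},\quad c_2 = t\,{^\tau}t \in L_2 ,$$
and one checks $c_1\,{^\tau}c_1 = c_2$, compatibly with $x\,{^\tau}x = y+{^\tau}y$. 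Using $x_a(u,v)^{-1} = x_a(-u,{^\tau}v)$, the group law of $H(L,L_2)$ and $x\,{^\tau}x = y+{^\tau}y$, this yields the closed commutator formula
$$\big[x_a(x,y),\widetilde a(t)\big] = x_a\Big((1-c_1)x,\ (1-{^\tau}c_1)\,y + (c_2-{^\tau}c_1)\,{^\tau}y\Big),\qquad {^\tau}c_1 = t^2({^\tau}t)^{-1},$$
which is the engine for both directions.

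For the upper bound (in both ramification cases) I would estimate, for an arbitrary $t = 1+s \in 1+\mathfrak m_L$: by Lemma \ref{lem:good:element:t}(1) applied to $t$ and after applying $\tau$, $\omega(1-c_1) = \omega(1-{^\tau}c_1) = \omega\big(t^2 - {^\tau}t\big) \geq \omega(\varpi_L)$, while $\omega(c_2-{^\tau}c_1) = \omega\big(({^\tau}t)^2 - t^2\big) \geq \omega(\varpi_L)$ because $\omega({^\tau}t+t) = 0$ (here $p\neq2$) and $\omega({^\tau}t-t) = \omega({^\tau}s-s) \geq \omega(\varpi_L)$. Normalizing $\Gamma_L = \mathbb Z$ and inserting these into the formula, the first coordinate of the commutator gains at least $1$ in valuation and the second coordinate likewise, so $\varphi_a$ of the commutator is at least $\varphi_a(x_a(x,y)) + \tfrac12$. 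Passing to the subgroup generated by all such commutators gives $[T(K)_b^+, U_{a,l}] \subset U_{a,l+\frac12}$ in both cases.

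For the lower bound I would first fix a \emph{good} $t$: by Lemma \ref{lem:good:element:t}(2), when $L/L_2$ is unramified one can take $t$ with $\omega(1-c_1) = \omega(1-c_2) = \omega(\varpi_L) = 1$; by Lemma \ref{lem:good:element:t}(3), when $L/L_2$ is ramified (this is where $p\geq5$ enters), one can take $t$ with $\omega(1-c_1) = 1$ and $\omega(1-c_2) = 2$. With such a $t$ I would produce two families inside $[T(K)_b^+, U_{a,l}]$. Taking $x = 0$ in the formula and using ${^\tau}y = -y$ for $y\in L^0$ collapses it to $[x_a(0,y_0),\widetilde a(t)] = x_a\big(0,(1-c_2)y_0\big)$; letting $y_0$ range over $L^0$ with $\omega(y_0)\geq 2l$ and using the value set $\omega\big((L^0)^\times\big)$ from Lemma \ref{lem:sets:of:values:multipliable:root}, this reaches every $x_a(0,z)$ with $z\in L^0$ of valuation at least $2l+1$ in the unramified case, resp. at least $2l+2$ in the ramified case. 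Next, given a target $x_a(x',y') \in U_{a,l+1}$ (resp. $U_{a,l+\frac32}$), I would solve $(1-c_1)x = x'$, set $y = \tfrac12 x\,{^\tau}x$ (legitimate since $p\neq2$), check $x_a(x,y)\in U_{a,l}$ using $\omega(x) = \omega(x') - 1$, and read off from the formula that $[x_a(x,y),\widetilde a(t)] = x_a(x',y'')$ with $\omega(y'')$ correspondingly large. Since $x_a(x',y')$ and $x_a(x',y'')$ differ, by the group law, by the $U_{2a}$-element $x_a\big(0,\,y'-y''\big)$ with $y'-y'' \in L^0$, and $\omega(y'-y'')$ is at least the threshold of the first family, multiplying the two commutators in the right order reconstructs $x_a(x',y')$; hence $U_{a,l+1}\subset[T(K)_b^+,U_{a,l}]$ (unramified) and $U_{a,l+\frac32}\subset[T(K)_b^+,U_{a,l}]$ (ramified).

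The delicate part, which I expect to be the main obstacle, is exactly this last bookkeeping: one must track the precise valuations of $x$, of $y''$ and of $y'-y''$ in the two parity regimes $l\in\mathbb Z$ and $l\in\mathbb Z+\tfrac12$, and verify that the correction $x_a(0,y'-y'')$ really is covered by the first family, i.e. that its valuation lies in $\omega\big((L^0)^\times\big)$ and meets the required bound. This is where the stronger Lemma \ref{lem:good:element:t}(3) and the value set $\omega\big((L^0)^\times\big) = 1+2\mathbb Z$ in the ramified case (hence the hypothesis $p\geq5$) are genuinely needed, whereas in the unramified case Lemma \ref{lem:good:element:t}(2) and $p\neq2$ suffice; a minor additional check is that the $x=0$ commutators reach down to $\omega(y_0) = 2l$, which holds because $x_a(0,y_0)\in U_{a,l}$ is equivalent to $\tfrac12\omega(y_0)\geq l$.
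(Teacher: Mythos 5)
Your proof is correct, and your upper bound is essentially the paper's: the same closed commutator formula (the paper's display writes the coefficient $c_1$ where your (correct) computation gives ${^\tau}c_1$, but all three coefficients have valuation exactly $\omega(t^2-{^\tau}t)$, so the estimate $\omega$-gain $\geq \omega(\varpi_L)$ via Lemma \ref{lem:good:element:t}(1) is identical). For the reverse inclusions you take a genuinely different route. The paper inverts the relation exactly: after fixing a good $t$ as you do, it sets $u=\frac{t}{t-{^\tau}t^2}U$ and solves a $2\times 2$ linear system over $L_2$ to write $v$ as an explicit combination of $V$ and ${^\tau}V$, so every target of the stated valuation is a \emph{single} commutator, with the only bookkeeping being $\omega(v)\geq\omega(V)-\omega(1-t{^\tau}t)-\omega(t-{^\tau}t^2)$; this is the same exact-inversion technique it reuses later for the torus in Lemma \ref{lem:inversion:relation:non:reduced:torus}. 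You instead match only the first coordinate, take $y=\tfrac12 x{^\tau}x$, and absorb the discrepancy into a second commutator $[x_a(0,y_0),\widetilde a(t)]=x_a(0,(1-c_2)y_0)$ from the $U_{2a}$-direction; since $[T(K)_b^+,U_{a,l}]$ is the generated subgroup, the product of two commutators is admissible, and your parity bookkeeping (gain $\geq 1$ on the second coordinate, thresholds $2l+1$ resp.\ $2l+2$, surjectivity of multiplication by $1-c_2$ on $L^0$ using Lemma \ref{lem:sets:of:values:multipliable:root}) does close in all four regimes. The paper's method buys a one-shot formula with no parity case split and no auxiliary $U_{2a}$-family; yours buys avoiding the linear system at the cost of that bookkeeping. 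One small slip to fix: $c_2-{^\tau}c_1=\frac{t}{{^\tau}t}\big(({^\tau}t)^2-t\big)$, so $\omega(c_2-{^\tau}c_1)=\omega(t^2-{^\tau}t)$, not $\omega\big(({^\tau}t)^2-t^2\big)$ (the latter can be strictly larger, e.g.\ for $t\in 1+\mathfrak m_{L_2}$); the bound $\geq\omega(\varpi_L)$ you need still holds, by Lemma \ref{lem:good:element:t}(1) applied after $\tau$, so nothing downstream is affected.
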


\begin{proof}
For any $t \in 1 + \varpi_L \mathcal{O}_L \simeq T(K)_b^+$ and all $(u,v) \in H(L,L_2)_l$, we have:

$$
\left[
\begin{pmatrix}1&-{^\tau}u&-v\\0&1&u\\0&0&1\end{pmatrix} ,
\begin{pmatrix}t&0&0\\0&\frac{{^\tau}t}{t}&0\\0&0&\frac{1}{{^\tau}t}\end{pmatrix}
\right] =
\begin{pmatrix}
	1 & - {^\tau}U & - V\\
	0 & 1 & U\\
	0 & 0 & 1
\end{pmatrix}
$$
where $U = \left( 1 - \frac{{^\tau}t^2}{t} \right) u$ and $V = \left(1- \frac{{^\tau}t^2}{t} \right) v + \left(t {^\tau}t - \frac{{^\tau}t^2}{t} \right) {^\tau} v$.
One can check that $(U,V) \in H(L,L_2)$.
We have:
$$\begin{array}{rcl}
\omega\left( V \right) & \geq & 
	\min \bigg(
		\omega\left(t - {^\tau}t^2 \right) + \omega(v) - \omega(t) ,
		\omega\left(\frac{{^\tau}t}{t}\right) +
		\omega\left(t^2 - {^\tau}t \right)+
		\omega\left( {^\tau} v \right)
	\bigg)\\
	& & \hfill \text{ by the triangle inequality}\\
	& = & 
	\omega\left( v \right) + \omega\left(t^2 - {^\tau}t \right)
		\hfill \text{ because } \tau \text{ preserves the valuation}\\
	& \geq &
	2 l + 1
		\hfill \text{ by lemma \ref{lem:good:element:t}(1)}
\end{array}$$

From this inequality, we deduce $(U,V) \in H(L,L_2)_{l+\frac{1}{2}}$, hence we have ${\left[U_{a,l},T(K)_b^+ \right] \subset U_{a,l+\frac{1}{2}}}$.

Conversely, let $l' \in \frac{1}{2} \mathbb{Z}$.
Let $(U,V) \in H(L,L_2)_{l'}$.
We want elements $t \in 1 + \mathfrak{m}_L$ and $(u,v) \in H(L,L_2)$ such that $[\widetilde{a}(t),x_a(u,v)]=x_a(U,V)$ and so that $\omega(v)$ is as big as possible.

Choose $t$ satisfying the equalities (2) or (3) in Lemma \ref{lem:good:element:t} applied to the extension of local fields $L/L_2$.
Let $u = \frac{t}{t-{^\tau}t^2}U$.
We seek $X,Y \in \mathcal{O}_K\left(t,{^\tau}t\right)$ such that $\left(1- \frac{{^\tau}t^2}{t} \right) v
		+ \left(t {^\tau}t - \frac{{^\tau}t^2}{t} \right) {^\tau} v = V$ where we set $v=X V + Y {^\tau} V$.
It suffices to find $X,Y$ such that:
$$\left\{
\begin{array}{rcl}
\left(1- \frac{{^\tau}t^2}{t} \right) X + \left(t {^\tau}t - \frac{{^\tau}t^2}{t} \right) {^\tau} Y & = & 1\\
\left(1- \frac{{^\tau}t^2}{t} \right) Y + \left(t {^\tau}t - \frac{{^\tau}t^2}{t} \right) {^\tau} X & = & 0
\end{array}
\right.$$

The unique solution of this linear system is:
$$\left\{
\begin{array}{rcl}
X & = & \frac{1}{\left(1 - t {^\tau}t \right)\left(1 - \frac{{^\tau}t^2}{t}\right)}\\
Y & = & \frac{\frac{{^\tau}t^2}{t}}{\left(1 - t {^\tau}t \right)\left(1 - \frac{{^\tau}t^2}{t} \right)}
\end{array}
\right.$$
so that:
$$v = X V + Y {^\tau} V = \frac{V + \frac{{^\tau}t^2}{t} {^\tau}V}{\left( 1 - t {^\tau}t \right) \left( 1 - \frac{{^\tau}t^2}{t} \right)}$$
satisfies $(u,v) \in H(L,L_2)$.

By a matrix computation, and because $t,u,v$ have been chosen for this, we can check that $\left[x_a(u,v),\widetilde{a}(t) \right] = x_a(U,V)$.
Moreover, the valuation gives us $\omega(v) \geq \omega( V ) - \omega(1 - t {^\tau} t ) - \omega( t - {^\tau}t^2 )$ because $\omega\left(V + \frac{{^\tau}t^2}{t} {^\tau}V \right) \geq \omega(V)$.

When $L/L_2$ is unramified, by \ref{lem:good:element:t}(2), this gives us $\omega(v) \geq 2 l' - 2$.
From this inequality, we deduce $(u,v) \in H(L,L_2)_{l'-1}$, 
hence:
$$\left[U_{a,l'-1},T(K)_b^+ \right] \supset U_{a,l'}$$

When $L/L_2$ is ramified, by \ref{lem:good:element:t}(3), this gives us $\omega(v) \geq 2 l' - 3$.
From this inequality, we deduce $(u,v) \in H(L,L_2)_{l'-\frac{3}{2}}$, 
hence:
$$\left[U_{a,l'-\frac{3}{2}},T(K)_b^+ \right] \supset U_{a,l'}$$
\end{proof}

\begin{Rq}\label{rq:refined:ramified:non:reduced:commutation:relation:torus:root:group}These inequalities could be refined, with a deeper study on the arithmetic properties of the local fields.
As an example, when $L / L_2$ is ramified, and $l \not\in \mathbb{Z}$,
we obtain $\left[ T(K)_b^+ , U_{a,l} \right] \subset U_{a,l+1}$.
\end{Rq}

\begin{Lem}[Commutation of opposite root groups]
\label{lem:commutation:non:reduced:opposite:root:groups}
Let $l,l' \in \Gamma_a = \frac{1}{2} \Gamma_L = \frac{1}{2} \mathbb{Z}$ be such that $l+l' > 0$.
Let $(x,y) \in H(L,L_2)_l$ and $(u,v) \in H(L,L_2)_{l'}$.
We have $\left[x_{-a}(x,y),x_a(u,v)\right] = x_{-a}(X,Y) \widetilde{a}(T) x_a(U,V)$ where:
$$\left\{\begin{array}{rcl}
	T & = &	1-{^\tau}u x + vy \\
	U & = & \frac{1}{{^\tau}T} \left(u^2 {^\tau}x - {^\tau}v x - u {^\tau}v {^\tau}y \right)\\
	V & = &	\frac{1}{T} \left( u v {^\tau}x - {^\tau}u {^\tau}v x + v {^\tau}v y \right) \\
	X & = & \frac{1}{T} \left( {^\tau}u x^2 - u y + v x y \right)\\
	Y & = &	\frac{1}{T} \left( {^\tau}x u y - {^\tau}u x {^\tau}y + v y {^\tau}y \right)
\end{array}\right.$$

Moreover, $\omega(V) \geq \lceil 3 l' + l \rceil$ and $\omega(Y) \geq \lceil l' +  3 l \rceil$.

Consequently,
$$\begin{array}{rcl}
\left[U_{-a,l},U_{a,l'}\right] & \subset & U_{-a,\frac{\lceil 3l + l' \rceil}{2}} T^a(K)_b^+ U_{a,\frac{\lceil l + 3l' \rceil}{2}} \\
& \subset & U_{-a,l+\frac{1}{2}} T^a(K)_b^+ U_{a,l' + \frac{1}{2}}
\end{array}$$
\end{Lem}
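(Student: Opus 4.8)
The plan is to carry out the commutator computation inside $\mathrm{SU}(h) \subset \mathrm{SL}_3(L)$ using the matrix realization recalled just before the statement, the general case following at once by applying the central isogeny $\pi$. First I would compute the commutator matrix
$$C = x_{-a}(x,y)\, x_a(u,v)\, x_{-a}(x,y)^{-1}\, x_a(u,v)^{-1},$$
a routine but lengthy $3\times 3$ multiplication, and observe that its top-left entry is $C_{11} = 1 - {}^\tau u\, x + v y$. Provided $C_{11}$ is a unit (checked below), $C$ lies in the big cell of $\mathrm{SU}(h)$ and factors uniquely as $x_{-a}(X,Y)\,\widetilde{a}(T)\,x_a(U,V)$; expanding this product gives
$$x_{-a}(X,Y)\,\widetilde{a}(T)\,x_a(U,V) = \begin{pmatrix} T & -T\,{}^\tau U & -TV \\ XT & \ast & \ast \\ -YT & \ast & \ast \end{pmatrix},$$
so comparing entries with $C$ yields $T = C_{11}$, $V = -C_{13}/C_{11}$, $X = C_{21}/C_{11}$, $Y = -C_{31}/C_{11}$, and $U$ is recovered from $C_{12}$; this is exactly the displayed system. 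That $(X,Y)$ and $(U,V)$ lie in $H(L,L_2)$ is automatic, since the factorization takes place inside $\mathrm{SU}(h)$, whose unipotent parts are precisely $x_{\pm a}\big(H(L,L_2)\big)$.

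For the valuation estimates, membership $(x,y)\in H(L,L_2)_l$ means $\omega(y)\geq 2l$, and combined with the defining relation $x\,{}^\tau x = y + {}^\tau y$ this gives $2\omega(x) = \omega(y + {}^\tau y) \geq \omega(y) \geq 2l$, hence $\omega(x)\geq l$; likewise $\omega(v)\geq 2l'$ and $\omega(u)\geq l'$. Since $l+l'>0$, the terms ${}^\tau u\,x$ and $vy$ lie in $\mathfrak{m}_L$, so $\omega(T)=0$ and $\widetilde{a}(T)\in T^a(K)_b^+$. Bounding $V$ term by term from its formula gives valuations at least $3l'+l$, $3l'+l$ and $4l'+2l$; as $l+l'>0$ the minimum is $3l'+l$, and since $\omega$ takes integer values on $L^\times$ we get $\omega(V)\geq\lceil 3l'+l\rceil$. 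The bound $\omega(Y)\geq\lceil 3l+l'\rceil$ is symmetric.

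Finally, translating through $\varphi_a\big(x_a(U,V)\big)=\tfrac12\omega(V)$ and $\varphi_{-a}\big(x_{-a}(X,Y)\big)=\tfrac12\omega(Y)$ gives $x_a(U,V)\in U_{a,\lceil l+3l'\rceil/2}$ and $x_{-a}(X,Y)\in U_{-a,\lceil 3l+l'\rceil/2}$, which is the first inclusion. For the coarser one, $l,l'\in\tfrac12\mathbb{Z}$ together with $l+l'>0$ forces $l+l'\geq\tfrac12$, so $3l+l' = 2l+(l+l')\geq 2l+\tfrac12$; as $2l\in\mathbb{Z}$ this gives $\lceil 3l+l'\rceil\geq 2l+1$, i.e. $\lceil 3l+l'\rceil/2\geq l+\tfrac12$, and since the filtration is decreasing, $U_{-a,\lceil 3l+l'\rceil/2}\subset U_{-a,l+1/2}$; the $U_a$ factor is handled identically with $l$ and $l'$ exchanged.

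The bulk of the effort is the $3\times 3$ commutator computation and its big-cell factorization: mechanical but error-prone. The one genuinely delicate point is keeping the $\tau$-twists straight throughout and making sure the factorization really takes place inside $\mathrm{SU}(h)$, so that $(X,Y)$ and $(U,V)$ automatically satisfy the Hermitian constraint, rather than merely inside $\mathrm{SL}_3(L)$.
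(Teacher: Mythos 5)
Your proposal is correct and follows essentially the same route as the paper: a matrix computation in $\mathrm{SU}(h)$, a big-cell factorization governed by the unit $T=1-{}^\tau u\,x+vy\in 1+\mathfrak{m}_L$, term-by-term valuation bounds using $2\omega(u)\geq\omega(v)$, and integrality of $\omega$ on $L$ to pass to the ceilings. The only (immaterial) difference is bookkeeping: the paper first factors the product $x_a(u,v)\,x_{-a}(x,y)$ through the big cell and deduces the commutator identity from that, whereas you factor the commutator matrix directly.
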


\begin{proof}
Because $\tau$ preserves $\omega$, we have the following in $H(L,L_2)$:
$$2 \omega(u) = \omega(u {^\tau}u) = \omega(v + {^\tau}v) \geq \omega(v)$$
Hence, we have:
$$\omega(x) + \omega(u) \geq \frac{1}{2} \big(\omega(y) + \omega(v)\big) \geq l + l' > 0$$

By a matrix computation in $\mathrm{SU}(h)$, we have:

$$ \begin{pmatrix}1 & -{^\tau}u & - v \\ 0 & 1 & u \\ 0 & 0 & 1 \end{pmatrix}
\begin{pmatrix}1 & 0 & 0 \\ x & 1 & 0 \\ -y & -{^\tau}x & 1 \end{pmatrix}   =
\begin{pmatrix}1&0&0\\X_0&1&0\\-Y_0&-{^\tau}X_0&1\end{pmatrix}
\begin{pmatrix}T&0&0\\0&\frac{ {^\tau}T}{T}&0 \\ 0& 0 & \frac{1}{{^\tau}T}\end{pmatrix}
\begin{pmatrix}1&-{^\tau}U_0&-V_0\\0&1&U_0\\0&0&1 \end{pmatrix}
$$

where
$$\left\{\begin{array}{rcl}
	T   & = &	1-{^\tau}u x + vy \\
	U_0 & = & 	\frac{1}{{^\tau}T} (u - {^\tau}v x)\\
	V_0 & = &	\frac{1}{T} v \\
	X_0 & = & 	\frac{1}{T} (x-uy)\\
	Y_0 & = &	\frac{1}{T} y
\end{array}\right.$$
Because $\omega({^\tau}u x) \geq \frac{1}{2} \omega(vy) > 0$,
we get $T \in 1 + \mathfrak{m}_L$.
Hence $\frac{1}{T} \in \mathcal{O}_L^\times$ is well-defined.
It follows:
$$
\left[
\begin{pmatrix}1 & 0 & 0 \\ - x & 1 & 0 \\ -{^\tau} y & {^\tau}x & 1 \end{pmatrix} ,
\begin{pmatrix}1 & -{^\tau}u & - v \\ 0 & 1 & u \\ 0 & 0 & 1 \end{pmatrix} 
\right] =
\begin{pmatrix}1&0&0\\X&1&0\\-Y&-{^\tau}X&1\end{pmatrix}
\begin{pmatrix}T&0&0\\0&\frac{ {^\tau}T}{T}&0 \\ 0& 0 & \frac{1}{{^\tau}T}\end{pmatrix}
\begin{pmatrix}1&-{^\tau}U&-V\\0&1&U\\0&0&1 \end{pmatrix}
$$

where
$$\left\{\begin{array}{rcl}
	T & = &	1-{^\tau}u x + vy \\
	U & = & \frac{1}{{^\tau}T} \left(u^2 {^\tau}x - {^\tau}v x - u {^\tau}v {^\tau}y \right)\\
	V & = &	\frac{1}{T} \left( u v {^\tau}x - {^\tau}u {^\tau}v x + v {^\tau}v y \right) \\
	X & = & \frac{1}{T} \left( {^\tau}u x^2 - u y + v x y \right)\\
	Y & = &	\frac{1}{T} \left( {^\tau}x u y - {^\tau}u x {^\tau}y + v y {^\tau}y \right)
\end{array}\right.$$

We have $$\begin{array}{ccl}
\omega(V) & \geq & \min\big( \omega( u v {^\tau}x ) , \omega( {^\tau}u {^\tau}v x ) , \omega(v {^\tau}v y) \big) \\
& \geq & \omega( v ) + \min\big( \omega( u ) + \omega( x ), \omega( v ) + \omega( y ) \big) \\
& \geq & 2 l' + l + l'
\end{array}$$
Because $\omega( V ) \in \mathbb{Z}$, we have in fact $\omega( V ) \geq \lceil 3 l' + l \rceil \geq 2 l' + 1$.

We proceed in the same way to find a lower bound of $\omega(Y)$.
\end{proof}

In order to compute a derived group in terms of root groups, we would like to invert the above equations. Precisely, given a $t \in 1 + \mathfrak{m}_L^{l''}$, we seek elements $(u,v), (x,y) \in H(L,L_2)$ with prescribed valuations $l,l' \in \frac{1}{2} \mathbb{Z}$ such that $t = 1 - {^\tau}u x +vy$.
The existence of such $(u,v),(x,y)$ is not guaranteed if $l''$ is not large enough.
Firstly, we seek an element $(u,v) \in H(L,L_2)_l$ such that $\omega\left(\mathrm{Tr}(u)\right)$ is minimal.

\begin{Lem}
\label{lem:minimal:trace:uniformizer}
Let $L/K$ be a quadratic Galois extension of local fields with a residue characteristic $p \neq 2$ and a discrete valuation $\omega : L^\times \rightarrow \mathbb{Z}$.
There exists a uniformizer $\varpi_L$ in $\mathcal{O}_L$ such that $\mathrm{Tr}_{L/K}(\varpi_L)$ is a uniformizer of $\mathcal{O}_K$.
\end{Lem}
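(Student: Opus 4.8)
The plan is to distinguish the two possibilities for the quadratic Galois extension $L/K$: since $p \neq 2$ there is no wild ramification, so $L/K$ is either unramified or tamely ramified, and the unramified case is immediate while the ramified case requires a small perturbation trick.

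First I would handle the unramified case. Here the ramification index is $1$, so any uniformizer $\varpi_K$ of $\mathcal{O}_K$ remains a uniformizer of $\mathcal{O}_L$, and since $\varpi_K \in K$ and $[L:K]=2$ we have $\mathrm{Tr}_{L/K}(\varpi_K) = 2\varpi_K$. Because $p \neq 2$ gives $\omega(2)=0$, the element $2\varpi_K$ has the same valuation as $\varpi_K$, hence is a uniformizer of $\mathcal{O}_K$; so $\varpi_L := \varpi_K$ answers the question.

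Next I would treat the ramified case, where the ramification index is $2$, so with the normalization $\omega(L^\times)=\mathbb{Z}$ one has $\omega(K^\times)=2\mathbb{Z}$, and an element of $K$ is a uniformizer of $\mathcal{O}_K$ exactly when it has valuation $2$. Let $\tau$ generate $\mathrm{Gal}(L/K)$ and pick any uniformizer $\pi$ of $\mathcal{O}_L$; since $\tau$ preserves $\omega$, both $\pi$ and $\tau(\pi)$ have valuation $1$, so $\omega(\mathrm{Tr}_{L/K}(\pi)) \geq 1$, and as $\mathrm{Tr}_{L/K}(\pi)\in K$ this forces $\omega(\mathrm{Tr}_{L/K}(\pi))$ to be $\infty$ or a positive even integer. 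If it equals $2$, I take $\varpi_L := \pi$. Otherwise it is $\geq 4$ (possibly $\infty$), and I set $\varpi_L := \pi + \pi^2$, which is still a uniformizer of $\mathcal{O}_L$ because $\omega(\pi^2)=2>1=\omega(\pi)$. Using the identity $\mathrm{Tr}_{L/K}(\pi^2) = \mathrm{Tr}_{L/K}(\pi)^2 - 2\,N_{L/K}(\pi)$, together with $\omega(N_{L/K}(\pi)) = \omega(\pi)+\omega(\tau(\pi)) = 2$ and $\omega(2)=0$, I get $\omega(2\,N_{L/K}(\pi))=2$ while $\omega(\mathrm{Tr}_{L/K}(\pi)^2) = 2\,\omega(\mathrm{Tr}_{L/K}(\pi)) \geq 8$, hence $\omega(\mathrm{Tr}_{L/K}(\pi^2)) = 2$. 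Since moreover $\omega(\mathrm{Tr}_{L/K}(\pi)) \geq 4 > 2$, the triangle inequality gives $\omega(\mathrm{Tr}_{L/K}(\varpi_L)) = \omega(\mathrm{Tr}_{L/K}(\pi) + \mathrm{Tr}_{L/K}(\pi^2)) = 2$, so $\mathrm{Tr}_{L/K}(\varpi_L)$ is a uniformizer of $\mathcal{O}_K$.

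The only genuine obstacle is the ramified case, where the obvious choice $\varpi_L=\pi$ can fail catastrophically — for example when $L=K(\sqrt{\varpi_K})$ and $\pi=\sqrt{\varpi_K}$ the trace vanishes identically. The point of perturbing by $\pi^2$ is that $\mathrm{Tr}_{L/K}(\pi^2)$ has valuation exactly $2$, which is precisely where the hypothesis $p \neq 2$ is used: it makes $2\,N_{L/K}(\pi)$ have valuation exactly $2$, and this term cannot be cancelled by $\mathrm{Tr}_{L/K}(\pi)^2$, which lies much deeper.
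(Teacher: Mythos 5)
Your proof is correct and takes essentially the same approach as the paper: the unramified case uses that $2$ is a unit, and in the ramified case you fix a uniformizer whose trace lies too deep (valuation $\geq 4$ or $\infty$) by adding a perturbation of valuation $2$ whose trace has valuation exactly $2$, which is exactly where $p \neq 2$ enters. The only cosmetic difference is that you perturb by $\pi^2$ while the paper perturbs by $N_{L/K}(\pi)$; in both cases the dominant term in the trace is $\pm 2\,N_{L/K}(\pi)$, so the argument is the same.
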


\begin{proof}
If $L/K$ is unramified, we can choose a uniformizer $\varpi_L$ of $\mathcal{O}_L$ in $\mathcal{O}_K$.
Because $p \neq 2$, the element $\mathrm{Tr}_{L/K}(\varpi_L) = 2 \varpi_L$ is a uniformizer in $\mathcal{O}_K$.

If $L/K$ is ramified, let $\varpi'$ be a uniformizer of $\mathcal{O}_L$.
We know that $\omega\left(\mathrm{Tr}_{L/K}(\varpi')\right) \geq \min\left( \omega\left( \varpi'\right), \omega\left( {^\tau}\varpi' \right) \right) = 1$.
This is never an equality because $\Gamma_K = \omega(K^\times) = 2 \mathbb{Z}$.

If $\omega\left(\mathrm{Tr}_{L/K}(\varpi')\right) = 2$, then we set $\varpi_L = \varpi'$.
Otherwise, we set $\varpi_L = \varpi' + N_{L/K}(\varpi')$.
Thus, $\varpi_L$ is a uniformizer because $\omega\left( N_{L/K}(\varpi') \right) = 2 > 1 = \omega(\varpi')$.
Moreover, $\mathrm{Tr}_{L/K}(\varpi_L) = \mathrm{Tr}_{L/K}(\varpi') + 2 N_{L/K}(\varpi')$.
Because $\omega\left( \mathrm{Tr}_{L/K}(\varpi') \right) > \omega\left( 2 N_{L/K}(\varpi') \right) = 2$, we get the result.
\end{proof}

\begin{Lem}
\label{lem:parametrisation:optimized:unipotent}
Assume that $p \neq 2$ and let $l\in \Gamma_L = \mathbb{Z}$.

If $L/L_2$ is unramified, set $\varepsilon = 0$.

If $L/L_2$ is ramified, set
$\varepsilon = \left\{\begin{array}{cl}0 & \text{ if } l \in \Gamma_{L_2} = 2 \mathbb{Z}\\1 & \text{ otherwise}\end{array}\right.$

There exists $u \in L$ such that:
\begin{itemize}
\item[(a)] $\omega(u) = l$;
\item[(b)] $\omega\left(\mathrm{Tr}_{L/L_2}(u)\right) = l + \varepsilon$;
\item[(c)] $\left(u,\frac{1}{2} u {^\tau}u \right) \in H(L,L_2)_{l}$.
\end{itemize}
\end{Lem}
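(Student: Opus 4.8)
The plan is to reduce the three conditions (a)--(c) to the single task of producing $u \in L$ with $\omega(u) = l$ and $\omega\bigl(\mathrm{Tr}_{L/L_2}(u)\bigr) = l + \varepsilon$, and then to exhibit such a $u$ explicitly by splitting on the ramification of $L/L_2$ and on the parity of $l$.

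First I would dispose of (c) assuming (a). For any $u \in L$ set $v = \tfrac12 u\,{}^\tau u$, which makes sense since $p \neq 2$; as $\tau(u\,{}^\tau u) = u\,{}^\tau u$ one gets $v + {}^\tau v = u\,{}^\tau u$, so $(u,v) \in H(L,L_2)$ automatically. Moreover $\tau$ preserves $\omega$ and $\omega(2) = 0$, hence $\tfrac12\omega(v) = \omega(u)$; thus if $\omega(u) = l$ then $\tfrac12\omega(v) = l \geq l$, i.e. $\bigl(u,\tfrac12 u\,{}^\tau u\bigr) \in H(L,L_2)_l$, which is (c). So only (a) and (b) remain, and these are pure valuation bookkeeping.

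Next I would run the case analysis, using that with the normalization $\Gamma_L = \mathbb{Z}$ a uniformizer $\varpi_{L_2}$ of $\mathcal{O}_{L_2}$ has $\omega(\varpi_{L_2}) = 1$ if $L/L_2$ is unramified and $\omega(\varpi_{L_2}) = 2$ if it is ramified. \emph{Unramified case} ($\varepsilon = 0$): take $u = \varpi_{L_2}^{l} \in L_2$, so $\omega(u) = l$ and $\mathrm{Tr}_{L/L_2}(u) = 2u$ has valuation $l$ since $p \neq 2$. \emph{Ramified case, $l$ even} ($\varepsilon = 0$): since $l \in 2\mathbb{Z} = \Gamma_{L_2}$ there is $u \in L_2$ with $\omega(u) = l$, and again $\mathrm{Tr}_{L/L_2}(u) = 2u$ has valuation $l$. \emph{Ramified case, $l$ odd} ($\varepsilon = 1$): apply Lemma \ref{lem:minimal:trace:uniformizer} to the quadratic extension $L/L_2$ to obtain a uniformizer $\varpi_L$ of $\mathcal{O}_L$ with $\mathrm{Tr}_{L/L_2}(\varpi_L)$ a uniformizer of $\mathcal{O}_{L_2}$, i.e. $\omega\bigl(\mathrm{Tr}_{L/L_2}(\varpi_L)\bigr) = 2$; since $(l-1)/2 \in \mathbb{Z}$, set $u = \varpi_{L_2}^{(l-1)/2}\varpi_L$, so that $\omega(u) = (l-1)+1 = l$ and, by $L_2$-linearity of the trace, $\mathrm{Tr}_{L/L_2}(u) = \varpi_{L_2}^{(l-1)/2}\,\mathrm{Tr}_{L/L_2}(\varpi_L)$ has valuation $(l-1)+2 = l+1$.

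The only genuine obstacle is the ramified case with $l$ odd: an arbitrary uniformizer of $\mathcal{O}_L$ may have trace down to $L_2$ of valuation strictly larger than $2$, so one must carefully select one whose trace is exactly a uniformizer of $\mathcal{O}_{L_2}$ --- which is precisely what Lemma \ref{lem:minimal:trace:uniformizer} provides. Everything else follows from $\omega(2) = 0$ together with the $\tau$-invariance of $\omega$.
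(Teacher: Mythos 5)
Your proposal is correct and follows essentially the same route as the paper: in all cases it constructs $u$ as a power of $\varpi_{L_2}$ times (in the ramified case with $l$ odd) the uniformizer $\varpi_L$ whose trace is a uniformizer of $\mathcal{O}_{L_2}$, supplied by Lemma \ref{lem:minimal:trace:uniformizer}, and then verifies (a)--(c) by the same valuation bookkeeping, using $p \neq 2$. The only cosmetic difference is that the paper packages the three cases into the single formula $u = \varpi_L^{\varepsilon}\,\varpi_{L_2}^{(l-\varepsilon)/\omega(\varpi_{L_2})}$, whereas you treat them separately.
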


\begin{proof}
Let $\varpi_L$ be a uniformizer of $\mathcal{O}_L$ such that $\varpi_{L_2} = \mathrm{Tr}_{L/L_2}(\varpi_L)$ is a uniformizer of $\mathcal{O}_{L_2}$, such a uniformizer exists by Lemma \ref{lem:minimal:trace:uniformizer}.
Define $u = (\varpi_L)^\varepsilon \cdot (\varpi_{L_2})^{\frac{l- \varepsilon}{\omega(\varpi_{L_2})}}$.

(a) $\omega(u) = \varepsilon \omega(\varpi_L) + \frac{l - \varepsilon}{\omega(\varpi_{L_2})} \omega(\varpi_{L_2}) = l$.

(b) We have:
$$\begin{array}{rcl}
\mathrm{Tr}_{L/L_2}(u) & = & \mathrm{Tr}_{L/L_2}\left((\varpi_L)^{\varepsilon}\right) \cdot (\varpi_{L_2})^{\frac{l- \varepsilon}{\omega(\varpi_{L_2})}} \\
& = &
\left\{\begin{array}{ll}
(\varpi_{L_2})^{\frac{l- \varepsilon}{\omega(\varpi_{L_2})} + \varepsilon} & \text{ if } \varepsilon=1\\
2 (\varpi_{L_2})^{\frac{l- \varepsilon}{\omega(\varpi_{L_2})}}& \text{ if } \varepsilon = 0
\end{array}\right.
\end{array}$$
Hence $\omega\left(\mathrm{Tr}_{L/L_2}(u)\right) = \left( \frac{l- \varepsilon}{\omega(\varpi_{L_2})} + \varepsilon \right) \omega(\varpi_{L_2}) = l- \varepsilon + \varepsilon \omega(\varpi_{L_2}) =  l + \varepsilon$.

(c) We have $N_{L/L_2}(u) = u {^\tau} u = \mathrm{Tr}\left( \frac{1}{2} u {^\tau}u \right)$.
\end{proof}

As a consequence, we got an element $(u,v)$ such that $\mathrm{Tr}_{L/L_2}(u)$ is minimal.
Secondly, we seek an element $(x,y) \in H(L,L_2)_{l'}$ such that $t = 1 - {^\tau}ux + vy$.
This is a quadratic problem.
That is why we recall the following lemma on the existence of square root.

\begin{Lem}
\label{lem:square:root}
Let $L$ be a local field of residue characteristic $p \neq 2$.
For all $a \in \mathfrak{m}_L$, there exists $b \in \mathfrak{m}_L$ such that $(1+b)^2 = 1+a$ and $\omega(a) = \omega(b)$.
\end{Lem}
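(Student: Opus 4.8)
The plan is to recognize this as a direct instance of Hensel's lemma. Writing $b$ for the unknown, the equation $(1+b)^2 = 1+a$ is equivalent to $f(b) = 0$, where $f(X) = X^2 + 2X - a \in \mathcal{O}_L[X]$. First I would check the hypotheses at the approximate root $0$: one has $f(0) = -a \in \mathfrak{m}_L$ while $f'(0) = 2$, and since the residue characteristic is $p \neq 2$ we have $\omega(2) = 0$, so $f'(0) \in \mathcal{O}_L^\times$. Thus $\omega(f(0)) > 2\,\omega(f'(0)) = 0$, and Hensel's lemma produces a (unique) $b \in \mathcal{O}_L$ with $f(b) = 0$ and $\omega(b) = \omega(f(0)) - \omega(f'(0)) = \omega(a) > 0$; in particular $b \in \mathfrak{m}_L$. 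Unwinding $f(b)=0$ gives $(1+b)^2 = 1+a$, and we already have $\omega(b) = \omega(a)$.

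To keep the argument self-contained, I would alternatively run the Newton-type iteration $b_0 = 0$, $b_{n+1} = (a - b_n^2)/2$ explicitly. By induction each $b_n$ lies in $\mathfrak{m}_L$, and $b_{n+1} - b_n = (b_{n-1} - b_n)(b_{n-1} + b_n)/2$, so $\omega(b_{n+1} - b_n) \geq \omega(b_n - b_{n-1}) + \omega(\varpi_L)$ because $\omega(b_{n-1} + b_n) \geq \omega(\varpi_L) > 0 = \omega(2)$. Hence $(b_n)$ is Cauchy; as $L$ is complete it converges to some $b \in \mathfrak{m}_L$ satisfying $2b = a - b^2$, i.e. $(1+b)^2 = 1+a$.

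For the valuation identity I would rewrite $2b + b^2 = a$ as $a = b(b+2)$. If $a = 0$ the solution $b = 0$ trivially gives $\omega(a) = \omega(b) = \infty$; if $a \neq 0$ then $b \neq 0$, so $\omega(b) > 0 = \omega(2)$ forces $\omega(b+2) = 0$, whence $\omega(a) = \omega(b) + \omega(b+2) = \omega(b)$.

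There is essentially no real obstacle here; the only point requiring attention is that the hypothesis $p \neq 2$ is precisely what makes $2$ (equivalently, the derivative of $X \mapsto X^2$ at a point of $1 + \mathfrak{m}_L$) a unit, without which neither the Hensel argument nor the valuation bookkeeping goes through, and the degenerate case $a = 0$ should be recorded separately.
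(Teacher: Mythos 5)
Your proof is correct and follows essentially the same route as the paper: both apply Hensel's lemma (you to $X^2+2X-a$ at the approximate root $0$, the paper to $X^2-1-a$ lifting the simple residue root $1$, which is the same argument up to the shift $X\mapsto X-1$), and both obtain $\omega(a)=\omega(b)$ from the factorization $a=b(b+2)$ together with $\omega(2+b)=0$ since $p\neq 2$. The explicit Newton iteration and the separate treatment of $a=0$ are harmless additions.
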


\begin{proof}
Let $a \in \mathfrak{m}_L$.
By Hensel's Lemma, the polynomial $X^2 - 1 - a$ admits exactly two roots $1+b$ and $-1+b'$ in $\mathcal{O}_L$, with $b, b' \in \mathfrak{m}_L$ since $1$ and $-1$ are two distinct roots in $\kappa_L$ of the polynomial $X^2-1$.
Moreover $\omega(a) = \omega\big( (1+b)^2 - 1 \big) = \omega(b) + \omega(2 + b)$.
Since $p \neq 2$, we have $\omega(2+b) = 0$.
Hence, $\omega(a) = \omega(b)$.
\end{proof}

We provide a solution $(x,y) \in H(L,L_2)_{l'}$ of $t = 1 - {^\tau}ux + vy$ for a suitable value $l''$ such that $t \in 1+ \mathfrak{m}_L^{l''}$.

\begin{Lem}
\label{lem:inversion:relation:non:reduced:torus}
Assume that $p \neq 2$.
Let $l,l'\in \Gamma_a$ be such that $l+l' > 0$ and $l \in \Gamma'_a = \mathbb{Z}$.
Define $\varepsilon \in \{0,1\}$ as in Lemma \ref{lem:parametrisation:optimized:unipotent}.
Define
$$l'' = \max\left( 1 + 2 \varepsilon, \varepsilon + 2l + 2l' \right) \in \mathbb{N}^*$$
For any $w \in \mathfrak{m}_{L}^{l''}$, there exist $(u,v) \in H(L,L_2)_{l}$ and $(x,y) \in H(L,L_2)_{l'}$ such that ${^\tau}u x - v y = w$.
\end{Lem}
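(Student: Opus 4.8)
The plan is to invert the relation $T = 1 - {}^{\tau}u\,x + vy$ of Lemma \ref{lem:commutation:non:reduced:opposite:root:groups}: after choosing a convenient $(u,v) \in H(L,L_2)_l$, I reduce the remaining equation for $(x,y)$ to a single quadratic equation over the subfield $L_2$, which I then solve with the square-root lemma (Lemma \ref{lem:square:root}).

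First I would fix $(u,v)$: apply Lemma \ref{lem:parametrisation:optimized:unipotent} to obtain $u \in L$ with $\omega(u) = l$, and set $v = \tfrac12 u\,{}^{\tau}u \in L_2$, so that $(u,v) \in H(L,L_2)_l$ by part (c) of that lemma. Because $v = \tfrac12\,{}^{\tau}u\,u$ lies in $L_2$, I may restrict the search to pairs $(x,y)$ with $y \in L_2$; then the defining relation $x\,{}^{\tau}x = y + {}^{\tau}y$ of $H(L,L_2)$ forces $y = \tfrac12 N_{L/L_2}(x)$. Writing $\gamma = N_{L/L_2}(u)$, $c = w/{}^{\tau}u$ (so $\omega(c) \geq l'' - l$) and $n = N_{L/L_2}(x) \in L_2$, the equation ${}^{\tau}u\,x - vy = w$ becomes ${}^{\tau}u\,x = w + \tfrac{\gamma}{4}n$, i.e. $x = c + \tfrac14 u\,n$, and substituting this into $n = x\,{}^{\tau}x$ yields, with $\beta = \mathrm{Tr}_{L/L_2}(c\,{}^{\tau}u) \in L_2$ and $\delta = N_{L/L_2}(c) \in L_2$, the scalar quadratic equation
\[
\tfrac{\gamma}{16}\,n^{2} + \Bigl(\tfrac{\beta}{4}-1\Bigr)\,n + \delta = 0, \qquad n \in L_2 .
\]

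Next I would solve it. Since $\omega(\beta) \geq \omega(c\,{}^{\tau}u) \geq l'' \geq 1$ and $p \neq 2$, the coefficient $\tfrac{\beta}{4}-1$ is a unit of $\mathcal{O}_{L_2}$; and $\omega(\gamma) = 2l$, $\omega(\delta) = 2\omega(c) \geq 2(l''-l)$, so $\omega(\gamma\delta) \geq 2l'' > 0$. Hence the discriminant $(\tfrac{\beta}{4}-1)^{2} - \tfrac{\gamma\delta}{4}$ equals $(\tfrac{\beta}{4}-1)^{2}(1+r)$ with $r \in \mathfrak{m}_{L_2}$, which is a square in $L_2$ by Lemma \ref{lem:square:root} applied to the local field $L_2$. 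By the quadratic formula the two roots are $\tfrac{8}{\gamma}\bigl[(1-\tfrac{\beta}{4}) \pm e\bigr]$ with $e^{2} = (\tfrac{\beta}{4}-1)^{2}(1+r)$; taking $e \equiv 1-\tfrac{\beta}{4} \pmod{\mathfrak{m}_{L_2}}$ and the minus sign, I get a root $n \in L_2$ with $\omega(n) = \omega(\delta) = 2\omega(c) \geq 2(l''-l) \geq 2l'$ (the other root has valuation $-2l$, which is $< 2l'$ because $l+l' > 0$, and so would not do); here I use $l'' \geq l+l'$, which follows at once from $l+l' > 0$ and $\varepsilon \geq 0$. Finally I set $x = c + \tfrac14 u\,n$ and $y = \tfrac{n}{2} \in L_2$: the quadratic equation rearranges to $x\,{}^{\tau}x = \delta + \tfrac{\beta}{4}n + \tfrac{\gamma}{16}n^{2} = n = y + {}^{\tau}y$, so $(x,y) \in H(L,L_2)$; since $\omega(y) = \omega(n) \geq 2l'$ we have $(x,y) \in H(L,L_2)_{l'}$; and ${}^{\tau}u\,x - vy = {}^{\tau}u\,c = w$ by the choice of $c$. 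This produces the required $(u,v)$ and $(x,y)$.

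The step I expect to be the main obstacle is the valuation bookkeeping around the quadratic: one must simultaneously know that its discriminant really is a square in $L_2$ — this is where $p \neq 2$ is used, through Lemma \ref{lem:square:root} — and that the appropriate root is small enough to place $(x,y)$ in the prescribed subgroup $H(L,L_2)_{l'}$. In fact the argument above needs only $l'' \geq \max(1,\, l+l')$; the sharper value $l'' = \max(1+2\varepsilon,\ \varepsilon+2l+2l')$ appearing in the statement reflects the finer information $\omega(\mathrm{Tr}_{L/L_2}(u)) = l+\varepsilon$ supplied by Lemma \ref{lem:parametrisation:optimized:unipotent}, and is the form that is used later in Proposition \ref{prop:non:reduced:frattini:rank:one}.
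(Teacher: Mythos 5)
Your argument is correct, and it follows a genuinely different route from the paper's. The paper restricts the unknown to $x \in L_2$, solves $y = \frac{-w + {}^\tau u x}{v}$ linearly, and turns the constraint $x^2 = \mathrm{Tr}_{L/L_2}(y)$ into a quadratic in $x$ whose coefficients are $\mathrm{Tr}\left(\frac{{}^\tau u}{v}\right)$ and $\mathrm{Tr}\left(\frac{w}{v}\right)$; controlling these traces is exactly why it invokes the refined element of Lemma \ref{lem:parametrisation:optimized:unipotent} with $\omega(\mathrm{Tr}_{L/L_2}(u)) = l+\varepsilon$, and why $\varepsilon$ enters both the discriminant bound ($l'' \geq 1+2\varepsilon$) and the estimate $\omega(y) \geq l''-\varepsilon-2l$ ($l'' \geq \varepsilon+2l+2l'$). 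You instead restrict $y \in L_2$, which forces $y = \frac{1}{2}N_{L/L_2}(x)$, solve $x = c + \frac{1}{4}u\,n$ linearly in $n = N_{L/L_2}(x)$ by dividing only by ${}^\tau u$ (never by a trace), and land on a quadratic in $n$ over $L_2$; since $\beta = \mathrm{Tr}(c\,{}^\tau u) = \mathrm{Tr}(w)$ and $\gamma\delta$ have valuations controlled by $\omega(w)$ and $\omega(u)=l$ alone, the trace refinement disappears and your construction only needs $l'' \geq \max(1,\,l+l')$, which is indeed implied by the stated $l'' = \max(1+2\varepsilon,\ \varepsilon+2l+2l')$ since $l+l'>0$ — so you prove the lemma as stated (in fact a slightly stronger version), using only parts (a) and (c) of Lemma \ref{lem:parametrisation:optimized:unipotent}. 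Both routes rest on the same ingredients otherwise: the base point $(u,\tfrac12 u\,{}^\tau u)$ and the Hensel-type square root of Lemma \ref{lem:square:root} applied over $L_2$, plus the choice of the ``small'' root of the quadratic (your valuation bookkeeping — unit leading-type coefficient $\tfrac{\beta}{4}-1$, $\omega(\gamma\delta)\geq 2l''$, selected root of valuation $\omega(\delta) = 2\omega(w)-2l \geq 2l'$ — all checks out). One small correction to your closing remark: the $\varepsilon$ in the stated $l''$ is not extra sharpness your argument fails to capture; it is an artifact of the paper's parametrization (dividing by $\mathrm{Tr}(u)$), and the stated value is simply what the paper's construction delivers and what is consumed later in Proposition \ref{prop:non:reduced:frattini:rank:one} and Proposition \ref{prop:improvement:upper:bound:bounded:torus:non:reduced:case}.
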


\begin{proof}
In order to simplify notation in this proof, we denote by $T$ the field trace operator $\mathrm{Tr}_{L/L_2} : L \rightarrow L_2$.

Let $w \in \left(\mathfrak{m}_{L}\right)^{l''}$.
Choose $u \in L$ satisfying the properties (a),(b) and (c) of Lemma \ref{lem:parametrisation:optimized:unipotent} and set $v = \frac{1}{2} u {^\tau} u$.
We seek an element $(x,y) \in H(L,L_2) \cap ( L_2 \times L) $ such that ${^\tau}x u - v y = w$, which is equivalent to
$$\left\{\begin{array}{l}
y = \frac{- w + {^\tau} u x}{v}\\
x^2 = T(y) = - T\left(\frac{w}{v}\right) + x T\left(\frac{{^\tau}u}{v}\right)
\end{array}\right.$$
because $v \neq 0$ (otherwise property (a) would be contradicted).

Denote $\delta = 4 \frac{T\left(\frac{w}{v}\right)}{T\left(\frac{{^\tau}u}{v}\right)^2}$.
We have $T\left(\frac{{^\tau}u}{v}\right) = 2 \frac{T(u)}{u{^\tau}u}$ by definition of $v = \frac{1}{2}u{^\tau}u \in L_2$ and by $L_2$-linearity of $T$.
Hence $\omega\left(T\left(\frac{{^\tau}u}{v}\right)\right) = \omega\left( T(u) \right) - 2 \omega(u) = -l + \varepsilon$.
We have $\omega\left(T\left(\frac{w}{v}\right)\right) \geq \omega(w) - \omega(v) \geq l'' - 2 l$.
Hence $\omega(\delta) = \omega\left(T\left(\frac{w}{v}\right)\right) - 2 \omega\left(T\left(\frac{{^\tau}u}{v}\right)\right) \geq 
l'' - 2 \varepsilon \geq 1$.
By Lemma \ref{lem:square:root}, there exists $b \in \mathfrak{m}_{L_2}$ such that $(1+b)^2 = 1-\delta$ and $\omega(b) = \omega(\delta)$.
We denote $\sqrt[2]{1-\delta} = 1+b$.
Hence $\sqrt[2]{1-\delta} \in 1 + \delta \mathcal{O}_{L_2}$ is well-defined and $\omega\left( \sqrt[2]{1-\delta} - 1 \right) = \omega(\delta)$.

Set $x = \frac{1}{2} T\left( \frac{{^\tau}u}{v} \right) \left(1 - \sqrt[2]{1-\delta} \right)  \in L_2$ and set $y = \frac{w - {^\tau}u x}{v} \in L$.
We have $x^2 = T(y)$.
Moreover, $\omega(x) = \omega(\delta) + \varepsilon - l$.
We check the valuation of $y$:
$$\begin{array}{rcl}
\omega(y) & \geq & \min\big( \omega(w), \omega(u) + \omega(x) \big) - \omega( v )\\
& = & \min\big(l'', \omega(\delta) + \varepsilon \big) - 2l \\
& \geq & \min\left( l'', l'' -2 \varepsilon + \varepsilon \right) - 2l\\
& = & l'' - \varepsilon - 2l\\
& \geq & 2l'
\end{array}$$

Hence $(u,v) \in H(L,L_2)_{l}$ and $(x,y) \in H(L,L_2)_{l'}$ are suitable.
\end{proof}

Finally, we can combine Lemmas \ref{lem:commutation:non:reduced:torus:root:group}, \ref{lem:commutation:non:reduced:opposite:root:groups} and \ref{lem:inversion:relation:non:reduced:torus} in order to prove Proposition \ref{prop:non:reduced:frattini:rank:one}.

\begin{proof}[Proof of Proposition \ref{prop:non:reduced:frattini:rank:one}]
Up to exchanging $a$ and $-a$, one can suppose $l \in \Gamma'_a = \mathbb{Z} =\Gamma_L$.

By Lemma \ref{lem:commutation:non:reduced:torus:root:group}, we get $U_{-a,-l+1} \subset [H,H]$ and $U_{a,l+\frac{3}{2}} \subset [H,H]$ when $L/L_2$ is unramified; we get $U_{-a,-l+\frac{3}{2}} \subset [H,H]$ and $U_{a,l+2} \subset [H,H]$ when $L/L_2$ is ramified.

Let $t \in T^a(K)_b^{l''}$ and write it $t = \widetilde{a}(1+w)$ where 
$w \in \left(\mathfrak{m}_L\right)^{l''}$.
Set $l_0=l+1 \in \mathbb{Z}$ et $l'_0=-l+\frac{1}{2}$.
By Lemma \ref{lem:inversion:relation:non:reduced:torus}, there exists $(u,v) \in H(L,L_2)_{l_0}$ and $(x,y) \in H(L,L_2)_{l'_0}$ such that $-w = {^\tau}u x - v y$.

We use the commutation relation of opposite root groups \ref{lem:commutation:non:reduced:opposite:root:groups}.
Let:
$$\left\{\begin{array}{rcl}
	T & = &	1+w \\
	U & = & \frac{1}{{^\tau}T} \left(u^2 {^\tau}x - {^\tau}v x - u {^\tau}v {^\tau}y \right)\\
	V & = &	\frac{1}{T} \left( u v {^\tau}x - {^\tau}u {^\tau}v x + v {^\tau}v y \right) \\
	X & = & \frac{1}{T} \left( {^\tau}u x^2 - u y + v x y \right)\\
	Y & = &	\frac{1}{T} \left( {^\tau}x u y - {^\tau}u x {^\tau}y + v y {^\tau}y \right)
\end{array}\right.$$
By Lemma \ref{lem:commutation:non:reduced:opposite:root:groups},
we have $\left[x_{-a}(x,y),x_a(u,v)\right] = x_{-a}(X,Y) \widetilde{a}(T) x_a(U,V)$ with $\omega(V) \geq \lceil 3 l' + l \rceil$ and $\omega(Y) \geq \lceil l' +  3 l \rceil$.

Because $l \in \mathbb{Z}$, we have $\frac{1}{2} \lceil 3l'_0 + l_0 \rceil = -l+\frac{3}{2}$ and $\frac{1}{2} \lceil l'_0 + 3 l_0 \rceil = l+2$.
Hence $x_{-a}(X,Y) \in [T(K)_b^+,U_{-a,l}]$ and $x_{a}(U,V) \in [T(K)_b^+,U_{a,l+\frac{1}{2}}]$ by Lemma \ref{lem:commutation:non:reduced:torus:root:group}.
Because $\widetilde{a}(1+w) = x_{-a}(X,Y)^{-1} \left[ x_{-a}(x,y),x_a(u,v)\right] x_a(U,V)^{-1} \in [H,H]$, we get $T^a(K)_b^{l''} \subset [H,H]$.

We now assume that $\mathrm{char}(K) = p \geq 5$.
It suffices to check that $H^p \subset [H,H]$.
Inside $H / [H,H]$, we have $u^p = 1$ for any $u \in U_{a,l}$ and it is the same for $-a$.
Indeed, the element $x_a\left(u,v\right)^p = x_a\left(pu,pv+\frac{p(p-1)}{2} u {^\tau}u\right)$ is the neutral element in characteristic $p \neq 2$.

Moreover, if $t \in T^{a}(K)_b^+$, write $t = \widetilde{a}(1+w)$ where $w \in \mathfrak{m}_L$.
We have $(1+w)^p = 1 + w^p$ with $\omega(w^p) \geq p \geq 5 \geq l''$.
Hence $t^p \in T^a(K)_b^{l''} \subset [H,H]$.
\end{proof}

In the case of higher rank, we obtain in Proposition \ref{prop:generated:commutator:root:group} some inclusions of the form $U_{a,l_a} \subset [H,H]$ with a suitable value $l_a$, by commuting some root groups corresponding to non-collinear roots.
Hence, it is useful to do a further assumption on subgroups contained in $[H,H]$.

\begin{Prop}
\label{prop:improvement:upper:bound:bounded:torus:non:reduced:case}
If in Proposition \ref{prop:non:reduced:frattini:rank:one} we furthermore assume that $[H,H]H^p$ contains $U_{a,l+1}$ and $U_{-a,-l+\frac{1}{2}}$, then one can take $l'' = 1 + 2 \varepsilon$.
\end{Prop}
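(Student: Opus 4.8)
The plan is to reopen the proof of Proposition~\ref{prop:non:reduced:frattini:rank:one} (we keep $l\in\Gamma'_a=\mathbb{Z}$ as there) and pin down where the value $l''=3+\varepsilon$ enters. There a prescribed element $\widetilde{a}(1+w)$ of the torus is written as $x_{-a}(X,Y)^{-1}\big[x_{-a}(x,y),x_a(u,v)\big]x_a(U,V)^{-1}$ through the commutation relation of opposite root groups (Lemma~\ref{lem:commutation:non:reduced:opposite:root:groups}), where $(u,v)$ and $(x,y)$ solve ${^\tau}u\,x-vy=-w$ by the inversion Lemma~\ref{lem:inversion:relation:non:reduced:torus}. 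The commutator itself lies in $[H,H]$ as soon as $x_a(u,v)$ and $x_{-a}(x,y)$ lie in $H$; the size of $l''$ was forced only by the demand that the correction terms $x_a(U,V)$ and $x_{-a}(X,Y)$ land in subgroups already shown to lie in $[H,H]$, which — through the lower bounds on $\omega(V)$ and $\omega(Y)$ furnished by Lemma~\ref{lem:commutation:non:reduced:opposite:root:groups} — pushed the two filtration parameters given to Lemma~\ref{lem:inversion:relation:non:reduced:torus} up to $l+1$ and $-l+\frac{1}{2}$.

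Under the extra hypothesis that demand is weaker: it suffices that $x_a(U,V)\in U_{a,l+1}$ and $x_{-a}(X,Y)\in U_{-a,-l+\frac{1}{2}}$, both now contained in $[H,H]H^p$. First I would rerun the argument with the smaller parameters $(u,v)\in H(L,L_2)_{l+1}$ and $(x,y)\in H(L,L_2)_{-l}$ — the least values still compatible with $x_a(u,v)\in U_{a,l+1}\subseteq H$, with $x_{-a}(x,y)\in U_{-a,-l}\subseteq H$, and with the constraint $l+1\in\Gamma'_a$ of Lemma~\ref{lem:inversion:relation:non:reduced:torus} — and check from the same bounds on $\omega(V)$ and $\omega(Y)$ that the correction terms then land in $U_{a,l+1}$ and $U_{-a,-l+\frac{1}{2}}$. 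Lemma~\ref{lem:inversion:relation:non:reduced:torus} then applies with its $l''$ replaced by $\max\big(1+2\varepsilon',\,\varepsilon'+2\big)$, where $\varepsilon'$ is the integer of Lemma~\ref{lem:parametrisation:optimized:unipotent} attached to $l+1$ (so $\varepsilon'=1$ exactly when $L/L_2$ is ramified and $l$ even): this gives $T^a(K)_b^{3}\subseteq[H,H]H^p$ in that case and $T^a(K)_b^{2}\subseteq[H,H]H^p$ in every other case. When $\varepsilon=1$ (ramified, $l$ odd) this already yields $T^a(K)_b^{1+2\varepsilon}=T^a(K)_b^{3}\subseteq T^a(K)_b^{2}\subseteq[H,H]H^p$, which settles that case.

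When $\varepsilon=0$ the target $l''=1$ is the statement $T^a(K)_b^{+}\subseteq[H,H]H^p$, and one or two more units must still be gained; I would do this by a descending induction on $j\ge 1$ with base case $j\in\{2,3\}$ just obtained. Assuming $T^a(K)_b^{j+1}\subseteq[H,H]H^p$ and given $w$ with $\omega(w)=j$, take $u$ of valuation $l+1$ and $x$ of valuation $-l+j-1$, set $v=\frac{1}{2}u\,{^\tau}u$ and $y=\frac{1}{2}x\,{^\tau}x$, and choose the unit parts of $u$ and $x$ so that $-{^\tau}u\,x\equiv w\pmod{\mathfrak{m}_L^{j+1}}$ — possible because ${^\tau}$ preserves $\omega$ and the residues of those unit parts are free. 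Then $x_a(u,v)\in U_{a,l+1}\subseteq H$ and $x_{-a}(x,y)\in U_{-a,-l}\subseteq H$; the bounds of Lemma~\ref{lem:commutation:non:reduced:opposite:root:groups} put $x_a(U,V)\in U_{a,l+1}$ and $x_{-a}(X,Y)\in U_{-a,-l+\frac{1}{2}}$, both in $[H,H]H^p$; and $\omega(vy)=2j\ge j+1$ gives $T:=1-{^\tau}u\,x+vy\equiv 1+w\pmod{\mathfrak{m}_L^{j+1}}$. Hence $\widetilde{a}(T)\in[H,H]H^p$, and writing $\widetilde{a}(1+w)=\widetilde{a}(T)\,\widetilde{a}\big(T^{-1}(1+w)\big)$ with $\widetilde{a}\big(T^{-1}(1+w)\big)\in T^a(K)_b^{j+1}\subseteq[H,H]H^p$ we conclude $T^a(K)_b^{j}\subseteq[H,H]H^p$; at $j=1$ this is the claim. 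The assertion $H^p\subseteq[H,H]$ in characteristic $p$ carries over unchanged from Proposition~\ref{prop:non:reduced:frattini:rank:one}.

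I expect the main work to be the simultaneous bookkeeping in this last step: hitting the prescribed class in $\mathfrak{m}_L^{j}/\mathfrak{m}_L^{j+1}$ by $-{^\tau}u\,x$, keeping $x_a(u,v)$ and $x_{-a}(x,y)$ inside $H$, and keeping the correction terms inside the shallower groups $U_{a,l+1}$ and $U_{-a,-l+\frac{1}{2}}$, all simultaneously and uniformly over the ramified and unramified cases — where ${^\tau}$ can move uniformizers though never their valuation. Once these valuation inequalities are aligned the rest is routine.
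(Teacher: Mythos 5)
Your argument is correct, but after the first step it takes a genuinely different route from the paper. The paper's proof is a one-line re-parametrization: exchanging $a$ and $-a$ so that $l\in\mathbb{Z}+\frac{1}{2}$, it feeds Lemma \ref{lem:inversion:relation:non:reduced:torus} the pair $l_0=l+\frac{1}{2}\in\mathbb{Z}$, $l'_0=-l$, so that $2l_0+2l'_0=1$ and the lemma already gives $l''=\max(1+2\varepsilon,\varepsilon+1)=1+2\varepsilon$, while the correction terms from Lemma \ref{lem:commutation:non:reduced:opposite:root:groups} land exactly in $U_{a,l+1}$ and $U_{-a,-l+\frac{1}{2}}$, i.e.\ in the groups supplied by the extra hypothesis; no induction is needed. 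You keep $l\in\mathbb{Z}$, where the sum $2l_0+2l'_0$ cannot be brought below $2$ without losing either the $H$-membership of $x_a(u,v)$ or the landing of a correction term, so the inversion lemma only yields $T^a(K)_b^{2}$ or $T^a(K)_b^{3}$, and you close the remaining gap by a successive-approximation descent on the torus filtration, using the explicit elements $\bigl(u,\frac{1}{2}u\,{^\tau}u\bigr)$ and $\bigl(x,\frac{1}{2}x\,{^\tau}x\bigr)$ to match the leading term of $w$ by $-{^\tau}u\,x$, the error $vy$ having valuation $2j\geq j+1$. I checked the bookkeeping: with $\omega(u)=l+1$, $\omega(x)=-l+j-1$ one gets $\omega(V)\geq 2l+j+2$ and $\omega(Y)\geq -2l+3j-2$, so the correction factors lie in $U_{a,l+1}$ and $U_{-a,-l+\frac{1}{2}}$ for every $j\geq 1$, and $T^{-1}(1+w)\in 1+\mathfrak{m}_L^{j+1}$ makes the factorization $\widetilde{a}(1+w)=\widetilde{a}(T)\,\widetilde{a}\bigl(T^{-1}(1+w)\bigr)$ legitimate; your careful separation of $\varepsilon'$ (attached to $l+1$) from the proposition's $\varepsilon$ is also needed and correctly handled. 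What each approach buys: the paper's swap is shorter and stays entirely inside the two lemmas already proved; your descent is longer but bypasses the quadratic constraint ($x\in L_2$) built into the proof of Lemma \ref{lem:inversion:relation:non:reduced:torus}, and it would in fact push the ramified, $l$ odd case down to $T^a(K)_b^{+}$ as well, slightly more than the stated $l''=1+2\varepsilon$.
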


\begin{proof}
In the above proof, up to exchanging $a$ and $-a$ so that $l \in \mathbb{Z} + \frac{1}{2}$ and $l' \in \mathbb{Z}$, we can replace the equalities $l_0 = l+1$ and $l'_0 = -l + \frac{1}{2}$ by $l_0 = l+\frac{1}{2} \in \mathbb{Z}$ and $l'_0 = -l$.
Indeed, in this case we obtain $\lceil 3 l'_0 + l_0 \rceil = \lceil -2l + \frac{1}{2} \rceil = -2l+1$, so that $U_{-a,\frac{1}{2} \lceil 3 l'_0 + l_0 \rceil} \subset H^p [H,H]$ by the additional assumption.
In the same way, $\lceil 3 l'_0 + l_0 \rceil = 2l+2$ so that $U_{a,\frac{1}{2} \lceil l'_0 + 3 l_0 \rceil} \subset H^p [H,H]$.
As a consequence, we can conclude as before.
\end{proof}

To conclude this section, we compute the commutation relation between elements of the same root group. This is non-trivial because, in the non-reduced case, the root group is non-commutative.
This will be useful in order to understand the action of a maximal pro-$p$ subgroup on the Bruhat-Tits building.

\begin{Lem}[Computation of the derived group of a valued root group: specificity on the non-reduced case]
\label{lem:non:reduced:derived:valued:root:group}
Let $l,l' \in \Gamma_a = \frac{1}{2} \mathbb{Z}$.
In general, we have $\left[ U_{a,l}, U_{a,l'} \right] \subset U_{2a,\lceil l \rceil + \lceil l' \rceil}$.

If $L/L_2$ is unramified and $p\neq 2$, then $\left[ U_{a,l}, U_{a,l} \right] = U_{2a,2\lceil l \rceil}$.

If $L/L_2$ is ramified and $p\neq 2$, then $\left[ U_{a,l}, U_{a,l} \right] = U_{2a,2\lceil l \rceil +1}$.
\end{Lem}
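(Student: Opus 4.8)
The plan is to reduce everything to the group law on $H(L,L_2)$ and to the decomposition $L = L_2 \oplus L_a^0$, which is available because $p \neq 2$. First I would record the commutator formula. From $x_a(u,v)\,x_a(u',v') = x_a(u+u',\,v+v'+u\,{}^\tau u')$ one gets $x_a(u,v)^{-1} = x_a(-u,\,-v+u\,{}^\tau u)$, and expanding $x_a(u,v)x_a(u',v')x_a(u,v)^{-1}x_a(u',v')^{-1}$ the first coordinate cancels and the second collapses to
\[
\bigl[x_a(u,v),x_a(u',v')\bigr] = x_a\bigl(0,\ u\,{}^\tau u' - u'\,{}^\tau u\bigr) = x_{2a}\bigl(u\,{}^\tau u' - {}^\tau(u\,{}^\tau u')\bigr),
\]
the equality on the right using that $u\,{}^\tau u' - u'\,{}^\tau u \in \ker(\tau+\mathrm{id}) = L_a^0$. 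In particular the commutator depends only on $u$ and $u'$, which re-proves $[U_a,U_a]\subset U_{2a}$.

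For the general inclusion, note that if $x_a(u,v)\in U_{a,l}$ then $\omega(v)\geq 2l$, and the relation $v+{}^\tau v = u\,{}^\tau u$ forces $2\omega(u)\geq\omega(v)$, hence $\omega(u)\geq l$; as $\omega(u)\in\Gamma_L=\mathbb{Z}$, in fact $\omega(u)\geq\lceil l\rceil$. Since $\tau$ preserves $\omega$, the commutator formula gives $\varphi_{2a}\bigl([x_a(u,v),x_a(u',v')]\bigr) = \omega(u\,{}^\tau u'-u'\,{}^\tau u)\geq\omega(u)+\omega(u')\geq\lceil l\rceil+\lceil l'\rceil$; as $U_{2a,\lceil l\rceil+\lceil l'\rceil}$ is a group, so is the subgroup generated by such commutators, giving $[U_{a,l},U_{a,l'}]\subset U_{2a,\lceil l\rceil+\lceil l'\rceil}$. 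When $L/L_2$ is ramified, Lemma \ref{lem:sets:of:values:multipliable:root} gives $\Gamma_{2a}=1+2\mathbb{Z}$, so for $l'=l$ the value $\omega(u\,{}^\tau u'-u'\,{}^\tau u)$, being a valuation of an element of $L_a^0$ and $\geq 2\lceil l\rceil$, is automatically $\geq 2\lceil l\rceil+1$; this upgrades the inclusion to $[U_{a,l},U_{a,l}]\subset U_{2a,2\lceil l\rceil+1}$ in the ramified case (and it is $\subset U_{2a,2\lceil l\rceil}$ in the unramified case).

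For the reverse inclusions, set $m=\lceil l\rceil$. Since $p\neq 2$, for any $u$ with $\omega(u)\geq m$ the pair $\bigl(u,\tfrac12 u\,{}^\tau u\bigr)$ lies in $H(L,L_2)_l = U_{a,l}$, so the first coordinates occurring in $U_{a,l}$ are exactly the elements of $\mathfrak{m}_L^m$. Because $U_{2a}$ is abelian, $[U_{a,l},U_{a,l}]$ corresponds under $x_{2a}$ to the additive subgroup of $L_a^0$ generated by $\{u\,{}^\tau u'-u'\,{}^\tau u : u,u'\in\mathfrak{m}_L^m\}$. Writing $u\,{}^\tau u'-u'\,{}^\tau u = z - {}^\tau z$ with $z=u\,{}^\tau u'$, and observing that every $z\in\mathfrak{m}_L^{2m}$ arises this way (take $u=\varpi_L^m$ and $u'={}^\tau(z\varpi_L^{-m})$, so $\omega(u')=\omega(z)-m\geq m$), this subgroup is exactly $\phi(\mathfrak{m}_L^{2m})$, where $\phi\colon z\mapsto z-{}^\tau z$. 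Now $\mathfrak{m}_L^{2m}$ is $\tau$-stable, so (using $p\neq 2$ once more, to invert $2$) it decomposes as $(\mathfrak{m}_L^{2m}\cap L_2)\oplus(\mathfrak{m}_L^{2m}\cap L_a^0)$; since $\phi$ kills $L_2$ and acts as multiplication by the unit $2$ on $L_a^0$, we get $\phi(\mathfrak{m}_L^{2m}) = \mathfrak{m}_L^{2m}\cap L_a^0 = \{y\in L_a^0:\omega(y)\geq 2m\}$. By the description of $\Gamma_{2a}$ in Lemma \ref{lem:sets:of:values:multipliable:root}, this set equals $U_{2a,2\lceil l\rceil}$ when $L/L_2$ is unramified and $U_{2a,2\lceil l\rceil+1}$ when $L/L_2$ is ramified (the even value $2m$ not being attained there); combined with the previous paragraph, this yields the two equalities.

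The one delicate point is the bookkeeping with the integral structure: checking that $\bigl(u,\tfrac12 u\,{}^\tau u\bigr)\in U_{a,l}$ realizes precisely the first coordinates with $\omega(u)\geq\lceil l\rceil$, that $\{u\,{}^\tau u':u,u'\in\mathfrak m_L^m\}$ is all of $\mathfrak m_L^{2m}$, and that $\phi(\mathfrak m_L^{2m})=\mathfrak m_L^{2m}\cap L_a^0$ matches the correct term of the filtration $U_{2a,\bullet}$ — it is exactly here that the gap $\Gamma_{2a}=1+2\mathbb{Z}\subsetneq\mathbb{Z}$ in the ramified case produces the extra $+1$. The commutator identity and the triangle-inequality estimate are otherwise routine.
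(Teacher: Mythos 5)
Your proof is correct, and its first half (the commutator identity $[x_a(u,v),x_a(u',v')]=x_a(0,\,u\,{}^{\tau}u'-u'\,{}^{\tau}u)$, the observation that $\omega(v)\geq 2l$ forces $\omega(u)\geq\lceil l\rceil$ because $\omega(u)\in\mathbb{Z}$, and the resulting inclusion $[U_{a,l},U_{a,l'}]\subset U_{2a,\lceil l\rceil+\lceil l'\rceil}$) is exactly the paper's argument; your parity remark in the ramified case is the same observation the paper phrases as $U_{2a,2\lceil l\rceil}=U_{2a,2\lceil l\rceil+1}$. Where you genuinely diverge is the reverse inclusion. The paper exhibits, for each target $w\in(L^0)_{2\lceil l\rceil}$, an explicit commutator: it chooses $\lambda_0\in L^0$ of valuation $0$ (unramified) or $1$ (ramified), sets $x=\lambda_0\varpi^{k}$ with a case split on the parity of $\lceil l\rceil$ in the ramified case, and solves $u=w/(x-{}^{\tau}x)$. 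You instead note that the first coordinates occurring in $U_{a,l}$ are exactly $\{u:\omega(u)\geq\lceil l\rceil\}$ (using $(u,\tfrac12 u\,{}^{\tau}u)$, hence $p\neq 2$), that every $z$ with $\omega(z)\geq 2\lceil l\rceil$ is of the form $u\,{}^{\tau}u'$ with both factors admissible, and therefore that the commutator set is the image of $\mathfrak{m}_L^{2\lceil l\rceil}$ under $z\mapsto z-{}^{\tau}z$; the $\tau$-eigenspace decomposition $L=L_2\oplus L^0$ (again $p\neq 2$) identifies this image with $\mathfrak{m}_L^{2\lceil l\rceil}\cap L^0$, and Lemma \ref{lem:sets:of:values:multipliable:root} converts that set into $U_{2a,2\lceil l\rceil}$ or $U_{2a,2\lceil l\rceil+1}$ according to ramification. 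Your route is uniform in the ramified/unramified dichotomy, dispenses with the choice of $\lambda_0$ and the parity discussion, and makes transparent that the only arithmetic input beyond $p\neq 2$ is the computation of $\omega({L^0}^{\times})$; the paper's construction is more hands-on and produces completely explicit unipotent witnesses for a prescribed $w$, in the same style as its surrounding rank-one computations, but proves nothing more. Both arguments show every element of the target group is a single commutator, so the two proofs are interchangeable here.
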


\begin{proof}
Let $(u,v) , (x,y) \in H(L,L_2)$.
In matrix-wise terms, we have
$$\left[\begin{pmatrix}
1&-{^\tau}x&-y\\
0&1&-x\\
0&0&1\end{pmatrix} ,
\begin{pmatrix}
1&-{^\tau}u&-v\\
0&1&-u\\
0&0&1\end{pmatrix} \right] =
\begin{pmatrix}
1&0&x {^\tau}u - u {^\tau}x\\0&1&0\\0&0&1
\end{pmatrix}$$
We deduce that $\left[x_a(x,y),x_a(u,v)\right] = x_a\left(0,x {^\tau}u - u {^\tau}x\right)$.

If $\omega(y) \geq 2l$, then $\omega(x) \geq \lceil l \rceil$ because $\omega(x) \in \Gamma_L = \mathbb{Z}$.
Likewise, if $\omega(v) \geq 2l'$, then $\omega(u) \geq \lceil l' \rceil$.
Hence $\omega\left( x {^\tau}u - u {^\tau}x \right) \geq \omega(u) + \omega(x) \geq \lceil l \rceil + \lceil l' \rceil$.
We obtain $\left[ U_{a,l}, U_{a,l} \right] \subset U_{2a,\lceil l \rceil + \lceil l' \rceil}$.

Conversely, we show that any element of $U_{2a,2\lceil l \rceil}$ can be written as the commutator of two suitable elements in $U_{a,l}$.
For that, it suffices to show that for any $w \in (L^0)_{2\lceil l \rceil}$, there exist $(u,v), (x,y) \in H(L,L_2)_l$ such that $w=x {^\tau}u - u {^\tau}x$.

We firstly consider the case of a unramified extension $L/L_2$ with $p \neq 2$.
In this case, we have $\Gamma'_{2a} = \Gamma_{2a} = \mathbb{Z}$ by Lemma \ref{lem:sets:of:values:multipliable:root}.
Hence, there exists $\displaystyle \lambda_0 \in (L^0)_0 = \left\{\lambda \in \mathcal{O}_L^\times,\ \lambda + {^\tau}\lambda =0\right\}$.
Let $\varpi \in \mathcal{O}_{L_2}$ be a uniformizer.
Set $x = \lambda_0 \varpi^{\lceil l \rceil}$ and set $y = \frac{1}{2} x {^\tau}x$ so that $(x,y) \in H(L,L_2)_{l}$.
Let $w \in (L^0)_{2 \lceil l \rceil}= \left\{ w_0 \in \left(\mathfrak{m}_L\right)^{2 \lceil l \rceil} ,\  w_0 + {^\tau}w_0 = 0  \right\}$.
Then $u = \frac{w}{x-{^\tau}x} \in L_2$.
Indeed, ${^\tau}u = \frac{{^\tau}w}{{^\tau}x-x} = \frac{-w}{-(x-{^\tau}x)} = u$.
Moreover, $\omega(x-{^\tau}x) = \omega\left((\lambda_0 - {^\tau}\lambda_0) \varpi^{ \lceil l \rceil}\right) = \omega(2 \lambda_0) + \omega(\varpi^{ \lceil l \rceil}) = \lceil l \rceil$ because $p \neq 2$.
Hence $\omega(u) = \omega(w) - \omega(x-{^\tau}x) = \lceil l \rceil$.
Set $v = \frac{1}{2} u {^\tau} u = \frac{u^2}{2}$ so that $(u,v) \in H(L,L_2)_l$.
We have $x {^\tau}u - u {^\tau}x = u (x-{^\tau}x) = w$.

We secondly consider the case of a ramified extension $L/L_2$ with $p \neq 2$.
In this case, $\Gamma'_{2a} = \Gamma_{2a} = 2\mathbb{Z} +1$ by Lemma \ref{lem:sets:of:values:multipliable:root}.
Thus $ U_{2a,2\lceil l \rceil} = U_{2a,2\lceil l \rceil +1}$.
Moreover, there exists $\displaystyle \lambda_0 \in (L^0)_1 = \left\{\lambda \in \mathcal{O}_L,\ \lambda + {^\tau}\lambda =0 \text{ et } \omega(\lambda) = 1\right\}$.
Let $\varpi \in \mathcal{O}_{L_2}$ be a uniformizer.

If $\lceil l \rceil \in 2 \mathbb{Z}$, we set $x = \lambda_0 \varpi^{\frac{\lceil l \rceil}{2}}$ and $y = \frac{1}{2} x {^\tau}x$ so that $(x,y) \in H(L,L_2)_{l}$.

Otherwise, $\lceil l \rceil \in 2 \mathbb{Z} +1 $. We set $x = \lambda_0 \varpi^{\frac{\lceil l \rceil - 1}{2}}$ and $y = \frac{1}{2} x {^\tau}x$ so that $(x,y) \in H(L,L_2)_{l}$.

Let $w \in (L^0)_{2 \lceil l \rceil}= \left\{ w_0 \in \left(\mathfrak{m}_L\right)^{2 \lceil l \rceil} ,\  w_0 + {^\tau}w_0 = 0  \right\}$.
Then, as before, we get $u = \frac{w}{x-{^\tau}x} \in L_2$.
Moreover, $\omega\left((\lambda_0 - {^\tau}\lambda_0\right) = \omega(2 \lambda_0) = 1$ because $p \neq 2$.
Hence, we obtain the inequalities $\omega(x) \geq \lceil l \rceil $ and $ \omega\left( x - {^\tau}x \right) =  \leq \lceil l \rceil + 1$.
Hence $\omega(u) = \omega(w) - \omega(x-{^\tau}x) \geq \lceil l \rceil$.
We set $v = \frac{1}{2} u {^\tau} u = \frac{u^2}{2}$ so that $(u,v) \in H(L,L_2)_l$.
We get $x {^\tau}u - u {^\tau}x = u (x-{^\tau}x) = w$.
\end{proof}

\section{Bruhat-Tits theory for quasi-split semisimple groups} 
\label{sec:bruhat:tits:building}

In Bruhat-Tits theory, a building is attached to a reductive group in two steps.
The first step, in \cite[§4]{BruhatTits2}, corresponds to split and quasi-split groups.
The second step in \cite[§5]{BruhatTits2} is an \'etale descent to the base field.
In order to describe some subgroups in terms of the action on the Bruhat-Tits building, in Section \ref{sec:description:apartment}, we recall how the simplicial structure of the building is defined thanks to the valuation of root groups.
Then, in Section \ref{sec:action:on:ball}, we consider the action of the group $G(K)$ on its Bruhat-Tits building $X(G,K)$.
In this section, $K$ is a local field and $G$ is an almost-$K$-simple simply-connected quasi-split $K$-group.

\subsection{Numerical description of walls and alcoves}
\label{sec:description:apartment}

The Bruhat-Tits building of $(G,K)$ is obtained by gluing together affine spaces, called apartments, having the same given simplicial structure.
This consists in defining the building as $X(G,K) = G(K) \times \mathbb{A} / \sim$, where $\mathbb{A}$ is a suitable affine space, called the standard apartment,  see \cite[§9]{Landvogt}.
The apartments are glued together along hyperplanes called walls, that we will describe as zero sets of affine functions thanks to the sets of values defined in Section \ref{sec:set:of:values}.
In Section \ref{sec:walls}, we recall how we deduce the simplicial structure of an apartment from the definition of walls.
More precisely, we define an \gts{affinisation} of the spherical root system following the Bruhat-Tits method. In Lemma \ref{lem:equivalence:affine:root:systems}, we check that this construction coincide with the affine root system defined by Tits in \cite{TitsCorvallis}.
In Section \ref{sec:description:alcove}, we describe, thanks to the sets of values, a well-chosen alcove, which is the candidate to be a fundamental domain of the action of $G(K)$ on $X(G,K)$.
In Section \ref{sec:counting:alcoves}, we look locally the building near an alcove.

\subsubsection{Walls of an apartment of the Bruhat-Tits building} 
\label{sec:walls}

In \cite[§1]{Landvogt}, we define a simplicial structure for apartments as follows.
Firstly, we let $\mathbb{A} = \mathbb{A}(G,S,K)$ be the unique affine space under $V = X_*(S) \otimes_\mathbb{Z} \mathbb{R}$ together with a suitable group homomorphism $\nu : \mathcal{N}_G(S)(K) \rightarrow \mathrm{Aff}(\mathbb{A})$.

Secondly, each relative root $a \in \Phi \subset X^*(S)$ induces a linear form on $V$ deduced by linearity from the dual pairing $X_*(S) \times X^*(S) \rightarrow \mathbb{Z}$. Hence, up to choice of an origin $\mathcal{O} \in \mathbb{A}$, each relative root induces an affine map on $\mathbb{A}$.

Thirdly, any relative root $a\in \Phi \subset X^*(S)$ can be seen as a linear form on $V=X_*(S) \otimes_\mathbb{Z} \mathbb{R}$, arising from the dual pairing $\langle \cdot , \cdot \rangle : X^*(S) \times X_*(S) \rightarrow \mathbb{R}$.
From this spherical root system (where each root is seen as a linear form), we define an \gts{affinisation}.
Hence, each affine map $\theta(a,l) = a( \cdot - \mathcal{O}) - l : \mathbb{A} \rightarrow \mathbb{R}$, where $a \in \Phi$ and $l \in \mathbb{R}$, determinates a unique \textbf{half-apartment} denoted by:
$$D(a,l) = \{x \in \mathbb{A},\ \theta(a,l)(x) > 0 \}$$
whose border (an affine subspace of codimension one) is denoted by $\mathcal{H}_{a,l} = \{x \in \mathbb{A},\ \theta(a,l)(x) = 0 \}$.
When $l \in \Gamma'_a$, the affine map $\theta(a,l)$ is called an \textbf{affine root}.
In Lemma \ref{lem:equivalence:affine:root:systems}, we will see that the set of affine roots is the affine root system of \cite[1.6]{TitsCorvallis}.

For each affine root $\theta(a,l)$, the corresponding $\mathcal{H}_{a,l}$ is called a \textbf{wall} of $\mathbb{A}$.
The walls induce a structure of poly-simplicial complex on $\mathbb{A}$:
a connected component of $\displaystyle \mathbb{A} \setminus \bigcup_{a \in \Phi,\  l \in \Gamma'_a} \mathcal{H}_{a,l}$ is called an \textbf{alcove}.
It is a simplex of maximal dimension.
More generally, we define an equivalence relation on points on $\mathbb{A}$ by $x \sim y$ if, for any $a \in \Phi$, if the real numbers $a(x)$ and $a(y)$ have the same sign or are both equal to zero.
That means $x \sim y$ if, and only if, $x$ and $y$ always are in the same half-apartment.
An equivalence class is called a \textbf{facet}; alcoves are the facets of maximal dimension.
The set of facets constitutes a partition of $\mathbb{A}$.
Finally, the affine space $\mathbb{A}$ together with the affine root system $\{ \theta(a,l),\ a \in \Phi\text{ and } l \in \Gamma'_a \}$ and the structure of poly-simplicial complex deduced from the walls is called the \textbf{standard apartment}.

\begin{Not}
\label{not:Omega}
For any non-empty bounded subset $\Omega$ of $\mathbb{A}$, according to \cite[§4]{BruhatTits2} and \cite[§5]{Landvogt}, we denote:
\begin{itemize}
\item $f_\Omega(a) = \sup\{-a(x),\ x \in \Omega \}$ for any relative root $a \in \Phi$;
\item $U_{a,\Omega} = U_{a,f_\Omega(a)}$ for any relative root $a \in \Phi$;
\item $\begin{array}{rcl}f'_\Omega(a) & = & \inf\left\{l \in \Gamma'_a,\ l \geq f_\Omega(a) \text{ or } \frac{1}{2} l \geq f_\Omega(\frac{a}{2}) \right\} \\ & = & \sup \{ l \in \mathbb{R},\  U_{a,l} = U_{a,f_\Omega(a)}\}\end{array}$
\item $U_\Omega$ the subgroup of $G(K)$ generated by the groups $U_{a,\Omega}$ where $a\in\Phi$;
\item $N_\Omega = \{ n \in \mathcal{N}_{G}(S)(K),\ \forall x \in \Omega,\ n \cdot x = x\}$;
\item $P_\Omega = U_\Omega \cdot T(K)_b$,  (we recall that $T(K)_b$ normalizes $U_\Omega$);
\item $\widehat{P_\Omega}$ the subgroup of $G(K)$ generated by $U_\Omega$ and $N_\Omega$.
\end{itemize}
Moreover, because $G$ is a (quasi-split) semisimple $K$-group, the group $\widehat{P}_\Omega$ can be realized as the integral points of a suitable model $\mathfrak{G}_\Omega$ of $G$, and we write $\widehat{P}_\Omega = \mathfrak{G}_\Omega^\circ(\mathcal{O}_K)$.
This group is the connected pointwise stabilizer in $G(K)$ of the subset $\Omega \subset X(G,K)$ \cite[4.6.28]{BruhatTits2}.
\end{Not}

From the dual pairing, each relative root $a \in \Phi$ can be realized geometrically in the Euclidean dual space $V^*$.
By \cite[VI.1.4 Prop. 12]{Bourbaki4-6}, there are exactly one or two values for the length of a root if $\Phi$ is reduced; and by \cite[VI.4.14]{Bourbaki4-6} there are three values if $\Phi$ is non-reduced.
We say that a root $a \in \Phi$ is a \textbf{long root} if its length is maximal in its irreducible component, and is a \textbf{short root} otherwise.
More precisely, if $\Phi$ is a reduced non-simply laced root system, the ratio between the length of a long root and the length of a short root is exactly $\sqrt{d'}$ where the integer $d' \in \{1,2,3\}$ has been defined in \ref{not:L:prime} considering the smallest extension of $K$ splitting $G$.

\begin{Prop}
\label{prop:length:splitting:field}
Let $d$, $L'$, $L_d$ as in \ref{not:L:prime}.

(1) If $d=1$, every root $a \in \Phi$ has $L_a = L' = L_d = L_0$ as splitting field (up to isomorphism, in the sense of \ref{def:splitting:field}).

(2) If $d \geq 2$ and $\Phi$ is reduced, every short root has $L'$ as splitting field; every long root has $L_d$ as splitting field.

(3) If $d = 2$ and $\Phi$ is non-reduced, every non-divisible root has $L'$ as splitting field; every divisible root has $L_d$ as splitting field.
\end{Prop}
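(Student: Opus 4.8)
The plan is to reduce to the absolutely simple case and then determine, for each relative root $a$, the $\Sigma$-orbit of $\widetilde\Phi$ lying above it, whose cardinality governs the degree of $L_a$. By Remark~\ref{rq:reduction:to:absolutely:simple:groups} we may assume that $G$ is absolutely simple, so that $\widetilde\Phi$ is irreducible, $\widetilde K=L_0$, $L_d=K$, and the $*$-action of $\Sigma=\mathrm{Gal}(\widetilde K/K)$ on $\mathrm{Dyn}(\widetilde\Delta)$ and on $\widetilde\Phi$ is faithful with image of order $d$. Given $a\in\Phi$, choose $\alpha$ in the fibre $\rho^{-1}(a)\cap\widetilde\Phi$; by Notation~\ref{not:star:action} this fibre is precisely the orbit $\Sigma\cdot\alpha$, so orbit--stabilizer together with Galois theory gives $[L_a:K]=[\Sigma:\Sigma_\alpha]=|\Sigma\cdot\alpha|$. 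If $d=1$ then $\Sigma$ is trivial and $L_a=\widetilde K=K=L'=L_d=L_0$ for every $a$, which is~(1).

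Assume from now on $d\ge 2$. Then $\widetilde\Phi$ is of type $A_n$, $D_n$ or $E_6$; in particular it is simply laced, so all absolute roots have a common length $\ell$, and moreover the highest root of $\widetilde\Phi$ is $\Sigma$-fixed, so that the maximal length attained on $\Phi$ equals $\ell$. When $\Phi$ is reduced it is then non-simply-laced with exactly two root lengths (its long roots being those of length $\ell$), and when $\Phi$ is non-reduced it is of type $BC_l$, whose divisible roots $2e_i$ are strictly longer than its non-divisible roots $e_i$ and $e_i\pm e_j$. I would first dispose of the relative roots of maximal length: since $\rho$ is the orthogonal projection onto the fixed subspace $V^*$, for $\beta\in\widetilde\Phi$ we have $|\rho(\beta)|\le|\beta|$ with equality iff $\beta\in V^*$, i.e.\ iff $\beta$ is $\Sigma$-fixed. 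Hence $a$ has length $\ell$ iff $\rho^{-1}(a)\cap\widetilde\Phi=\{\alpha\}$ consists of a single $\Sigma$-fixed root, and then $L_a=\widetilde K^{\Sigma}=K=L_d$. This already gives the assertions on long roots in~(2) and on divisible roots in~(3).

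It remains to handle the short roots in the reduced case and the non-divisible roots in the non-reduced case; in both situations $a$ is not of maximal length, so its fibre is a $\Sigma$-orbit of cardinality $k>1$, and I must show $k=d'$ and that $L_a$ is isomorphic, as an extension of $K$, to $L'$. If $d\in\{1,2,3\}$ then $d'=d$ and $k\mid d$ with $k>1$ forces $k=d$, hence $\Sigma_\alpha=\{1\}$ and $L_a=\widetilde K=L_0=L'$. The one genuinely delicate case is $d=6$, i.e.\ the trialitarian group of type ${}^6D_4$ with $\Phi$ of type $G_2$ and $d'=3$. Here the roots of $\widetilde\Phi=D_4$ fixed by the cyclic subgroup of order $3$ of $\Sigma\cong\mathfrak S_3$ form the subsystem of type $A_2$ made up of the six long roots of $G_2$ (and these are also exactly the $\mathfrak S_3$-fixed roots). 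Consequently no short root of $G_2$ has a preimage fixed by an element of order $3$, so $3\nmid|\Sigma_\alpha|$ and $k=6/|\Sigma_\alpha|\in\{3,6\}$; but the six short roots of $G_2$ have preimages partitioning the $24-6=18$ non-fixed roots of $D_4$, and $18=6a_6+3a_3$ with $a_6+a_3=6$ forces $a_6=0$, so $k=3=d'$. Then $|\Sigma_\alpha|=2$, and all order-$2$ subgroups of $\mathfrak S_3$ being conjugate, $L_a=\widetilde K^{\Sigma_\alpha}$ is, up to isomorphism, the degree-$3$ subextension $L'$.

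The main obstacle is precisely this ${}^6D_4$ computation: everything else rests only on orbit--stabilizer and on orthogonal projection onto the fixed subspace being length non-increasing, whereas ruling out orbits of size $2$ or $6$ above a short root of $G_2$ uses the specific combinatorics of $D_4$ under triality. Alternatively, parts~(2) and~(3) can be checked case by case from the explicit foldings ${}^2A_{2n-1}\to C_n$, ${}^2D_n\to B_{n-1}$, ${}^2E_6\to F_4$, ${}^3D_4,\,{}^6D_4\to G_2$ and ${}^2A_{2n}\to BC_n$ recorded in \cite[\S4.2]{BruhatTits2}, in each of which one reads off directly that the longest relative roots lift to $\Sigma$-fixed absolute roots and the remaining ones lift to orbits of size $d'$.
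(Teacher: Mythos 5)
Your proof is correct and follows essentially the same route as the paper's: you use that the orthogonal projection $\rho$ is length non-increasing, with equality exactly on $\Sigma$-fixed roots, to match long (resp.\ divisible) relative roots with singleton fixed fibres and the remaining relative roots with orbits of size $d'$, and then conclude by Galois theory that the stabilizer's fixed field is $L_d$ or (up to conjugacy) $L'$. The only real difference is that you justify by an explicit orbit count the ${}^6D_4$ fact that short roots of $G_2$ lie under orbits of size $3$ (stabilizer of index $3$ in $\mathfrak{S}_3$), which the paper merely records as an observation.
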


\begin{proof}

(1) If $d = 1$, then $\Sigma_0 = \Sigma_d = \Sigma_a$ for any root $a \in \Phi$.
Hence, we have the equality of the corresponding fixed fields $L_0 = L_d = L_a = L'$.

Suppose now that $d \geq 2$.
Because $\mathrm{Dyn}(\widetilde{\Delta})$ has a non-trivial symmetry, all the absolute roots have the same length in the geometric realisation in $\widetilde{V}^*$ defined in \ref{not:star:action}.
Let $a$ be a relative root, seen as orbit, which contain several absolute roots. In the geometric realization, the orbit $a$ can be geometrically realized as the orthogonal projection of its absolute roots.
Hence, the length of the orbits having several roots is shorter than that of the orbits having only one root.

Let $a \in \Phi$ be a relative root and let $\alpha \in \widetilde{\Phi}$ be an absolute root so that the relative root $a = \alpha|_S$ is its orbit for the $*$-action.

(2) If $d\geq 2$ and $\Phi$ is reduced.
If $a$ is short, then $\Sigma_0$ fixes $\alpha$ but $\Sigma_d$ does not.
Moreover, we observe that for $d = 6$ (hence $\widetilde{\Phi}$ is of type $D_4$), the stabilizer of $\alpha$ in $\Sigma_d / \Sigma_0 \simeq \mathfrak{S}_3$ has index $3$.
Hence $L_\alpha$ is a separable extension of $L_d$ of degree $3$ if $d \geq 3$ and of degree $2$ otherwise, hence isomorphic to $L'$.
Thus $L' = L_a$.
If $a$ is long, then $\Sigma_d$ is the stabilizer of $\alpha$.
Hence $L_d = L_a$.

(3) If $d = 2$ and $\Phi$ is non-reduced.
If $a$ is divisible, then $a$ is a long root.
Hence $\Sigma_2$ is the stabilizer of $\alpha$.
Thus $L_2 = L_a$.
Otherwise, $a$ is a short root.
Hence $\Sigma_0$ is the stabilizer of $\alpha$.
Thus $L'=L_0 = L_a$.
\end{proof}

\subsubsection{Description of an alcove by its panels} 
\label{sec:description:alcove}

An alcove is the candidate to be a fundamental domain of the action of $G(K)$ on its Bruhat-Tits building $X(G,K)$.

\begin{Def}
\label{def:panel}
A \textbf{panel} is a facet of $X(G,S)$ of codimension $1$.
\end{Def}

We want to describe precisely, thanks to some relative roots and their sets of values, walls bounding a given alcove.
To do this, we may have to consider a dual root system, which appears to be necessary in some ramified cases.

Firstly, we define a dual root system of $\Phi$ by a suitable normalisation of the canonical dual root system in Lie considerations.

\begin{Not}
\label{not:inverse:root:system}
We consider a geometric realization of $\Phi_{\mathrm{nd}}$ in the Euclidean space $\big(V^*,(\cdot | \cdot )\big)$.
For each root $a \in \Phi_{\mathrm{nd}}$, we set $\lambda_a = \frac{\mu^2}{(a|a)} \in \{1,d'\}$ and $a^D = \lambda_a a \in V$ where $\mu$ is the length of a long root, so that $a^D = a$ for any long roots.
The set $\Phi_{\mathrm{nd}}^D = \{ a^D,\ a \in \Phi_{\mathrm{nd}} \}$ is a root system, because it is proportional (by a factor $\frac{\mu^2}{2}$) to the dual root system $\Phi^\vee$ of \cite[VI.1.1 Prop. 2]{Bourbaki4-6}.
In particular, if $\Phi$ is a reduced irreducible root system, then $\Phi^D=\Phi$ if, and only if, it is a simply laced root system (type $A$, $D$, or $E$).
Moreover, by \cite[VI.1.5 Rem.(5)]{Bourbaki4-6}, if $\Delta$ is a basis of $\Phi$, then $\Delta^D = \{a^D,\  a \in \Delta \}$ is a basis of $\Phi_{\mathrm{nd}}^D$.
\end{Not}

Whereas $\Phi^\vee$ and $\Phi^D$ are constructions strictly in terms of Lie theory, we have found it was more convenient to introduce the following root system $\Phi^\delta$ which takes into account the splitting field extensions of root groups.

\begin{Def}\label{def:dual:root:system}
For any non-divisible root $a \in \Phi_{\mathrm{nd}}$, we denote by $\delta_a \in \{1, d'\}$ the order of the quotient group $\Gamma_{L_a} / \Gamma_{L_d}$ (resp. $\Gamma_{L_a} / \Gamma_{L'}$) if $\Phi$ is reduced (resp. non-reduced), by $a^\delta = \delta_a a$ and by $\Phi_{\mathrm{nd}}^\delta = \{ a^\delta,\ a \in \Phi_{\mathrm{nd}} \}$.
We denote by $\Delta^\delta = \{a^\delta,\  a \in \Delta \}$.
We will see below that $\Phi_{\mathrm{nd}}^\delta = \Phi_{\mathrm{nd}}$ or $\Phi^D_{\mathrm{nd}}$.
\end{Def}

\begin{Not}\label{not:highest:dual:root}
In the following, we denote by:
\begin{itemize}
\item $h$ the highest root of $\Phi$ with respect to the chosen basis $\Delta$;
\item $\theta \in \Phi_{\mathrm{nd}}$ the root such that $\theta^\delta$ is the highest root of $\Phi_{\mathrm{nd}}^\delta$ with respect to the basis $\Delta^\delta$.
\end{itemize}
Moreover, if $\Phi$ is non-reduced, we will see below that $\Phi_{\mathrm{nd}}^\delta = \Phi_{\mathrm{nd}}^D = \Phi_{\mathrm{nm}}$, so that $h = 2 \theta$.
\end{Not}

Note that if $a$ is multipliable and $2l \in \Gamma'_{2a}$, it is possible that $\mathcal{H}_{2a,2l} = \mathcal{H}_{a,l}$ be a wall even if $l \not\in \Gamma'_a$.
Moreover, we have $\Gamma_a = \Gamma'_a \cup \frac{1}{2} \Gamma'_{2a}$ in this case.
Otherwise, if $a$ is non-multipliable and non divisible, we have $\Gamma_a = \Gamma'_a$ by Lemma \ref{lem:sets:of:values:non:multipliable:root}.
In fact, the walls of $\mathbb{A}$ are described by the various $a\in \Phi_{\mathrm{nd}}$ and $l \in \Gamma_a$.

According to \cite[4.2.23]{BruhatTits2}, we can classify the scalings to describe the various alcoves for a $K$-simple group $G$.
In a similar way, there exists a classification of (quasi-split) absolutely almost-simple groups over a local field, provided by Tits in \cite[§4]{TitsCorvallis}.
Here, we reduce the discussion to three types of behaviours.

\paragraph*{First case: $\Phi$ is reduced and $L' / L_d$ is unramified.}

These groups are the residually split groups named $A_n$, $B_n$, $C_n$, $D_n$, $E_6$, $E_7$, $E_8$, $F_4$ and $G_2$; and the non-residually split groups named ${^2}A'_{2n-1}$, ${^2}D_{n+1}$, ${^2}E_6$ and ${^3}D_4$ in the Tits tables \cite[4.2, 4.3]{TitsCorvallis}.
These correspond respectively to scalings, classified in \cite[1.4.6]{BruhatTits1}, of type $A_n$, $B_n$, $C_n$, $D_n$, $E_6$, $E_7$, $E_8$, $F_4$ and $G_2$; and $C_n$, $B_n$, $F_4$ and $G_2$.

Let $a$ be a relative root.
Because $\Phi$ is reduced, $\Gamma_a = \Gamma_{L_a}$ by Lemma \ref{lem:sets:of:values:non:multipliable:root}.
Hence, by Proposition \ref{prop:length:splitting:field}, we have $\Gamma_a = \Gamma_{L_d}$.
Because $L'/L_d$ is unramified, we have $\Gamma_{L'} = \Gamma_{L_d}$.
Hence $\Phi^\delta = \Phi$ and $h=\theta$.

In order to simplify notations, we normalize the valuation $\omega$ so that $\Gamma_{L'} = \mathbb{Z} = \Gamma_{L_d}$ and $0^+ = 1$.
By definition of alcoves as connected components, we can define an alcove as the intersection of all the various half-apartments $D(a,l)$ and $D(b,l^+)$ where $a \in \Phi^+$, $b \in \Phi^-$ and $l \in \mathbb{R}^+$.
Because $D(A,l) \subset D(a,l')$ for any $l > l'$, we are in fact considering the finite intersection of all the various half-apartments $D(a,0)$ and $D(b,1)$ where $a \in \Phi^+$ and $b \in \Phi^-$.
We call it \gts{the} fundamental alcove, denoted by $\mathbf{c}_{\mathrm{af}}$.

\begin{tikzpicture}
\draw [thick] (0,-2.1) -- (0,2.1);
\draw [thick] (-1,-2.1) -- (-1,2.1);
\draw [thick] (1,-2.1) -- (1,2);
\draw [thick] (-2,-2.1) -- (-2,2.1);
\draw [thick] (2,-2.1) -- (2,2.1);

\draw [thick] (-2.1,1.097) -- (-0.363,2.1);
\draw [thick] (-2.1,-0.058) -- (1.637,2.1);
\draw [thick] (-2.1,-1.212) -- (2.1,1.212);
\draw [thick] (-1.637,-2.1) -- (2.1,0.058);
\draw [thick] (0.363,-2.1) -- (2.1,-1.097);

\draw [thick] (-2.1,-1.097) -- (-0.363,-2.1);
\draw [thick] (-2.1,0.058) -- (1.637,-2.1);
\draw [thick] (-2.1,1.212) -- (2.1,-1.212);
\draw [thick] (-1.637,2.1) -- (2.1,-0.058);
\draw [thick] (0.363,2.1) -- (2.1,1.097);

\draw [very thick,fill=gray!40] (0,0) -- (1,0.577) -- (0,1.155) -- (0,0);

\fill[pattern color=blue!100, pattern=north east lines]
 (0,2.1) -- (0,0) -- (2.1,1.212) -- (2.1,2.1)-- (0,2.1);
 
\fill[pattern color=red!100, pattern=north west lines]
 (-2.1,-0.058) -- (0,2/1.732) -- (1,1/1.732) -- (1,-2.1) -- (-2.1,-2.1) -- (-2.1,-0.058);

\draw (0.25,1.732/4) node{$\mathbf{c}_\mathrm{af}$};
\draw (-2.1,-2.1) node[above left] {$\displaystyle \bigcap_{b\in \Phi^-} D(b,1)$};
\draw (2.1,2.1) node[below right] {$\displaystyle \bigcap_{a\in \Phi^+} D(a,0)$};

\draw [dashed,very thin,->] (0,0) -- (1/2,1.732/2);
\draw [dashed,very thin,->] (0,0) -- (-1/2,1.732/2);
\draw [dashed,very thin,->] (0,0) -- (1,0);
\draw [dashed,very thin,->] (0,0) -- (-1/2,-1.732/2);
\draw [dashed,very thin,->] (0,0) -- (1/2,-1.732/2);
\draw [dashed,very thin,->] (0,0) -- (-1,0);
\end{tikzpicture}

By \cite[VI.2.2 Prop. 5]{Bourbaki4-6}, its panels are exactly contained inside the walls $\mathcal{H}_{a,0}$, where $a \in \Delta$, and $\mathcal{H}_{-h,1}$.

\begin{Ex}[The apartments and their fundamental alcoves in dimension~$2$]

\noindent
\begin{tabular}{ccc}
\resizebox{3.2cm}{3.2cm}{%
\begin{tikzpicture}
\draw [very thin] (0,-2.1) -- (0,2.1);
\draw [very thin] (-1,-2.1) -- (-1,2.1);
\draw [very thin] (1,-2.1) -- (1,2);
\draw [very thin] (-2,-2.1) -- (-2,2.1);
\draw [very thin] (2,-2.1) -- (2,2.1);

\draw [very thin] (-2.1,1.097) -- (-0.363,2.1);
\draw [very thin] (-2.1,-0.058) -- (1.637,2.1);
\draw [very thin] (-2.1,-1.212) -- (2.1,1.212);
\draw [very thin] (-1.637,-2.1) -- (2.1,0.058);
\draw [very thin] (0.363,-2.1) -- (2.1,-1.097);

\draw [very thin] (-2.1,-1.097) -- (-0.363,-2.1);
\draw [very thin] (-2.1,0.058) -- (1.637,-2.1);
\draw [very thin] (-2.1,1.212) -- (2.1,-1.212);
\draw [very thin] (-1.637,2.1) -- (2.1,-0.058);
\draw [very thin] (0.363,2.1) -- (2.1,1.097);

\draw [very thick,fill=gray!20] (0,0) -- (1,0.577) -- (0,1.155) -- (0,0);
\draw (0.25,1.732/4) node{$\mathbf{c}_\mathrm{af}$};

\draw [dashed,very thick,->] (0,0) -- (1/2,1.732/2);
\draw [dashed,very thick,->] (0,0) -- (-1/2,1.732/2) node[left]{$b$};
\draw [dashed,very thick,->] (0,0) -- (1,0) node[below]{$a$};
\draw [dashed,very thick,->] (0,0) -- (-1/2,-1.732/2) node[below]{$-\theta$};
\draw [dashed,very thick,->] (0,0) -- (1/2,-1.732/2);
\draw [dashed,very thick,->] (0,0) -- (-1,0);
\end{tikzpicture}
} &

\resizebox{3.2cm}{3.2cm}{%
\begin{tikzpicture}
\draw [very thin] (-2,-2.5) -- (-2,2.5);
\draw [very thin] (0,-2.5) -- (0,2.5);
\draw [very thin] (2,-2.5) -- (2,2.5);

\draw [very thin] (-2.5,-2) -- (2.5,-2);
\draw [very thin] (-2.5,0) -- (2.5,0);
\draw [very thin] (-2.5,2) -- (2.5,2);

\draw [very thin] (-2.5,-1.5) -- (-1.5,-2.5);
\draw [very thin] (-2.5,0.5) -- (0.5,-2.5);
\draw [very thin] (-2.5,2.5) -- (2.5,-2.5);
\draw [very thin] (-0.5,2.5) -- (2.5,-0.5);
\draw [very thin] (1.5,2.5) -- (2.5,1.5);

\draw [very thin] (-2.5,1.5) -- (-1.5,2.5);
\draw [very thin] (-2.5,-0.5) -- (0.5,2.5);
\draw [very thin] (-2.5,-2.5) -- (2.5,2.5);
\draw [very thin] (-0.5,-2.5) -- (2.5,0.5);
\draw [very thin] (1.5,-2.5) -- (2.5,-1.5);

\draw [very thick,fill=gray!20] (0,0) -- (1,1) -- (0,2) -- (0,0);
\draw (0.5,1) node[]{$\mathbf{c}_{\mathrm{af}}$};

\draw [dashed, very thick,->] (0,0) -- (-1,1) node[left]{$b$};
\draw [dashed, very thick,->] (0,0) -- (0,1);
\draw [dashed, very thick,->] (0,0) -- (1,1);
\draw [dashed, very thick,->] (0,0) -- (1,0) node[below]{$a$};
\draw [dashed, very thick,->] (0,0) -- (1,-1);
\draw [dashed, very thick,->] (0,0) -- (0,-1);
\draw [dashed, very thick,->] (0,0) -- (-1,-1) node[left]{$-\theta$};
\draw [dashed, very thick,->] (0,0) -- (-1,0);

\end{tikzpicture}
} &

\resizebox{3.2cm}{3.2cm}{%
\begin{tikzpicture}
\draw [very thin] (0,-3) -- (0,3);
\draw [very thin] (-1,-3) -- (-1,3);
\draw [very thin] (-2,-3) -- (-2,3);
\draw [very thin] (1,-3) -- (1,3);
\draw [very thin] (2,-3) -- (2,3);

\draw [very thin] (-1.732,3) -- (1.732,-3);
\draw [very thin] (0.268,3) -- (3,-1.732);
\draw [very thin] (2.268,3) -- (3,1.732);
\draw [very thin] (-0.268,-3) -- (-3,1.732);
\draw [very thin] (-2.268,-3) -- (-3,-1.732);

\draw [very thin] (-3,1.732) -- (3,-1.732);
\draw [very thin] (-3,2.887) -- (3,-0.577);
\draw [very thin] (-1.196,3) -- (3,0.577);
\draw [very thin] (0.804,3) -- (3,1.732);
\draw [very thin] (2.804,3) -- (3,2.887);
\draw [very thin] (3,-2.887) -- (-3,0.577);
\draw [very thin] (1.196,-3) -- (-3,-0.577);
\draw [very thin] (-0.804,-3) -- (-3,-1.732);
\draw [very thin] (-2.804,-3) -- (-3,-2.887);

\draw [very thin] (-3,0) -- (3,0);
\draw [very thin] (-3,1.732) -- (3,1.732);
\draw [very thin] (-3,-1.732) -- (3,-1.732);

\draw [very thin] (-3,-1.732) -- (3,1.732);
\draw [very thin] (-3,-2.887) -- (3,0.577);
\draw [very thin] (-1.196,-3) -- (3,-0.577);
\draw [very thin] (0.804,-3) -- (3,-1.732);
\draw [very thin] (2.804,-3) -- (3,-2.887);
\draw [very thin] (3,2.887) -- (-3,-0.577);
\draw [very thin] (1.196,3) -- (-3,0.577);
\draw [very thin] (-0.804,3) -- (-3,1.732);
\draw [very thin] (-2.804,3) -- (-3,2.887);

\draw [very thin] (-1.732,-3) -- (1.732,3);
\draw [very thin] (0.268,-3) -- (3,1.732);
\draw [very thin] (2.268,-3) -- (3,-1.732);
\draw [very thin] (-0.268,3) -- (-3,-1.732);
\draw [very thin] (-2.268,3) -- (-3,1.732);

\draw [very thick,fill=gray!20] (0,0) -- (0.5,0.866) -- (0,1.155) -- (0,0);
\draw (0.25,0.75) node[]{$\mathbf{c}_{\mathrm{af}}$};

\draw [dashed, very thick,->] (0,0) -- (1.732,0) node[above]{$b$};
\draw [dashed, very thick,->] (0,0) -- (0.866,0.5);
\draw [dashed, very thick,->] (0,0) -- (0.866,1.5);
\draw [dashed, very thick,->] (0,0) -- (0,1) ;
\draw [dashed, very thick,->] (0,0) -- (-0.866,1.5);
\draw [dashed, very thick,->] (0,0) -- (-0.866,0.5) node[below left]{$a$};

\draw [dashed, very thick,->] (0,0) -- (-1.732,0);
\draw [dashed, very thick,->] (0,0) -- (-0.866,-0.5);
\draw [dashed, very thick,->] (0,0) -- (-0.866,-1.5) node[left]{$-\theta$};
\draw [dashed, very thick,->] (0,0) -- (0,-1) ;
\draw [dashed, very thick,->] (0,0) -- (0.866,-1.5);
\draw [dashed, very thick,->] (0,0) -- (0.866,-0.5);
\end{tikzpicture}
} \\
Type $A_2$ &
Type $C_2$ &
Type $G_2$
\end{tabular}

\end{Ex}

\paragraph*{Second case: $\Phi$ is reduced and $L' / L_d$ is ramified.}

These groups are the residually split groups named $B\text{-}C_n$, $C\text{-}B_n$, $F^{I}_4$ and $G^{I}_2$ in the Tits tables \cite[4.2]{TitsCorvallis}.
These correspond respectively to scalings, classified in \cite[1.4.6]{BruhatTits1}, of type $B\text{-}C_n$, $C\text{-}B_n$, $F^{I}_4$ and $G^{I}_2$.

Because $L'/L_d$ is ramified, $d' \in \{2,3\}$, hence $\Phi$ is a non-simply laced root system. 
Moreover, we have $d' \Gamma_{L'} = \Gamma_{L_d}$.
Let $a$ be a relative root.
Because $\Phi$ is reduced, $\Gamma_a = \Gamma_{L_a}$ by Lemma \ref{lem:sets:of:values:non:multipliable:root}.
By Proposition \ref{prop:length:splitting:field},
if $a$ is a long root, $\Gamma_a = \Gamma_{L_d}$;
if $a$ is a short root, $\Gamma_a = \Gamma_{L'}$.
Thus, $\delta_a = \lambda_a$.
Hence $\Phi_{\mathrm{nd}}^\delta = \Phi_{\mathrm{nd}}^D$.

In order to simplify notations, we normalize the valuation $\omega$ so that $\Gamma_{L'} = \mathbb{Z}$.
The intersection of all the various half-apartments $D(a,0)$ and $D(b,0^+)$ where $a \in \Phi^+$ and $b \in \Phi^-$ in exactly an alcove.
If $b \in \Phi^-$ is short, then $\Gamma_b=\Gamma_{L'}$ so that $D(b,0^+) = D(b,1)$;
if $b' \in \Phi^-$ is long, then  $\Gamma_b=\Gamma_{L_d}$ so that $D(b,0^+) =D(b',d')$.
We call it \gts{the} fundamental alcove, denoted by $\mathbf{c}_{\mathrm{af}}$.

Its panels are exactly contained inside the walls $\mathcal{H}_{a,0}$, where $a \in \Delta$, and $\mathcal{H}_{-\theta,1}$.
Indeed, let $a \in \Phi$ and $l \in \mathbb{R}$.
Let $l^D = \delta_a l$ so that for any $x \in \mathbb{A}$:
$$a(x - \mathcal{O}) - l = 0 \Leftrightarrow a^D(x - \mathcal{O}) - l^D= 0$$
By definition, the set $\mathcal{H}_{a,l}$ is a wall of $\mathbb{A}$ if, and only if, $l \in \Gamma_a$; hence if, and only if, $l^D \in \Gamma_{L_d}$.
Thus, the panels of $\mathbf{c}_{\mathrm{af}}$ are contained in the walls $\mathcal{H}_{a^D,l^D}$ described in the first case.
Because the highest root $\theta^D$ is a long root in $\Phi^D$ by \cite[VI.1.8 Prop. 25 (iii)]{Bourbaki4-6}, hence $\theta$ is a short root in $\Phi$ and $\delta_\theta = d'$.

\begin{Rq}
The ramification as the effect of adding some walls in the direction corresponding to short roots.
For instance, if $d=2$ and if the absolute root system $\widetilde{\Phi}$ is of type $A_3$, then the relative root system is of type $C_2$ and we obtain the following picture where we print the \gts{added} walls with dotted lines, and the root system $\Phi^D$ instead of $\Phi$:

\begin{tikzpicture}
\draw [very thin] (-2,-2.5) -- (-2,2.5);
\draw [dotted] (-1,-2.5) -- (-1,2.5);
\draw [very thin] (0,-2.5) -- (0,2.5);
\draw [dotted] (1,-2.5) -- (1,2.5);
\draw [very thin] (2,-2.5) -- (2,2.5);

\draw [very thin] (-2.5,-2) -- (2.5,-2);
\draw [dotted] (-2.5,-1) -- (2.5,-1);
\draw [very thin] (-2.5,0) -- (2.5,0);
\draw [dotted] (-2.5,1) -- (2.5,1);
\draw [very thin] (-2.5,2) -- (2.5,2);

\draw [very thin] (-2.5,-1.5) -- (-1.5,-2.5);
\draw [very thin] (-2.5,0.5) -- (0.5,-2.5);
\draw [very thin] (-2.5,2.5) -- (2.5,-2.5);
\draw [very thin] (-0.5,2.5) -- (2.5,-0.5);
\draw [very thin] (1.5,2.5) -- (2.5,1.5);

\draw [very thin] (-2.5,1.5) -- (-1.5,2.5);
\draw [very thin] (-2.5,-0.5) -- (0.5,2.5);
\draw [very thin] (-2.5,-2.5) -- (2.5,2.5);
\draw [very thin] (-0.5,-2.5) -- (2.5,0.5);
\draw [very thin] (1.5,-2.5) -- (2.5,-1.5);

\draw [very thick,fill=gray!20] (0,0) -- (1,1) -- (0,1) -- (0,0);
\draw (0.35,0.65) node[]{$\mathbf{c}_{\mathrm{af}}$};

\draw [dashed, very thick,->] (0,0) -- (-1,1) node[above]{$b^D$};
\draw [dashed, very thick,->] (0,0) -- (0,2);
\draw [dashed, very thick,->] (0,0) -- (1,1);
\draw [dashed, very thick,->] (0,0) -- (2,0) node[right]{$a^D$};
\draw [dashed, very thick,->] (0,0) -- (1,-1);
\draw [dashed, very thick,->] (0,0) -- (0,-2) node[below]{$-\theta^D$};
\draw [dashed, very thick,->] (0,0) -- (-1,-1);
\draw [dashed, very thick,->] (0,0) -- (-2,0);
\end{tikzpicture}
\end{Rq}

\paragraph*{Third case: $\Phi$ is non-reduced.}

These groups are named $C\text{-}BC_n$ and ${^2}A'_{2n}$ in the Tits tables \cite[4.2, 4.3]{TitsCorvallis}.
These correspond respectively to scalings, classified in \cite[1.4.6]{BruhatTits1}, of type $C\text{-}BC^{III}_n$ and $C\text{-}BC^{IV}_n$.

Because $\Phi$ is non-reduced, $d = d' = 2$.
In order to simplify notations, we normalize the valuation $\omega$ so that $\Gamma_{L'} = \mathbb{Z}$.
Let $a$ be a non-divisible relative root.
If $a$ is multipliable, by Lemma \ref{lem:sets:of:values:multipliable:root}, we have $\Gamma_a = \frac{1}{2} \Gamma_{L'}$;
if $a$ is non-multipliable, by Lemma \ref{lem:sets:of:values:non:multipliable:root}, and by Proposition \ref{prop:length:splitting:field}, we have $\Gamma_a = \Gamma_{L_a} = \Gamma_{L'}$.
Thus, $\delta_a \Gamma_{a} = \Gamma_{L'}$.

As above, one can see that the intersection of all the various following half-apartments: $D(a,0)$ where $a \in \Phi_{\mathrm{nd}}^+$, $D(b,1)$ where $b \in \Phi_{\mathrm{nd}}^-$ is non-multipliable, and $D(b',\frac{1}{2})$ where $b' \in \Phi^-$ is multipliable, is exactly an alcove.
We call it \gts{the} fundamental alcove, denoted by $\mathbf{c}_{\mathrm{af}}$.
Its panels are exactly contained inside the walls $\mathcal{H}_{a,0}$, where $a \in \Delta$, and $\mathcal{H}_{-\theta,\frac{1}{2}}$.

Indeed, we proceed in the same way as in the previous case, with the reduced root system $\Phi_{\mathrm{nd}}^D$.

\begin{Ex}[$\widetilde{\Phi}$ of type $A_4$ and $\Phi$ of type $BC_2$]~

\begin{tikzpicture}
\draw [very thin] (-2,-2.5) -- (-2,2.5);
\draw [very thin] (-1,-2.5) -- (-1,2.5);
\draw [very thin] (0,-2.5) -- (0,2.5);
\draw [very thin] (1,-2.5) -- (1,2.5);
\draw [very thin] (2,-2.5) -- (2,2.5);

\draw [very thin] (-2.5,-2) -- (2.5,-2);
\draw [very thin] (-2.5,-1) -- (2.5,-1);
\draw [very thin] (-2.5,0) -- (2.5,0);
\draw [very thin] (-2.5,1) -- (2.5,1);
\draw [very thin] (-2.5,2) -- (2.5,2);

\draw [very thin] (-2.5,-1.5) -- (-1.5,-2.5);
\draw [very thin] (-2.5,0.5) -- (0.5,-2.5);
\draw [very thin] (-2.5,2.5) -- (2.5,-2.5);
\draw [very thin] (-0.5,2.5) -- (2.5,-0.5);
\draw [very thin] (1.5,2.5) -- (2.5,1.5);

\draw [very thin] (-2.5,1.5) -- (-1.5,2.5);
\draw [very thin] (-2.5,-0.5) -- (0.5,2.5);
\draw [very thin] (-2.5,-2.5) -- (2.5,2.5);
\draw [very thin] (-0.5,-2.5) -- (2.5,0.5);
\draw [very thin] (1.5,-2.5) -- (2.5,-1.5);

\draw [very thick,fill=gray!20] (0,0) -- (1,1) -- (0,1) -- (0,0);
\draw (0.35,0.65) node[]{$\mathbf{c}_{\mathrm{af}}$};

\draw [dashed, very thick,->] (0,0) -- (-1,1) node[above left]{$b$};
\draw [dashed, very thick,->] (0,0) -- (0,1);
\draw [dashed, very thick,->] (0,0) -- (0,2);
\draw [dashed, very thick,->] (0,0) -- (1,1);
\draw [dashed, very thick,->] (0,0) -- (1,0) node[below right]{$a$};
\draw [dashed, very thick,->] (0,0) -- (2,0) node[below right]{$2a$};
\draw [dashed, very thick,->] (0,0) -- (1,-1);
\draw [dashed, very thick,->] (0,0) -- (0,-1) node[below left]{$-\theta$};
\draw [dashed, very thick,->] (0,0) -- (0,-2) node[below left]{$-2\theta$};
\draw [dashed, very thick,->] (0,0) -- (-1,-1);
\draw [dashed, very thick,->] (0,0) -- (-1,0);
\draw [dashed, very thick,->] (0,0) -- (-2,0);
\end{tikzpicture}
\end{Ex}

\subsubsection{Counting alcoves of a panel residue}
\label{sec:counting:alcoves}

Because a maximal pro-$p$ subgroup $P$ fixes an alcove $\mathbf{c}$, it acts on the set of alcoves which are adjacent to $\mathbf{c}$.
We want to describe this set of alcoves.

\begin{Def}
\label{def:panel:residue}
Let $F$ be a panel.
The \textbf{panel residue} with respect to $F$, denoted by $E_{F}$, is the set of the alcoves whose the closure contains $F$.

The \textbf{combinatorial unit ball} centered in $\mathbf{c}$, denoted by $B(\mathbf{c},1)$, is the union of all the panel residues with respect to a panel $F$ in the closure of $\mathbf{c}$.

We say that two alcoves are \textbf{adjacent} if they have a common panel.
\end{Def}

In what follows, we provide a reformulation and a proof of \cite[1.6]{TitsCorvallis}.

\begin{Prop}
\label{prop:first:root:group:quotient}
Let $a \in \Phi$ and $l \in \Gamma_a$.
The group $U_{a,l^+}$ is a normal subgroup of $U_{a,l}$.
We denote by $X_{a,l} = U_{a,l} / U_{a,l^+}$ the quotient group.

If $a$ is non-multipliable, then there exists a canonical $\kappa_{L_{a}}$-vector space structure on $X_{a,l}$ of dimension $1$.

If $a$ is multipliable, then there exists a canonical group homomorphism  $X_{2a,2l} \rightarrow X_{a,l}$;
so that we have the inclusion $[X_{a,l},X_{a,l}] \leq X_{2a,2l}$.
There exists a canonical $\kappa_{L_{a}}$-vector space structure on the quotient group $X_{a,l} / X_{2a,2l}$ of dimension $0$ or $1$.
\end{Prop}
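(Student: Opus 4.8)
The plan is to work entirely inside the rank $1$ machinery set up in Section \ref{sec:rank:one:case}, using the explicit parametrizations $x_a$ and the filtration $\varphi_a$. First I would treat the non-multipliable case, which is immediate: by Lemma \ref{lem:sets:of:values:non:multipliable:root} we have $\Gamma_a = \Gamma'_a = \Gamma_{L_a}$, and through the isomorphism $x_a : L_a \xrightarrow{\sim} U_a(K)$, the subgroup $U_{a,l}$ corresponds to the fractional ideal $\{y \in L_a,\ \omega(y) \geq l\}$ and $U_{a,l^+}$ to $\{y,\ \omega(y) \geq l^+\}$, where $l^+ = l + \omega(\varpi_{L_a})$ after normalization. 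Since $U_a$ is abelian in this case, $U_{a,l^+}$ is automatically normal, and the quotient $X_{a,l} = U_{a,l}/U_{a,l^+}$ is identified with $\mathfrak{m}_{L_a}^{k}/\mathfrak{m}_{L_a}^{k+1}$ for the appropriate $k$; this is a $1$-dimensional $\kappa_{L_a}$-vector space, the scalar action coming from multiplication by $\mathcal{O}_{L_a}$ on $L_a$ (which descends to the residue field since $\mathfrak{m}_{L_a}$ kills the quotient).

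For the multipliable case I would use the group law on $H(L_a,L_{2a})(K)$, namely $x_a(u,v)x_a(u',v') = x_a(u+u',v+v'+u\,{}^\tau u')$, together with the valuation prescription $\varphi_a(x_a(y,y')) = \tfrac12\omega(y')$ and $\varphi_{2a}(x_a(0,y')) = \omega(y')$ from Section \ref{sec:valuation:of:root:groups:datum}. From Lemma \ref{lem:non:reduced:derived:valued:root:group} (its general inclusion, which holds for all $p$) we already know $[U_{a,l},U_{a,l}] \subset U_{2a,2\lceil l\rceil} \subset U_{2a,2l}$, and the commutator formula $[x_a(x,y),x_a(u,v)] = x_a(0,\,x\,{}^\tau u - u\,{}^\tau x)$ shows that $U_{2a}$ is central in $U_a$ modulo nothing — in particular $U_{2a,2l}$ and hence $U_{a,l^+}$ are normal in $U_{a,l}$ (one checks $U_{a,l^+} = U_{a,l''}$ where $l'' > l$ is the next element of $\Gamma_a$, and conjugation by $x_a(x,y)$ sends $x_a(u,v)$ to $x_a(u, v + x\,{}^\tau u - u\,{}^\tau x)$, which stays in $U_{a,l^+}$ since $\omega(x\,{}^\tau u - u\,{}^\tau x) \geq \omega(u) + \omega(x)$ pushes the correction term up). The inclusion $[X_{a,l},X_{a,l}] \leq X_{2a,2l}$ is then a direct consequence. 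For the homomorphism $X_{2a,2l} \to X_{a,l}$ I would take the map induced by the inclusion $L_a^0 \hookrightarrow H(L_a,L_{2a})$, $y' \mapsto x_a(0,y')$, which identifies $U_{2a,2l}$ with a subgroup of $U_{a,l}$; one must check it carries $U_{2a,(2l)^+}$ into $U_{a,l^+}$, which follows from the relation $\varphi_{2a} = 2\varphi_a$ on $U_{2a}$ (axiom (VRGD 4)) combined with the description of $\Gamma_{2a}$ versus $\Gamma_a$ in Lemma \ref{lem:sets:of:values:multipliable:root}. Finally, the quotient $X_{a,l}/X_{2a,2l}$ is identified via $x_a(u,v) \mapsto u$ with a quotient of an $\mathcal{O}_{L_a}$-submodule of $L_a$ by a submodule killing $\mathfrak{m}_{L_a}$, hence a $\kappa_{L_a}$-vector space of dimension $0$ or $1$; which of the two occurs is governed by whether $f_\Omega(a)$-type jump at level $l$ is a "$\Gamma'_a$-jump" or only a "$\frac12\Gamma'_{2a}$-jump", i.e. by comparing $l \in \Gamma'_a$ or $l \notin \Gamma'_a$ (the latter forcing $U_{a,l} = U_{a,l^+}\cdot U_{2a,2l}$ up to the filtration level, so dimension $0$).

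The main obstacle I expect is the bookkeeping in the multipliable case: carefully identifying $l^+$ in terms of the sets of values $\Gamma_a$, $\Gamma'_a$, $\Gamma_{2a}$ (which differ according to whether $L_a/L_{2a}$ is ramified, per Lemma \ref{lem:sets:of:values:multipliable:root}), and then verifying that each of the claimed maps respects the relevant filtration steps so that it descends to the quotients. The canonical $\kappa_{L_a}$-structure requires checking that the $\mathcal{O}_{L_a}$-action on the relevant piece of $L_a$ factors through $\kappa_{L_a}$, which is where one uses that the next filtration step down is exactly $\mathfrak{m}_{L_a}$ times the current one — this is automatic for non-multipliable roots but in the multipliable ramified case needs the explicit description $\Gamma_a = \tfrac12\mathbb{Z}$, $\Gamma'_a = \mathbb{Z}$ to see that consecutive elements of $\Gamma_a$ differ by $\tfrac12\omega(\varpi_{L_a})$ and that the module structure is still the expected one. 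None of this is deep, but it is the place where the proof must be written with care rather than waved through.
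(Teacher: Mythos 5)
Your proposal is correct and takes essentially the same route as the paper: the abelian parametrization in the non-multipliable case, Lemma \ref{lem:non:reduced:derived:valued:root:group} plus the group law on $H(L_a,L_{2a})$ for normality and for $[X_{a,l},X_{a,l}]\leq X_{2a,2l}$, axiom (VRGD 4) for the map $X_{2a,2l}\to X_{a,l}$, and the action $(u,v)\mapsto(xu,\,x\,{}^{\tau}x\,v)$ for the $\kappa_{L_a}$-structure on $X_{a,l}/X_{2a,2l}$ (your identification via the first coordinate is only a cosmetic variant of the paper's direct definition of the module structure). One small care point: the inline bound $\omega(x\,{}^{\tau}u-u\,{}^{\tau}x)\geq\omega(u)+\omega(x)$ gives $\geq 2l^{+}$ only after using that $\omega(u),\omega(x)\in\mathbb{Z}$, so the sum is at least $\lceil l\rceil+\lceil l^{+}\rceil=2l^{+}$ --- which is precisely what Lemma \ref{lem:non:reduced:derived:valued:root:group} applied to the pair $(l,l^{+})$ provides, and is how the paper argues normality.
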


\begin{proof}
Suppose that $a$ is non-multipliable, then $U_a(K)$ is commutative.
Hence $U_{a,l^+}$ is a normal subgroup of $U_{a,l}$ and the quotient group $X_{a,l}$ is commutative.
We define a $\mathcal{O}_{L_a}$-module structure on $X_{a,l}$ by:
$$\forall x \in \mathcal{O}_{L_a},\ \forall y \in L_a\text{ such that }\omega(y) \geq l,\  x \cdot x_a(y) U_{a,l^+} = x_a(xy) U_{a,l^+}$$

For any $x \in \varpi_{L_a} \mathcal{O}_{L_a}$ and any $y \in L_a\text{ such that }\omega(y) \geq l$,
we have $\omega(xy) \geq l^+$,
hence $x X_{a,l} \leq U_{a,l^+}$.
This provides a $\kappa_{L_a} = \mathcal{O}_{L_a} / \varpi_{L_a} \mathcal{O}_{L_a}$-vector space structure on $X_{a,l}$.
We check that this vector space is of dimension $1$: for any $y,y' \in L_a$ such that $\omega(y) = \omega(y') = l$,
since $y$ is invertible, we have $x = y^{-1} y' \in \mathcal{O}_{L_a}$.
Moreover, such elements $y,y'$ exist by definition of $\Gamma_{L_a}$.

Suppose now that $a$ is multipliable.
By Lemma \ref{lem:non:reduced:derived:valued:root:group} applied to $l,l^+ \in \Gamma_a$,
we get that $U_{a,l^+}$ is a normal subgroup of $U_{a,l}$.

The normal subgroup $U_{2a,{2l}^+}$ of $U_{2a,2l}$ is the kernel of the canonical group homomorphism $U_{2a,2l} \rightarrow X_{a,l}$.
Hence we deduce a quotient group homomorphism $X_{2a,2l} \rightarrow X_{a,l}$.
Passing to the quotient the formula of Lemma \ref{lem:non:reduced:derived:valued:root:group}, we get $[X_{a,l},X_{a,l}] \leq X_{2a,2l}$.

In particular, the group $X_{a,l} / X_{2a,2l}$ is commutative.
There exist an $\mathcal{O}_{L_{a}}$-module structure given by:
\begin{multline*}
\forall x \in \mathcal{O}_{L_{a}},\ \forall (y,y') \in H(L_a,L_{2a}) \text{ such that }\omega(y') \geq 2l,\\
x \cdot x_a(y,y') U_{a,l^+} U_{2a,2l} = x_a(xy,x {^\tau}x y') U_{a,l^+} U_{2a,2l}
\end{multline*}

For any $x \in \varpi_{L_{a}} \mathcal{O}_{L_{a}}$ and any $(y,y') \in H(L_a,L_{2a})\text{ such that }\omega(y') \geq 2l$, we have $\omega(x {^\tau}x y') \geq 2 (l^+)$.
This defines a $\kappa_{L_{a}}$-vector-space structure on $X_{a,l} / X_{2a,2l}$.
This vector-space is of dimension at most $1$.
Indeed, if there exist elements $(y,y'),(z,z') \in H(L_a,L_{2a})$ such that $\omega(y') = \omega(z') = 2l$,
then we can set $x = y^{-1} z \in \mathcal{O}_{L_a}$ because $y$ is invertible.
Hence, we have $x_a(z,z')\in x \cdot x_a(y,y') U_{2a,2l}$.
\end{proof}

If $a$ is a non-multipliable root, we set $X_{2a,2l} = 0$ and $\kappa_{L_{2a}} = \kappa_{L_a}$.
Hence, the dimension $d(a,l)=\dim_{\kappa_{L_{2a}}} X_{a,l}/X_{2a,2l}$ has a sense for any root $a \in \Phi$.

\begin{Rq}
Let $F$ be a panel contained in a wall $\mathcal{H}_{a,l}$ corresponding to an affine root $\theta(a,l)$.
Denote $q = \mathrm{Card}(\kappa_{L_{2a}})$.
The panel residue $E_F$ contains $1 + \mathrm{Card}(X_{a,l}) = 1 + q^{d(\frac{a}{2},\frac{l}{2}) + d(a,l) + d(2a,2l)}$ elements.
This is a consequence of Lemma \ref{lem:panel:residue}.
\end{Rq}

The following lemma states that the affine root systems defined in \cite[6.2.6]{BruhatTits1} and in \cite[1.6]{TitsCorvallis} are the same.

\begin{Lem}
\label{lem:equivalence:affine:root:systems}
Let $a \in \Phi$ be a root and $l \in \mathbb{R}$.
Then $d(a,l) > 0$ if, and only if, $l \in \Gamma'_a$.
\end{Lem}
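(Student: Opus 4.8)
The plan is to argue everywhere through the subgroup $U_{a,l^+}U_{2a,2l}$ of $U_{a,l}$: by Proposition~\ref{prop:first:root:group:quotient} we have $d(a,l)>0$ if and only if $U_{a,l^+}U_{2a,2l}$ is proper in $U_{a,l}$, and $d(a,l)\in\{0,1\}$ in all cases (for $a$ non-multipliable one reads $U_{2a,2l}=\{\mathbf{1}\}$, as stipulated just after Proposition~\ref{prop:first:root:group:quotient}). The case $l\notin\Gamma_a$ is immediate: then $l\notin\Gamma'_a$ since $\Gamma'_a\subseteq\Gamma_a$, and since $\omega$ is discrete no value of $\varphi_a$ lies in $[l,l^+)$, whence $U_{a,l}=U_{a,l^+}$ and $d(a,l)=0$. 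So I may assume $l\in\Gamma_a$ throughout.

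Next I would establish the implication $l\in\Gamma'_a\Rightarrow d(a,l)=1$, uniformly in $a$. By the definition of $\Gamma'_a$ there is $u\in U_a(K)\setminus\{\mathbf{1}\}$ with $\varphi_a(u)=l=\sup\varphi_a\big(uU_{2a}(K)\big)$, and $u\notin U_{a,l^+}U_{2a,2l}$: otherwise $u=vw$ with $v\in U_{a,l^+}$ and $w\in U_{2a,2l}$, so $uw^{-1}=v$ satisfies $\varphi_a(uw^{-1})\ge l^+>l$, while $uw^{-1}\in uU_{2a}(K)$ forces $\varphi_a(uw^{-1})\le l$ --- a contradiction. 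Hence $d(a,l)=1$. This settles the non-multipliable case at once, since there $\Gamma_a=\Gamma'_a$: this is Lemma~\ref{lem:sets:of:values:non:multipliable:root} when $a$ is also non-divisible, and it follows from the equalities $\Gamma_{2b}=\Gamma'_{2b}$ recalled just before Lemma~\ref{lem:sets:of:values:multipliable:root} when $a=2b$ is divisible; so for non-multipliable $a$, $l\in\Gamma_a$ already gives $l\in\Gamma'_a$.

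There remains the case $a$ multipliable, $l\in\Gamma_a\setminus\Gamma'_a$, where I must show $d(a,l)=0$. Here $p\neq2$, and normalizing $\Gamma_{L_a}=\mathbb{Z}$ we have $\Gamma_a=\tfrac12\mathbb{Z}$ and $\Gamma'_a=\mathbb{Z}$ by Lemma~\ref{lem:sets:of:values:multipliable:root}, so $l\in\tfrac12+\mathbb{Z}$ and $l^+=l+\tfrac12$. I would prove $U_{a,l}=U_{a,l^+}U_{2a,2l}$ directly. Let $u=x_a(y,y')\in U_{a,l}$; one may assume $\varphi_a(u)=l$, i.e.\ $\omega(y')=2l$, since otherwise $u\in U_{a,l^+}$. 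If $y=0$ then $u=x_a(0,y')\in U_{2a,2l}$. If $y\neq0$, the relation $N_{L_a/L_{2a}}(y)=\mathrm{Tr}_{L_a/L_{2a}}(y')$ coming from $(y,y')\in H(L_a,L_{2a})$ gives $2\omega(y)=\omega\big(N_{L_a/L_{2a}}(y)\big)=\omega\big(\mathrm{Tr}_{L_a/L_{2a}}(y')\big)\ge\omega(y')=2l$, and since $2\omega(y)$ is even while $2l$ is odd this forces $2\omega(y)\ge 2l+1=2l^+$. Putting $w_0=y'-\tfrac12 N_{L_a/L_{2a}}(y)$, one checks $w_0\in L_a^0$ (its trace vanishes because $N_{L_a/L_{2a}}(y)\in L_{2a}$), $\omega(w_0)\ge\min\big(\omega(y'),2\omega(y)\big)=2l$, and $\omega(y'-w_0)=2\omega(y)\ge 2l^+$, so $x_a\big(y,\tfrac12 N_{L_a/L_{2a}}(y)\big)\in U_{a,l^+}$; as $u=x_a\big(y,\tfrac12 N_{L_a/L_{2a}}(y)\big)\,x_a(0,w_0)$ with $x_a(0,w_0)\in U_{2a,2l}$, we conclude $u\in U_{a,l^+}U_{2a,2l}$, hence $d(a,l)=0$.

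The only genuine obstacle is this last step, whose content is that a value $l\in\Gamma_a$ lying strictly between two consecutive elements of $\Gamma'_a$ is produced \emph{entirely} inside the divisible subgroup $U_{2a}$ and so is invisible to $X_{a,l}/X_{2a,2l}$. The arithmetic input making this work is the inequality $\omega\big(N_{L_a/L_{2a}}(y)\big)=\omega\big(\mathrm{Tr}_{L_a/L_{2a}}(y')\big)\ge\omega(y')$ together with the even-versus-odd parity argument that upgrades it; the assumption $p\neq2$ is what provides $\tfrac12\in L^1_{a,\max}$ and the splitting $L_a=L_{2a}\cdot\tfrac12\oplus L_a^0$, and the normalization makes ``$2l$ odd'' coincide with ``$l\notin\Gamma'_a$''. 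Everything outside this step is bookkeeping against Proposition~\ref{prop:first:root:group:quotient} and the set-of-values computations of Section~\ref{sec:set:of:values}.
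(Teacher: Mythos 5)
Your proposal is correct, but it takes a genuinely different route from the paper's. The paper proves the lemma by a purely formal chain of equivalences that unwinds the definition of $\Gamma'_a$: the condition $l\in\Gamma'_a$ means exactly that there is $u$ with $\varphi_a(u)=l$ and $\varphi_a(u u'')<l^+$ for every $u''\in U_{2a}(K)$, which is precisely the statement that $X_{a,l}\neq 0$ and $X_{a,l}\neq X_{2a,2l}$, i.e.\ $d(a,l)\neq 0$; no case distinction, no parametrization and no hypothesis on the residue characteristic enter (your ``easy'' direction is essentially one half of that chain, and the reverse implication is just as formal: a class outside the image of $X_{2a,2l}$ yields a witness $u$ with $\varphi_a(u)=\sup\varphi_a(uU_{2a}(K))=l$, using $U_{2a}(K)\cap U_{a,l}=U_{2a,2l}$). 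Your converse, by contrast, is only nontrivial for multipliable $a$, and there you first import the explicit determination $\Gamma_a=\tfrac12\mathbb{Z}$, $\Gamma'_a=\mathbb{Z}$ from Lemma~\ref{lem:sets:of:values:multipliable:root} and then prove by hand, via the $H(L_a,L_{2a})$-parametrization, the norm--trace relation and a parity argument, that $U_{a,l}=U_{a,l^+}U_{2a,2l}$ when $l\in\tfrac12+\mathbb{Z}$. That computation is correct and even more informative than what is needed (it re-derives, rather than merely quotes, the content of \cite[4.2.21]{BruhatTits2} underlying Lemma~\ref{lem:sets:of:values:multipliable:root}, and exhibits explicitly that the extra jumps of the filtration live inside $U_{2a}$), but it costs you the hypothesis $p\neq 2$ (through $\tfrac12\in L^1_{a,\max}$ and the splitting $y'=\tfrac12 N_{L_a/L_{2a}}(y)+w_0$), whereas the lemma as stated, and the paper's proof of it, are characteristic-free; since the surrounding sections assume $p\neq 2$ anyway, this is a loss of generality rather than an error. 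If you wanted your argument at full generality you would replace $\tfrac12$ by some $\lambda\in L^1_{a,\max}$ and keep track of $\delta=\omega(\lambda)$ --- or simply observe, as the paper does, that the equivalence already follows from the definitions.
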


\begin{proof}~

$\begin{array}{rcl}
l \in \Gamma'_a

& \Leftrightarrow & \exists \mathbf{u} \in U_a(K),\ \varphi_a(\mathbf{u}) = l = \sup \varphi_a(\mathbf{u} U_{2a}(K))\\

& \Leftrightarrow & \exists \mathbf{u} \in U_a(K),\ \varphi_a(\mathbf{u}) = l \text{ and } \forall \mathbf{u''} \in U_{2a}(K),\ \varphi_a(\mathbf{u}\mathbf{u''}) < l^+\\

& \Leftrightarrow & U_{a,l} \neq U_{a,l^+} \text{ and }\exists \mathbf{u} \in U_{a,l},\ \forall \mathbf{u}'' \in U_{2a}(K),\ \mathbf{u}\mathbf{u}'' \not\in U_{a,l^+}\\

& \Leftrightarrow & X_{a,l} \neq 0 \text{ and } X_{a,l} \neq X_{2a,2l}\\

& \Leftrightarrow & d(a,l) \neq 0
\end{array}$
\end{proof}

This affine root system is an affinisation of the spherical root system.
It can be obtained by adding affine reflections corresponding to elements $\mathbf{m}(u) = u' u u''$ where for any $u \in U_a(K) \setminus \{1\}$, there exist $u', u'' \in U_{-a}{K}$ uniquely determined such that $\mathbf{m}(u) \in \mathcal{N}_G(S)(K)$.

\subsection{Action on a combinatorial unit ball}
\label{sec:action:on:ball}

We consider a maximal pro-$p$-subgroup $P = P^+_\mathbf{c}$ of $G(K)$.
For any $a \in \Phi$, if there exists a wall $\mathcal{H}_{a,l}$ bounding $\mathbf{c}$, we denote by $F_{\mathbf{c},a}$ the panel of $\mathbf{c}$ contained in $\mathcal{H}_{a,l}$.
Let $E_{\mathbf{c},a} = E_{F_{\mathbf{c},a}} $ be the panel residue of $F_{\mathbf{c},a}$.
We want to study the action of the derived group and of the Frattini subgroup of $P$ on the Bruhat-Tits building $X(G,K)$ of $G$ over $K$.
For this, we consider the action, on each set $E_{\mathbf{c},a}$, of the various valued root groups $U_{a,\mathbf{c}}$ and of the group $T(K)_b^+$.

\begin{Lem}
\label{lem:adjacent:alcoves}
Let $\mathbf{c}_1$ and $\mathbf{c}_2$ be two adjacent alcoves of the apartment $\mathbb{A}$ along a wall directed by a root $a \in \Phi$.
If $b \in \Phi \setminus \mathbb{R} a$,
then $f'_{\mathbf{c}_1}(b) = f'_{\mathbf{c}_2}(b)$ where $f'$ is defined in \ref{not:Omega}.
In particular, we have $U_{b,\mathbf{c}_1} = U_{b,\mathbf{c}_2}$.
\end{Lem}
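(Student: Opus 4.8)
The plan is to reduce the claim to a purely combinatorial statement about affine roots and then to the definition of $f'$ from Notation \ref{not:Omega}. First I would recall that two adjacent alcoves $\mathbf{c}_1$ and $\mathbf{c}_2$ sharing a panel $F$ contained in a wall $\mathcal{H}_{a,l}$ differ only by the reflection $r_{a,l}$ across that wall; in particular $\mathbf{c}_2 = r_{a,l}(\mathbf{c}_1)$, and $F = \overline{\mathbf{c}_1} \cap \overline{\mathbf{c}_2}$ lies in $\mathcal{H}_{a,l}$. The key geometric observation is that for any root $b \in \Phi$ which is \emph{not} proportional to $a$, the linear form $b$ is not constant on the wall $\mathcal{H}_{a,l}$, so that the values $b(x)$ for $x$ ranging over $\overline{\mathbf{c}_1}$ and over $\overline{\mathbf{c}_2}$ behave symmetrically: precisely, since $\mathbf{c}_1$ and $\mathbf{c}_2$ both have $F$ in their closure and $F$ spans an affine hyperplane on which $b$ is non-constant, one gets $\sup\{-b(x) : x \in \overline{\mathbf{c}_1}\} = \sup\{-b(x) : x \in \overline{\mathbf{c}_2}\}$, because both suprema are attained along $F$. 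Indeed, the half-apartment $D(b,l')$ containing $\mathbf{c}_1$ is bounded by a wall $\mathcal{H}_{b,l'}$ which either contains $F$ or is disjoint from $\overline{F}$; in both cases the reflection $r_{a,l}$ fixes $F$ pointwise, and hence fixes the infimum over $F$ of the affine function $\theta(b,l')$. This yields $f_{\mathbf{c}_1}(b) = f_{\mathbf{c}_2}(b)$, and similarly $f_{\mathbf{c}_1}(b/2) = f_{\mathbf{c}_2}(b/2)$ when $b/2 \in \Phi$.

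Second, once $f_{\mathbf{c}_1}(b) = f_{\mathbf{c}_2}(b)$ and $f_{\mathbf{c}_1}(b/2) = f_{\mathbf{c}_2}(b/2)$ are established, the equality $f'_{\mathbf{c}_1}(b) = f'_{\mathbf{c}_2}(b)$ is immediate from the first line of the definition of $f'_\Omega(b)$ in Notation \ref{not:Omega}, namely
$$f'_\Omega(b) = \inf\left\{ l' \in \Gamma'_b,\ l' \geq f_\Omega(b) \text{ or } \tfrac{1}{2} l' \geq f_\Omega(\tfrac{b}{2}) \right\},$$
since the right-hand side depends on $\Omega$ only through the two numbers $f_\Omega(b)$ and $f_\Omega(b/2)$ (and the set $\Gamma'_b$, which does not depend on $\Omega$). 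Finally, $U_{b,\mathbf{c}_1} = U_{b, f_{\mathbf{c}_1}(b)} = U_{b,f_{\mathbf{c}_2}(b)} = U_{b,\mathbf{c}_2}$, using that $U_{b,l'}$ is constant on the interval of $l'$ between $f_\Omega(b)$ and $f'_\Omega(b)$, by the second description of $f'_\Omega$. This gives the last assertion.

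\textbf{Main obstacle.} The delicate point is the geometric claim that $f_{\mathbf{c}_1}(b) = f_{\mathbf{c}_2}(b)$, i.e.\ that the supremum of $-b$ over the closed alcove is not changed when we flip across a wall directed by a different root $a$. The subtlety is that a priori the supremum could be attained at a vertex of $\mathbf{c}_1$ not lying on $F$, and this vertex is moved by the reflection. One must argue that the supremum of the \emph{linear} form $-b$ over the closure of an alcove is in fact attained along \emph{every} panel not orthogonal to $b$'s wall-direction — equivalently, that the function $-b$, restricted to $\overline{\mathbf{c}_i}$, attains its max on $\overline{F}$ because $F$ is a codimension-one face and $b$ restricted to the wall containing $F$ is non-constant, so the linear functional cannot have an interior maximum nor a maximum on a face parallel to $\mathcal{H}_{a,l}$ only. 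I would make this precise by noting that $\overline{\mathbf{c}_1}$ and $\overline{\mathbf{c}_2}$ are the two closed alcoves of the panel residue $E_F$ inside the apartment $\mathbb{A}$, they are exchanged by $r_{a,l}$, and their union $\overline{\mathbf{c}_1} \cup \overline{\mathbf{c}_2}$ is $r_{a,l}$-stable; hence $\sup_{\overline{\mathbf{c}_1}}(-b) = \sup_{r_{a,l}(\overline{\mathbf{c}_1})}(-b \circ r_{a,l}) = \sup_{\overline{\mathbf{c}_2}}(-(r_{a,l}\cdot b))$, and since $b \notin \mathbb{R}a$ one checks that $r_{a,l}\cdot b$ and $b$ differ by a root in the span of $a$, whose contribution to the sup along $\overline{\mathbf{c}_2}$ vanishes because $\mathbf{c}_2$ touches $\mathcal{H}_{a,l}$ — so one reduces to the vanishing of $f_{\mathbf{c}_2}(a) + f_{\mathbf{c}_2}(-a)$-type terms. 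Handling the multipliable case ($b/2 \in \Phi$ or $2b \in \Phi$) requires tracking $f_\Omega(b/2)$ in parallel, but the same reflection argument applies verbatim.
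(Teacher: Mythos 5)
Your reduction to the claim $f_{\mathbf{c}_1}(b)=f_{\mathbf{c}_2}(b)$ is the problem: that stronger statement is simply false, and the lemma only holds at the level of $f'$. Concretely, take $\Phi$ of type $B_2$ (split group, all $\Gamma'_c=\mathbb{Z}$), coordinates $(x_1,x_2)$, roots $\pm e_1,\pm e_2,\pm e_1\pm e_2$. Let $\mathbf{c}_1$ be the alcove with vertices $(0,0),(1,0),(\frac{1}{2},\frac{1}{2})$ and $\mathbf{c}_2$ the adjacent alcove with vertices $(1,0),(\frac{1}{2},\frac{1}{2}),(1,1)$, glued along the panel contained in the wall $\{x_1+x_2=1\}$ directed by $a=e_1+e_2$. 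For $b=-e_2\notin\mathbb{R}a$ one gets $f_{\mathbf{c}_1}(b)=\sup_{\mathbf{c}_1}x_2=\frac{1}{2}$ but $f_{\mathbf{c}_2}(b)=\sup_{\mathbf{c}_2}x_2=1$. This also refutes the two arguments you offer for the "main obstacle": the supremum over $\overline{\mathbf{c}_2}$ is attained at the vertex $(1,1)$, which does \emph{not} lie on the common panel, so it is not true that "both suprema are attained along $F$"; and in the reflection argument the discrepancy $b-r_a(b)=\langle b,a^\vee\rangle a$ does not have vanishing contribution to the supremum (here $r_a(-e_2)=-e_1$, whose sup over $\mathbf{c}_2$ differs). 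Note that in this example $f'_{\mathbf{c}_1}(b)=f'_{\mathbf{c}_2}(b)=1$, consistent with the lemma: the equality survives only after rounding up to the next value in $\Gamma'_b$.

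The paper's proof avoids $f$ entirely and argues with walls: $f'_{\mathbf{c}}(b)$ depends only on the position of $\mathbf{c}$ relative to the walls directed by $b$ (this is exactly what the clause "$l\in\Gamma'_b$, $l\geq f_\Omega(b)$ or $\frac{1}{2}l\geq f_\Omega(\frac{b}{2})$" encodes, including the multipliable case), so $f'_{\mathbf{c}_1}(b)\neq f'_{\mathbf{c}_2}(b)$ forces some wall directed by $b$ to separate $\mathbf{c}_1$ from $\mathbf{c}_2$. But any hyperplane separating two adjacent alcoves must contain their common panel, hence its affine span, which is the wall $\mathcal{H}_{a,l}$ itself; therefore $b$ would be collinear with $a$, a contradiction. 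If you repair your argument along these lines — working with half-apartments $D(b,\cdot)$ and separation, rather than with the exact suprema $f_{\mathbf{c}_i}(b)$ — the second half of your write-up (deducing $U_{b,\mathbf{c}_1}=U_{b,\mathbf{c}_2}$ from the $f'$ equality via the second description of $f'_\Omega$ in Notation \ref{not:Omega}) goes through as you wrote it.
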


\begin{proof}
In order that $f'_{\mathbf{c}_1}(b) \neq f'_{\mathbf{c}_2}(b)$, it is necessary and sufficient that there exists a wall directed by $b$ separating the alcoves $\mathbf{c}_1$ and $\mathbf{c}_2$ in two opposed half-apartments.
The alcoves $\mathbf{c}_1$ and $\mathbf{c}_2$ contain a panel contained in a wall directed by $a$.
This wall is the only one separating the alcoves in two opposed half-apartments.
Hence, if $f'_{\mathbf{c}_1}(b) \neq f'_{\mathbf{c}_2}(b)$, then $a$ and $b$ are collinear.
\end{proof}

\begin{Prop}
\label{prop:favourable:geometric:description}
Let $a \in \Phi = \Phi(G,S)$ be a relative root such that there exists a wall $\mathcal{H}_{a,l}$ bounding $\mathbf{c}$.
If $a$ is non-multipliable or if the quadratic extension $L_{a} / L_{2a}$ is ramified, then the Frattini subgroup $\mathrm{Frat}(P)$ fixes $E_{\mathbf{c},a}$ pointwise.

As a consequence, if $\Phi$ is a reduced root system or if the extension $L/L_d$ is ramified,
then $\mathrm{Frat}(P)$ fixes pointwise the simplicial closure $\mathrm{cl}(B(\mathbf{c},1))$ of the combinatorial unit ball.

In general, denoting by $Q_a$ the pointwise stablizer of $E_{\mathbf{c},a}$, we have the group inclusion $\mathrm{Frat}(P) \subset Q_a U_{2a, \mathbf{c}}$.
\end{Prop}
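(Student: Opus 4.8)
The plan is to analyze, root by root, how the group $P = P^+_{\mathbf{c}}$ acts on a single panel residue $E_{\mathbf{c},a}$ and then to bound $\mathrm{Frat}(P)$ by combining this with the description of $\mathrm{Frat}(P) = \overline{P^p[P,P]}$. First I would set up the action: the panel $F_{\mathbf{c},a}$ lies in a wall $\mathcal{H}_{a,l}$ with $l \in \Gamma'_a$, and by Proposition \ref{prop:first:root:group:quotient} the set of alcoves in $E_{\mathbf{c},a}$ is a torsor-like set of size $1 + \mathrm{Card}(X_{a,l})$, where $X_{a,l} = U_{a,l}/U_{a,l^+}$ carries a $\kappa_{L_a}$-vector space structure (modulo $X_{2a,2l}$) of dimension $d(a,l) \ge 1$. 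The key point from Lemma \ref{lem:adjacent:alcoves} is that for $b \in \Phi \setminus \mathbb{R}a$ the groups $U_{b,\mathbf{c}}$ fix all alcoves of $E_{\mathbf{c},a}$, as does $T(K)_b^+$ (it normalizes each filtration step and fixes the apartment pointwise); so the action of $P$ on $E_{\mathbf{c},a}$ factors through the action of $U_{a,\mathbf{c}} = U_{a,l}$, and in fact through the quotient $X_{a,l}$ acting simply transitively on $E_{\mathbf{c},a} \setminus \{\mathbf{c}\}$ (this is essentially the reformulation of \cite[1.6]{TitsCorvallis} recorded after Lemma \ref{lem:equivalence:affine:root:systems}).

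Next I would identify the pointwise stabilizer $Q_a$ of $E_{\mathbf{c},a}$ inside $P$: it is exactly the subgroup of $P$ whose image in $X_{a,l}$ is trivial, i.e. $Q_a$ is generated by $U_{a,l^+}$, by $U_{2a,\mathbf{c}}$ (which maps into $X_{a,l}$ via $X_{2a,2l}$ but may act nontrivially), all $U_{b,\mathbf{c}}$ for $b$ non-collinear, and $T(K)_b^+$. The last assertion $\mathrm{Frat}(P) \subset Q_a U_{2a,\mathbf{c}}$ follows once I show that the image of $\mathrm{Frat}(P)$ in $X_{a,l}$ lands in $X_{2a,2l}$: since $X_{a,l}/X_{2a,2l}$ is an elementary abelian $p$-group (a $\kappa_{L_a}$-vector space, and $p = \mathrm{char}\,\kappa_{L_a}$), the quotient map $P \twoheadrightarrow X_{a,l}/X_{2a,2l}$ kills $P^p$ and $[P,P]$, hence kills $\mathrm{Frat}(P)$; this gives exactly $\mathrm{Frat}(P) \subset Q_a U_{2a,\mathbf{c}}$.

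For the sharper statements, suppose $a$ is non-multipliable (so $X_{2a,2l} = 0$ and $U_{2a,\mathbf{c}} = 1$): then the inclusion just proved becomes $\mathrm{Frat}(P) \subset Q_a$, which is the claim that $\mathrm{Frat}(P)$ fixes $E_{\mathbf{c},a}$ pointwise. If $a$ is multipliable but $L_a/L_{2a}$ is ramified, I would invoke Lemma \ref{lem:sets:of:values:multipliable:root}: there $\Gamma_{2a} = 1 + 2\mathbb{Z}$ while $\Gamma'_a = \mathbb{Z}$, so the value $l \in \Gamma'_a = \mathbb{Z}$ governing the wall $\mathcal{H}_{a,l}$ cannot equal any $m$ with $2m \in \Gamma_{2a}$ unless — the point being that at this particular $l$ the subquotient $X_{2a,2l}$ vanishes (equivalently $2l \notin \Gamma_{2a}$, since $2l \in 2\mathbb{Z}$ and $\Gamma_{2a} = 1 + 2\mathbb{Z}$), so again $X_{a,l} = X_{a,l}/X_{2a,2l}$ is a $\kappa_{L_a}$-vector space and $\mathrm{Frat}(P) \subset Q_a$. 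The global consequence then follows: when $\Phi$ is reduced every bounding root is non-multipliable, and when $L/L_d$ is ramified every multipliable bounding root of the fundamental alcove has its wall at a value where $X_{2a,2l}$ vanishes by the computation in Section \ref{sec:description:alcove}; so $\mathrm{Frat}(P)$ fixes every panel residue of every panel of $\mathbf{c}$, hence fixes $B(\mathbf{c},1)$ pointwise, hence (stabilizers being closed and the simplicial closure being determined by the vertices fixed) fixes $\mathrm{cl}(B(\mathbf{c},1))$ pointwise.

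The main obstacle I expect is the bookkeeping in the ramified multipliable case: one must be careful that "the wall $\mathcal{H}_{a,l}$ bounds $\mathbf{c}$" pins down $l$ to lie in $\Gamma'_a$ and not merely in $\Gamma_a$, and then check against Lemma \ref{lem:sets:of:values:multipliable:root} that at such $l$ one indeed has $X_{2a,2l} = 0$ — i.e. that the wall is not simultaneously a $2a$-wall. The subtlety is that $\mathcal{H}_{a,l} = \mathcal{H}_{2a,2l}$ as affine subspaces, so one has to argue at the level of the value sets $\Gamma'_a$ versus $\Gamma_{2a}$ rather than geometrically; the normalization $\Gamma_{L_a} = \mathbb{Z}$ and the ramification hypothesis are exactly what make $\Gamma'_a \cap \frac{1}{2}\Gamma_{2a} = \emptyset$ (as noted in the residue-characteristic-$2$ remark and implicit in Lemma \ref{lem:sets:of:values:multipliable:root}), and this disjointness is the crux. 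Everything else is a matter of assembling Lemma \ref{lem:adjacent:alcoves}, Proposition \ref{prop:first:root:group:quotient}, and the elementary-abelian-quotient argument.
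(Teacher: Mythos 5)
Your overall architecture is the same as the paper's (map $P$ onto the subquotient $X_{a,l}$ or $X_{a,l}/X_{2a,2l}$, observe it is a $\kappa$-vector space, hence elementary abelian of exponent $p$, so it kills $P^p[P,P]$ and therefore $\mathrm{Frat}(P)$), but as written there are two genuine gaps. First, the claim that the action of $P$ on $E_{\mathbf{c},a}$ factors through $X_{a,l}$ is not a consequence of Lemma \ref{lem:adjacent:alcoves}: that lemma only shows that $T(K)_b^+$, $U_{b,\mathbf{c}}$ ($b$ non-collinear) and $U_{-a,\mathbf{c}}$ fix the single adjacent alcove $\mathbf{c}'$ lying in $\mathbb{A}$, whereas the other alcoves of the residue are outside $\mathbb{A}$. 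To see that these groups, together with $U_{a,f_{\mathbf{c}}(a)^+}$ (and hence a normal subgroup of $P$ with quotient inside $X_{a,l}$), fix \emph{all} of $E_{\mathbf{c},a}$, one needs the commutator criterion and the rank-$1$ and rank-$2$ commutation estimates (Lemmas \ref{lem:element:fixing:panel:residue}, \ref{lem:groups:fixing:panel:residue}, resting on \ref{lem:commutator:relation:reduced:case:torus:unipotent}, \ref{lem:commutator:relation:reduced:case:opposite:unipotent}, \ref{lem:commutation:non:reduced:torus:root:group}, \ref{lem:commutation:non:reduced:opposite:root:groups}, \ref{lem:non:reduced:derived:valued:root:group} and quasi-concavity). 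In particular you never address the opposite root group $U_{-a,\mathbf{c}}\subset P$, which is exactly the hardest part of this factorization, and your description of $Q_a$ is internally inconsistent: $U_{2a,\mathbf{c}}$ is \emph{not} contained in the pointwise stabilizer in general (you say so yourself in the parenthesis), so $Q_a$ is not "generated by" it; the correct statement is only that the kernel of $P\to X_{a,l}/X_{2a,2l}$ contains $Q_aU_{2a,\mathbf{c}}$.

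Second, your treatment of the multipliable ramified case is factually wrong. The hypothesis that $\mathcal{H}_{a,l}$ bounds $\mathbf{c}$ does \emph{not} force $l\in\Gamma'_a$: walls in the direction of a multipliable root also occur at $l\in\frac12\Gamma'_{2a}$, and indeed the fundamental alcove is bounded by $\mathcal{H}_{-\theta,\frac12}$ with $\frac12\notin\Gamma'_{-\theta}$. In the ramified case at that wall one has $2l=1\in\Gamma_{2a}=1+2\mathbb{Z}$, so $X_{2a,2l}\neq 0$, contrary to your claim that $X_{2a,2l}$ vanishes at every bounding wall when $L'/L_d$ is ramified. The conclusion survives, but for a different reason, which is the one the paper uses: when $l\notin\Gamma'_a$ Lemma \ref{lem:equivalence:affine:root:systems} gives $d(a,l)=0$, hence $X_{a,l}=X_{2a,2l}$ is itself a $\kappa_{L_{2a}}$-vector space and $P/Q_a$ is again elementary abelian; when $l\in\Gamma'_a$ and $L_a/L_{2a}$ is ramified your disjointness argument does apply. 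The only genuinely bad case, where one can conclude no more than $\mathrm{Frat}(P)\subset Q_aU_{2a,\mathbf{c}}$, is $a$ multipliable with $L_a/L_{2a}$ unramified and $l\in\Gamma'_a$, since there $[X_{a,l},X_{a,l}]=X_{2a,2l}\neq 0$. So your proposal needs both the missing residue-fixing lemmas and a corrected case split on $l\in\Gamma'_a$ versus $l\notin\Gamma'_a$ before it matches the proposition.
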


The rest of this section consists in proving the above proposition.

Let $\mathbf{c}'$ be an alcove of $\mathbb{A}$ adjacent to $\mathbf{c}$.
In particular, we have $\mathbf{c}' \in B(\mathbf{c},1)$.
Write $a' + r'$, with $a' \in \Phi$ and $r' \in \Gamma_{a'}$, the affine root directing the wall separating the alcoves $\mathbf{c}$ and $\mathbf{c}'$.
If $a'$ is divisible, we set $a = \frac{1}{2} a'$ and $r = \frac{1}{2} r'$. Remark that we still have $r \in \Gamma_a$ but $a + r$ may or may not be an affine root according to $r$ is an element of $\Gamma_a'$ or not.
Otherwise, we set $a = a'$ and $r = r'$.
We also have the following definition of $r$ by the equality $r = f_\mathbf{c}(a) = f'_\mathbf{c}(a)$ by \cite[7.7]{Landvogt}.
Up to exchanging $a$ and $-a$, one can assume that $f_{\mathbf{c}'}(a) = f_{\mathbf{c}}(a)^+ > f_{\mathbf{c}}(a)$ and that $f_{\mathbf{c}'}(-a) < f_{\mathbf{c}}(-a) = f_{\mathbf{c}'}(-a)^+$.

The group $P$ acts on the finite set of alcoves $E_{\mathbf{c},a}$ and fixes $\mathbf{c}$.
Hence, it acts on the set of alcoves $E'_{\mathbf{c},a} = E_{\mathbf{c},a} \setminus \{\mathbf{c}\}$.
Denote by $Q_a$ the kernel of this action.
We will show that the quotient group $P / Q_a$ is isomorphic to a subgroup of $U_{a,r} / U_{a,r^+}$.

\begin{Lem}
\label{lem:panel:residue}
The group $U_{a,\mathbf{c}}$ acts transitively on the set $E'_{\mathbf{c},a}$.
\end{Lem}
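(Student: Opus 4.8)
The plan is to first reduce the statement to a rank-$1$ computation inside the group $G^a = \langle U_{-a}, U_a \rangle$, using the fact that the action of $P$ on $X(G,K)$ is given combinatorially by the valued root groups datum and that, by Lemma \ref{lem:adjacent:alcoves}, the root groups $U_{b,\mathbf{c}}$ for $b \not\in \mathbb{R} a$ fix every alcove of $E_{\mathbf{c},a}$. Thus only $U_{a,\mathbf{c}} = U_{a,r}$, $U_{-a,\mathbf{c}}$ and the toric part are relevant, and since the toric part and $U_{-a,\mathbf{c}}$ already fix $\mathbf{c}$ and its panel $F_{\mathbf{c},a}$, the action on $E_{\mathbf{c},a}$ factors through $U_{a,r}$. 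First I would recall that an element $u \in U_a(K)$ with $\varphi_a(u) = f_{\mathbf{c}'}(-a)^+ \leq r^+$... more precisely: for each alcove $\mathbf{c}' \in E'_{\mathbf{c},a}$ there is a unique wall $\mathcal{H}_{a,r}$ separating it from $\mathbf{c}$, and the reflection $\mathbf{m}(u)$ attached to a suitable $u \in U_{a}(K)$ with $\varphi_a(u)=-r$ swaps the two half-apartments across $\mathcal{H}_{a,r}$, sending $\mathbf{c}$ to an alcove adjacent along that wall. More directly, I would use the explicit description: an element $v \in U_{-a, -r}$ (equivalently $\mathbf{m}(u)$ for $u \in U_{a,r}$) moves $\mathbf{c}$ to a neighbour, and conjugating by $U_{a,r}$ permutes these neighbours.

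The cleanest route is to invoke \cite[4.6.28]{BruhatTits2} and Notation \ref{not:Omega}: $\widehat{P}_{F} = \mathfrak{G}_F^\circ(\mathcal{O}_K)$ is the connected pointwise stabilizer of the panel $F = F_{\mathbf{c},a}$, and an alcove $\mathbf{c}' \in E_F$ is determined by the image of the wall-crossing data in $U_{a,r}/U_{a,r^+} = X_{a,r}$. Concretely: for any two alcoves $\mathbf{c}_1, \mathbf{c}_2 \in E'_{\mathbf{c},a}$, both lie on the $\mathbf{c}$-side complement; choose the affine reflection $s$ in the wall $\mathcal{H}_{a,r}$; then $s(\mathbf{c}_1), s(\mathbf{c}_2)$ lie on the opposite side, and $\mathbf{c}_1 = s \cdot \mathbf{c}_1$ is obtained from the fixed reference alcove $\mathbf{c}$ by an element of $U_{a,\mathbf{c}}$, because the fixer of $F$ acts on $E_F$ through $U_{a,r}$ which surjects onto the transitive part. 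I would make this precise by exhibiting, for a given $\mathbf{c}' \in E'_{\mathbf{c},a}$, an explicit $u \in U_{a,r}$ with $u \cdot \mathbf{c}_0 = \mathbf{c}'$ where $\mathbf{c}_0$ is one chosen element of $E'_{\mathbf{c},a}$: indeed $\mathbf{c}_0$ and $\mathbf{c}'$ are swapped with $\mathbf{c}$ by reflections $\mathbf{m}(u_0)$, $\mathbf{m}(u')$ with $u_0, u' \in U_a(K)$ of valuation $-r$, and then $\mathbf{m}(u')\mathbf{m}(u_0)^{-1} \in U_{a,r} \cdot (\text{stab of } \mathbf{c}) $ realizes the transition, using axiom (RGD 4)--(RGD 5) and (VRGD 5) to control that the toric correction term lies in $T(K)_b$, which fixes $\mathbb{A}$ pointwise.

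The main obstacle I expect is bookkeeping the toric correction: when one writes $\mathbf{m}(u') \mathbf{m}(u_0)^{-1}$, by (RGD 4) this is an element of $U_{-a} M_a U_a \cdot U_a M_a^{-1} U_{-a}$-type form, and one must check that after absorbing the $M_a$-cosets the residual "diagonal" factor normalizes $U_{a,r}$ and $U_{-a,\mathbf{c}}$ and fixes $\mathbf{c}$ — i.e. that it lies in $T(K)_b$ and not merely in $T(K)$. This follows from the valuation axioms: (VRGD 2) and (VRGD 5) force $\varphi_{-a}(u_0') = -\varphi_a(u_0) = r$ and pin down the valuations of the opposite-root factors, so that the toric part $\widetilde{a}(\cdot)$ has trivial valuation, hence lies in $T(K)_b$ and acts trivially on $\mathbb{A} \supset E_{\mathbf{c},a}$. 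Once that is checked, transitivity of $U_{a,\mathbf{c}} = U_{a,r}$ on $E'_{\mathbf{c},a}$ is immediate; and one reads off simultaneously that the stabilizer in $U_{a,r}$ of $\mathbf{c}_0$ is exactly $U_{a,r^+}$, giving the bijection $E'_{\mathbf{c},a} \simeq X_{a,r}$ that underlies the counting remark and Proposition \ref{prop:favourable:geometric:description}.
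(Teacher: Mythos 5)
There is a genuine gap at the central step. Your mechanism for reaching an arbitrary alcove $\mathbf{c}'' \in E'_{\mathbf{c},a}$ is to produce it as the image of $\mathbf{c}$ under a reflection $\mathbf{m}(u'')$ attached to some $u'' \in U_a(K)$ of valuation $-r$, and then to compare two such reflections via $\mathbf{m}(u')\mathbf{m}(u_0)^{-1}$. But $\mathbf{m}(u) = u'uu''$ lies in $\mathcal{N}_G(S)(K)$, so it stabilizes the standard apartment $\mathbb{A}$ and can only send $\mathbf{c}$ to alcoves of $\mathbb{A}$. The residue $E_{\mathbf{c},a}$ contains $1+q^{d(\frac{a}{2},\frac{l}{2})+d(a,l)+d(2a,2l)}$ alcoves, of which exactly two (namely $\mathbf{c}$ and the alcove $\mathbf{c}'$ opposite across $\mathcal{H}_{a,r}$) lie in $\mathbb{A}$; all the others are invisible to elements of $\mathcal{N}_G(S)(K)$, so your construction only ever reaches $\mathbf{c}'$. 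The valuation bookkeeping with (RGD 4)--(RGD 5) and (VRGD 2), (VRGD 5) controls the toric correction correctly, but it is applied to the wrong objects: the missing input is precisely the statement that every alcove of $E'_{\mathbf{c},a}$ lies in the $P_{\mathbf{c}}$-orbit of $\mathbf{c}'$. The paper gets this from strong transitivity --- $P_{\mathbf{c}}$ acts transitively on the apartments containing $\mathbf{c}$ \cite[9.7 (i)]{Landvogt} --- and then kills all factors other than $U_{a,\mathbf{c}}$ in the Iwahori-type decomposition $P_{\mathbf{c}} = U_{a,\mathbf{c}}\cdot\prod_{b} U_{b,\mathbf{c}}\cdot U_{-\Phi^+,\mathbf{c}}\cdot T(K)_b$ \cite[7.1.8]{BruhatTits1}, using Lemma \ref{lem:adjacent:alcoves} and $f_{\mathbf{c}'}(-a) < f_{\mathbf{c}}(-a)$ to see that each of those factors fixes $\mathbf{c}'$. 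Your sentence that the fixator of $F$ ``surjects onto the transitive part'' presupposes this transitivity rather than proving it.

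Two smaller points. First, you invoke that $U_{b,\mathbf{c}}$ for $b \notin \mathbb{R}a$ fixes \emph{every} alcove of $E_{\mathbf{c},a}$: that is Lemma \ref{lem:groups:fixing:panel:residue}(\ref{item:non:collinear:root:group:fixes:panel:residue}), whose proof in the paper uses the present lemma (via Lemma \ref{lem:element:fixing:panel:residue}), so using it here would be circular; what Lemma \ref{lem:adjacent:alcoves} legitimately gives you is only that these groups fix $\mathbf{c}'$, which is all the paper's argument needs. Second, the identification $E'_{\mathbf{c},a} \simeq X_{a,r}$ you announce at the end is a consequence of the transitivity statement together with the stabilizer computation, not an independent route to it; as written it again assumes the surjectivity that is the actual content of the lemma.
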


\begin{proof}
By construction of the building, the subgroup $P_\mathbf{c}$ acts transitively on the set of apartments containing $\mathbf{c}$ \cite[9.7 (i)]{Landvogt}.
Because the action preserves the type of facets, we obtain $E_{\mathbf{c},a} = P_\mathbf{c} \cdot \mathbf{c}'$.

Write $P_\mathbf{c} = U_{a,\mathbf{c}} \cdot \prod_{b \in \Phi^+_{\mathrm{nd}} \setminus (a)} U_{b,\mathbf{c}} \cdot U_{-\Phi^+,\mathbf{c}}\cdot T(K)_b$ \cite[7.1.8]{BruhatTits1}.
The group $T(K)_b$ fixes $A$ pointwise \cite[9.8]{Landvogt}, hence it also fixes $\mathbf{c}'$.
For any $b \in \Phi \setminus \mathbb{R} a$, by Lemma \ref{lem:adjacent:alcoves} we have $U_{b,\mathbf{c}} = U_{b,\mathbf{c}'}$.
Hence $U_{b,\mathbf{c}}$ fixes $\mathbf{c}'$.
Since we assumed that $f_{\mathbf{c}'}(-a) < f_{\mathbf{c}}(-a)$, we have $U_{-a,\mathbf{c}} \subset U_{-a,\mathbf{c}'}$.
Hence $U_{-a,\mathbf{c}}$ fixes $\mathbf{c}'$.
As a consequence $E'_{\mathbf{c},a} = U_{a,\mathbf{c}} \cdot \mathbf{c}'$, because the valued root groups $U_{b,\mathbf{c}}$ and the group $T(K)_b$ fix $\mathbf{c}'$.
\end{proof}

\begin{Lem}
\label{lem:element:fixing:panel:residue}
Let $g \in P$ be an element fixing $\mathbf{c}'$.
If $[v,g]$ fixes $\mathbf{c}'$ for any $v \in U_{a,\mathbf{c}}$,
then $g$ fixes $E_{\mathbf{c},a}$.
\end{Lem}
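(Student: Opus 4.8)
The plan is to reduce the claim to the transitivity result of Lemma~\ref{lem:panel:residue} together with one short commutator manipulation. First I would observe that $g\in P = P^+_{\mathbf c}$ automatically fixes the alcove $\mathbf c$, so it suffices to prove that $g$ fixes every alcove of $E'_{\mathbf c,a} = E_{\mathbf c,a}\setminus\{\mathbf c\}$. By Lemma~\ref{lem:panel:residue}, the group $U_{a,\mathbf c}$ acts transitively on $E'_{\mathbf c,a}$, and hence every alcove in $E'_{\mathbf c,a}$ is of the form $v\cdot\mathbf c'$ for some $v\in U_{a,\mathbf c}$, where $\mathbf c'$ is the adjacent alcove fixed at the beginning of this subsection.

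It then remains to evaluate the action of $g$ on an arbitrary $v\cdot\mathbf c'$. With the commutator convention $[x,y] = xyx^{-1}y^{-1}$ used throughout the paper, one has $v^{-1}gv = [v^{-1},g]\,g$, so that
\[ g\cdot(v\cdot\mathbf c') = v\cdot\bigl([v^{-1},g]\,g\bigr)\cdot\mathbf c' = v\cdot[v^{-1},g]\cdot\mathbf c', \]
the last equality using that $g$ fixes $\mathbf c'$. Since $U_{a,\mathbf c}$ is a subgroup, $v^{-1}$ again lies in $U_{a,\mathbf c}$, so the hypothesis applies to it and $[v^{-1},g]$ fixes $\mathbf c'$; therefore $g\cdot(v\cdot\mathbf c') = v\cdot\mathbf c'$. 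This shows that $g$ fixes $E'_{\mathbf c,a}$ pointwise, and together with the fact that it fixes $\mathbf c$, we conclude that $g$ fixes $E_{\mathbf c,a}$.

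I do not anticipate a genuine obstacle: the argument is purely formal once Lemma~\ref{lem:panel:residue} is in hand. The only points requiring a little care are remembering that $P$ already stabilizes $\mathbf c$ (so that $E_{\mathbf c,a}$ is covered by $\{\mathbf c\}$ and the $U_{a,\mathbf c}$-orbit of $\mathbf c'$), and tracking the commutator carefully so that the element which must be shown to fix $\mathbf c'$ is $[v^{-1},g]$ rather than $[v,g]$ --- which is harmless precisely because $v\mapsto v^{-1}$ preserves $U_{a,\mathbf c}$.
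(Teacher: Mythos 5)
Your proof is correct and follows essentially the same route as the paper: decompose $E_{\mathbf c,a}\setminus\{\mathbf c\}$ as the $U_{a,\mathbf c}$-orbit of $\mathbf c'$ via Lemma~\ref{lem:panel:residue}, then use the identity $gv = v\,[v^{-1},g]\,g$ and the hypothesis applied to $v^{-1}$. The paper's computation is literally the same, so there is nothing to add.
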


\begin{proof}
Let $\mathbf{c}'' \in E'_{\mathbf{c},a}$.
By Lemma \ref{lem:panel:residue}, there exists an element $v \in U_{a,\mathbf{c}}$ such that $\mathbf{c}'' = v \mathbf{c}'$.
We do the following computation:
$$\begin{array}{rcll}
g \cdot \mathbf{c}''
& = & g v \cdot \mathbf{c}' & \\
& = & v [v^{-1},g] g \cdot \mathbf{c}' & \\
& = & v [v^{-1},g]\cdot \mathbf{c}' & \text{ because } g \text{ fixes } \mathbf{c}'\\
& = & v \mathbf{c}' & \text{ because } [v^{-1},g] \text{ fixes } \mathbf{c}'\\
& = & \mathbf{c}''
\end{array}$$
Since this is true for any $\mathbf{c}'' \in E'_{a,\mathbf{c}}$, we conclude that $g$ fixes $E_{a,\mathbf{c}}$.
\end{proof}

Hence, to show that $g \in [P,P]$ fixes $E_{\mathbf{c},a}$,
it suffices to verify that $[U_{a,\mathbf{c}},g]$ fixes $\mathbf{c}'$.
We are reduced to compute commutators.
Recall that the group $U_{a,f_{\mathbf{c}}(a)^+} = U_{a,\mathbf{c}'}$ fixes $\mathbf{c}'$.

\begin{Lem}\label{lem:groups:fixing:panel:residue}
The following groups:
\begin{enumerate}
\item \label{item:root:group:fixes:panel:residue}
$U_{a,f_{\mathbf{c}}(a)^+}$

\item \label{item:bounded:torus:fixes:panel:residue}
$T(K)_b^+$

\item \label{item:non:collinear:root:group:fixes:panel:residue}
$U_{b,\mathbf{c}}$ where $b \in \Phi \setminus \mathbb{R} a$

\item \label{item:opposite:root:group:fixes:panel:residue}
$U_{-a,\mathbf{c}}$
\end{enumerate}
fix the panel residue $E_{\mathbf{c},a}$.
\end{Lem}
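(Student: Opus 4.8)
The plan is to reduce the whole statement to the computation of a handful of commutator subgroups, via Lemma \ref{lem:element:fixing:panel:residue} — more precisely via its proof, which only uses that the element under consideration fixes both $\mathbf{c}$ and the chosen adjacent alcove $\mathbf{c}'$. First one checks that each of the four groups $H$ in the list fixes $\mathbf{c}$ and $\mathbf{c}'$ pointwise. For items \ref{item:root:group:fixes:panel:residue}, \ref{item:non:collinear:root:group:fixes:panel:residue}, \ref{item:opposite:root:group:fixes:panel:residue} this is because $U_{a,f_{\mathbf{c}}(a)^+} = U_{a,\mathbf{c}'} \subseteq U_{a,\mathbf{c}}$, $U_{b,\mathbf{c}} = U_{b,\mathbf{c}'}$ (Lemma \ref{lem:adjacent:alcoves}, as $b \notin \mathbb{R}a$), and $U_{-a,\mathbf{c}} \subseteq U_{-a,\mathbf{c}'}$ (since $f_{\mathbf{c}'}(-a) < f_{\mathbf{c}}(-a)$), and each of these root groups is contained in both $\widehat{P}_{\mathbf{c}} = \mathfrak{G}_{\mathbf{c}}^{\circ}(\mathcal{O}_K)$ and $\widehat{P}_{\mathbf{c}'}$; for item \ref{item:bounded:torus:fixes:panel:residue}, $T(K)_b^+ \subseteq T(K)_b$ fixes the whole apartment $\mathbb{A}$ pointwise by \cite[9.8]{Landvogt}. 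Hence, by the computation in the proof of Lemma \ref{lem:element:fixing:panel:residue}, it remains only to check for each $H$ that $[U_{a,\mathbf{c}}, H]$ is contained in a product of subgroups of $G(K)$ each of which fixes $\mathbf{c}'$.

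Item \ref{item:non:collinear:root:group:fixes:panel:residue} is the most direct: by axiom (VRGD 3) of Definition \ref{def:valued:root:groups:datum}, $[U_{a,\mathbf{c}}, U_{b,\mathbf{c}}] = [U_{a,f_{\mathbf{c}}(a)}, U_{b,f_{\mathbf{c}}(b)}]$ lies in the group generated by the $U_{ra+sb,\, rf_{\mathbf{c}}(a)+sf_{\mathbf{c}}(b)}$ with $r,s \in \mathbb{N}^*$ and $ra+sb \in \Phi$; by sublinearity of $f_{\mathbf{c}}$ (that is, $f_{\mathbf{c}}(ra+sb) \leq rf_{\mathbf{c}}(a)+sf_{\mathbf{c}}(b)$) each such group is contained in $U_{ra+sb,\mathbf{c}}$, and since $ra+sb \notin \mathbb{R}a$ (as $s \geq 1$ and $b \notin \mathbb{R}a$) no wall directed by $ra+sb$ separates $\mathbf{c}$ from $\mathbf{c}'$, so by the argument of Lemma \ref{lem:adjacent:alcoves} one gets $U_{ra+sb,\mathbf{c}} = U_{ra+sb,\mathbf{c}'}$, which fixes $\mathbf{c}'$. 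For item \ref{item:bounded:torus:fixes:panel:residue} I would invoke the rank one torus commutation relations of Section \ref{sec:rank:one:case}: Lemma \ref{lem:commutator:relation:reduced:case:torus:unipotent}(2) when $a$ is non-multipliable and Lemma \ref{lem:commutation:non:reduced:torus:root:group} when $a$ is multipliable, which give $[T(K)_b^+, U_{a,\mathbf{c}}] \subseteq U_{a,f_{\mathbf{c}}(a)^+} = U_{a,\mathbf{c}'}$ (reading their ``$+1$'', resp. ``$+\tfrac12$'', increments as ``pass to the next level in $\Gamma_a$'', which is what they amount to once the global normalization $\Gamma_{L'} = \mathbb{Z}$ of Section \ref{sec:description:alcove} is restored), and $U_{a,\mathbf{c}'}$ fixes $\mathbf{c}'$.

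Item \ref{item:root:group:fixes:panel:residue} is trivial when $a$ is non-multipliable, since then $U_a$ is abelian and $[U_{a,\mathbf{c}}, U_{a,\mathbf{c}'}] = \{1\}$; when $a$ is multipliable, Lemma \ref{lem:non:reduced:derived:valued:root:group} gives $[U_{a,f_{\mathbf{c}}(a)}, U_{a,f_{\mathbf{c}'}(a)}] \subseteq U_{2a,\, \lceil f_{\mathbf{c}}(a)\rceil + \lceil f_{\mathbf{c}'}(a)\rceil}$, and since $f_{\mathbf{c}}(a) \in \Gamma'_a = \mathbb{Z}$ and $f_{\mathbf{c}'}(a) = f_{\mathbf{c}}(a)^+ = f_{\mathbf{c}}(a) + \tfrac12$, the exponent equals $2f_{\mathbf{c}}(a) + 1 = 2f_{\mathbf{c}'}(a) = f_{\mathbf{c}'}(2a)$, so this commutator group is $U_{2a,\mathbf{c}'}$, which fixes $\mathbf{c}'$. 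Item \ref{item:opposite:root:group:fixes:panel:residue} is the crux, and the main obstacle, because (VRGD 3) does not apply to the pair $(a,-a)$: here one must use the explicit commutation formulas for opposite root groups, namely Lemma \ref{lem:commutator:relation:reduced:case:opposite:unipotent} when $a$ is non-multipliable and Lemma \ref{lem:commutation:non:reduced:opposite:root:groups} when $a$ is multipliable. These apply since $f_{\mathbf{c}}(a) + f_{\mathbf{c}}(-a) > 0$ — indeed $\mathbf{c}$ is an alcove, so $a$ is non-constant on it, which gives the hypothesis $l+l' \geq 1$, resp. $l+l' > 0$, for $l = f_{\mathbf{c}}(-a)$, $l' = f_{\mathbf{c}}(a)$ (both integral multiples of the positive generator of $\Gamma_a$ in the reduced case). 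They yield $[U_{a,\mathbf{c}}, U_{-a,\mathbf{c}}] \subseteq U_{-a,f_{\mathbf{c}}(-a)^+} \cdot T^a(K)_b^+ \cdot U_{a,f_{\mathbf{c}}(a)^+} \subseteq U_{-a,\mathbf{c}'} \cdot T(K)_b^+ \cdot U_{a,\mathbf{c}'}$, using that $f_{\mathbf{c}}(-a)^+ > f_{\mathbf{c}}(-a) > f_{\mathbf{c}'}(-a)$ and that $T^a(K)_b^+ \subseteq T(K)_b^+$ fixes $\mathbb{A}$; this is a product of three groups each fixing $\mathbf{c}'$, hence fixing $\mathbf{c}'$. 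With these four commutator inclusions established, Lemma \ref{lem:element:fixing:panel:residue} concludes in each case that $H$ fixes $E_{\mathbf{c},a}$ pointwise. The one point I would double-check most carefully is exactly this bookkeeping: matching the increments of the rank one Lemmas of Sections \ref{sec:reduced:case}--\ref{sec:non:reduced:case} (stated under the local normalizations $\Gamma_a = \mathbb{Z}$, resp. $\Gamma_a = \tfrac12\mathbb{Z}$) with the passage $f_{\mathbf{c}}(\pm a) \mapsto f_{\mathbf{c}}(\pm a)^+$ needed to land precisely inside $U_{\pm a,\mathbf{c}'}$, while using that $U_{a,l}$ is constant in $l$ between consecutive elements of $\Gamma_a$.
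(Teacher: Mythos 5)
Your proposal is correct and follows essentially the same route as the paper: show each of the four groups fixes $\mathbf{c}'$, bound its commutator with $U_{a,\mathbf{c}}$ by groups fixing $\mathbf{c}'$ (the rank-one Lemmas \ref{lem:commutator:relation:reduced:case:torus:unipotent}, \ref{lem:commutation:non:reduced:torus:root:group}, \ref{lem:commutator:relation:reduced:case:opposite:unipotent}, \ref{lem:commutation:non:reduced:opposite:root:groups}, \ref{lem:non:reduced:derived:valued:root:group}), and conclude with Lemma \ref{lem:element:fixing:panel:residue}; the only real variation is in item \ref{item:non:collinear:root:group:fixes:panel:residue}, where you use axiom (VRGD 3) together with sublinearity of $f_{\mathbf{c}}$ while the paper invokes quasi-concavity of $f'$ via \cite[4.5.10]{BruhatTits2} and Lemma \ref{lem:adjacent:alcoves}, which amounts to the same thing. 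One small inaccuracy in item \ref{item:root:group:fixes:panel:residue}: for a multipliable $a$ one cannot assert $f_{\mathbf{c}}(a) \in \Gamma'_a$ (the panel in $\mathcal{H}_{-\theta,\frac{1}{2}}$ of the fundamental alcove has $f_{\mathbf{c}}(-\theta)=\frac{1}{2}\notin\Gamma'_{-\theta}$), but this does not matter: since $\Gamma_a=\frac{1}{2}\mathbb{Z}$ one always has $f_{\mathbf{c}}(a)^+=f_{\mathbf{c}}(a)+\frac{1}{2}$ and $\lceil l\rceil+\lceil l+\frac{1}{2}\rceil = 2l+1 = 2f_{\mathbf{c}'}(a)$ for every $l\in\frac{1}{2}\mathbb{Z}$, so the commutator still lands in $U_{2a,2f_{\mathbf{c}'}(a)}\subset U_{a,f_{\mathbf{c}'}(a)}$, exactly the estimate the paper uses.
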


\begin{proof}
(\ref{item:root:group:fixes:panel:residue}) Let $u \in U_{a,f_{\mathbf{c}}(a)^+}$.
Then $u$ fixes $\mathbf{c}'$.
Let $v \in U_{a,\mathbf{c}}$.

If $a$ is non-multipliable,
then $[v,u] = 1$ because the root group $U_a(K)$ is commutative.

If $a$ is multipliable,
by Lemma \ref{lem:non:reduced:derived:valued:root:group}, we know that $[v^{-1},u] \in U_{2a,\lceil f_{\mathbf{c}}(a)^+ \rceil + \lceil f_{\mathbf{c}}(a) \rceil}$.
Since $\lceil f_{\mathbf{c}}(a)^+ \rceil + \lceil f_{\mathbf{c}}(a) \rceil > 2 f_{\mathbf{c}}(a)$,
we deduce that $[v^{-1},u] \in U_{a,f_\mathbf{c}(a)^+} = U_{a,f_{\mathbf{c}'}(a)}$ fixes $\mathbf{c}'$.

Applying Lemma \ref{lem:element:fixing:panel:residue}, we obtain that $u$ fixes $E_{\mathbf{c},a}$. 

(\ref{item:bounded:torus:fixes:panel:residue}) Let $t \in T(K)_b^+$.
The element $t$ fixes $\mathbf{c}'$ because $T(K)_b$ fixes the apartment $A$.
By Lemmas \ref{lem:commutator:relation:reduced:case:torus:unipotent} and \ref{lem:commutation:non:reduced:torus:root:group}, we know that $[T(K)_b^+,U_{a,\mathbf{c}}]\subset U_{a,f_{\mathbf{c}}(a)^+} = U_{a,\mathbf{c}'}$.
Hence $[v,t] \in U_{a,\mathbf{c}'}$ fixes $\mathbf{c}'$ for any $v \in U_{a,\mathbf{c}}$.
We deduce from (\ref{item:root:group:fixes:panel:residue}) that $T(K)_b^+$ fixes $E_{\mathbf{c},a}$.

(\ref{item:non:collinear:root:group:fixes:panel:residue}) Let $g \in U_{b,\mathbf{c}}$ and $v \in U_{a,\mathbf{c}}$.
By Lemma \ref{lem:adjacent:alcoves}, we get $U_{b,\mathbf{c}} = U_{b,\mathbf{c}'}$.
Hence $g \cdot \mathbf{c}' = \mathbf{c}'$.
By quasi-concavity of the functions $f'$ applied in the case where $a$ and $b$ are not collinear, we get by \cite[4.5.10]{BruhatTits2}:
$$[v^{-1},g] \in \prod_{m,n \in \mathbb{N}^*,\ ma + nb \in \Phi} U_{ma+nb,f'_\mathbf{c}(ma+nb)} $$
Applying again Lemma \ref{lem:adjacent:alcoves},
we get $U_{ma+nb,\mathbf{c}} = U_{ma+nb,\mathbf{c}'}$.
Thus $[v,g]$ fixes $\mathbf{c}'$ for any $v$,
hence, by Lemma \ref{lem:element:fixing:panel:residue}, the element $g$ fixes $E_{\mathbf{c},a}$.

(\ref{item:opposite:root:group:fixes:panel:residue}) Let $u \in U_{-a,\mathbf{c}}$ and $v \in U_{a,\mathbf{c}}$.
Since $f_{\mathbf{c}'}(-a) < f_{\mathbf{c}}(-a)$, we get $U_{-a,\mathbf{c}} \subset U_{-a,\mathbf{c}'}$.
Hence $u$ fixes $\mathbf{c}'$.

According to whether $a$ is multipliable or not, we know that $[v,u] \subset U_{-a,f_{\mathbf{c}}(-a)^+} T(K)_b^+ U_{a,f_{\mathbf{c}}(a)^+}$, by applying either Lemma \ref{lem:commutation:non:reduced:opposite:root:groups} or Lemma \ref{lem:commutator:relation:reduced:case:opposite:unipotent}.
The groups $U_{a,f_{\mathbf{c}}(a)^+}$, $T(K)_b^+$, and $U_{-a,f_{\mathbf{c}}(-a)^+} \subset U_{-a,f_{\mathbf{c}}(-a)}$ fix $\mathbf{c}'$.
Thus, the commutator $[v,u]$ fixes $\mathbf{c}'$ because it can be written as the product of three such elements.
Applying lemma \ref{lem:element:fixing:panel:residue}, we conclude that $u$ fixes $E_{\mathbf{c},a}$.
\end{proof}

\begin{proof}[Proof of Proposition \ref{prop:favourable:geometric:description}]
We keep notations introduced below Proposition \ref{prop:favourable:geometric:description}.
In particular, $a$ is a root such that there exists a wall $\mathcal{H}_{a,l}$ bounding the alcove $\mathbf{c} \subset \mathbb{A}$;
the alcove $\mathbf{c}' \in \mathbb{A}$ has the panel $F_{\mathbf{c},a}$ in common with $\mathbf{c}$.
We have the equalities $f'_\mathbf{c}(a)^+ = f'_{\mathbf{c}'}(a) = f'_{\mathbf{c} \cup \mathbf{c}'}(a)$.
Hence $U_{a,f_{\mathbf{c}}(a)^+} = U_{a,\mathbf{c} \cup \mathbf{c}'}$.
For any root $b \in \Phi_{\mathrm{nd}} \setminus \mathbb{R} a$, by Lemma \ref{lem:adjacent:alcoves}, we get $f'_\mathbf{c}(b) = f'_{\mathbf{c}'}(b) = f'_{\mathbf{c} \cup \mathbf{c}'}(b)$.
Hence $U_{b,f_{\mathbf{c}}(b)} = U_{b,\mathbf{c} \cup \mathbf{c}'}$.
Finally, because we have assumed $f'_{\mathbf{c}'}(-a) < f'_{\mathbf{c}}(-a)$, we get the equality of groups
$U_{-a,\mathbf{c} \cup \mathbf{c}'} = U_{-a,f'_{\mathbf{c}}(-a)} \cap U_{-a,f'_{\mathbf{c}'}(-a)} = U_{-a,\max(f'_{\mathbf{c}}(-a),f'_{\mathbf{c}'}(-a))} = U_{-a, \mathbf{c}}$.
From this, we deduce the equality of groups:
$$U_{a,f_\mathbf{c}(a)^+} \left(\prod_{b \in \Phi_\mathrm{nd} \setminus \{a\}} U_{b,\mathbf{c}}\right) T(K)_b^+ U_{-\Phi^+,\mathbf{c}}
= U_{\Phi^+, \mathbf{c} \cup \mathbf{c}'} T(K)_b^+ U_{-\Phi^+, \mathbf{c} \cup \mathbf{c}'}$$

We denote this group by $P_{\mathbf{c} \cup \mathbf{c}'}^+$ because one could show (as in \cite[3.2.9]{Loisel-maximaux}) that it is the (unique because of simply connectedness assumption on $G$) maximal pro-$p$ subgroup of the pointwise stabilizer in $G(K)$ of $\mathbf{c} \cup \mathbf{c}'$.

By Lemma \ref{lem:groups:fixing:panel:residue}, the subgroup $Q_a$ contains the subgroup $P_{\mathbf{c} \cup \mathbf{c}'}^+$.
Firstly, we prove that $P_{\mathbf{c} \cup \mathbf{c}'}^+$ is a normal subgroup of $P$.
We can write $P = U_{a,\mathbf{c}} P_{\mathbf{c} \cup \mathbf{c}'}^+$.
We have the following group inclusions:
\begin{itemize}
\item $[U_{a,\mathbf{c}},U_{a,f_\mathbf{c}(a)^+}] \subset U_{a,f_\mathbf{c}(a)^+} \subset P_{\mathbf{c} \cup \mathbf{c}'}^+$ by Lemma \ref{lem:non:reduced:derived:valued:root:group} or commutativity according to whether the root $a$ is multipliable or not;
\item $[U_{a,\mathbf{c}},T(K)_b^+] \subset U_{a,f_\mathbf{c}(a)^+} \subset P_{\mathbf{c} \cup \mathbf{c}'}^+$ by Lemma \ref{lem:commutation:non:reduced:torus:root:group} or \ref{lem:commutator:relation:reduced:case:torus:unipotent};
\item $[U_{a,\mathbf{c}},U_{-a,\mathbf{c}}] \subset U_{a,f_\mathbf{c}(a)^+} T(K)_b^+ U_{-a,f_\mathbf{c}(-a)^+} \subset P_{\mathbf{c} \cup \mathbf{c}'}^+$ by Lemma \ref{lem:commutation:non:reduced:opposite:root:groups} or \ref{lem:commutator:relation:reduced:case:opposite:unipotent};
\item $[U_{a,\mathbf{c}},U_{b,c}] \subset P_{\mathbf{c} \cup \mathbf{c}'}^+$ for any $b \in \Phi_{\mathrm{nd}} \setminus \mathbb{R} a$ by quasi-concavity \cite[4.5.10]{BruhatTits2}, as in proof of Lemma \ref{lem:groups:fixing:panel:residue} (\ref{item:non:collinear:root:group:fixes:panel:residue}).
\end{itemize}
Hence, $P_{\mathbf{c} \cup \mathbf{c}'}^+$ is a normal subgroup of $P$ and the quotient $P / P_{\mathbf{c} \cup \mathbf{c}'}^+$ is isomorphic to $U_{a,f_{\mathbf{c}}(a)} / U_{a,f_{\mathbf{c}}(a)^+} = X_{a,f_{\mathbf{c}}(a)}$.
Secondly, $Q_a$ is a normal subgroup of $P$ as the kernel of the action of $P$ on $E_{\mathbf{c},a}$.
Hence, the quotient group $P / Q_a$ is a subgroup of $X_{a,f_{\mathbf{c}}(a)}$.

We define a subgroup $Q'_a$ by $Q'_a = Q_a U_{2a, 2f_{\mathbf{c}}(a)}$ if $a$ is multipliable, $L_a / L_{2a}$ is ramified and $f_{\mathbf{c}}(a) \in \Gamma'_a$; and by $Q'_a = Q_a$ otherwise.
We show that the quotient group $P / Q'_a$ can be endowed with a vector space structure.

Firstly, assume that $a$ is non-multipliable or that $L_a / L_{2a}$ is ramified.
Then, by Proposition \ref{prop:first:root:group:quotient}, we know that the quotient group $P / Q'_a = X_{a,f_\mathbf{c}(a)}$ is a  $\kappa_{L_a}$-vector space (of dimension $1$).

Secondly, assume that $a$ is multipliable, that the extension $L_a / L_{2a}$ is unramified and that $f'_{\mathbf{c}}(a) \not\in \Gamma'_a$.
Then, by Proposition \ref{prop:first:root:group:quotient}, we know that $X_{a,f'_{\mathbf{c}}(a)} = X_{2a,2f'_{\mathbf{c}}(a)}$ is a $\kappa_{L_{2a}}$-vector space of dimension $1$ because the quotient space $X_{a,f'_{\mathbf{c}}(a)} / X_{2a,2f'_{\mathbf{c}}(a)}$ is zero by Lemma \ref{lem:equivalence:affine:root:systems}.
Hence $P / Q'_a = X_{a,f'_{\mathbf{c}}(a)}$ is a vector space.

Finally, assume that $a$ is multipliable, that $L_a / L_{2a}$ is unramified and that $f'_{\mathbf{c}}(a) \in \Gamma'_a$.
Then, by Proposition \ref{prop:first:root:group:quotient}, we know that $P / Q'_a \simeq X_{a,f'_{\mathbf{c}}(a)} / X_{2a,2f'_{\mathbf{c}}(a)}$ is a $\kappa_{L_a}$-vector space of dimension $1$.

As a consequence,
on the one hand, the group $P / Q'_a$ is commutative;
hence $[P,P] \subset Q_a'$.
On the other hand, the group $P / Q'_a$ is of exponent $p$;
hence $P^p \subset Q_a'$.
We get $P^p [P,P] \subset Q'_a$.
Because $G(K)$ acts continuously on $X(G,K)$,
the group $Q_a$ is a closed subgroup of $P$ as the kernel of the action of $P$ on $E_{\mathbf{c},a}$.
Moreover, the group $Q_a U_{2a,2f_{\mathbf{c}}(a)}$ is still closed.
Hence $\mathrm{Frat}(P) = \overline{P^p[P,P]} \subset Q'_a$.

If $\Phi$ is a reduced root system or if the extension $L'/L_d$ is ramified,
then for any root $a \in \Phi$ corresponding to a panel of $\mathbf{c}$, we get that $\mathrm{Frat}(P)$ fixes $E_{\mathbf{c},a}$ pointwise and so it fixes the combinatorial ball of radius $1$ centered in $\mathbf{c}$, denoted by $B(\mathbf{c},1)$, which is the union of all the $E_{\mathbf{c},a}$.
By continuity of the action, the group $\mathrm{Frat}(P) = \overline{P^p [P,P]}$ fixes pointwise the simplicial closure of $B(\mathbf{c},1)$.
\end{proof}

\begin{Rq}
\label{remark:action:bounded:torus}
Though the bounded torus $T(K)_b$ fixes pointwise the apartment $A$, its action on the $1$-neighbourhood of this apartement is, in general, non-trivial.
For instance, assume that $\Phi$ is a reduced root system and choose a spherical root $a \in \Phi$ directing a wall bordering the alcove $\mathbf{c}$.
The action of $T(K)_b$ on $E_{\mathbf{c},a}$ corresponds to the action of a subgroup of $\kappa_{L_a}^{\times 2} \subset \kappa_{L_a}^\times$.
The useful term of an element $t \in T(K)_b$ to describe its action on the set of alcoves $E_{\mathbf{c},a} \setminus \{\mathbf{c}'\}$ is $a(t) / \varpi_{L_a} \mathcal{O}_{L_a} \in \kappa_{L_a}^{\times 2}$.
Indeed, let $\mathbf{c}'' \in E_{\mathbf{c},a} \setminus \{\mathbf{c}'\}$ and write it $\mathbf{c}'' = x_a(x) \cdot \mathbf{c}'$ where $\omega(x) = f'_{\mathbf{c}}(a)$.
Then $t \cdot \mathbf{c}'' = t x_a(x) t^{-1} \cdot \mathbf{c}' = x_a(a(t)x) \cdot \mathbf{c}'$.
\end{Rq}

\begin{Cor}[of Proposition \ref{prop:favourable:geometric:description}]
\label{cor:majoration:frattini}
For any non divisible relative root $a \in \Phi_{\mathrm{nd}}$,
\begin{itemize}
\item if $a \not\in \Delta \cup \{ -\theta \}$, we set $V_{a,\mathbf{c}} = U_{a,\mathbf{c}}$;
\item if $a \in \Delta \cup \{ -\theta \}$ is non-multipliable, we set $V_{a,\mathbf{c}} = U_{a,f_{\mathbf{c}}(a)^+}$;
\item if $a \in \Delta \cup \{ -\theta \}$ and if $a$ is multipliable, and either $L_a / L_{2a}$ is unramified or $f'_{\mathbf{c}}(a) \not\in \Gamma'_a$, we set $V_{a,\mathbf{c}} = U_{a,f_{\mathbf{c}}(a)^+}$;
\item if $a \in \Delta \cup \{ -\theta \}$ and if $a$ is multipliable, the extension $L_a / L_{2a}$ is ramified and $f'_{\mathbf{c}}(a) \in \Gamma'_a$, we set $V_{a,\mathbf{c}} = U_{a,f_{\mathbf{c}}(a)^+} U_{2a,2f_{\mathbf{c}}(a)} = U_{a,f_{\mathbf{c}}(a)^+} U_{2a,\mathbf{c}}$.
\end{itemize}

 We have the following:
$$\mathrm{Frat}(P) \leq \prod_{a \in \Phi^-_{\mathrm{nd}}} V_{a,\mathbf{c}} \cdot
T(K)_b^+ \cdot
\prod_{a \in \Phi^+_{\mathrm{nd}} } V_{a,\mathbf{c}} = T(K)_b^+ \prod_{a \in \Phi_{\mathrm{nd}}} V_{a,\mathbf{c}}$$
\end{Cor}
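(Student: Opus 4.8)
The plan is to combine the Iwahori-type factorisation of $P$ with the geometric majoration of $\mathrm{Frat}(P)$ supplied by Proposition \ref{prop:favourable:geometric:description}. First I would recall that, once an order on $\Phi^-_{\mathrm{nd}}$ and on $\Phi^+_{\mathrm{nd}}$ has been fixed, the product map
$$\left(\prod_{a\in\Phi^-_{\mathrm{nd}}}U_{a,\mathbf{c}}\right)\times T(K)_b^+\times\left(\prod_{a\in\Phi^+_{\mathrm{nd}}}U_{a,\mathbf{c}}\right)\ \longrightarrow\ P=P^+_{\mathbf{c}}$$
is a bijection (the Iwahori factorisation of a maximal pro-$p$ subgroup, as in \cite[7.1.8]{BruhatTits1} or \cite[3.2.9]{Loisel-maximaux}); since moreover $T(K)_b^+$ normalises every $U_{a,\mathbf{c}}$, this bijection also yields the second equality in the statement. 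In view of it, it suffices, for an arbitrary $g\in\mathrm{Frat}(P)$, to bound the coordinate of $g$ in each factor $U_{a,\mathbf{c}}$.

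Second, from the explicit description of $\mathbf{c}_{\mathrm{af}}$ carried out in Section \ref{sec:description:alcove} (in all three cases), the walls bounding $\mathbf{c}$ are precisely $\mathcal{H}_{a,0}$ for $a\in\Delta$ together with the wall directed by $-\theta$ (at level $1$ when $\Phi$ is reduced, at level $\tfrac12$ when $\Phi$ is non-reduced); hence the panels of $\mathbf{c}$ are indexed by the roots $b\in\Delta\cup\{-\theta\}\subset\Phi_{\mathrm{nd}}$. For each such $b$, Proposition \ref{prop:favourable:geometric:description} gives $\mathrm{Frat}(P)\subseteq Q'_b$, where $Q'_b$ is the pointwise stabiliser of the panel residue $E_{\mathbf{c},b}$ (slightly enlarged in the ramified multipliable subcase with $f'_{\mathbf{c}}(b)\in\Gamma'_b$). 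Reading the proof of that proposition, $Q'_b$ contains $T(K)_b^+$, $U_{-b,\mathbf{c}}$ and every $U_{a,\mathbf{c}}$ with $a\notin\mathbb{R}b$ (this is Lemma \ref{lem:groups:fixing:panel:residue}), while the quotient $P/Q'_b$ is described there, through Proposition \ref{prop:first:root:group:quotient}, so that the kernel of $U_{b,\mathbf{c}}\to P/Q'_b$ is exactly $V_{b,\mathbf{c}}$ — the case split ($b$ non-multipliable; $b$ multipliable with $L_b/L_{2b}$ ramified; $b$ multipliable with $L_b/L_{2b}$ unramified, with the subcase $f'_{\mathbf{c}}(b)\in\Gamma'_b$ or not) being precisely the one recorded in the definition of $V_{b,\mathbf{c}}$. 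Since $P=U_{b,\mathbf{c}}\cdot\widehat{P}$ with $\widehat{P}$ the maximal pro-$p$ subgroup stabilising $\mathbf{c}$ and the adjacent alcove, and $\widehat{P}\subseteq Q'_b$, the modular law gives $Q'_b=V_{b,\mathbf{c}}\cdot\widehat{P}$; so $Q'_b$ admits an Iwahori factorisation coinciding with that of $P$ except that its $b$-factor is $V_{b,\mathbf{c}}$.

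Third, let $g\in\mathrm{Frat}(P)$, so $g\in\bigcap_{b\in\Delta\cup\{-\theta\}}Q'_b$; write $g$ in the Iwahori factorisation of $P$. For a fixed $b\in\Delta\cup\{-\theta\}$, reorder so that $b$ occurs last in $\prod_{\Phi^+_{\mathrm{nd}}}$ if $b\in\Phi^+$, or first in $\prod_{\Phi^-_{\mathrm{nd}}}$ if $b=-\theta\in\Phi^-$; since $g$ lies in $Q'_b$, whose Iwahori factorisation has $b$-factor $V_{b,\mathbf{c}}$ and all other factors equal to those of $P$, the uniqueness of the factorisation in $P$ forces the $U_{b,\mathbf{c}}$-coordinate of $g$ to lie in $V_{b,\mathbf{c}}$. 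For the remaining $a\in\Phi_{\mathrm{nd}}\setminus(\Delta\cup\{-\theta\})$ one has $V_{a,\mathbf{c}}=U_{a,\mathbf{c}}$ by definition, so no constraint is needed; reassembling the coordinates gives $g\in\prod_{a\in\Phi^-_{\mathrm{nd}}}V_{a,\mathbf{c}}\cdot T(K)_b^+\cdot\prod_{a\in\Phi^+_{\mathrm{nd}}}V_{a,\mathbf{c}}$, which is the asserted inclusion.

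The hard part is the second step, and inside it the identification of the $b$-factor of $Q'_b$ with $V_{b,\mathbf{c}}$ when $b$ is multipliable: there $U_b(K)$ is non-commutative, $Q'_b$ genuinely sees the $U_{2b}$-layer, and the three sets of values $\Gamma'_b\subseteq\Gamma_b$ and $\Gamma_{2b}$ sit inside $\tfrac12\mathbb{Z}$ in configurations that differ according to the ramification of $L_b/L_{2b}$ and to whether $f'_{\mathbf{c}}(b)\in\Gamma'_b$. This is exactly what produces the case split in the definition of $V_{a,\mathbf{c}}$, and it rests on Proposition \ref{prop:first:root:group:quotient} together with the derived-group computation of Lemma \ref{lem:non:reduced:derived:valued:root:group}; by contrast, in the reduced case the definition collapses to $V_{a,\mathbf{c}}=U_{a,f_{\mathbf{c}}(a)^+}$ for $a\in\Delta\cup\{-\theta\}$ and the whole argument is immediate.
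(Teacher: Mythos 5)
Your proposal is correct and takes essentially the same route as the paper: the paper's own proof likewise combines the directly generated product decomposition of $P$ with Proposition \ref{prop:favourable:geometric:description}, reducing everything to the fact that the $U_a$-component of any element of $\mathrm{Frat}(P)$ lies in $V_{a,\mathbf{c}}$ for each $a\in\Delta\cup\{-\theta\}$. Your second and third steps simply spell out, via $P^+_{\mathbf{c}\cup\mathbf{c}'}\subseteq Q'_a$ and uniqueness of the Iwahori-type factorisation, the coordinate-extraction argument that the paper compresses into the sentence ``it suffices to check that $\mathrm{Frat}(P)\cap U_a(K)\subset V_{a,\mathbf{c}}$''.
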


\begin{proof}
Since $\mathrm{Frat}(P) \subset P$, it suffices to check that $\mathrm{Frat}(P) \cap U_a(K) \subset V_{a,\mathbf{c}}$ for any $a \in \Delta \cup \{-\theta\}$.
Let $a \in \Delta \cup \{-\theta\}$.
By Proposition \ref{prop:favourable:geometric:description}, we have the inclusion $\mathrm{Frat}(P) \subset Q_a U_{2a,\mathbf{c}}$ when $a$ is multipliable, the extension $L_a / L_{2a}$ is ramified and $f'_{\mathbf{c}}(a) \in \Gamma'_a$;
we have the inclusion $\mathrm{Frat}(P) \subset Q_a$ otherwise.
In particular, $\mathrm{Frat}(P) \cap U_a(K) \subset V_{a,\mathbf{c}}$.
\end{proof}

\begin{Prop}
\label{prop:combinatorial:ball:fixator}
We assume that $\Phi$ is a reduced root system.
The group $Q = T(K)_b^+ \prod_{a \in \Phi} V_{a,\mathbf{c}}$ is the maximal pro-$p$ subgroup of the pointwise stabilizer in $G(K)$ of $\mathrm{cl}(B(\mathbf{c},1))$.
\end{Prop}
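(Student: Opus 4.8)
The plan is to show that $Q$ coincides with the intersection $\bigcap_{a\in\Delta\cup\{-\theta\}} Q_a$ of the groups $Q_a$ introduced in the proof of Proposition~\ref{prop:favourable:geometric:description}, and then to conclude by using that $P$ is the unique maximal pro-$p$ subgroup of the Iwahori subgroup $I=\mathrm{Stab}_{G(K)}(\mathbf{c})=\mathcal{N}_{G(K)}(P)$. First I would set up the geometry: since $\Phi$ is reduced, the panels of $\mathbf{c}$ lie in the walls $\mathcal{H}_{a,0}$ for $a\in\Delta$ and in a wall directed by $-\theta$, so $B(\mathbf{c},1)=\bigcup_{a\in\Delta\cup\{-\theta\}}E_{\mathbf{c},a}$. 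As $G$ is simply connected its action on $X(G,K)$ is type-preserving, so an element of $G(K)$ fixes $\mathrm{cl}(B(\mathbf{c},1))$ pointwise if and only if it stabilises every alcove occurring in $\mathrm{cl}(B(\mathbf{c},1))$. Hence the pointwise fixator $R$ of $\mathrm{cl}(B(\mathbf{c},1))$ in $G(K)$ satisfies $R=\bigcap_{a\in\Delta\cup\{-\theta\}}\mathrm{Fix}_{G(K)}(E_{\mathbf{c},a})$; in particular $R\subseteq I$ and $P\cap R=\bigcap_{a\in\Delta\cup\{-\theta\}}Q_a$. Moreover the proof of Proposition~\ref{prop:favourable:geometric:description} shows, for $\Phi$ reduced, that $Q_a=P^+_{\mathbf{c}\cup\mathbf{c}'_a}$ for any alcove $\mathbf{c}'_a\in E_{\mathbf{c},a}\setminus\{\mathbf{c}\}$: indeed $Q_a$ is a pro-$p$ subgroup of $G(K)$ fixing $\mathbf{c}\cup\mathbf{c}'_a$ pointwise, hence contained in $P^+_{\mathbf{c}\cup\mathbf{c}'_a}$, while the reverse inclusion is Lemma~\ref{lem:groups:fixing:panel:residue}; and $P^+_{\mathbf{c}\cup\mathbf{c}'_a}$ is obtained from the Iwahori factorisation of $P$ by replacing the single factor $U_{a,\mathbf{c}}$ by $U_{a,f_{\mathbf{c}}(a)^+}=V_{a,\mathbf{c}}$, all other factors being unchanged.

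Then I would fix one total ordering of the roots and invoke uniqueness of the Iwahori (mixed Bruhat) factorisation $P=\prod_{b\in\Phi^-}U_{b,\mathbf{c}}\cdot T(K)_b^+\cdot\prod_{b\in\Phi^+}U_{b,\mathbf{c}}$: an element of $P$ lies in $Q_a=P^+_{\mathbf{c}\cup\mathbf{c}'_a}$ exactly when its $a$-component lies in $U_{a,f_{\mathbf{c}}(a)^+}$, and intersecting over $a\in\Delta\cup\{-\theta\}$ yields precisely the product group $T(K)_b^+\prod_{a\in\Phi}V_{a,\mathbf{c}}$ of the statement (the factors indexed by $\Delta\cup\{-\theta\}$ being shrunk to $V_{a,\mathbf{c}}=U_{a,f_{\mathbf{c}}(a)^+}$, the others remaining $U_{a,\mathbf{c}}=V_{a,\mathbf{c}}$). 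This simultaneously shows that the expression defining $Q$ is a group, that $Q=P\cap R$, and that $Q$ is pro-$p$: each factor $V_{a,\mathbf{c}}$ is pro-$p$, being a successive extension of the elementary abelian $p$-groups $X_{a,l}$ of Proposition~\ref{prop:first:root:group:quotient}, and $T(K)_b^+$ is pro-$p$ by definition. Since $Q\subseteq\bigcap_a Q_a$ and each $Q_a$ fixes $E_{\mathbf{c},a}$ pointwise, $Q$ fixes $B(\mathbf{c},1)$, hence $\mathrm{cl}(B(\mathbf{c},1))$, pointwise, so $Q\subseteq R$.

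It remains to establish maximality. The group $R$ is closed, hence compact, being contained in the compact Iwahori subgroup $I$. Now $P$ is the unique maximal pro-$p$ subgroup of $I$: indeed $P\trianglelefteq I$ with $I/P$ finite, and if $p$ divided $|I/P|$ then the preimage in $I$ of a subgroup of order $p$ of $I/P$ would be a pro-$p$ subgroup of $G(K)$ strictly containing the maximal pro-$p$ subgroup $P$, a contradiction. Therefore every pro-$p$ subgroup of $R\subseteq I$ is contained in $P$, hence in $P\cap R=Q$; as $Q$ is itself a pro-$p$ subgroup of $R$, it is the unique maximal pro-$p$ subgroup of $R$, which is the assertion.

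The step that will require the most care is the identification $P\cap R=\bigcap_a Q_a=Q$: one must verify that each $Q_a$ is exactly the group $P^+_{\mathbf{c}\cup\mathbf{c}'_a}$ obtained by shrinking a single factor of the Iwahori factorisation — which rests on $U_a(K)$ being abelian, so that the point stabiliser $U_{a,f_{\mathbf{c}}(a)^+}$ of the transitive action of $U_{a,\mathbf{c}}$ on $E'_{\mathbf{c},a}$ furnished by Lemma~\ref{lem:panel:residue} is normal and hence equals the kernel of that action — together with the fact that the shrinkings attached to distinct $a\in\Delta\cup\{-\theta\}$ are mutually independent, which is precisely the content of the uniqueness of the Iwahori factorisation.
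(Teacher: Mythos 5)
Your argument is correct, but it reaches the conclusion by a different mechanism than the paper. You never describe the full pointwise stabilizer $R=\widehat{P}_{B(\mathbf{c},1)}$: you intersect with $P$ first, identify $P\cap R=\bigcap_{a\in\Delta\cup\{-\theta\}}Q_a$ with $\bigcap_a P^+_{\mathbf{c}\cup\mathbf{c}'_a}=Q$ via the uniqueness of the decomposition $P=\prod_{b\in\Phi^-}U_{b,\mathbf{c}}\cdot T(K)_b^+\cdot\prod_{b\in\Phi^+}U_{b,\mathbf{c}}$, and then obtain maximality and uniqueness by a Sylow-type argument inside $I=\mathcal{N}_{G(K)}(P)=\mathrm{Stab}_{G(K)}(\mathbf{c})$, using only that $P$ is the unique maximal pro-$p$ subgroup of $I$. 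The paper instead works with the whole fixator: it invokes Landvogt's description $\widehat{P}_\Omega=T(K)_b\prod_a U_{a,\Omega}$ for $\Omega=\mathrm{cl}(B(\mathbf{c},1))\cap\mathbb{A}$, pins down the torus part by the commutator computation of Lemma \ref{lem:commutator:relation:reduced:case:torus:unipotent} (giving $a(t)\equiv 1\bmod\varpi$ for all $a\in\Delta$, hence $t\in T'$ with $[T':T(K)_b^+]$ dividing $2^{|\Delta|}$), and concludes by a prime-to-$p$ index argument plus normalization by $T(K)_b$. Your route buys a cleaner uniqueness statement (no index computation, no explicit use of $p\neq 2$ at this step) and avoids Landvogt's description of $\widehat{P}_\Omega$; the paper's route buys more, namely an explicit bound $\widehat{P}_{B(\mathbf{c},1)}\subset T'\prod_a V_{a,\mathbf{c}}$ on the full stabilizer, not just on its pro-$p$ part. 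Two ingredients of yours deserve explicit justification or citation: the equality $Q_a=P^+_{\mathbf{c}\cup\mathbf{c}'_a}$ needs either the paper's parenthetical claim (in the proof of Proposition \ref{prop:favourable:geometric:description}) that $P^+_{\mathbf{c}\cup\mathbf{c}'}$ is the unique maximal pro-$p$ subgroup of the fixator of $\mathbf{c}\cup\mathbf{c}'$, or your sketched argument, which in turn uses the standard Bruhat--Tits fact $U_a(K)\cap\widehat{P}_{\mathbf{c}'}=U_{a,\mathbf{c}'}$ (compatible with Notation \ref{not:Omega} but not proved in the text); and the componentwise criterion for membership in $P^+_{\mathbf{c}\cup\mathbf{c}'_a}$ rests on the directly generated product property holding, with the same ordering of roots, for both $P$ and $P^+_{\mathbf{c}\cup\mathbf{c}'_a}$, i.e.\ on \cite[6.4.9, 7.1.8]{BruhatTits1}-type results already used implicitly throughout the paper.
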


\begin{proof}
Denote by $\mathrm{cl}\left(B(\mathbf{c},1)\right)$ the simplicial closure of the combinatorial ball of radius $1$.
Set $\Omega = \mathrm{cl}(B(\mathbf{c},1)) \cap \mathbb{A}$.
Denote by $\widehat{P}_{B(\mathbf{c},1)}$ (resp. $\widehat{P}_{\Omega}$) the pointwise stabilizer in $G(K)$ of $\mathrm{cl}\left(B(\mathbf{c},1)\right)$ (resp. $\Omega$).
By \cite[9.3 and 8.10]{Landvogt}, we can write
$\widehat{P}_{\Omega} = T(K)_b \prod_{a\in \Phi} U_{a,\Omega}$.

By Proposition \ref{prop:favourable:geometric:description}, we get that $Q$ fixes $\mathrm{cl}(B(\mathbf{c},1))$ pointwise.
Let $g \in \widehat{P}_{B(\mathbf{c},1)} \subset \widehat{P}_{\Omega}$.
Write $g = t \prod_{a \in \Phi} u_a$ where $t \in T(K)_b$ and $u_a \in U_{a,\Omega} = V_{a,\mathbf{c}}$.
By Lemma \ref{lem:groups:fixing:panel:residue}, we know that $u_a$ fixes pointwise $\mathrm{cl}(B(\mathbf{c},1))$.

Let $t \in T(K)_b$ fixing pointwise $\mathrm{cl}(B(\mathbf{c},1))$.
Let $a$ be a root corresponding to a panel of $\mathbf{c}$.
By Lemma \ref{lem:panel:residue}, we write the orbit $E_{\mathbf{c},a}' = U_{a,\mathbf{c}} \mathbf{c'}$.
For any $u \in U_{a,\mathbf{c}}$, the computation $u \cdot \mathbf{c}' = t u \cdot \mathbf{c}' = [t,u] u \mathbf{c}'$ shows that $[t,u] \in V_{a,\mathbf{c}}$.
By Lemma \ref{lem:commutator:relation:reduced:case:torus:unipotent}, we get $a(t) \equiv 1 \mathrm{mod}\:\varpi$.

Because this equality is true for any $a \in \Delta$, we get $t \in T' = {\prod_{a \in \Delta} \widetilde{a}(\pm 1 + \mathfrak{m}_{L_a})}$.
Hence $\widehat{P}_{B(\mathbf{c},1)} \subset T' \prod_{a \in \Phi} V_{a,\mathbf{c}}$.

The index $[T':T(K)_b^+]$ divides $\prod_{a \in \Delta} | \pm 1+ \mathfrak{m}_{L_a} / 1+ \mathfrak{m}_{L_a} | = 2^{|\Delta|}$ which is prime to $p$ since $p \neq 2$.
Hence $Q$ is a subgroup, which has an index prime to $p$, of the profinite group $\widehat{P}_{B(\mathbf{c},1)}$.
Since $Q$ is a pro-$p$-group, we get that it is a maximal pro-$p$ subgroup of $\widehat{P}_{B(\mathbf{c},1)}$.

It remains to show that it is the only one, in other words that $Q$ is normal in $\widehat{P}_{B(\mathbf{c},1)}$.
But since $T(K)_b$ normalises $Q$, this gives the result.
\end{proof}

\section{Computation in higher rank}
\label{sec:commutation:relations:general}

As before, $G$ is an almost-$K$-simple quasi-split simply-connected $K$-group and $P$ is a maximal pro-$p$ subgroup of $G(K)$.
By a geometrical analysis, we provided, in Proposition \ref{prop:combinatorial:ball:fixator}, a description of the Frattini subgroup $\mathrm{Frat}(P)$ as a subgroup of the (unique) maximal pro-$p$ subgroup $Q$ of a well-described stabilizer in $G(K)$.
We now want to provide a large enough subset of $\mathrm{Frat}(P)$, so that this subset generates $Q$, and thus $\mathrm{Frat}(P)$.
We provide unipotent elements of $\mathrm{Frat}(P)$ by finding some values $l_a \in \mathbb{R}$ with $a \in \Phi$ such that the valued root groups $U_{a,l_a}$ are subgroups of $[P,P] \subset \mathrm{Frat}(P)$.
In the rank-$1$ case treated in Section \ref{sec:rank:one:case}, we have already found some values $l_a$. In higher rank, we can improve these values for most of roots; more precisely, for all roots which are not corresponding to panels of the (unique) alcove stabilized by $P$.
In Section \ref{sec:commutation:relations}, we invert most of commutation relations providing bounds of valuations of root groups.
In Section \ref{sec:explicit:computation:with:commutation:relations}, we combine those inversions in the whole root system.

\subsection{Commutation relations between root groups of a quasi-split group} 
\label{sec:commutation:relations}

We consider both the split semisimple $\widetilde{K}$-group $\widetilde{G} = G_{\widetilde{K}}$ and the quasi-split $K$-group $G$.
A \textbf{Chevalley-Steinberg system} of $(G,\widetilde{K},K)$ is the datum of morphisms: $\widetilde{x}_\alpha : \mathbb{G}_{a,\widetilde{K}} \rightarrow \widetilde{U}_\alpha$ parametrizing the various root groups of $\widetilde{G}$, and satisfying some axioms of compatibility, given in \cite[4.1.3]{BruhatTits2}, taking in account the commutation relations of absolute root groups and the $\mathrm{Gal}(\widetilde{K} / K)$-action on root groups.
Note that despite the morphisms parametrize root groups of $\widetilde{G}$, a Chevalley-Steinberg system also depends on the quasi-split group $G$ because of the relations between the $\widetilde{x}_\alpha$ where $\alpha \in \widetilde{\Phi}$.
According to \cite[4.1.3]{BruhatTits2}, a quasi-split group always admits a Chevalley-Steinberg system.

According to \cite[14.5]{Borel}, there exist constants $(c_{r,s;\alpha,\beta})_{r,s \in \mathbb{N}^* ; \alpha,\beta \in \widetilde{\Phi}}$ in $\widetilde{K}$, uniquely determined by the Chevalley-Steinberg system $(\widetilde{x}_\alpha)_{\alpha \in \widetilde{\Phi}}$, so that we have the following relations:
$$[\widetilde{x}_\alpha(u),\widetilde{x}_\beta(v)] = \prod_{r,s\in\mathbb{N}^*} \widetilde{x}_{r \alpha + s \beta}(c_{r,s;\alpha,\beta} u^r v^s)$$
for any non-collinear roots $\alpha, \beta \in \widetilde{\Phi}$ and any parameters $u,v \in \widetilde{K}$.
Moreover $c_{r,s;\alpha,\beta} = 0$ as soon as $r \alpha + s \beta \not\in \widetilde{\Phi}$ which makes the above products finite.
These constants are called the \textbf{structure constants}.
There is some flexibility in the choice of a Chevalley-Steinberg system,
so that we can choose $c_{r,s;\alpha,\beta}$ in $\mathbb{Z} 1_{\widetilde{K}}$ where $1_{\widetilde{K}}$ denotes the identity element of $\widetilde{K}^\times$.
More precisely, because $\widetilde{G}$ is generated by its root groups, it comes from a base change of a $\mathbb{Z}$-reductive group \cite[XXV 1.3]{SGA3}.
In this case, one can determinate the $c_{r,s;\alpha,\beta}$, up to sign, to be some coefficients of a Cartan matrix \cite[XXIII 6.4]{SGA3}. 
More precisely, we have:
\begin{Lem}
\label{lem:structure:constants}
Let $\alpha,\beta \in \widetilde{\Phi}$ be two (non-collinear) roots such that $\alpha + \beta \in \widetilde{\Phi}$.

If $\widetilde{\Phi}$ is of type $A_n$, $D_n$, or $E_n$, then $c_{1,1;\alpha,\beta} \in \{\pm 1_{\widetilde{K}}\}$. 

If $\widetilde{\Phi}$ is of type $B_n$, $C_n$, or $F_4$, then $c_{1,1;\alpha,\beta} \in \{\pm 1_{\widetilde{K}},\pm 2 \cdot 1_{\widetilde{K}}\}$. 

If $\widetilde{\Phi}$ is of type $G_2$, then $c_{1,1;\alpha,\beta} \in \{\pm 1_{\widetilde{K}},\pm 2 \cdot 1_{\widetilde{K}}, \pm 3 \cdot 1_{\widetilde{K}}\}$. 
\end{Lem}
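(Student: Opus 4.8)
The plan is to reduce the statement to the elementary combinatorics of root strings. First I would recall the standard description of the relevant structure constant: for non-collinear roots $\alpha,\beta\in\widetilde{\Phi}$ with $\alpha+\beta\in\widetilde{\Phi}$, one has $c_{1,1;\alpha,\beta}=\pm(p_{\alpha,\beta}+1)\cdot 1_{\widetilde{K}}$, where the $\alpha$-string through $\beta$ is $\{\beta-p_{\alpha,\beta}\alpha,\dots,\beta+q_{\alpha,\beta}\alpha\}$, so that $p_{\alpha,\beta}\in\mathbb{N}$ is the largest integer with $\beta-p_{\alpha,\beta}\alpha\in\widetilde{\Phi}$. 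This is the computation of structure constants from a pinning, as in \cite[XXIII 6.4]{SGA3}, and it also follows from the usual formulas for the constants $N_{\alpha,\beta}$ of a Chevalley basis. Since the excerpt already arranges each $c_{r,s;\alpha,\beta}$ to lie in $\mathbb{Z}\cdot 1_{\widetilde{K}}$, the task reduces to bounding the integer $p_{\alpha,\beta}+1$ by $1$, $2$, or $3$ according to the type of $\widetilde{\Phi}$.

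Next I would reduce this bound to a rank $2$ question. The $\alpha$-string through $\beta$ is contained in the root system $\Psi=\widetilde{\Phi}\cap(\mathbb{Q}\alpha+\mathbb{Q}\beta)$, which has rank $2$ since $\alpha$ and $\beta$ are non-collinear; by the classification of rank $2$ root systems, $\Psi$ has type $A_1\times A_1$, $A_2$, $B_2$, or $G_2$. The first case is impossible here, for then $\alpha\perp\beta$ and no element of $\Psi$ equals $\alpha+\beta$. In each of the three remaining types, an $\alpha$-string contains at most $2$, $3$, or $4$ roots respectively; this is the inequality $p_{\alpha,\beta}+q_{\alpha,\beta}\leq 3$ of \cite[VI.1.3]{Bourbaki4-6}, sharpened to $p_{\alpha,\beta}+q_{\alpha,\beta}\leq 1$ in type $A_2$ and to $\leq 2$ in type $B_2$, and it is immediate from the finite list of roots. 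Since $\alpha+\beta\in\widetilde{\Phi}$ forces $q_{\alpha,\beta}\geq 1$, we get $p_{\alpha,\beta}=0$ when $\Psi$ has type $A_2$, $p_{\alpha,\beta}\leq 1$ when $\Psi$ has type $B_2$, and $p_{\alpha,\beta}\leq 2$ when $\Psi$ has type $G_2$; hence $c_{1,1;\alpha,\beta}$ lies in $\{\pm 1_{\widetilde{K}}\}$, in $\{\pm 1_{\widetilde{K}},\pm 2\cdot 1_{\widetilde{K}}\}$, and in $\{\pm 1_{\widetilde{K}},\pm 2\cdot 1_{\widetilde{K}},\pm 3\cdot 1_{\widetilde{K}}\}$ respectively.

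It then remains to match the type of $\Psi$ with that of $\widetilde{\Phi}$. If $\widetilde{\Phi}$ has type $A_n$, $D_n$, or $E_n$, all its roots have the same length, so $\Psi$ is simply laced, hence of type $A_2$, and $c_{1,1;\alpha,\beta}\in\{\pm 1_{\widetilde{K}}\}$. If $\widetilde{\Phi}$ has type $B_n$, $C_n$, or $F_4$, the ratio between a long and a short root length is $\sqrt2$, so $\Psi$ cannot have type $G_2$ (which requires ratio $\sqrt3$) and therefore has type $A_2$ or $B_2$, giving $c_{1,1;\alpha,\beta}\in\{\pm 1_{\widetilde{K}},\pm 2\cdot 1_{\widetilde{K}}\}$. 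If $\widetilde{\Phi}$ has type $G_2$, then $\Psi=\widetilde{\Phi}$ and the third bound applies. This yields the three assertions.

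The only genuinely delicate step is the first one: I want the identity $c_{1,1;\alpha,\beta}=\pm(p_{\alpha,\beta}+1)\cdot 1_{\widetilde{K}}$ in exactly the normalization of the Chevalley-Steinberg system fixed earlier, which means being careful about the sign and indexing conventions in $[\widetilde{x}_\alpha(u),\widetilde{x}_\beta(v)]=\prod_{r,s}\widetilde{x}_{r\alpha+s\beta}(c_{r,s;\alpha,\beta}\,u^rv^s)$ and checking that the residual freedom in the choice of system (used to land the constants in $\mathbb{Z}\cdot 1_{\widetilde{K}}$) does not affect $|c_{1,1;\alpha,\beta}|$. Everything afterwards is standard root-system combinatorics: the classification of root systems of rank $2$ together with the bound on root-string lengths. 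As an alternative to the root-string formula, one can invoke directly the description in \cite[XXIII 6.4]{SGA3} of the admissible values of $c_{1,1}$, up to sign, as off-diagonal Cartan coefficients of $\widetilde{\Phi}$, which lie in $\{0,1\}$, $\{0,1,2\}$, and $\{0,1,2,3\}$ for the three families and are nonzero here because $\alpha+\beta\in\widetilde{\Phi}$.
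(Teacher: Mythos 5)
Your argument is correct. Note, though, that the paper does not really prove this lemma: it simply records it as a consequence of the citation \cite[XXIII 6.4]{SGA3}, after observing via \cite[XXV 1.3]{SGA3} that $\widetilde{G}$ comes from a $\mathbb{Z}$-group, so that the $c_{1,1;\alpha,\beta}$ are, up to sign, Cartan-type integers. What you do is supply the classical argument lying behind that citation: the Chevalley identity $c_{1,1;\alpha,\beta}=\pm(p_{\alpha,\beta}+1)\cdot 1_{\widetilde{K}}$ for any Chevalley system (the Chevalley--Steinberg system of \cite[4.1.3]{BruhatTits2} is in particular one, and the residual sign freedom in its choice indeed cannot change $|c_{1,1;\alpha,\beta}|$), followed by the reduction to the rank $2$ subsystem $\Psi=\widetilde{\Phi}\cap(\mathbb{Q}\alpha+\mathbb{Q}\beta)$, the bound on root-string lengths ($2$, $3$, $4$ roots in types $A_2$, $B_2$, $G_2$, with $q_{\alpha,\beta}\geq 1$ forced by $\alpha+\beta\in\widetilde{\Phi}$), and the matching of the possible types of $\Psi$ with the length ratios available in $\widetilde{\Phi}$. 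This is the same underlying mathematics as the paper's reference, but made explicit and self-contained; the only point to keep straight is exactly the one you flag, namely that the normalization of the commutator formula $[\widetilde{x}_\alpha(u),\widetilde{x}_\beta(v)]=\prod_{r,s}\widetilde{x}_{r\alpha+s\beta}(c_{r,s;\alpha,\beta}u^rv^s)$ identifies $c_{1,1;\alpha,\beta}$ with the Chevalley constant $N_{\alpha,\beta}$ of the underlying $\mathbb{Z}$-form, which is standard. Either route (your root-string computation or the direct appeal to \cite[XXIII 6.4]{SGA3}, which you also mention) suffices for the uses made of the lemma, namely Hypothesis \ref{hypothesis:on:residue:characteristic} and Proposition \ref{prop:generated:commutator:root:group}.
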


In the quasi-split case, given two non-collinear relative roots $a,b \in \Phi$, there exist commutation relations between the corresponding root groups in terms of the parametrizations $(x_a)_{a\in \Phi}$.
These commutation relations can be completely computed in the irreducible root system $\Phi(a,b) = \Phi \cap \left( \mathbb{R} a \oplus \mathbb{R} b \right)$ of rank $2$.
Hence $\Phi(a,b)$ is of type $A_2$, $C_2$, $BC_2$ or $G_2$, and we can assume that $a$ is shorter or has the same length as $b$.
The various commutation relations are written down in \cite[Annexe A]{BruhatTits2} where Bruhat and Tits consider the angles between roots.
Here, we follow another description in terms of length of roots, as in \cite[§1]{PrasadRaghunathan1}.

We recall that, according to Section \ref{sec:star:action}, the Galois group $\mathrm{Gal}(\widetilde{K}/K)$ acts on the absolute roots $\widetilde{\Phi}$ and that the relative roots $\Phi$ can be seen as the orbits for this action.
We recall that $d'=[L'/L_d]$ has been defined in \ref{not:L:prime} to be the number of absolute roots in a short root seen as an orbit.
We do the following assumptions:

\begin{Hyp}\label{hypothesis:on:residue:characteristic}
We assume that the residue characteristic $p$ of $K$ is such that $p > d'$ and  the following structure constants $c_{1,1;\alpha,\beta}$, where $\alpha, \beta \in \widetilde{\Phi}$, are invertible in $\mathcal{O}_K$.
In other words, this is to say that $p\geq 3$ if the relative root system $\Phi$ of the quasi-split almost-$K$-simple $K$-group $G$ is of type $B_n$, $C_n$ of $F_4$; and that $p\geq 5$ if $\Phi$ is of type $G_2$.
\end{Hyp}

\begin{Prop}
\label{prop:generated:commutator:root:group}
Let $a,b,c \in \Phi$ be relative roots such that $c = a+b$ and, at least, one of the two roots $a,b$ is non-multipliable.
Let $l_a \in \Gamma_a$, $l_b \in \Gamma_b$ and $l_c \in \Gamma_c$ be values such that $l_c = l_b + l_a$.

Let $u \in U_{c,l_c}$.
If Hypothesis \ref{hypothesis:on:residue:characteristic} is satisfied, then there exist elements $v \in U_{a,l_a}$, $v' \in U_{b,l_b}$ and $\displaystyle v''\in \prod_{\substack{r,s \in \mathbb{N}^* \\ r+s \geq 2}} U_{r a + s b, r l_a + s l_b}$ such that 
$u = [v,v'] v''$.
\end{Prop}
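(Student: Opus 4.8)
The plan is to work inside the rank-$2$ subsystem $\Phi(a,b) = \Phi \cap (\mathbb{R}a \oplus \mathbb{R}b)$, which by the classification is of type $A_2$, $C_2$, $BC_2$ or $G_2$, and to reduce everything to the Chevalley--Steinberg parametrization of the split group $\widetilde{G}$ over $\widetilde{K}$. First I would fix an absolute root $\alpha$ lying over $a$ and an absolute root $\beta$ lying over $b$ chosen so that $\alpha+\beta$ lies over $c$; the commutation relation $[\widetilde{x}_\alpha(u),\widetilde{x}_\beta(v)] = \prod_{r,s} \widetilde{x}_{r\alpha+s\beta}(c_{r,s;\alpha,\beta}u^rv^s)$ recalled before the statement, together with Lemma~\ref{lem:structure:constants} and Hypothesis~\ref{hypothesis:on:residue:characteristic}, guarantees that the leading structure constant $c_{1,1;\alpha,\beta}$ is a unit in $\mathcal{O}_K$ (this is exactly where the hypotheses $p>d'$ and $p\ne 3$ in type $G_2$ are used). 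The idea is then: given $u \in U_{c,l_c}$, solve the ``leading-order'' equation $[\,x_a(\text{something}),x_b(\text{something})\,] \equiv u \pmod{\text{higher root groups}}$ by choosing the parameters to cancel $c_{1,1}$, and absorb the error term into $v''$.

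The key steps, in order, are: (1) translate $u = x_c(y)$ (or, in the non-reduced case, $u = x_c(y,y')$) through $\pi$ into $\widetilde{G}_{\widetilde{K}}$ and write the target value in terms of the $\widetilde{x}$'s; (2) since at least one of $a,b$ is non-multipliable, say $a$, observe that $U_a(K) \cong R_{L_a/K}(\mathbb{G}_{a,L_a})$, so I may freely choose $v = x_a(\lambda)$ with $\omega(\lambda) \geq l_a$; similarly pick $v' = x_b(\mu)$ with appropriate valuation (using the parametrizations of Section~\ref{sec:parametrization}, with the multipliable case handled via $H(L_b,L_{2b})$); (3) choose $\lambda,\mu$ so that $c_{1,1;\alpha,\beta}$-coefficient of $[v,v']$ matches $u$ exactly — this uses that $c_{1,1;\alpha,\beta} \in \mathcal{O}_K^\times$ to invert it, and that $\omega(\lambda)+\omega(\mu) = l_a + l_b = l_c$ puts the leading term in precisely $U_{c,l_c}$; (4) set $v'' = [v,v']^{-1}u$ (appropriately ordered) and verify via axiom (VRGD 3) / the explicit Bruhat--Tits relations of \cite[Annexe A]{BruhatTits2} that $v''$ lies in the product $\prod_{r+s\geq 2} U_{ra+sb,\,rl_a+sl_b}$; here the valuation bookkeeping is that every remaining term $\widetilde{x}_{r\alpha+s\beta}(c_{r,s}\lambda^r\mu^s)$ has $\omega(c_{r,s}\lambda^r\mu^s) \geq r\omega(\lambda) + s\omega(\mu) \geq rl_a + sl_b$ because the remaining structure constants are integers, hence in $\mathcal{O}_K$.

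The main obstacle will be step (4): the commutator $[\widetilde{x}_\alpha(u),\widetilde{x}_\beta(v)]$ involves a choice of ordering of the factors $\widetilde{x}_{r\alpha+s\beta}$, and when $\Phi(a,b)$ is of type $G_2$ (or in the $BC_2$ case) there are several pairs $(r,s)$ with $r+s\geq 2$ and the descent from absolute to relative root groups mixes Galois orbits, so one must check carefully that after descending to $K$-points the ``error'' still organizes into the claimed product of \emph{relative} valued root groups $U_{ra+sb,\,rl_a+sl_b}$ with the right valuations — this is essentially a matter of combining the compatibility axioms of the Chevalley--Steinberg system with the filtration definitions of Section~\ref{sec:valuation:of:root:groups:datum}, and of using Hypothesis~\ref{hypothesis:on:residue:characteristic} once more to ensure that the higher structure constants $c_{r,s;\alpha,\beta}$ (which can equal $\pm 2$ or $\pm 3$ times a unit) do not lower the valuation. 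A secondary subtlety is the non-reduced case, where $c$ could be multipliable even though $a,b$ are arranged so one of them is not; there one uses the explicit group law of $H(L_c,L_{2c})$ and the relation $\varphi_{2c} = 2\varphi_c|_{U_{2c}}$ to keep track of both coordinates, but the strategy is unchanged.
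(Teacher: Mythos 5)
Your overall strategy, namely inverting the leading term of the commutation relation and pushing the remaining factors into $v''$, is the same as the paper's, which argues case by case on the rank-$2$ subsystem $\Phi(a,b)$ using the relative relations of \cite[A.6]{BruhatTits2}. But there is a genuine gap at the leading-order step. You propose to choose parameters that \emph{cancel} $c_{1,1;\alpha,\beta}$, i.e.\ to solve $c_{1,1}\lambda\mu = x$ after inverting a unit; this works verbatim only in the cases where $a,b,c$ all have splitting field $L_d$ (the split-type configurations). In the twisted cases the parameters of $U_a(K)$ and $U_b(K)$ are tied together along the Galois orbit, and after descent the leading coefficient of $[x_a(y),x_b(z)]$ in $U_c(K)$ is not $c_{1,1}yz$ but a trace or twisted form: $\varepsilon\,(yz+{}^{\tau}y\,{}^{\tau}z)$, resp.\ $\varepsilon\,\mathrm{Tr}(yz)$, when two short roots sum to a long root ($d'=2$ or $3$), and the twisted product ${}^{\tau}y\,{}^{\tau^2}z+{}^{\tau^2}y\,{}^{\tau}z$ in the trialitarian case where $a$, $b$, $c$ are all short. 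Showing that, for $z$ of exact valuation $l_b$, these equations can be solved for $y$ with $\omega(y)\geq l_a$ is the main content of the paper's proof: it uses the invertibility of $2$, resp.\ $3$ (this, rather than the invertibility of $c_{1,1}$, is where the hypothesis $p>d'$ of Hypothesis \ref{hypothesis:on:residue:characteristic} enters), and in the trialitarian short-short-short case the non-obvious explicit solution $y=\tfrac{\varepsilon_1}{2}\bigl(\mathrm{Tr}(xz)-2xz\bigr)\bigl({}^{\tau}z\,{}^{\tau^2}z\bigr)^{-1}$ together with a valuation check. Your sketch flags Galois descent as an obstacle but locates it in organizing the error $v''$, which is comparatively harmless (the higher structure constants are integers, so valuations cannot drop); the solvability of the leading \emph{relative} equation is the step that is missing.

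A second omission concerns the configuration where exactly one root, say $a$, is multipliable. There one must take $z\in L_b$ of \emph{exact} valuation $l_b$ (the non-multipliable side, since $\Gamma_b$ controls that coordinate) and solve for the pair $(y,y')\in H(L_a,L_{2a})$, setting $y=\varepsilon_1xz^{-1}$ and $y'=x'z^{-1}\,{}^{\tau}z^{-1}$, then verify both the defining relation of $H(L_a,L_{2a})$ and $\omega(y')\geq 2l_a$, with the leftover factor landing in $U_{2a+b,\,2l_a+l_b}$. Your plan leaves unspecified which parameter is chosen freely and which is solved for; choosing the multipliable parameter with prescribed $\varphi_a$-value and solving in the other root group would not go through, precisely because $\Gamma_a$ does not control the valuation of the first coordinate of $H(L_a,L_{2a})$ (this is the same phenomenon that forces the exclusion of the case where both $a$ and $b$ are multipliable).
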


\begin{proof}
If $u$ is the identity element, the statement is clear.
From now on, we assume that $u$ is not the identity element.
We choose $\alpha \in a$ and $\beta \in b$.
In this proof, length of root is considered in the irreducible (possibly non-reduced) root system $\Phi(a,b)$ of rank $2$.

In the below various cases, we always follow the same sketch of proof.
Firstly, we recall the splitting field of the roots $a$, $b$ and $c=a+b$ computed in Proposition \ref{prop:length:splitting:field}.
Secondly, we recall the commutation relation between $U_a$ and $U_b$, provided by \cite[A.6]{BruhatTits2} and we draw the relative roots that appear in the writing of this commutation relation.
Thirdly, given a non-trivial unipotent element $u \in U_{c,l_c}$, we use the parametrisation of root groups, defined in Section \ref{sec:parametrization}, to provide suitable elements $v \in U_{a,l_a}$ and $v'\in U_{b,l_b}$.
Finally, we check that $v'' = [v,v']^{-1} u$ is suitable.

\noindent\textbf{Case $d'=1$ or the relative roots $a,b,c$ are long:}

By Proposition \ref{prop:length:splitting:field}, we have $L_a=L_b=L_c=L_d$.

By \cite[A.6]{BruhatTits2}, we have the following commutation relation:
$$\forall y \in L_a,\ z \in L_b,\ [x_a(y),x_b(z)]=\prod_{r,s \in \mathbb{N}^*} x_{r a + s b}(c_{r,s;\alpha,\beta} y^r z^s)$$

There exists a parameter $x \in L_c$ such that $u = x_c(x)$ and $\omega(x) \geq l_c$.
We choose $y \in L_a$ such that $\omega(y) = l_a$.
This is possible because $l_a \in \Gamma_a = \Gamma_{L_a}$ by Lemma \ref{lem:sets:of:values:non:multipliable:root}.
We set $z = c_{1,1;\alpha,\beta}^{-1}xy^{-1} \in L_b$.
Then $\omega(z) = \omega(x) - \omega(y) \geq l_c - l_a = l_b$ satisfies $x = c_{1,1;\alpha,\beta}yz$.
Then, we set $v=x_a(y)$, $v' =x_b(z)$ and $\displaystyle (v'')^{-1} = \prod_{r,s \in \mathbb{N}^*,\ r+s \geq 2} x_{r a + s b}(c_{r,s;\alpha,\beta} y^r z^s)$.
For any pair of non-negative integers $(r,s)$ such that $r+s \geq 2$ and $ra+sb$ is a root, we get $\omega(c_{r,s;\alpha,\beta} y^r z^s) \geq r \omega(y) + s \omega(z) \geq r l_a + s l_b$.
Hence $v'' \in \prod_{r,s \in \mathbb{N}^*; r+s \geq 2} U_{r a + s b, r l_a + s l_b}$.
Thus $[v,v']=u(v'')^{-1}$.

\noindent\textbf{Case $d'= 2$, the roots $a,c$ are short, $b$ is long and non-divisible:}

By Proposition \ref{prop:length:splitting:field}, we have $L_b=L_{2a+b}=L_d$ and $L_a=L_c=L'$.

By \cite[A.6.b]{BruhatTits2}, there exist $\varepsilon_1, \varepsilon_2 \in \{\pm 1\}$ such that we have the following commutation relation:

\begin{tabular}{c >{\centering\arraybackslash} m{.4\linewidth}}
$\begin{array}{rcl}
\forall y \in L_a,\ \forall z \in L_b,\ & & \\
\Big[x_a(y),x_b(z)\Big]& = &
x_{a+b}\Big(\varepsilon_{1}yz\Big) \\
& & x_{2a+b}\Big(\varepsilon_{2}y{^\tau}yz\Big)
\end{array}$
&
\begin{tikzpicture}
\draw [->] (0,0) -- (-1,1) node[left]{$b$};
\draw [->] (0,0) -- (0,1) node[above]{$a+b=c$};
\draw [->] (0,0) -- (1,1) node[right]{$2a+b$};
\draw [->] (0,0) -- (1,0) node[below]{$a$};
\end{tikzpicture}

\end{tabular}

There exists a parameter $x \in L_c$ such that $u = x_c(x)$ and $\omega(x) \geq l_c$.
We choose $z \in L_b$ such that $\omega(z) = l_b$.
This is possible because $l_b \in \Gamma_b =\Gamma_{L_b}$.
We set $y = \varepsilon_1 x z^{-1} \in L' = L_a$.
Then $\omega(y) = \omega(x) - \omega(z) \geq l_c - l_b = l_a$ and $x = \varepsilon_1 y z$.
The root $2a+b$ is non-divisible and we get $\omega(y {^\tau}y z) = 2 \omega(y) + \omega(z) \geq 2 l_a + l_b$.
Then, we set $v=x_a(y)$, $v' =x_b(z)$ and $\displaystyle v'' = x_{2 a + b}(-\varepsilon_2 y {^\tau}y z)$.
Hence $v'' \in U_{2a+b,2 l_a + l_b}$.
Thus $u=[v,v']v''$.

\noindent\textbf{Case $d'= 2$, the roots $a,c$ are short, $b$ is long and divisible:}

By Proposition \ref{prop:length:splitting:field}, we have $L_a=L_c=L'$.

By \cite[A.6.c]{BruhatTits2}, there exist $\varepsilon_1, \varepsilon_2 \in \{\pm 1\}$ such that we have the following commutation relation:

\begin{tabular}{c >{\centering\arraybackslash} m{.3\linewidth}}
$\begin{array}{rcl}
\forall y \in L_a,\ \forall z \in L_b^0,&&\\
\Big[x_a(y),x_{\frac{b}{2}}(0,z)\Big]&=&
x_{a+b}\Big(\varepsilon_{1}yz\Big)\\
&&x_{a+\frac{b}{2}}\Big(0,\varepsilon_{2} y{^\tau}yz\Big)
\end{array}$ &
\begin{tikzpicture}
\draw [->] (0,0) -- (-1,1) node[left]{$a$};
\draw [->] (0,0) -- (0,1);
\draw [->] (0,0) -- (0,2) node[above]{$2a+b$};
\draw [->] (0,0) -- (1,1) node[right]{$a+b=c$};
\draw [->] (0,0) -- (1,0);
\draw [->] (0,0) -- (2,0) node[below]{$b$};
\end{tikzpicture}
\end{tabular}

There exists a parameter $x \in L_c$ such that $u = x_c(x)$ and $\omega(x) \geq l_c$.
By Lemma \ref{lem:sets:of:values:multipliable:root}, we have $l_b \in \Gamma_b = \omega({L'^0}^\times)$.
Hence, we can choose $z \in L_{\frac{b}{2}}^0 = L'^0$ such that $\omega(z) = l_b$.
We set $y = \varepsilon_1 x z^{-1} \in L_a = L'$.
Then $\omega(y) = \omega(x) - \omega(z) \geq l_c - l_b = l_a$ and $x = \varepsilon_1 y z$.
The root $2a+b$ is divisible and we can check that $\omega(\varepsilon_{2} y {^\tau}y z) = 2 \omega(y) + \omega(z) \geq 2 l_a + l_b$.
Then, we set $v=x_a(y)$, $v' =x_b(z)$ and $\displaystyle v'' = x_{a + \frac{b}{2}}(0,-\varepsilon_2 y {^\tau}y z)$.
Thus $u=[v,v']v''$.

\noindent\textbf{Case $d'= 2$, the roots $a,b$ are short, $c$ is long and non-divisible:}

By Proposition \ref{prop:length:splitting:field}, we have $L_a=L_b=L'$ and $L_c=L_d$.

By \cite[A.6.b]{BruhatTits2}, there exists $\varepsilon \in \{\pm 1\}$ such that we have the following commutation relation:

\begin{tabular}{c >{\centering\arraybackslash} m{.3\linewidth}}
$\begin{array}{rcl}
\forall y \in L_a,\ \forall z \in L_b,&&\\
\Big[x_a(y),x_b(z)\Big]&=&
x_{a+b}\Big(\varepsilon (yz + {^\tau}y {^\tau}z)\Big)
\end{array}$
&
\begin{tikzpicture}
\draw [->] (0,0) -- (-1,1);
\draw [->] (0,0) -- (0,1) node[above]{$b$};
\draw [->] (0,0) -- (1,1) node[right]{$a+b=c$};
\draw [->] (0,0) -- (1,0) node[below]{$a$};
\end{tikzpicture}
\end{tabular}

There exists a parameter $x \in L_c$ such that $u = x_c(x)$ and $\omega(x) \geq l_c$.
We choose $z \in L_b = L$ such that $\omega(z) = l_b$.
This is possible because $l_b \in \Gamma_b$.
We set $y = \frac{\varepsilon}{2} x z^{-1} \in L_a = L'$.
This makes sense because $p$ does not divide $d'=2$, hence $2 \in \mathcal{O}_K^\times$.
Then $\omega(y) = \omega(x) - \omega(z) \geq l_c - l_b = l_a$ and $ \varepsilon \mathrm{Tr}( y z ) = \frac{x}{2} + \frac{{^\tau}x}{2} = x$ because $x \in L_d$.
Then, we set $v=x_a(y)$, $v' =x_b(z)$ and $v'' = 1$.
Thus $u=[v,v']v''$.

\noindent\textbf{Case $d'= 2$, the roots $a,b$ are short, $c$ is long and divisible:}

By Proposition \ref{prop:length:splitting:field}, we have $L_a=L_b=L_{\frac{c}{2}}=L'$.

By \cite[A.6.c]{BruhatTits2}, there exists $\varepsilon \in \{\pm 1\}$ such that we have the following commutation relation:

\begin{tabular}{c >{\centering\arraybackslash} m{.3\linewidth}}
$\begin{array}{rcl}
\forall y \in L_a,\ \forall z \in L_b,&&\\
\Big[x_a(y),x_b(z)\Big]&=&
x_{\frac{a+b}{2}}\Big(0,\varepsilon ( yz - {^\tau}y {^\tau}z)\Big)
\end{array}$
&
\begin{tikzpicture}
\draw [->] (0,0) -- (-1,1) node[left]{$a$};
\draw [->] (0,0) -- (0,1);
\draw [->] (0,0) -- (0,2) node[above]{$a+b=c$};
\draw [->] (0,0) -- (1,1) node[right]{$b$};
\draw [->] (0,0) -- (1,0);
\draw [->] (0,0) -- (2,0);
\end{tikzpicture}
\end{tabular}

There exists a parameter $x \in L_{\frac{c}{2}}^0=L'^0$ such that $u = x_{\frac{c}{2}}(0,x)$ and $\omega(x) \geq l_c$.
We choose $z \in L_b = L'$ such that $\omega(z) = l_b$.
This is possible because $l_b \in \Gamma_b$.
We set $y = \frac{\varepsilon}{2} x z^{-1} \in L_a = L'$.
This is possible because $p$ does not divide $d'=2$, hence $2 \in \mathcal{O}_K^\times$.
Then $\omega(y) = \omega(x) - \omega(z) \geq l_c - l_b = l_a$ and $\varepsilon  \left( y z - {^\tau}y {^\tau}z \right) = \frac{x - {^\tau}x}{2} = x$ because $x + {^\tau}x = 0$.
Then, we set $v=x_a(y)$, $v' =x_b(z)$ and $v'' = 1$.
Thus $u=[v,v']v''$.

\noindent\textbf{Case $d'= 2$, the roots $a,b,c$ are short, $a,b$ are non-multipliable:}

By Proposition \ref{prop:length:splitting:field}, we have $L_a=L_b=L_c=L'$.

By \cite[A.6.b]{BruhatTits2}, there exists $\varepsilon \in \{\pm 1\}$ such that we have the following commutation relation:

\begin{tabular}{c >{\centering\arraybackslash} m{.4\linewidth}}
$\begin{array}{rcl}
\forall y \in L_a,\ \forall z \in L_b,&&\\
\Big[x_a(y),x_b(z)\Big]&=&
x_{a+b}\Big(\varepsilon y z\Big)
\end{array}$
&
\begin{tikzpicture}
\draw [->] (0,0) -- (1/2,1.732/2) node[above]{$a+b=c$};
\draw [->] (0,0) -- (-1/2,1.732/2) node[left]{$b$};
\draw [->] (0,0) -- (1,0) node[below]{$a$};
\draw (0,0);
\end{tikzpicture}
\end{tabular}

There exists a parameter $x \in L_c$ such that $u = x_{c}(x)$ and $\omega(x) \geq l_c$.
We choose $z \in L_b = L$ such that $\omega(z) = l_b$.
We set $y = \varepsilon x z^{-1} \in L_a = L'$.
Then $\omega(y) = \omega(x) - \omega(z) \geq l_c - l_b = l_a$ and $x = \varepsilon y z$.
Then, we set $v=x_a(y)$, $v' =x_b(z)$ and $v'' = 1$.
Thus $u=[v,v']v''$.

\noindent\textbf{Case $d'= 2$, the roots $a,b,c$ are short, $b$ is non-multipliable and $a$ is multipliable:}

By Proposition \ref{prop:length:splitting:field}, we have $L_a=L_b=L_c=L'$.

By \cite[A.6.c]{BruhatTits2}, there exist $\varepsilon_1,\varepsilon_2 \in \{\pm 1\}$ such that we have the following commutation relation:

\begin{tabular}{c >{\centering\arraybackslash} m{.2\linewidth}}
$\begin{array}{l}
\forall (y,y') \in H(L_a,L_{2a}),\ \forall z \in L_b,\\
\begin{array}{rcl}
\Big[x_a(y,y'),x_b(z)\Big] & =&
x_{a+b}\Big(\varepsilon_1 y z,y' z {^\tau}z\Big)\\
&& x_{2a+b}\Big(\varepsilon_2 z y'\Big)
\end{array}
\end{array}$
&
\begin{tikzpicture}
\draw [->] (0,0) -- (-1,1) node[left]{$b$};
\draw [->] (0,0) -- (0,1) node[above right]{$a+b=c$};
\draw [->] (0,0) -- (0,2) node[above]{$2a+2b$};
\draw [->] (0,0) -- (1,1) node[right]{$2a+b$};
\draw [->] (0,0) -- (1,0) node[below]{$a$};
\draw [->] (0,0) -- (2,0);
\end{tikzpicture}
\end{tabular}

There exists a parameter $(x,x') \in H(L_c,L_{2c})$ such that $u = x_{c}(x,x')$ and $\omega(x') \geq 2 l_c$.
We choose $z \in L_b$ such that $\omega(z) = l_b$.
This is possible because $l_b \in \Gamma_b$.
We set $y = \varepsilon_1 x z^{-1} \in L$
and $y' = x' z^{-1} {^\tau}z^{-1}$.
Then $y {^\tau}y = y'+{^\tau}y'$ and $\omega(y') = \omega(x') - 2 \omega(z) \geq 2 l_c - 2 l_b = 2 l_a$.
This implies $(y,y') \in H(L_a,L_{2a})_{l_a}$.
Moreover $(x,x') = (\varepsilon_1 y z,y' z {^\tau}z)$.
The root $2a+b$ is non-multipliable, non-divisible, and we can check that $\omega(\varepsilon_2 z y') = \omega(y') + \omega(z) \geq 2 l_a + l_b$.
Then, we set $v=x_a(y,y')$, $v' =x_b(z)$ and $v'' = x_{2a+b}(-\varepsilon_2 x' {^\tau}z^{-1})$.
Thus $u=[v,v']v''$.

\noindent\textbf{Case $d'= 2$, the roots $a,b,c$ are short and $a,b$ are multipliable:}

This case where $a$ and $b$ are both multipliable is the only one excluded by the third assumption.
It is considered in Remark \ref{remark:exclusion:case2D2m}.

From now on, we assume $d'=3$. This occurs only for the trialitarian $D_4$.

\noindent\textbf{Case $d'= 3$, the roots $a,c$ are short and $b$ is long:}

By Proposition \ref{prop:length:splitting:field}, we have $L_a=L_c =L_{2a+b}=L'$ and $L_b=L_{3a+b}=L_{3a+2b}=L_d$.

We denote by $\tau \in \Sigma_d$ an element representing an element of order $3$ in the quotient group $\Sigma_d / \Sigma_0$.
For any $y \in L'$, we denote $\Theta(y) = {^\tau}y {^{\tau^2}}y$ and $\mathrm{N}(y) = y \Theta(y)$.
By \cite[A.6.d]{BruhatTits2}, there exist an integer $\eta \in \{1,2\}$ and four signs $\varepsilon_1, \varepsilon_2, \varepsilon_3, \varepsilon_4 \in \{-1,1\}$ such that we have the following commutation relation:

\begin{tabular}{c >{\centering\arraybackslash} m{.2\linewidth}}
$\begin{array}{rcl}
\forall y \in L_a,\ \forall z \in L_b,&&\\
\Big[x_a(y),x_b(z)\Big] &=&
x_{a+b}\Big(\varepsilon_1 y z\Big)\\
&&x_{2a+b}\Big(\varepsilon_2 \Theta(y) z\Big)\\
&&x_{3a+b}\Big(\varepsilon_3 \mathrm{N}(y) z \Big)\\
&&x_{3a+2b}\Big(\varepsilon_4 \eta \mathrm{N}(y) z^2 \Big)
\end{array}$
&
\begin{tikzpicture}
\draw [->] (0,0) -- (1.732/2,1.732*1.732/2) node[right]{$3a+2b$};
\draw [->] (0,0) -- (-1.732/2,1.732*1.732/2) node[left]{$3a+b$};
\draw [->] (0,0) -- (1.732,0) node[below]{$b$};

\draw [->] (0,0) -- (-1.732/2,1/2) node[below]{$a$};
\draw [->] (0,0) -- (0,1) node[above]{$2a+b$};
\draw [->] (0,0) -- (1.732/2,1/2) node[right]{$a+b=c$};
\end{tikzpicture}
\end{tabular}

There exists a parameter $x \in L_c=L'$ such that $u = x_c(x)$ and $\omega(x) \geq l_c$.
We choose $z \in L_b = L_d$ such that $\omega(z) = l_b$.
This is possible because $l_b \in \Gamma_b$.
We set $y = \varepsilon_1 x z^{-1} \in L_a = L'$.
Then $\omega(y) = \omega(x) - \omega(z) \geq l_c - l_b = l_a$ and $x = \varepsilon_1 y z$.
The root $2a+b$ is short and the parameter $\varepsilon_2 \Theta(y) z \in L'$ satisfies $\omega(\varepsilon_2 {^\tau}y {^{\tau^2}}y z) = 2 \omega(y) + \omega(z) \geq 2 l_a + l_b$.
The root $3a+b$ is long and the parameter $\varepsilon_3 \mathrm{N}(y) z \in L_d$ satisfies $\omega(\varepsilon_3 {^\tau}y {^{\tau^2}}y z) = 3 \omega(y) + \omega(z) \geq 3 l_a + l_b$.
The root $3a+2b$ is long and the parameter $\eta \varepsilon_4 z^2 \mathrm{N}(y) \in L$ satisfies $\omega(\eta \varepsilon_4 z^2 y {^\tau}y {^{\tau^2}}y) = \omega(\eta) + 3 \omega(y) + 2 \omega(z) \geq 3 l_a + 2 l_b$.

Then we set $v=x_a(y)$, $v' =x_b(z)$ and $$ v'' = 
x_{3a+2b}\Big(-\eta \varepsilon_4 \mathrm{N}(y) z^2\Big)
x_{3a+b}\Big(-\varepsilon_3 \mathrm{N}(y)z \Big)
x_{2 a + b}\Big(-\varepsilon_2 \Theta(y) z\Big)$$
Hence $v'' \in U_{2a+b,2 l_a + l_b}U_{3a+b,3 l_a + l_b} U_{3a+2b,3 l_a + 2 l_b}$.
Thus $u=[v,v']v''$

\noindent\textbf{Case $d'= 3$, the roots $a,b$ are short and $c$ is long:}

By Proposition \ref{prop:length:splitting:field}, we have $L_a=L_b =L'$ and $L_c=L_d$.

We denote by $\tau \in \Sigma_d$ an element representing an element of order $3$ in the quotient group $\Sigma_d / \Sigma_0$.
For any $y \in L'$, we denote $\mathrm{Tr}(y) = y + {^\tau}y + {^{\tau^3}}y$.
By \cite[A.6.d]{BruhatTits2}, there exists a sign $\varepsilon \in \{-1,1\}$ such that:

\begin{tabular}{c >{\centering\arraybackslash} m{.4\linewidth}}
$\begin{array}{rcl}
\forall y \in L_a,\ \forall z \in L_b,&&\\
\Big[x_a(y),x_b(z)\Big] &=&
x_{a+b}\Big(\varepsilon \mathrm{Tr}(yz)\Big)\\
\end{array}$
&
\begin{tikzpicture}
\draw [->] (0,0) -- (1.732/2,1.732*1.732/2) node[right]{$a+b=c$};
\draw [->] (0,0) -- (-1.732/2,1.732*1.732/2) node[left]{$ $};
\draw [->] (0,0) -- (1.732,0) node[below]{$ $};

\draw [->] (0,0) -- (-1.732/2,1/2) node[below]{$ $};
\draw [->] (0,0) -- (0,1) node[above]{$a$};
\draw [->] (0,0) -- (1.732/2,1/2) node[right]{$b$};
\end{tikzpicture}
\end{tabular}

There exists a parameter $x \in L_c=L_d$ such that $u = x_c(x)$ and $\omega(x) \geq l_c$.
We choose $z \in L_b = L'$ such that $\omega(z) = l_b$.
This is possible because $l_b \in \Gamma_b$.
We set $y = \frac{\varepsilon}{3} x z^{-1} \in L_a = L$.
This is possible because $p$ does not divide $3=d'$,
hence $3 \in \mathcal{O}_K^\times$.
Then $\omega(y) = \omega(x) - \omega(z) \geq l_c - l_b = l_a$ and $x = \varepsilon \mathrm{Tr}( y z )$.
Then, we set $v=x_a(y)$, $v' =x_b(z)$ and $v'' = 1$.
Thus $u=[v,v']v''$

\noindent\textbf{Case $d'= 3$ and the roots $a,b,c$ are short:}

By Proposition \ref{prop:length:splitting:field}, we have $L_a=L_b=L_c=L'$ and $L_{2a+b}=L_{a+2b}=L_d$.

We denote by $\tau \in \Sigma_d$ an element representing an element of order $3$ in the quotient group $\Sigma_d / \Sigma_0$.
For any $y \in L'$, we denote $\Theta(y) = {^\tau}y {^{\tau^2}}y \in L'$ and $\mathrm{Tr}(y) = y + {^\tau}y + {^{\tau^3}}y \in L_d$ and $\mathrm{N}(y) = y \Theta(y) \in L_d$.
For any $y,z \in L'$, we denote $(y*z) = \Theta(y+z) - \Theta(y) - \Theta(z) = {^\tau}y {^{\tau^2}}z + {^{\tau^2}}y {^\tau}z$.
By \cite[A.6.d]{BruhatTits2}, there exist three signs $\varepsilon_1, \varepsilon_2, \varepsilon_3 \in \{-1,1\}$ such that we have the following commutation relation:

\begin{tabular}{c >{\centering\arraybackslash} m{.3\linewidth}}
$\begin{array}{rcl}
\forall y \in L_a,\ \forall z \in L_b,&&\\
\Big[x_a(y),x_b(z)\Big] &=&
x_{a+b}\Big(\varepsilon_1 (y * z)\Big)\\
&&x_{2a+b}\Big(\varepsilon_2 \mathrm{Tr}\big(\Theta(y)z\big)\Big)\\
&&x_{a+2b}\Big(\varepsilon_3 \mathrm{Tr}\big(y\Theta(z)\big)\Big)
\end{array}$
&
\begin{tikzpicture}
\draw [->] (0,0) -- (1.732/2,1.732*1.732/2) node[right]{$a+2b$};
\draw [->] (0,0) -- (-1.732/2,1.732*1.732/2) node[left]{$2a+b$};
\draw [->] (0,0) -- (1.732,0) node[below]{$ $};

\draw [->] (0,0) -- (-1.732/2,1/2) node[below]{$a$};
\draw [->] (0,0) -- (0,1) node[above]{$a+b=c$};
\draw [->] (0,0) -- (1.732/2,1/2) node[right]{$b$};
\end{tikzpicture}
\end{tabular}

We choose $z \in L_b = L'$ such that $\omega(z) = l_b$,
this is possible because $l_b \in \Gamma_b$.
Because $p$ does not divide $2$,
hence $2 \in \mathcal{O}_K^\times$, we can set:
$$y = \frac{\varepsilon_1}{2} \cdot \frac{\mathrm{Tr}(xz) - 2 x z}{\Theta(z)}
= \frac{\varepsilon_1}{2 \mathrm{N}(z)} \left(z \mathrm{Tr}(xz) - 2 x z^2 \right)$$
so that $(y*z) = \varepsilon_1 x$.
Indeed:
$$\begin{array}{rcl}
(y*z) & = & 
\frac{\varepsilon_1}{2 \mathrm{N}(z)} \left({^\tau}z \mathrm{Tr}(xz) - 2 {^\tau}x {^\tau}z^2 \right) {^{\tau^2}}z
+ \frac{\varepsilon_1}{2 \mathrm{N}(z)} \left( {^{\tau^2}}z \mathrm{Tr}(xz) - 2 {^{\tau^2}}x {^{\tau^2}}z^2 \right) {^\tau}z\\

& = & \frac{\varepsilon_1 \Theta(z)}{2 \mathrm{N}(z)}
\left(\mathrm{Tr}(xz) - 2 {^\tau}x {^\tau}z
+ \mathrm{Tr}(xz) - 2 {^{\tau^2}} x {^{\tau^2}}z \right)\\

& = & \frac{\varepsilon_1}{2z}
\left(
2 x z
\right)
\end{array}$$
Then we have:
$$\begin{array}{rl}\omega(y) = & \omega\big( \mathrm{Tr}(xz) - 2 xz\big) - \omega\big(\Theta(z) \big)\\
\geq &
\min \Big( \omega\big(\mathrm{Tr}(xz)\big) ,\omega(x) + \omega(z) \Big) - 2 \omega(z)\\
\geq & \big(\omega(x) + \omega(z)\big)- 2 \omega(z)\\
= & \omega(x) - \omega(z)\\
\geq & l_c - l_a = l_b
\end{array}$$
In fact, we get $\omega(y) = \omega(x) - \omega(z)$ because we deduce the inequality $\omega(x) \geq \omega(y) + \omega(z)$ from the formula $x=\varepsilon_1 (y * z)$.
The root $2a+b$ is long and we can check that the parameter $\displaystyle \varepsilon_2 \mathrm{Tr}\big(\Theta(y)z\big) \in L_d$ satisfies $\omega\Big(\varepsilon_2 \mathrm{Tr}\big(\Theta(y)z\big)\Big) \geq 2 \omega(y) + \omega(z) = 2 l_a + l_b$.
The root $a+2b$ is long and we can check that the parameter $\displaystyle \varepsilon_3 \mathrm{Tr}\big(y\Theta(z)\big) \in L_d$ satisfies $\omega\Big(\varepsilon_3 \mathrm{Tr}\big(y\Theta(z)\big)\Big) \geq \omega(y) + 2 \omega(z) = l_a + 2 l_b$.
Then, we set $v=x_a(y)$, $v' =x_b(z)$ and
$$v'' = 
x_{a+2b}\Big(-\varepsilon_3 \mathrm{Tr}\big(y\Theta(z)\big)\Big) 
x_{2 a + b}\Big(-\varepsilon_2 \mathrm{Tr}\big(\Theta(y)z\big)\Big)$$
Hence $v'' \in U_{2a+b,2 l_a + l_b}U_{a+2b,l_a + 2 l_b}$.
Thus $u=[v,v']v''$.

All the cases except the excluded one, where $a,b$ both are multipliable, have been treated.
\end{proof}

\begin{Rq}
\label{remark:exclusion:case2D2m}
In the excluded case, by \cite[A.6.c]{BruhatTits2}, there exists a sign $\varepsilon \in \{ \pm 1 \}$ such that we have the following commutation relation:

\begin{tabular}{c >{\centering\arraybackslash} m{.4\linewidth}}
$\begin{array}{rcl}
\forall (y,y') \in H(L_a,L_{2a}),&&\\
\quad \forall (z,z') \in H(L_b,L_{2b}),&&\\
\Big[x_a(y,y'),x_b(z,z')\Big]&=&
x_{a+b}\Big(\varepsilon y z\Big)
\end{array}$
&
\begin{tikzpicture}
\draw [->] (0,0) -- (-1,1);
\draw [->] (0,0) -- (0,1) node[left]{$b$};
\draw [->] (0,0) -- (0,2);
\draw [->] (0,0) -- (1,1) node[right]{$a+b=c$};
\draw [->] (0,0) -- (1,0) node[above]{$a$};
\draw [->] (0,0) -- (2,0);
\end{tikzpicture}
\end{tabular}

There exists a parameter $x \in L_c = L'$ such that $u = x_{c}(x)$ and $\omega(x) \geq l_c$.
The problem is that, for a multipliable root $a\in\Phi$,
the set of values $\Gamma_a$ does not control completely the valuation of the first term $y$ of a parameter $(y,y') \in H(L_a,L_{2a})$.
One can show that, when $l_a \not \in \Gamma'_a$, we get $\omega(y) > l_a$.
Hence the inclusion $[U_{a,l_a},U_{b,l_b}] \subset U_{a+b,l_a+l_b}$ is not, in general, an equality.
\end{Rq}

\subsection{Generation of unipotent elements thanks to commutation relations between valued root groups} 
\label{sec:explicit:computation:with:commutation:relations}

In Corollary \ref{cor:majoration:frattini}, we obtained that $\mathrm{Frat}(P)$ is a subgroup of a pro-$p$ group $Q$ written in terms of valued root groups.
We want to get an equality when it is possible.
It suffices to provide a generating system of the biggest group consisting of $p$-powers and commutators of elements chosen in $P$.
In a general consideration of a compact open subgroup $H$ of $G(K)$, in Section \ref{sec:lower:bounds:positive:roots}, we do an induction on the positive roots from the highest to the simple roots to provide bounds of valued root groups contained in $[H,H]$;
in Section \ref{sec:lower:bounds:negative:roots}, we furthermore consider the length of roots to provide bounds for the whole root system.
In Section \ref{sec:lower:bounds:frattini:subgroup}, we go back to the situation of the Frattini subgroup $\mathrm{Frat}(P) = \overline{P^p [P,P]} \supset [P,P]$.

In order to do an induction on the set of relative roots, the following lemma in Lie combinatorics explains how to get, step by step, all the roots as a linear combination with integer coefficients of the lowest root and the simple roots.

\begin{Lem}
\label{lem:induction:root:system}
Let $\Phi$ be an irreducible root system of rank greater or equal to $2$ and $\Delta$ be a basis of simple roots in $\Phi$, associated to an order $\Phi^+$.
Let $h$ be the highest root for this order.
\begin{enumerate}
\item[(1)]
Let $\beta \in \Phi^+ \setminus (\Delta \cup 2 \Delta)$ be a positive root which is not the multiple of a simple root.
Then, there exists a simple root $\alpha \in \Delta$ and a positive root $\beta' \in \Phi^+$ such that $\beta = \alpha + \beta'$ and the roots $\alpha, \beta'$ are not collinear.
\item[(2)]
Let $\gamma \in \Phi^- \setminus \{-h\}$.
There exists a positive root $\beta \in \Phi^+$ and a negative root $\gamma' \in \Phi^-$ such that $\gamma = \beta + \gamma'$ and the roots $\beta, \gamma'$ are not collinear.
\item[(3)]
Let $\alpha \in \Delta$.
There exists a simple root $\beta \in \Delta$ such that $\alpha + \beta$ is a positive root.
Moreover, the roots $\alpha + \beta \in \Phi^+$ and $-\beta$ are not collinear.
\end{enumerate}
\end{Lem}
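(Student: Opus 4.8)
The plan is to derive all three items from two standard facts about irreducible root systems: that root strings are unbroken (so for non-proportional roots $a,b$ one has $a-b\in\Phi$ whenever $(a,b)>0$ and $a+b\in\Phi$ whenever $(a,b)<0$), and that the highest root $h$ is the unique positive root to which no simple root can be added inside $\Phi$ (\cite[VI.1.6, VI.1.8]{Bourbaki4-6}; in the non-reduced case $BC_l$ this last point is an immediate direct check). I would prove the items in the order (3), (1), (2), since the exceptional cases of (2) are cleanest to dispatch via (3). For (3): as $\Phi$ is irreducible of rank $\geq 2$, the diagram $\mathrm{Dyn}(\Delta)$ is connected with at least two vertices, so the given $\alpha\in\Delta$ has a neighbour $\beta\in\Delta$, i.e. $(\alpha,\beta)<0$; hence $\alpha+\beta\in\Phi$, and $\alpha+\beta\in\Phi^+$ as a sum of simple roots. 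If $\alpha+\beta$ and $-\beta$ were collinear, then $\alpha$ would be proportional to $\beta$, impossible for distinct elements of a basis; so $\beta$ is as required.

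For (1): write $\beta=\sum_{\alpha\in\Delta}n_\alpha\alpha$ with $n_\alpha\in\mathbb{Z}_{\geq 0}$. The hypothesis $\beta\notin\Delta\cup 2\Delta$ means exactly that $\beta$ is not proportional to any simple root, equivalently that the support $\{\alpha:n_\alpha>0\}$ has at least two elements. From $0<(\beta,\beta)=\sum_\alpha n_\alpha(\beta,\alpha)$ there is a simple root $\alpha$ in the support of $\beta$ with $(\beta,\alpha)>0$; since $\beta$ is not proportional to $\alpha$, the root-string fact gives $\beta':=\beta-\alpha\in\Phi$. Its coefficients on $\Delta$ are nonnegative and strictly positive on some simple root (any element of the support of $\beta$ other than $\alpha$), so $\beta'\in\Phi^+$; and $\alpha$ and $\beta'$ are non-collinear, again because $\beta$ is not proportional to $\alpha$. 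Thus $\beta=\alpha+\beta'$ is the desired decomposition.

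For (2): put $\delta=-\gamma\in\Phi^+$, so $\delta\neq h$. By the cited fact there is $\alpha\in\Delta$ with $\delta+\alpha\in\Phi$, necessarily in $\Phi^+$; then $\gamma'=\gamma-\alpha=-(\delta+\alpha)\in\Phi^-$ and $\gamma=\alpha+\gamma'$ with $\alpha\in\Phi^+$. If $\alpha$ and $\gamma'$ are non-collinear we are done. Otherwise $\delta$ is proportional to $\alpha$, which forces $\gamma=-\alpha$ or $\gamma=-2\alpha$ with $\alpha\in\Delta$ (the second possibility only when $\Phi$ is non-reduced and $\alpha$ is the short end node). In either case I would finish using (3): for $\gamma=-\alpha$, choose $\beta\in\Delta$ with $\alpha+\beta\in\Phi^+$ non-collinear with $\beta$ and write $\gamma=\beta+\bigl(-(\alpha+\beta)\bigr)$; the case $\gamma=-2\alpha$ (type $BC_l$) is handled the same way, taking $\beta\in\Delta$ adjacent to $\alpha$ and checking $2\alpha+\beta\in\Phi^+$ directly.

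The one delicate point throughout is ensuring that the decompositions produced are by \emph{non-collinear} roots; this is precisely what dictates the exclusion of $\Delta\cup 2\Delta$ in (1) and the short case analysis on $\gamma\in-(\Delta\cup 2\Delta)$ in (2), and it is also where the non-reduced system $BC_l$ forces a couple of explicit verifications (both for that case analysis and for the characterisation of $h$). Everything else is routine once one invokes unbrokenness of root strings and the standard properties of the highest root of an irreducible root system.
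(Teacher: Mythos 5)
Your proposal is correct. For items (1) and (3) it coincides with the paper's argument: the paper also picks a simple root $\alpha$ with $(\alpha\mid\beta)>0$ and subtracts it (your extra care that $\alpha$ lies in the support of $\beta$ is a slight refinement of the same idea), and for (3) it likewise takes any neighbour of $\alpha$ in the Dynkin diagram. The genuine divergence is in item (2). The paper argues via the highest root: if $(h\mid\gamma)<0$ it writes $\gamma=(h+\gamma)+(-h)$ with $\gamma'=-h$; if $(h\mid\gamma)=0$ it finds a simple $\alpha$ with $(\alpha\mid\gamma)>0$ and takes $\beta=\alpha$, $\gamma'=\gamma-\alpha$; and it treats $\gamma=-h/2$ (non-reduced case) separately by passing to $\Phi_{\mathrm{nd}}$. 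You instead invoke the raising property characterizing $h$ --- every positive root $\delta\neq h$ admits a simple root $\alpha$ with $\delta+\alpha\in\Phi$ --- so that $\beta$ can always be taken simple, and you absorb the failures of non-collinearity into the explicit exceptional cases $\gamma\in-(\Delta\cup 2\Delta)$, settled by item (3) plus a one-line check of $2\alpha+\beta\in\Phi$ in type $BC_l$; this also disposes of $\gamma=-h/2$ with no special treatment. What the paper's route buys is that it only needs the dominance of $h$ (Bourbaki VI.1.8 Prop.\ 25) plus unbroken root strings; what yours buys is a more uniform decomposition (always $\beta\in\Delta$) at the price of the raising fact, which is indeed standard but is not literally the statement of VI.1.6 Prop.\ 19 or VI.1.8 Prop.\ 25 and does not follow from dominance alone (in $B_2$ the root $e_1$ is dominant yet can be raised by $e_2$); if you keep your route you should either cite it precisely or include its short proof (e.g.\ via irreducibility of the adjoint representation in the reduced case, together with your direct verification for $BC_l$).
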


\begin{proof}
According to notations of \cite[VI.1.3]{Bourbaki4-6},
we denote by $V$ the $\mathbb{R}$-vector space generated by $\Delta$ containing $\Phi$ and by $(\cdot | \cdot)$ a scalar product which is invariant by the Weyl group.

(1) Let $\beta \in \Phi^+ \setminus \Delta$ be a positive non-simple root.
Because $\Delta$ is a basis of the Euclidean vector space $V$
and $\beta \in \Phi^+$ is in the cone $\mathbb{Z}_{> 0} \Delta$ generated by $\Delta$, there exists $\alpha \in \Delta$ such that $(\alpha | \beta) > 0$.
By \cite[VI.1.3 Corollaire]{Bourbaki4-6}, we get $\beta' = \beta - \alpha \in \Phi$ because we excluded the case where $\alpha = \beta$ assuming $\beta \not\in \Delta$.
Moreover, $\beta'$ is a positive root because its integer coefficients when we write it in the basis $\Delta$ all have the same sign (hence are positive).
Finally, $\beta'$ and $\alpha$ are not collinear because we assumed that $\beta$ is not the multiple of a simple root.
Hence $\beta' = \beta - \alpha$ satisfies assertion (1).

(2) Let $\gamma \in \Phi^- \setminus \{-h,-\frac{h}{2}\}$.
If $(-h | \gamma ) > 0$, then the sum $\beta = h + \gamma \in \Phi^+$ is a positive root.
Moreover, $-h$ and $\beta$ are not collinear because we assumed that $\gamma$ and $h$ are not collinear.
Hence $\beta$ and $\gamma' = - h$ satisfies assertion (2).
Otherwise, we necessarily get the equality $(-h | \gamma ) = 0$ according to \cite[VI.1.8 Proposition 25]{Bourbaki4-6} and there exists a simple root $\alpha \in \Delta$ such that $(\alpha | \gamma) > 0$,
because the roots $\alpha \in \Delta$ form a basis of the Euclidean space $V$ and $-h \neq 0$.
The roots $\gamma$ and $\alpha$ are not collinear because, if they were, we should have $\gamma \in \mathbb{R}_+ \alpha$ according to assumption $(\gamma | \alpha ) > 0$;
and this contradicts $\gamma \in \Phi^-$.
Hence $\gamma'= \gamma-\alpha \in \Phi^-$ is a negative root.
Thus, $\gamma'$ and $\beta = \alpha$ satisfies assertion (2).

Let $\gamma = -\frac{h}{2}$. In particular, this happens only if $\Phi$ is non-reduced. We can apply the same method inside $\Phi_{\mathrm{nd}}$, because the root $-\frac{h}{2}$ is a short root of $\Phi_{\mathrm{nd}}$, hence it cannot be collinear to the highest root of $\Phi_{\mathrm{nd}}$.

(3) Let $\alpha \in \Delta$.
Any $\beta$ connected to $\alpha$ by an edge in $\mathrm{Dyn}(\Delta)$ satisfies (3).
Such a simple root exists because we assumed $\Phi$ to be of rank greater of equal to $2$.
\end{proof}

\begin{Lem}
\label{lem:write:root:positive:coefficients}
Let $\Phi$ be an irreducible root system of rank greater or equal to $2$ and $\Delta$ be a basis of simple roots in $\Phi$, associated to an order $\Phi^+$.
Let $h$ be the highest root for this order.
For any root $\gamma \in \Phi$, there exist non-negative integers $(n_\alpha(\gamma))_{\alpha \in \Delta}$ such that:
$$\gamma = -h + \sum_{\alpha \in \Delta} n_\alpha(\gamma) \alpha$$
\end{Lem}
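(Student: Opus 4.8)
The plan is to deduce this from Lemma~\ref{lem:induction:root:system}, and in fact only part~(2) of that lemma will be needed here. Rewriting the desired identity as $\gamma + h = \sum_{\alpha\in\Delta} n_\alpha(\gamma)\,\alpha$, the statement amounts to saying that $\gamma + h$ belongs to the monoid $M = \sum_{\alpha\in\Delta}\mathbb{Z}_{\geq 0}\,\alpha$ generated by the simple roots.

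I would first treat the two trivial cases. If $\gamma\in\Phi^+$, then by definition of the order associated with $\Delta$ both $\gamma$ and the highest root $h$ lie in $M$, hence so does $\gamma+h$; concretely one takes $n_\alpha(\gamma)$ to be the sum of the $\alpha$-coordinates of $\gamma$ and of $h$ in the basis $\Delta$. If $\gamma = -h$, one takes $n_\alpha(\gamma)=0$ for every $\alpha\in\Delta$.

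It remains to handle a root $\gamma\in\Phi^-\setminus\{-h\}$. Here I would build a sequence $\gamma=\gamma_0,\gamma_1,\dots$ of negative roots: as long as $\gamma_i\neq -h$, apply Lemma~\ref{lem:induction:root:system}(2) to write $\gamma_i=\beta_{i+1}+\gamma_{i+1}$ with $\beta_{i+1}\in\Phi^+$ and $\gamma_{i+1}\in\Phi^-$. Since $\beta_{i+1}$ is a positive root, $\operatorname{ht}(\gamma_{i+1})<\operatorname{ht}(\gamma_i)$ for the height function attached to $\Delta$; as $\Phi$ is finite, the heights cannot decrease strictly forever, so the construction halts at some $\gamma_k$, and since every $\gamma_i$ is negative by construction, halting forces $\gamma_k=-h$. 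Summing the relations gives $\gamma=-h+\sum_{i=1}^{k}\beta_i$ with each $\beta_i\in\Phi^+\subset M$, and adding the (non-negative integer) coordinate vectors of the $\beta_i$ produces the required integers $n_\alpha(\gamma)\geq 0$.

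I do not expect a genuine obstacle: the combinatorial content is entirely carried by Lemma~\ref{lem:induction:root:system}(2). The only point needing a word of care is the termination of the process together with the fact that it can stop nowhere except at $-h$ — but this is immediate from the lemma, which applies to every negative root distinct from $-h$, the non-reduced case where $-\tfrac{h}{2}$ is also a root being already absorbed into the statement of that lemma.
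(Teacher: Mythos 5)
Your proof is correct and follows essentially the same route as the paper: the paper argues by induction on height, peeling off a positive root via Lemma \ref{lem:induction:root:system} at each step, which is exactly what your descending iteration does for the negative roots. Your only deviation is to dispose of positive roots directly (both $\gamma$ and $h$ already lie in $\sum_{\alpha\in\Delta}\mathbb{Z}_{\geq 0}\,\alpha$), a mild streamlining that avoids invoking the lemma for simple and divisible roots but does not change the substance of the argument.
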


\begin{proof}
We proceed by induction on height.
If $\gamma = -h$, it is clear.

Induction step: If $\gamma \in \Phi$, by \ref{lem:induction:root:system}, there exists $\beta \in \Phi^+$ and $\gamma' \in \Phi$ such that $\gamma = \gamma' + \beta$.
Hence by induction hypothesis, there exist non-negative integers $(n_\alpha(\gamma'))$ such that $\gamma' = -h + \sum_{\alpha \in \Delta} n_\alpha(\gamma') \alpha$.
According to \cite[VI.1.6 Th\'eor\`eme 3]{Bourbaki4-6}, there exist non-negative integers $(n_\alpha(\beta))$ such that $\beta = \sum_{\alpha \in \Delta} n_\alpha(\beta) \alpha$.
Hence, the property is satisfied by $n_\alpha(\gamma) = n_\alpha(\gamma') + n_\alpha(\beta)$.
\end{proof}

\begin{Def}
Let $f : \Phi \rightarrow \mathbb{R}$ be a map.
We say that the map $f$ is \textbf{concave} if it satisfies the following axioms:
\begin{itemize}
\item[(C0)] $f(2a) \leq 2 f(a)$ for any root $a \in \Phi$ such that $2a \in \Phi$;
\item[(C1)] $f(a+b) \leq f(a) + f(b)$ for any roots $a,b \in \Phi$ such that $a+b \in \Phi$;
\item[(C2)] $0 \leq f(a) + f(-a)$ for any root $a \in \Phi$.
\end{itemize}
\end{Def}

Despite these axioms look like a convexity property, they correspond in fact to a concavity property in terms of valued root groups.

\begin{Ex}
\label{ex:f:is:concave}
For any non-empty subset $\Omega \subset \mathbb{A}$,
the map $f_\Omega : a \mapsto \sup \{ -a(x),\ x \in \Omega \}$ is concave.
Later, we will apply Propositions \ref{prop:generated:commutator:positive:root:groups} and \ref{prop:generated:commutator:negative:root:groups} to values $l_a = f_{\mathbf{c}_{\mathrm{af}}}(a)$.
\end{Ex}

\subsubsection{Lower bounds for positive root groups}
\label{sec:lower:bounds:positive:roots}

Let $(l_a)_{a\in \Phi}$ be any values in $\mathbb{R}$.
We define the following values $(l'_b)_{b \in \Phi^+}$ depending on the $l_a$, to become bounds for the positive root groups.

\begin{Not}
\label{not:positive:bounds}
For any positive root $b \in \Phi^+$, we can write uniquely $b = \sum_{\alpha \in \Delta} n_\alpha(b) \alpha$ where $n_a(b) \in \mathbb{N}$ are nonnegative integers (not all equal to zero).
We define a value $l'_b = \sum_{\alpha \in \Delta} n_\alpha(b) l_\alpha$.
\end{Not}

Thanks to Lemma \ref{lem:induction:root:system}, we do several inductions on various root systems to provide bounds, thanks to Proposition \ref{prop:generated:commutator:root:group}, for the valuations of the valued root groups contained in the Frattini subgroup $\mathrm{Frat}(P)$.
The first step, in terms of positive roots, is the following:

\begin{Prop}\label{prop:generated:commutator:positive:root:groups}
Let $(l_a)_{a \in \Phi}$ be values in $\mathbb{R}$.
Assume that for any simple root $a \in \Delta$, we have $l_a \in \Gamma_a$.

(1) Then $l'_b \in \Gamma_b$ for any positive root $b \in \Phi^+$.

(2) Assume, moreover, that the map $a \mapsto l_a$ is concave.
Then we have $l'_b \geq l_b$ for any positive root $b \in \Phi^+$.

(3) Furthermore, assume that Hypothesis \ref{hypothesis:on:residue:characteristic} is satisfied.
Let $H$ be a (compact open) subgroup of $G(K)$ containing the valued root groups $U_{a,l_a}$ for $a \in \Phi$.
Then for any root $b \in \Phi^+ \setminus \Delta$, the derived group $[H,H]$ contains the valued root group $U_{b,l'_b}$.
\end{Prop}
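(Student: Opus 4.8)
The plan is to prove the three assertions simultaneously by induction on the height of $b$, the common device being Lemma \ref{lem:induction:root:system}(1): a positive root $b$ that is neither simple nor twice a simple root can be written $b=\alpha+\beta'$ with $\alpha\in\Delta$ and $\beta'\in\Phi^+$ not collinear to $\alpha$, and then Notation \ref{not:positive:bounds} gives the bookkeeping identity $l'_b=l_\alpha+l'_{\beta'}$, and more generally $l'_{r\alpha+s\beta'}=r\,l_\alpha+s\,l'_{\beta'}$ for any root of that form. Assertion (2) follows at once: inductively $l'_b=l_\alpha+l'_{\beta'}\geq l_\alpha+l_{\beta'}\geq l_{\alpha+\beta'}=l_b$, where the first inequality is the inductive hypothesis for $\beta'$ and the second is concavity axiom (C1), while the base cases $l'_\alpha=l_\alpha$ for $\alpha\in\Delta$ and $l'_{2\alpha}=2l_\alpha\geq l_{2\alpha}$ (axiom (C0)) are trivial.

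For assertion (1) the base cases are the simple roots ($l'_\alpha=l_\alpha\in\Gamma_\alpha$ by hypothesis) and, in the non-reduced case, the roots $2\alpha$ with $\alpha\in\Delta$ multipliable, where $l'_{2\alpha}=2l_\alpha$ lies in $\Gamma_{2\alpha}$ by the explicit description of $\Gamma_{2\alpha}$ in Lemma \ref{lem:sets:of:values:multipliable:root} together with $l_\alpha\in\Gamma_\alpha=\tfrac12\Gamma_{L_\alpha}$. For the inductive step, writing $b=\alpha+\beta'$ as above, I would run through exactly the same families of rank-$2$ configurations as in the proof of Proposition \ref{prop:generated:commutator:root:group}, organised by the value of $d'$ and by the lengths / multipliability / divisibility of $\alpha$, $\beta'$ and $b$: Proposition \ref{prop:length:splitting:field} pins down the splitting fields $L_\alpha$, $L_{\beta'}$, $L_b$, Lemmas \ref{lem:sets:of:values:non:multipliable:root} and \ref{lem:sets:of:values:multipliable:root} then exhibit $\Gamma_\alpha$, $\Gamma_{\beta'}$, $\Gamma_b$ as (halves of) value groups of these fields, and since by induction $l'_{\beta'}\in\Gamma_{\beta'}$, a one-line check in each case — precisely the arithmetic that legitimises the parameter substitutions carried out in Proposition \ref{prop:generated:commutator:root:group} — yields $l_\alpha+l'_{\beta'}\in\Gamma_b$. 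This case analysis is the real work of the proposition: unlike (2) and (3) it does not reduce to a clean combinatorial induction, and it is exactly here that the hypothesis $l_a\in\Gamma_a$ for $a\in\Delta$ is consumed and where a naive induction would break down.

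For assertion (3) I would argue by \emph{decreasing} induction on the height of $b\in\Phi^+\setminus\Delta$. If $b=2\alpha$ with $\alpha\in\Delta$ (which forces $\Phi$ non-reduced), then Lemma \ref{lem:non:reduced:derived:valued:root:group} applied to $U_{\alpha,l_\alpha}\subseteq H$ shows that $[H,H]$ contains $[U_{\alpha,l_\alpha},U_{\alpha,l_\alpha}]=U_{2\alpha,l'_{2\alpha}}$, after relating $2\lceil l_\alpha\rceil$ (resp. $2\lceil l_\alpha\rceil+1$ in the ramified case) to $l'_{2\alpha}=2l_\alpha$ via $l_\alpha\in\Gamma_\alpha$ and part (1). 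Otherwise Lemma \ref{lem:induction:root:system}(1) gives $b=\alpha+\gamma$ with $\alpha\in\Delta$, $\gamma\in\Phi^+$ not collinear to $\alpha$, and one checks that this decomposition may be chosen with at least one of $\alpha,\gamma$ non-multipliable — in the non-reduced case using that a sum of two non-collinear multipliable roots is never multipliable, which is what lets us avoid the single configuration excluded from Proposition \ref{prop:generated:commutator:root:group} — so that Proposition \ref{prop:generated:commutator:root:group} applies to $c=b=\alpha+\gamma$ with the values $l_\alpha,l'_\gamma,l'_b$ (whose required memberships $l_\alpha\in\Gamma_\alpha$, $l'_\gamma\in\Gamma_\gamma$, $l'_b\in\Gamma_b$ hold by hypothesis and part (1)). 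We have $U_{\alpha,l_\alpha}\subseteq H$ by hypothesis and $U_{\gamma,l'_\gamma}\subseteq U_{\gamma,l_\gamma}\subseteq H$ by part (2); hence for any $u\in U_{b,l'_b}$, Proposition \ref{prop:generated:commutator:root:group} writes $u=[v,v']\,v''$ with $v\in U_{\alpha,l_\alpha}$, $v'\in U_{\gamma,l'_\gamma}$ — so $[v,v']\in[H,H]$ — and $v''$ lying in the subgroup generated by the $U_{r\alpha+s\gamma,\;r l_\alpha+s l'_\gamma}=U_{r\alpha+s\gamma,\;l'_{r\alpha+s\gamma}}$ over roots $r\alpha+s\gamma$ of height strictly larger than $\mathrm{ht}(b)$. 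By the decreasing inductive hypothesis all these groups lie in $[H,H]$, so $v''\in[H,H]$ and therefore $u\in[H,H]$. The base case is $b=h$, the highest root (not simple since $\mathrm{rk}\,\Phi\geq2$): there are no roots of height exceeding $\mathrm{ht}(h)$, so $v''=1$ and $u=[v,v']\in[H,H]$. Apart from the case analysis of part (1), the only points needing care are the $2\Delta$ case above and the choice of a non-multipliable summand in the decomposition $b=\alpha+\gamma$.
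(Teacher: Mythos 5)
Your parts (2) and (3) follow essentially the same route as the paper: the increasing induction through Lemma \ref{lem:induction:root:system}(1) plus axiom (C1) for (2), and the decreasing induction on height starting from the highest root, applying Proposition \ref{prop:generated:commutator:root:group} together with the identity $l'_{r\alpha+s\gamma}=r\,l_\alpha+s\,l'_\gamma$ to absorb the term $v''$, for (3). Your extra care about $b\in 2\Delta$ and about choosing the decomposition $b=\alpha+\gamma$ so that at least one summand is non-multipliable addresses points the paper passes over silently, and is welcome (note, however, that for $b=2\alpha$ with $l_\alpha\notin\mathbb{Z}$ Lemma \ref{lem:non:reduced:derived:valued:root:group} only yields $U_{2\alpha,2\lceil l_\alpha\rceil}$, resp.\ $U_{2\alpha,2\lceil l_\alpha\rceil+1}$, which can be strictly smaller than $U_{2\alpha,2l_\alpha}$, so that edge case needs a different argument than the self-commutator alone).

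The genuine gap is in your treatment of (1). The paper does not prove (1) by induction on height: it evaluates $l'_b=\sum_{\alpha\in\Delta}n_\alpha(b)\,l_\alpha$ directly, and in the ramified reduced case the whole point is that for a long root $b$ the coefficient $n_\alpha(b)$ of every \emph{short} simple root $\alpha$ is divisible by $d'$ --- obtained by lifting $b$ to a $\Sigma_d$-invariant absolute root and observing that the absolute coefficients are constant along each $*$-orbit --- so that $n_\alpha(b)l_\alpha\in d'\Gamma_{L'}=\Gamma_{L_d}$; likewise, in the $BC_l$ case, the coefficient of the multipliable simple root in a non-multipliable non-divisible root is even. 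Your inductive step only carries the membership $l'_{\beta'}\in\Gamma_{\beta'}$, and that is too weak: when $\alpha$ and $\beta'$ are short and $b=\alpha+\beta'$ is long with $L'/L_d$ ramified, you only know $l_\alpha,\,l'_{\beta'}\in\Gamma_{L'}$, and $\Gamma_{L'}+\Gamma_{L'}=\Gamma_{L'}\not\subseteq\Gamma_{L_d}=d'\,\Gamma_{L'}$, so no check based on these memberships alone can give $l_\alpha+l'_{\beta'}\in\Gamma_b$ (take type $C_2$ with $b=2e_1=(e_1-e_2)+(e_1+e_2)$: the claim holds only because the coefficient of the short simple root in $b$ is $2$, information your induction has discarded). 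The same failure occurs in $BC_l$ for a non-multipliable root decomposed as a sum of two multipliable roots, since $\tfrac12\mathbb{Z}+\tfrac12\mathbb{Z}\not\subseteq\mathbb{Z}$. Appealing to \gts{the arithmetic that legitimises the parameter substitutions} in Proposition \ref{prop:generated:commutator:root:group} cannot close this, because that proposition \emph{assumes} $l_c\in\Gamma_c$; it never derives it. To repair your induction you would have to strengthen the inductive statement so as to track the coefficients themselves (i.e.\ prove the divisibility facts above), at which point you have reproduced the paper's direct computation.
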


\begin{proof}

(1) We apply Proposition \ref{prop:length:splitting:field} and Lemmas \ref{lem:sets:of:values:multipliable:root} and \ref{lem:sets:of:values:non:multipliable:root} in the various cases.

\textbf{First case: $\Phi$ is a reduced root system and $L'/L_d$ is unramified.}
For any root $b \in \Phi^+$, the set of values $\Gamma_b$ of $b$ is $\Gamma_{L'} = \Gamma_{L_{d}}$.
Hence, the sum $l'_b = \sum_{\alpha \in \Delta} n_\alpha(b) l_a$ is an element of $\Gamma_{L_{d}} = \Gamma_b$.

\textbf{Second case: $\Phi$ is a reduced root system and $L'/L_d$ is ramified.}
For any long root of $\Phi$, its set of values is the group $d' \Gamma_{L'} = \Gamma_{L_d}$.
For any short root of $\Phi$, its set of values is the group $\Gamma_{L'}$.
Hence, for any short root $b\in \Phi$, the sum $l'_b = \sum_{\alpha \in \Delta} n_\alpha(b) l_\alpha$ is an element of $\Gamma_{L'} = \Gamma_b$.

Let $b \in \Phi$ be a long relative root arising from an absolute root $\beta \in \widetilde{\Phi}$.
Write $\beta = \sum_{\widetilde{\alpha} \in \widetilde{\Delta}} n'_{\widetilde{\alpha}}(\beta) \widetilde{\alpha}$.
Hence $n_\alpha(b) = \sum_{\widetilde{\alpha} \in \alpha} n'_{\widetilde{\alpha}}(\beta)$.
Moreover, $n'_{\widetilde{\alpha}}(\beta)$ is constant along the class $\alpha$ because $\beta$ is $\Sigma_d$-invariant and $\alpha = \Sigma_d \cdot \widetilde{\alpha}$ is an orbit.
Hence, for any short simple root $\alpha$ arising from $\widetilde{\alpha}$ taking in the same irreducible component as $\beta$, we obtain $n_\alpha(b) = d' n'_{\widetilde{\alpha}}(\beta)$.
As a consequence, $n_\alpha(b) l_\alpha = n'_{\widetilde{\alpha}}(\beta) d' l_\alpha \in d' \Gamma_{L'} = \Gamma_{L_d}$.
For any long simple root $\alpha$, we have $l_\alpha \in \Gamma_{L_d}$.
Hence, the sum $l'_b = \sum_{\alpha \in \Delta} n_\alpha(b) l_\alpha$ is an element of $\Gamma_{L_d} = \Gamma_b$.

\textbf{Third case: $\Phi$ is a non-reduced root system.}
The set of values of any multipliable root is $\frac{1}{2} \Gamma_{L'}$.
The set of values of any non-multipliable, non-divisible root is $\Gamma_{L'}$.
For any multipliable root $b \in \Phi^+$, the sum $l'_b$ is an element of $\frac{1}{2} \Gamma_{L'} = \Gamma_b$.
We number by $a_1, \dots, a_{l-1}$ the non-multipliable simple roots and by $a_{l}$ the multipliable simple root.
Any non-multipliable non-divisible root $b \in \Phi^+$ can be written as $b = \sum_{j = 1}^{l} n_j(b) a_j$ with $n_{l} \in \{0,2\}$.
We have $n_j(b) l_{a_j} \in \Gamma_{a_j} = \Gamma_{L}$ and $n_{l}(b) l_{a_{l}} \in 2 \Gamma_{a_{l}} = \Gamma_{L'}$.
Hence the sum $l'_b$ is an element of $\Gamma_{L'} = \Gamma_b$.

(2) For any positive root $b \in \Phi^+$, we apply recursively Lemma \ref{lem:induction:root:system}(1) to $\Phi^+$ in order to write $b = \sum_{j=1}^N a_j$ where $a_j \in \Delta$ are simple roots (possibly with repetitions) and $N \in \mathbb{N}^*$ such that $b_n = \sum_{j=1}^n a_j$ is a (positive) root for any $n \in [1,N]$.
By induction, we get that $l'_{b_n} \geq l_{b_n}$.
Indeed, for any $0 \leq n \leq N-1$, we have $l'_{b_{n+1}} = l'_{b_n} + l_{a_{n+1}} \geq l_{b_n} + l_{a_{n+1}}$ by induction hypothesis;
and from the concavity relation (C1), we end the inequality by
$l_{b_n} + l_{a_{n+1}} \geq l_{b_n + a_{n+1}} = l_{b_{n+1}}$.
Hence, we obtain the inequality $l_b \leq l'_b$.

(3) Consequently, we have the inclusion $U_{b,l'_b} \subset U_{b,l_b}$.
We proceed by decreasing strong induction on height in the root system $\Phi$ relatively to the basis $\Delta$.

\textbf{Basis:} Let $h$ be the highest root of $\Phi$.
For the root group $U_{h,l'_h}$, we know by Lemma \ref{lem:induction:root:system}(1) that there exists a simple root $a\in\Delta$ and a positive root $b \in \Phi^+$ non-collinear to $a$, and non both multipliable, such that $h = a+b$.
Let $u \in U_{h,l'_h}$.
We have the group inclusion $U_{b,l'_b} \subset U_{b,l_b}$.
We know by Proposition \ref{prop:generated:commutator:root:group}, that there exist elements $v \in U_{a,l_a}$, $v' \in U_{b,l'_b}$ and $v'' \in \prod_{r,s\in\mathbb{N}^*;r+s \geq 2} U_{ra+sb,rl_a+sl'_b}$ such that $u = [v,v'] v''$.
But, for any pair of positive integers $(r,s)$ such that $r+s \geq 2$, the character $ra+sb$ is not a root because this would contradict maximality of height of $h$.
Hence $v'' = 1$.
Thus, we get $U_{h,l'_h} \subset [H,H]$.

\textbf{Inductive step:} Let $c \in \Phi^+ \setminus \Delta$.
By Lemma \ref{lem:induction:root:system}(1), we write $c = a+b$ where $a\in\Delta$ and $b \in \Phi^+$.
Let $u \in U_{c,l'_c}$.
We know by Proposition \ref{prop:generated:commutator:root:group}, that there exist elements $v \in U_{a,l_a}$, $v' \in U_{b,l'_b}$ and $v'' \in \prod_{r,s\in\mathbb{N}^*;r+s \geq 2} U_{ra+sb,rl_a+sl'_b}$ such that $u = [v,v'] v''$.
For any pair of positive integers $(r,s)$ such that $r+s \geq 2$,
if the character $ra+sb$ is a root,
then we have $r l_a + s l'_b = l'_{ra+sb}$ by definition of the $l'$.
Moreover, the height of $ra+sb$ is greater than $c$.
By induction hypothesis, the valued root group $U_{ra+sb,l'_{ra+sb}}$ is a subgroup of $[H,H]$, hence $v'' \in [H,H]$.
As a consequence, we get $U_{c,l'_c} \subset [H,H]$.
\end{proof}

\subsubsection{Lower bounds for negative root groups}
\label{sec:lower:bounds:negative:roots}

In order to get an analogous result for negative roots, doing an induction on height no longer works.
In fact, we have to consider length of roots instead of height.
We recall that, in Notation \ref{not:inverse:root:system}, we defined a pure Lie theoretic dual root system $\Phi^D$.

\begin{Lem}
\label{lem:short:root:combinatorial}
Let $\Phi$ be a reduced irreducible non-simply laced root system of rank $l \geq 2$.
Let $\Phi^+$ be an ordering on $\Phi$ and $\theta \in \Phi$ be the short root such that $\theta^D$ is the highest root of $\Phi^D$ in the corresponding ordering.
Then, any short root $c \in \Phi \setminus \{-\theta\}$ can be written $c=a+b$ where $a,b \in \Phi$ are non-collinear roots such that $a\in \Phi$ is short and $b \in \Phi^+$.
In particular, every short root is higher than $- \theta$.
\end{Lem}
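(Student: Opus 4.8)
The plan is to reduce the statement to two structural properties of $\theta$ and then run a short case analysis; the only place genuine case-checking is needed will be simple short roots.

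The first point is that $\theta$ is the highest short root of $\Phi$ (so that $-\theta$ is the lowest short root). Recall from Notation \ref{not:inverse:root:system} that $a\mapsto a^D$ fixes every long root of $\Phi$ and sends a short root $a$ to $a^D=d'a$; hence it is a bijection $\Phi\to\Phi^D$ carrying $\Phi^+$ onto $(\Phi^D)^+$ and the short roots of $\Phi$ onto the long roots of $\Phi^D$. Writing $b-a=\sum_{\gamma\in\Delta}n_\gamma\gamma$ with $n_\gamma\in\mathbb Z$ and using $\gamma^D=\gamma$ for $\gamma$ long and $\gamma^D=d'\gamma$ for $\gamma$ short, one gets $b^D-a^D=d'(b-a)=\sum_{\gamma\ \mathrm{long}}d'n_\gamma\gamma^D+\sum_{\gamma\ \mathrm{short}}n_\gamma\gamma^D$, which lies in $\mathbb Z_{\ge0}\Delta^D$ if and only if all $n_\gamma\ge0$. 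Thus $a\mapsto a^D$ is an order-isomorphism from the short roots of $\Phi$ onto the long roots of $\Phi^D$; since the highest root $\theta^D$ of the irreducible system $\Phi^D$ dominates every root of $\Phi^D$, in particular every long root, $\theta$ dominates every short root of $\Phi$. In particular $c+\theta=c-(-\theta)\in\mathbb Z_{\ge0}\Delta$ for every short root $c$, which is exactly the final assertion of the lemma once the decomposition is established.

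The second point is that $(\theta,\gamma)\ge0$ for all $\gamma\in\Phi^+$; it suffices to check this for $\gamma=\alpha\in\Delta$. If $(\theta,\alpha)<0$ then $\theta+\alpha\in\Phi$, and a length computation — distinguishing $\alpha$ short from $\alpha$ long, using $|\mathrm{long}|^2=d'|\mathrm{short}|^2$ and that the only admissible negative Cartan integer is $\langle\theta,\alpha^\vee\rangle=-1$ when $\alpha$ is short and $\langle\theta,\alpha^\vee\rangle=-d'$ when $\alpha$ is long — gives $|\theta+\alpha|^2=|\theta|^2$, so $\theta+\alpha$ would be a short root strictly higher than $\theta$, contradicting the first point. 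Hence $(\theta,\alpha)\ge0$, so $\theta$ is dominant; by the usual connectedness argument it then has full support (so, as $l\ge2$, $\theta$ is not a simple root), and since the short roots form a single Weyl orbit with dominant representative $\theta$, the root $-\theta$ is the unique anti-dominant short root.

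Now fix a short root $c\ne-\theta$; any decomposition $c=a+b$ with $a$ short and $b\in\Phi^+$ is automatically with $a,b$ non-collinear, since $a=\pm c$ would force $b=\mp2c\notin\Phi$. If $c+\theta\in\Phi$, take $a=-\theta$ and $b=c+\theta$, a nonzero element of $\mathbb Z_{\ge0}\Delta\cap\Phi=\Phi^+$ by the first point. If $c+\theta\notin\Phi$ and $c$ is not a simple root, then $(\theta,c)\ge0$ (otherwise $c+\theta\in\Phi$) and, $c$ not being anti-dominant, some $\alpha\in\Delta$ has $(c,\alpha)>0$; since $c\ne\alpha$ we get $c-\alpha\in\Phi$, and the same length computation (now with $(c,\alpha)>0$) shows $c-\alpha$ is short, so $a=c-\alpha$, $b=\alpha$ work. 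This covers $c=\theta$ as well, $\theta$ being non-simple. The remaining case is $c=\alpha$ a simple short root with $\alpha\ne\theta$: if $\alpha$ has a neighbour $\beta$ in $\mathrm{Dyn}(\Delta)$ which is a short simple root, then $\alpha+\beta\in\Phi^+$ and we take $a=-\beta$, $b=\alpha+\beta$; otherwise $\alpha$ is the short simple root at the short end of a $B_l$, a $C_2$, or a $G_2$ diagram, and a direct inspection of these three cases shows $\alpha+\theta\in\Phi$ (namely $e_l+e_1$ in $B_l$, $2e_1$ in $C_2$, and the long root $3\alpha+\beta$ in $G_2$), so again $a=-\theta$, $b=\alpha+\theta$ work.

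The main obstacle is precisely this last case: a simple short root with no short simple neighbour. There $(\theta,\alpha)\ge0$ fails to be strict, so $\alpha+\theta\in\Phi$ cannot be seen from inner products and must instead be extracted from the $\alpha$-string through $\theta$ or, as above, from the short list of diagrams in which the configuration occurs; everything else reduces to the elementary length bookkeeping of the first two points and to \cite{Bourbaki4-6}.
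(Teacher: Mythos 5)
Your proposal is correct, but it takes a genuinely different route from the paper's. The paper proves the lemma by brute force: for each type $B_l$, $C_l$, $F_4$, $G_2$ it fixes an explicit realization in $\mathbb{R}^l$ and lists, short root by short root, an explicit decomposition $c=a+b$. You instead isolate the structural facts that do all the work: the map $c\mapsto c^D$ is an order-isomorphism from the short roots of $\Phi$ onto the long roots of $\Phi^D$, so $\theta$ is the highest short root, hence dominant, hence of full support and non-simple, and $-\theta$ is the unique anti-dominant short root. After that the decomposition is almost formal — subtract a simple root $\alpha$ with $(c,\alpha)>0$ when $c$ is not simple, take $a=-\theta$, $b=c+\theta$ when $c+\theta\in\Phi$, and only the simple short roots with no short simple neighbour (the $B_l$ tail, $C_2$, $G_2$) need inspection. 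Your argument is shorter, makes clear why $-\theta$ is the unique exception, and yields the final assertion that every short root is higher than $-\theta$ directly from the order-isomorphism; what the paper's enumeration buys is explicit coordinate witnesses $a,b$, which the rest of the paper does not actually use.

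Two small repairs. First, non-collinearity of $a$ and $b$ is automatic for the simpler reason that $b=\pm a$ would force $c=a+b$ to be $2a$ or $0$, neither a root in a reduced system; your check that $a\neq\pm c$ addresses the wrong pair. Second, in the dominance argument the admissible negative Cartan integer is $\langle\theta,\alpha^\vee\rangle=-1$ in both cases, because $\theta$ is short; it is $\langle\alpha,\theta^\vee\rangle$ that equals $-d'$ when $\alpha$ is long. With the value $-d'$ you wrote for $\langle\theta,\alpha^\vee\rangle$ the length computation would give $|\theta+\alpha|^2=|\theta|^2+(1-d')|\alpha|^2\neq|\theta|^2$; with the correct value $-1$ one gets $2(\theta,\alpha)=-(\alpha,\alpha)$ and hence $|\theta+\alpha|^2=|\theta|^2$, which is the conclusion you state and use, so the slip is purely notational and the proof stands.
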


\begin{proof}
We provide these roots case by case thanks to an explicit realization of the root system in $\mathbb{R}^l$.
Let $(e_i)_{1 \leq i \leq l}$ be the canonical basis of the Eucliean space $\mathbb{R}^l$.

\textbf{$\Phi$ is of type $B_l$ with $l \geq 2$:}\\
Basis: $a_i = e_i - e_{i+1}$ where $1 \leq i < l$ and $a_l = e_l$\\
Short roots: $\pm e_i$ for $1 \leq i \leq l$ and $\theta = e_1$\\
For any short root $c\in \Phi\setminus \{-\theta\}$,
\begin{itemize}
\item if $c \in \Phi^+$, we write $c = e_i = a + b$ with $1 \leq i \leq l$, $a = -e_j$, $b = e_i + e_j$ and $j \neq i$;
\item if $c\in \Phi^-$, we write $c = - e_i = a + b$ with $1 < i \leq l$, $a = -e_1$ and $b = e_1 - e_i$.
\end{itemize}

\textbf{$\Phi$ is of type $C_l$ with $l \geq 3$:}\\
Basis: $a_i = e_i - e_{i+1}$ where $1 \leq i < l$ and $a_l = 2 e_l$\\
Short roots: $\pm e_i \pm e_j$ where $1 \leq i < j \leq l$ and $\theta = e_1+e_2$\\
For any short root $c\in \Phi\setminus \{-\theta\}$,
\begin{itemize}
\item if $c = e_i \pm e_j$ where $1 \leq i < j \leq l$, we write $c = a + b$  where $a = -e_i \pm e_j$ and $b = 2 e_i$;
\item if $c = -e_i \pm e_j$ where $1 < i < j \leq l$, we write $c = a + b$ where $a = -e_1 - e_i$ and $b = e_1 \pm e_i$;
\item if $c = -e_1 \pm e_j$ where $2 < j \leq l$, we write $c = a + b$ where $a = -e_1 - e_2$ and $b = e_2 \pm e_j$;
\item if $c = -e_1 + e_2$, we write $c = a + b$ where $a = -e_1 - e_3$ and $b = e_2 + e_3$.
\end{itemize}

\textbf{$\Phi$ is of type $F_4$:}\\
Basis: $a_1 = e_2 - e_3$, $a_2 = e_3 - e_4$, $a_3 = e_4$ and $a_4 = \frac{1}{2}(e_1 - e_2 - e_3 -e_4)$\\
Highest root: $h = e_1+e_2 = 2a_1 + 3a_2 + 4a_3 + 2a_4$\\
Short roots: $\pm e_i$ where $1 \leq i \leq 4$ and $\frac{1}{2}(\pm e_1 \pm e_2 \pm e_3 \pm e_4)$ and $\theta = e_1$\\
For any short root $c\in \Phi\setminus \{-\theta\}$,
\begin{itemize}
\item if $c = e_1$, we write $c = a + b$ where $a = \frac{1}{2}(e_1-e_2-e_3-e_4)$ and $b = \frac{1}{2}(e_1+e_2+e_3+e_4)$;
\item if $c = \pm e_i$ where $1 < i \leq 4$, we write $c = a + b$  where $a = \frac{1}{2}(-e_1+\pm e_i - e_j - e_k)$ and $b = \frac{1}{2}(e_1+\pm e_i + e_j + e_k)$ where $\{i,j,k\} = \{2,3,4\}$;
\item if $c = \frac{1}{2}(e_1 \pm e_2 \pm e_3 \pm e_4)$, we write $c = a + b$  where $a = \frac{1}{2}(-e_1 \mp e_2 \pm e_3 \pm e_4)$ et $b = e_1 \pm e_2$;
\item if $c = \frac{1}{2}(-e_1 \pm e_2 \pm e_3 \pm e_4)$, we write $c = a + b$  where $a = -e_1$ and $b = \frac{1}{2}(e_1 \pm e_2 \pm e_3 \pm e_4)$.
\end{itemize}

\textbf{$\Phi$ is of type $G_2$:}\\
Basis: $\alpha$, $\beta$ where $\alpha$ is short and $\beta$ is long\\
Highest root: $h = 3 \alpha + 2 \beta$\\
We have $\theta = 2 \alpha + \beta$.
We summarize the choices for the short roots, except $-\theta$, case by case, in the following table:
$$\begin{array}{|c|c|c|c|c|c|}
\hline
c & 2 \alpha + \beta & \alpha + \beta & \alpha & - \alpha & -\alpha - \beta \\
\hline
a & \alpha & -\alpha & -\alpha - \beta & -2\alpha - \beta & - 2 \alpha - \beta \\
\hline
b & \alpha + \beta & 2 \alpha + \beta & 2 \alpha + \beta & \alpha + \beta & \alpha\\
\hline
\end{array}$$
\end{proof}

We let $(\delta_c)_{c \in \Phi}$, $\Phi_{\mathrm{nd}}^\delta$, $\theta$ and $h$ be defined as in Notation \ref{not:highest:dual:root}.
Let $(l_a)_{a\in \Phi}$ be any values in $\mathbb{R}$.
We define the following values $(l''_c)_{c \in \Phi}$ depending on the $l_a$, to become bounds for all the root groups.

\begin{Not}\label{not:negative:bounds}
For any non-divisible root $c \in \Phi_{\mathrm{nd}}$, thanks to Lemma \ref{lem:write:root:positive:coefficients} applied in the root system $\Phi_{\mathrm{nd}}^\delta$, we write:
$$c^\delta = - \theta^\delta + \sum_{\alpha^\delta\in \Delta^\delta} n'_\alpha(c) \alpha^\delta \in \Phi^\delta$$
with $n'_\alpha(c) \in \mathbb{N}$.
We define $l''_c \in \mathbb{R}$ by:
$$\delta_c l''_c = \delta_{-\theta} l_{-\theta} + \sum_{\alpha \in \Delta} \delta_\alpha n'_\alpha(c) l_\alpha $$
Furthermore, for any multipliable root $c \in \Phi$, we define $l''_{2c} = 2 l''_{c}$.
Note that for any root $c \in \Phi$, there exist integers $n_{\alpha}(c)$ for $\alpha \in \Delta$, uniquely determined by:
$$c = \sum_{\alpha \in \Delta} n_\alpha(c) \alpha$$
This extends Notation \ref{not:positive:bounds}.
\end{Not}

These values overestimate the values of valued root groups contained in the derived group $[H,H]$.
In particular, this proposition provides values even for simple roots, which were not treated in Proposition \ref{prop:generated:commutator:positive:root:groups}.
We can remark on an example that, in general, this values are not optimal for positive non-simple roots.

\begin{Prop}
\label{prop:generated:commutator:negative:root:groups}
Let $(l_a)_{a \in \Phi}$ be values in $\mathbb{R}$.
Assume that for any simple root $a \in \Delta$, we have $l_a \in \Gamma_a$ and that $l_{-\theta} \in \Gamma_{-\theta}$.

(1) We have $l''_c \in \Gamma_c$ for any non-divisible root $c \in \Phi_{\mathrm{nd}} \setminus \{-\theta\}$.

(2) We assume, moreover, that the map $a \mapsto l_a$ is concave.
For any root $c \in \Phi$, we have $l''_c \geq l_c$;
for any positive root $b \in \Phi^+$, we have $l''_b \geq l'_b \geq l_b$.

(3) We assume, moreover, that the irreducible root system $\Phi$ is not of rank $1$ and that Hypothesis \ref{hypothesis:on:residue:characteristic} is satisfied.
Let $H$ be a (compact open) subgroup of $G(K)$ containing the valued root groups $U_{a,l_a}$ for $a \in \Phi$.
If $G$ is a trialitarian $D_4$ (i.e. $\Phi$ of type $G_2$ and $\delta_\theta = 3$), we assume furthermore that $l'_{\theta} + l_{-\theta} \leq \omega(\varpi_{L'})$.
Then the derived group $[H,H]$ contains the valued root groups $U_{c,l''_c}$ for any root $c \in \Phi \setminus \{-\theta\}$.
\end{Prop}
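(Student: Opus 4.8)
The plan is to follow the same philosophy as in Proposition \ref{prop:generated:commutator:positive:root:groups}, but replacing the induction on height by an induction on the combinatorial structure of the dual root system $\Phi^\delta_{\mathrm{nd}}$, whose ``highest root'' is $\theta^\delta$, so that the role of the highest root $h$ in the positive case is now played by $\theta$. First I would dispose of part (1): for $c \in \Phi_{\mathrm{nd}} \setminus \{-\theta\}$, one writes $c^\delta = -\theta^\delta + \sum_{\alpha \in \Delta} n'_\alpha(c) \alpha^\delta$ by Lemma \ref{lem:write:root:positive:coefficients} applied inside $\Phi^\delta_{\mathrm{nd}}$, and one checks case by case (exactly as in the proof of \ref{prop:generated:commutator:positive:root:groups}(1), splitting according to whether $\Phi$ is reduced with $L'/L_d$ unramified, reduced with $L'/L_d$ ramified, or non-reduced) that $\delta_c l''_c = \delta_{-\theta} l_{-\theta} + \sum_{\alpha} \delta_\alpha n'_\alpha(c) l_\alpha$ lands in $\delta_c \Gamma_c$, using Proposition \ref{prop:length:splitting:field} to identify $\delta_a \Gamma_a = \Gamma_{L'}$ (resp. $\Gamma_{L_d}$) and Lemmas \ref{lem:sets:of:values:multipliable:root}, \ref{lem:sets:of:values:non:multipliable:root}. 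For divisible roots $2c$ one uses $l''_{2c} = 2 l''_c$ together with (VRGD 4).

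For part (2), I would run a concavity argument parallel to \ref{prop:generated:commutator:positive:root:groups}(2). The inequality $l''_b \geq l'_b \geq l_b$ for positive roots $b$ follows once $l''_c \geq l_c$ is known for all $c$, since $l'_b \geq l_b$ is already Proposition \ref{prop:generated:commutator:positive:root:groups}(2). To get $l''_c \geq l_c$ in general, I would use Lemma \ref{lem:short:root:combinatorial} (for short roots) and Lemma \ref{lem:induction:root:system}(2) (for the remaining roots) to write any $c \in \Phi \setminus \{-\theta\}$ as a chain starting from $-\theta$ and successively adding positive roots, then induct: at each step, $l''$ of the new root is $l''$ of the old one plus $l'$ of a positive root $b$, which is $\geq l_{\text{old}} + l_b \geq l_{\text{new}}$ by the concavity axiom (C1). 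Here one must be careful to carry the $\delta$-factors correctly; it is cleanest to argue in $\Phi^\delta_{\mathrm{nd}}$, where the coefficients $n'_\alpha$ behave additively, and then translate back.

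For part (3), I would perform a decreasing induction, this time on the ``co-height'' measured in $\Phi^\delta_{\mathrm{nd}}$, i.e. on $\sum_\alpha n'_\alpha(c)$. The base case is $c$ with $c^\delta$ of co-height $0$ other than $-\theta$ itself --- more precisely the roots adjacent to $-\theta$ in the dual sense; one writes $c = -\theta + b$ (or uses Lemma \ref{lem:short:root:combinatorial} for short $c$, or Lemma \ref{lem:induction:root:system}(2) in general) with $b \in \Phi^+$, notes $U_{b,l'_b} \subset [H,H]$ by Proposition \ref{prop:generated:commutator:positive:root:groups}(3) (or $U_{b,l_b} \subset H$ if $b$ is simple), and applies Proposition \ref{prop:generated:commutator:root:group} to the decomposition $c = (-\theta) + b$ (after checking one of $-\theta$, $b$ is non-multipliable, and that the auxiliary hypothesis \ref{hypothesis:on:residue:characteristic} holds): this yields $U_{c,l''_c} \subset [H,H] \cdot \prod U_{r(-\theta)+sb, \dots}$, and the extra factors are handled by the induction hypothesis since each $r(-\theta)+sb$ has strictly larger co-height. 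For the inductive step, given $c \in \Phi \setminus \{-\theta\}$ one picks, via Lemma \ref{lem:induction:root:system}(2) applied in $\Phi^\delta_{\mathrm{nd}}$, a decomposition $c = \beta + \gamma'$ with $\beta \in \Phi^+$ and $\gamma' \in \Phi$ non-collinear, with $\gamma'$ of smaller co-height, so that $U_{\gamma', l''_{\gamma'}} \subset [H,H]$ by induction and $U_{\beta, l'_\beta} \subset [H,H]$; again Proposition \ref{prop:generated:commutator:root:group} plus the induction hypothesis on the higher terms gives $U_{c, l''_c} \subset [H,H]$, using that $l''_c = l'_\beta + l''_{\gamma'}$ by Notation \ref{not:negative:bounds} (here one verifies the arithmetic of the $\delta$-factors once more).

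The main obstacle I expect is the trialitarian $D_4$ case (i.e. $\Phi$ of type $G_2$ with $\delta_\theta = 3$). There, Proposition \ref{prop:generated:commutator:root:group} is only available because Hypothesis \ref{hypothesis:on:residue:characteristic} forces $p \geq 5$, but more seriously the commutation relations of type $G_2$ produce terms like $x_{3a+2b}(\varepsilon_4 \eta \mathrm{N}(y) z^2)$ whose valuation involves the norm form $\mathrm{N}$ from $L'$ to $L_d$, and the $\delta$-bookkeeping no longer matches the naive additive count unless one imposes the extra constraint $l'_\theta + l_{-\theta} \leq \omega(\varpi_{L'})$ stated in the proposition. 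I would handle this by treating $\Phi$ of type $G_2$ separately using the explicit short-root decompositions in the $G_2$ table of Lemma \ref{lem:short:root:combinatorial}, checking directly that under the additional hypothesis every auxiliary term $x_{r a + s b}(\dots)$ that appears lies in a valued root group $U_{ra+sb, l''_{ra+sb}}$ already known to be in $[H,H]$; the ramified $F^I_4$ and $G^I_2$ cases require the same care with $\delta_a = \lambda_a = d'$ but are lighter since $d' = 2,3$ divides nothing relevant once $p > d'$. The other cases (all other reduced types, and non-reduced $BC_l$ with the multipliable-multipliable commutation excluded exactly as in Remark \ref{remark:exclusion:case2D2m}) go through routinely.
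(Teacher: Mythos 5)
Your overall strategy (reduce everything to the rank-two relations of Proposition \ref{prop:generated:commutator:root:group}, organize the induction around $-\theta$ and the dual system $\Phi^\delta_{\mathrm{nd}}$, use Lemmas \ref{lem:induction:root:system} and \ref{lem:short:root:combinatorial}) is the right general shape, but the induction you describe for part (3) does not close. You propose an induction on the co-height $\sum_\alpha n'_\alpha(c)$, taking the roots near $-\theta$ as base case and deducing the statement for $c=\beta+\gamma'$ from the smaller co-height of $\gamma'$; yet you simultaneously dispose of the residual factor $v''\in\prod_{r+s\geq 2}U_{r\gamma'+s\beta,\cdots}$ ``by the induction hypothesis since each $r\gamma'+s\beta$ has strictly larger co-height''. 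In an induction that increases from $-\theta$ upward the hypothesis is not available for larger co-height, and in a genuinely decreasing induction it is not available for $\gamma'$; as stated the scheme is circular. The paper's proof avoids this precisely by ordering the induction by root \emph{length} (divisible roots first, then non-divisible long roots, then short roots, with the lowest dual root and the lowest root $-h$ treated separately at the end): in every rank-two configuration used, the residual roots $ra+sb$, $r+s\geq 2$, are longer than $c$ (or divisible), hence already known to lie in $[H,H]$. Relatedly, your step relies on the identity $l''_c=l''_{\gamma'}+l'_\beta$, which is false in general: whenever the $\delta$-values of $c$ and of the summands differ one gets correction terms such as $l''_{2a+b}=2l''_a+l'_b+(\delta_\theta-2)(l_{-\theta}+l'_\theta)$ and $l''_{a+2b}=l''_a+2l'_b+(\delta_\theta-1)(l_{-\theta}+l'_\theta)$, and controlling these is most of the work. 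This is also where the trialitarian hypothesis really enters: for $\delta_\theta=3$ the correction is strictly positive, so $U_{2a+b,2l''_a+l'_b}$ is a priori smaller than $U_{2a+b,l''_{2a+b}}$, and one needs $0\leq(\delta_\theta-1)(l'_\theta+l_{-\theta})<3\,\omega(\varpi_{L'})$ together with $l''\in\Gamma_{L_d}=3\Gamma_{L'}$ to conclude that the two root groups coincide; it is a quantization argument on the value sets, not an issue with the norm terms in the $G_2$ commutation formula as you suggest.

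Two further points you dismiss as routine are not. In part (2), the inequality $l''_b\geq l'_b$ for positive $b$ does not follow from $l''_c\geq l_c$; it is a separate computation, $\delta_b(l''_b-l'_b)=\delta_\theta(l'_\theta+l_{-\theta})\geq 0$, and it is needed later (e.g.\ to place $U_{b,l''_b}$ inside $[H,H]$ for positive roots and to compare residual terms). In the non-reduced case, when $c$ is a long non-divisible root of a $BC_2$ subsystem the natural decomposition $c=a+b$ has \emph{both} summands multipliable, which is exactly the case excluded from Proposition \ref{prop:generated:commutator:root:group} (Remark \ref{remark:exclusion:case2D2m}); the paper must re-decompose as $c=a'+b'$ with $a'=a-b$, $b'=2b$ (or $a'=2a$, $b'=b+\theta$ when $a=-\theta$) and then feed the resulting divisible residual root back into the already-treated divisible case. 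Without the length ordering, the corrected $l''$-identities, and this $BC_2$ workaround, the argument as proposed would not go through.
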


\begin{proof}

(1) If $\Phi$ is a reduced root system,
then $\Phi^{\delta} = \Phi$ if the extension $L' / L_d$ is unramified;
and $\Phi^{\delta} = \Phi^D$ if the extension $L' / L_d$ is ramified.
By Definition \ref{def:dual:root:system}, for any root $c \in \Phi$, the integer $\delta_c$ is the order of the quotient group $\Gamma_c / \Gamma_{L_d}$, so that $\delta_c \Gamma_c = \Gamma_{L_d}$.
Hence, each term $n'_\alpha(c) \delta_\alpha l_\alpha$ and $\delta_{-\theta} l_{-\theta}$ of the sum belongs to the group $\Gamma_{L_d}$.
Thus $\delta_c l''_c \in \Gamma_{L_d} = \delta_c \Gamma_c$,
and we obtain $l''_c \in \Gamma_c$ for any root $c\in\Phi$.

If $\Phi$ is a non-reduced root system, then the set of values of multipliable roots is $\frac{1}{2} \Gamma_{L'}$ by Lemma \ref{lem:sets:of:values:multipliable:root} and the set of values of non-multipliable and non-divisible roots is $\Gamma_{L'}$.
For any non-divisible root $c \in \Phi$, the value $\delta_c l_c$ is an element of $\Gamma_{L'}$, hence so is the sum $l''_c$.
If $c$ is non-multipliable, then $\delta_c = 1$, hence $l''_c \in \Gamma_{L'} = \Gamma_c$.
If $c$ is multipliable, then $\delta_c = 2$ hence $l''_c \in \frac{1}{2}\Gamma_{L'} = \Gamma_c$.

(2) In the following, for any root $c \in \Phi_{\mathrm{nd}}$, we denote by $n_\alpha(c)$ and $n'_\alpha(c)$ the integers defined in Notation \ref{not:negative:bounds}.
We furthermore denote by $n_\alpha^\delta(c)$ the integers uniquely determined by the following writing in basis $\Delta^\delta$:
$c^\delta = \sum_{\alpha \in \Delta} n_\alpha^\delta(c) \alpha^\delta$.
From uniqueness, for any $\alpha \in \Delta$, we deduce that $\delta_\alpha n_\alpha^\delta(c) = \delta_c n_\alpha(c)$ and that $n'_\alpha(c) = n_\alpha^\delta(\theta) - n_\alpha^\delta(c) \geq 0$ (it is a non-negative integer).

Let $b \in \Phi_{\mathrm{nd}}^+$ be a non-divisible positive root.
In $V^* = \mathrm{Vect}(\Phi)$ we have:
$$ \begin{array}{rl} b^\delta = &\displaystyle - \theta^\delta + \theta^\delta + \sum_{\alpha \in \Delta} n_\alpha^\delta(b) \alpha^\delta\\
= &\displaystyle - \theta^\delta + \sum_{\alpha \in \Delta} \left( n_\alpha^\delta(\theta) + n_\alpha^\delta(b) \right) \alpha^\delta
\end{array}$$
By definition of $l''_b, l'_b, l'_\theta$, we get:
$$ \begin{array}{rl} \delta_b l''_b = &\displaystyle \delta_\theta l_{-\theta} + \sum_{\alpha \in \Delta} \left( n_\alpha^\delta(b) + n_\alpha^\delta(\theta) \right) \delta_\alpha l_\alpha\\
= &\displaystyle \delta_\theta l_{-\theta} + \left( \sum_{\alpha \in \Delta} \delta_b n_\alpha(b) l_\alpha \right) + \left( \sum_{\alpha \in \Delta} \delta_\theta n_\alpha(\theta) l_\alpha \right)\\
= &\displaystyle \delta_\theta l_{-\theta} + \delta_b l'_b + \delta_\theta l'_\theta
\end{array}$$
Hence $\delta_b (l''_b - l'_b) = \delta_\theta (l'_\theta + l_{-\theta})$.
According to Proposition \ref{prop:generated:commutator:positive:root:groups}(2), we have $l'_b \geq l_b$ for all positive roots and, in particular, $l'_\theta \geq l_\theta$.
Hence, by axiom (C2), we get $l'_\theta + l_{-\theta} \geq l_\theta + l_{-\theta} \geq 0$.
As a consequence, we get $l''_b \geq l'_b \geq l_b$.

Let $b \in \Phi^+$ be a multipliable root.
Then $l''_{2b} = 2l''_b \geq l'_{2b} = 2 l'_{b} \geq 2 l_b$.
By axiom (C0), we have $2 l_b \geq l_{2b}$, hence $l''_{2b} \geq l_{2b}$.

Let $c \in\Phi_\mathrm{nd}^-$ be a non-divisible negative root.
We want to prove that $l''_c \geq l_c$.
We proceed by induction on height in $\Phi_{\mathrm{nd}}$.

\textbullet\ \textbf{First case: $\Phi_\mathrm{nd}^\delta = \Phi_\mathrm{nd}$.}
Then $\delta_\theta = 1$, $h = \theta$ and $\delta_c = 1$ for any root $c \in \Phi$.
By definition, $l''_{-h} = l''_{-\theta}= l_{-\theta}=l_{-h}$.

If $c \neq - \theta$, by Lemma \ref{lem:induction:root:system}(2), there exist $a \in \Phi_\mathrm{nd}$ and $b \in \Phi_\mathrm{nd}^+$ such that $c = a + b$.
From $c = - \theta + \sum_\alpha n'_\alpha(c) \alpha = - \theta + \sum_\alpha n'_\alpha(a) \alpha + \sum_\alpha n_\alpha(b) \alpha = a + b$, we deduce $n'_\alpha(c) = n'_\alpha(a) + n_\alpha(b)$.
Hence $l''_c = l''_a + l'_b \geq l_a + l'_b$ by induction hypothesis.
By axiom (C1) and because $l'_b \geq l_b$, we get $l''_c \geq l_a + l_b \geq l_{a+b} = l_c$.

\textbullet\ \textbf{Second case: $\Phi_\mathrm{nd}^\delta = \Phi_\mathrm{nd}^D \neq \Phi_\mathrm{nd}$.}
Then $\delta_\theta = d'$.

We firstly do the induction, initialized by $l''_{-\theta} = l_{-\theta}$, on height of short roots.
Assume that $c \neq -\theta$ is a short root in $\Phi_{\mathrm{nd}}$.
By Lemma \ref{lem:short:root:combinatorial}, there exist a short root $a \in \Phi_\mathrm{nd}$ and a positive root $b \in \Phi^+_{\mathrm{nd}}$ such that $c = a+b$. Hence $\delta_a = \delta_c = \delta_\theta$.
We have $\delta_\theta b = \delta_\theta (c-a) = c^\delta -a^\delta = - \theta^\delta + \sum_\alpha \delta_\alpha n'_\alpha(c) + \theta^\delta - \sum_\alpha \delta_\alpha n'_\alpha(a) = \sum_\alpha \delta_\alpha \Big( n'_\alpha(c) - n'_\alpha(a) \Big)$.
Hence $\delta_\theta n_\alpha(b) = \delta_\alpha \big( n'_\alpha(c) - n'_\alpha(a) \big)$ for any $\alpha \in \Delta$.
Hence, we get:
$$
\begin{array}{rl}
\delta_c l''_c = & \delta_\theta l_{-\theta} + \sum_\alpha \delta_\alpha n'_\alpha(c) l_\alpha \\
= & \Big( \delta_\theta l_{-\theta} + \sum_\alpha \delta_\alpha n'_\alpha(a) l_\alpha \Big) + \sum_\alpha \delta_\alpha \Big( n'_\alpha(c) - n'_\alpha(a) \Big) l_\alpha\\
= &\delta_a l''_a + \delta_\theta l'_b
\end{array}
$$
Hence $l''_c = l''_a + l'_b  \geq l_a + l'_b$ by induction hypothesis.
By axiom (C1) and because $l'_b \geq l_b$, we get $l''_c \geq l_a + l_b \geq l_{a+b} = l_c$.

Now we do an induction on height for all roots of $\Phi_{\mathrm{nd}}$.
Basis: consider the lowest root $-h$.
Because $\Phi_\mathrm{nd}$ is non-simply laced, there exist two short roots $a,b  \in \Phi_\mathrm{nd}$ such that $-h = a + b$.
In particular, $\delta_a = \delta_b = \delta_\theta$.
Then:
$$
\begin{array}{rl}
-h = & - \delta_\theta \theta + \sum_\alpha \delta_\alpha n'_\alpha(h) \alpha\\
a = & - \theta + \sum_\alpha \frac{\delta_\alpha}{\delta_a} n'_\alpha(a) \alpha\\
b = & - \theta + \sum_\alpha \frac{\delta_\alpha}{\delta_b} n'_\alpha(b) \alpha\\
(\delta_\theta -2) \theta = & \sum_\alpha \Big( \delta_\alpha n'_\alpha(h) - \frac{\delta_\alpha}{\delta_a} n'_\alpha(a) - \frac{\delta_\alpha}{\delta_b} n'_\alpha(b) \Big) \alpha \\
= & \sum_\alpha (\delta_\theta -2) n_\alpha(\theta) \alpha
\end{array}
$$
Hence, we obtain:
$$\begin{array}{rl}l''_{-h} - l''_a - l''_b = &\displaystyle
\Big( \delta_\theta l_{-\theta} + \sum_\alpha n'_\alpha(-h) \delta_\alpha l_\alpha \Big) - \Big( l_{-\theta} + \sum_\alpha \frac{\delta_\alpha}{\delta_a} n'_\alpha(a) l_\alpha \Big) \\
&- \Big( l_{-\theta} + \sum_\alpha \frac{\delta_\alpha}{\delta_b} n'_\alpha(b) l_\alpha \Big)\\
= & (\delta_\theta -2) l_{-\theta} + \sum_\alpha \Big( \delta_\alpha n'_\alpha(-h) - \frac{\delta_\alpha}{\delta_a} n'_\alpha(a) - \frac{\delta_\alpha}{\delta_b} n'_\alpha(b) \Big) l_\alpha\\
= & (\delta_\theta -2) (l_{-\theta} + l'_\theta)
\end{array}$$
Because $\delta_\theta =d' \geq 2$ and $l_{-\theta} + l'_\theta \geq l_{-\theta} + l_\theta \geq 0$, we have $l''_{-h} \geq l''_{a} + l''_{b}$.
By the case of short roots, we know that $l''_{a} \geq l_a$ and $l''_b \geq l_b$.
Hence, by axiom (C1), we have $l''_{-h} \geq l_a + l_b \geq l_{a+b} = l_{-h}$.

Induction step: we consider the length of a root $c \neq -h$.
The case of short roots has been treated.
Let $c \neq -h \in \Phi_{\mathrm{nd}}$ be a long root and we assume that $l''_{a} \geq l_{a}$ for any root $a$ lower than $c$ in $\Phi_{\mathrm{nd}}$.
We have $c = c^\delta = - \delta_\theta \theta + \sum_\alpha n'_\alpha(c) \delta_\alpha \alpha$.
By Lemma \ref{lem:induction:root:system}, there exist $a \in \Phi_{\mathrm{nd}}$ and $b \in \Phi_{\mathrm{nd}}^+$ such that $c = a+b$.

If $a$ is long, we have $a = a^\delta = - \delta_\theta \theta + \sum_\alpha n'_\alpha(a) \delta_\alpha \alpha$.
Hence, $\delta_\alpha n'_\alpha(c) = \delta_\alpha n'_\alpha(a) + n_\alpha(b)$.
As a consequence, $l''_c = l''_a + l'_b$.
By induction hypothesis, $l''_a \geq l_a$ because $c$ is strictly higher than $a$.
Hence $l''_c \geq l_a + l'_b \geq l_a + l_b \geq l_{a+b} = l_c$ by axiom (C1).

Otherwise, $a$ is a short root, so that $\delta_\alpha = \delta_\theta = d'$.
Hence $a = - \theta + \sum_\alpha \frac{\delta_\alpha}{\delta_\theta} n'_\alpha(a) \alpha$.
We have:
$
0 = a + b -c = (\delta_\theta -1) \theta + \sum_\alpha \Big( \frac{\delta_\alpha}{\delta_\theta} n'_\alpha(a) + n_\alpha(b) - n'_\alpha(c) \delta_\alpha \Big) \alpha
$.
By uniqueness of coefficients, for any $\alpha \in \Delta$, we have $(\delta_\theta -1) n_\alpha(\theta) = \frac{\delta_\alpha}{\delta_\theta} n'_\alpha(a) + n_\alpha(b) - n'_\alpha(c) \delta_\alpha$.
Hence $l''_c-l''_a-l'_b = (\delta_\theta -1) l_{-\theta} + \sum_{\alpha} (\delta_\theta -1) n_\alpha(\theta) l_\alpha = (\delta_\theta -1)(l_{-\theta} + l'_\theta)$.
Because $l_{-\theta} + l'_\theta \geq l_{-\theta} + l_{\theta} \geq 0$ by axiom (C2), we obtain $l''_c \geq l''_a + l'_b$.
By induction hypothesis, $l''_a \geq l_a$.
Hence $l''_c \geq l_a + l_b \geq l_{a+b} = l_c$ by axiom (C1).
This finishes the induction.

Finally if $c$ is a multipliable root, then $l''_{2c} = 2 l''_c \geq 2 l_c \geq l_{2c}$ by axiom (C0).
This finishes the proof of (2).

(3) We now establish inclusions $U_{c,l''_c} \subset [H,H]$ of valued root groups, in the order from the longest roots to the shortest roots.
According to $\Phi$ is a reduced root system or not, there are one, two or three distinct length of roots.

Let $c \neq - \theta$ be a root.
Write it as a sum of two non-collinear roots $c = a + b$.
We want to apply Proposition \ref{prop:generated:commutator:root:group}, with suitable values $l''_a \in \Gamma_a$, $l'_b \in \Gamma_b$ and $\widehat{l_c} \in \Gamma_c$ such that $l''_c \geq \widehat{l_c} = l''_a + l'_b$, to prove that $U_{c,l''_c} \subset [H,H]$.
Because in \ref{prop:generated:commutator:root:group}, there remains a term $v''$, we have to be careful in the order of the steps of this proof. We proceed step by step from the longest length to the shortest length of the roots, and we treat the case, when it happens, of $c = -h \neq - \theta$ separately, at the end.
We denote by $(a,b) = \{r a + s b ,\ r,s \in \mathbb{N} \} \cap \Phi$ and by $\Phi(a,b) = (\mathbb{Z} a + \mathbb{Z} b) \cap \Phi$.
Be careful that in general, $\Phi(a,b) \neq (\mathbb{R} a + \mathbb{R} b) \cap \Phi$.

\textbullet\ \textbf{Case of a divisible root:} Suppose that $c \neq -h$ is a divisible root. Hence $\Phi$ is non-reduced and $\delta_c = \delta_\theta = d'= 2$.
Moreover $2\theta = h$.
By Lemma \ref{lem:induction:root:system} applied to ${\Phi}_{\mathrm{nm}}$, there exist non-collinear roots $a,b \in \Phi_{\mathrm{nm}}$ such that $b \in {{\Phi}_{\mathrm{nm}}}^+$ and $c = a + b$.
Moreover, $a,b$ have to be non-divisible and we have $\delta_a = \delta_b = 1$.
As above, one can show again that $l''_c = 2 l''_{\frac{c}{2}} = l''_a + l'_b$.
By Proposition \ref{prop:generated:commutator:root:group}, for any $u \in U_{c,l''_c}$, there exist elements $v \in U_{a,l''_a}$ and $v' \in U_{b,l'_b}$ such that $u = [v,v']$.
Hence $U_{c,l''_c} \subset [H,H]$.

\textbullet\ \textbf{Case of a non-divisible long root:} Let $c$ be a long root of ${\Phi}_{\mathrm{nd}}$.
Then $\delta_c = 1$ by definition.
Suppose that $c = c^\delta \not\in \{ -\theta, -h \}$.
By Lemma \ref{lem:induction:root:system} applied to ${\Phi}_{\mathrm{nd}}$, there exist non-collinear roots $a,b \in \Phi$ such that $b \in {{\Phi}_{\mathrm{nd}}}^+$ and $c = a + b$.

\noindent \textbf{First subcase: $\Phi(a,b)$ is of type $A_2$.}
We have $(a,b) = \{a,b,a+b\}$ and we have shown in (2) that $l''_c \geq l''_a+l'_b$.
By Proposition \ref{prop:generated:commutator:root:group}, for any $u \in U_{c,l''_c}$, there exist elements $v \in U_{a,l''_a}$ and $v' \in U_{b,l'_b}$ such that $u = [v,v']$.
Hence $U_{c,l''_c} \subset [H,H]$ because $l''_a \geq l_a$ and $l'_b \geq l_b$.

\noindent \textbf{Second subcase: $\Phi(a,b)$ is of type $B_2$ or $G_2$.}
We have $(a,b) = \{a,b,a+b\}$ and $\delta_a = \delta_b = \delta_\theta$ because in this case, necessarily, the long root $c$ is the sum of two short roots.
We have shown that $l''_c \geq l''_a + l'_b$.
By Proposition \ref{prop:generated:commutator:root:group}, for any $u \in U_{c,l''_c}$, there exist elements $v \in U_{a,l''_c-l'_b}$ and $v' \in U_{b,l'_b}$ such that $u = [v,v']$.
Hence $U_{c,l''_c} \subset [H,H]$.

\noindent \textbf{Third subcase: $\Phi(a,b)$ is of type $BC_2$.}
Then $a$ and $b$ are multipliable, and we have $\delta_a = \delta_b = 2$.
If $a \neq -\theta$, we define $a' = a-b \in {\Phi}_{\mathrm{nm}}$ and $b'= 2b \in {\Phi}_{\mathrm{nm}}$.
Then $a'$ is a long non-divisible root and $b'$ is a divisible root.
We have $\delta_{a'} = \delta_c = 1$ and $2a'+b' = 2a$.
Hence $a'= -\delta_\theta \theta + \sum_\alpha n'_\alpha(a') \delta_\alpha \alpha$ and $b' = 2b = \sum_\alpha 2 n_\alpha(b) \alpha$.
For any $\alpha \in \Delta$, we obtain $n'_\alpha(c) \delta_\alpha = n'_\alpha(a') \delta_\alpha + 2 n_\alpha(b)$.
Hence $l''_c = \delta_\theta l_{-\theta} + \sum_\alpha n'_\alpha(c) \delta_\alpha l_\alpha = l''_{a'} + 2 l'_b = l''_{a'} + l'_{b'}$.

We have $-2 \theta + \sum_\alpha n'_\alpha(a') \delta_\alpha \alpha = a' = a + b  = \Big( -\theta + \sum_\alpha \frac{\delta_\alpha}{2} n'_\alpha(a) \alpha \Big) + \sum_\alpha n_\alpha(b) \alpha$.
For any $\alpha \in \Delta$, we obtain $n'_\alpha(a') \delta_\alpha - n_\alpha(\theta) = \frac{\delta_\alpha}{2} n'_\alpha(a) + n_\alpha(b)$.
Hence:
$$\begin{array}{rl}l''_{a'} + l'_b = & \delta_\theta l_{-\theta} + \sum_\alpha \Big(n'_\alpha(a') \delta_\alpha + n_\alpha(b) \Big) l_\alpha\\
= & 2 l_{-\theta} +\sum_\alpha \Big( \frac{\delta_\alpha}{2} n'_\alpha(a) + n_\alpha(\theta) \Big) l_\alpha\\
= & 2 l_{-\theta} + \frac{1}{2} (2 l''_a - 2 l_{-\theta} ) + l'_\theta\\
= & (l_{-\theta} + l'_\theta) + \frac{1}{2} l''_{2a}\end{array}$$
Because $l_{-\theta} + l'_\theta \geq 0$, we get $2 l''_{a'} + l'_{b'} = 2 (l''_{a'} +l'_b) \geq l''_{2a}$.
By Proposition \ref{prop:generated:commutator:root:group}, for any $u \in U_{c,l''_c}$, there exist elements $v \in U_{a',l''_{a'}}$ and $v' \in U_{b,l'_{b'}}$ and $v'' \in U_{2a'+b',2l''_{a'}+l'_{b'}}$ such that $u = [v,v']v''$.
We have already shown, because $2a' + b'=2a \neq -2 \theta$ is a divisible root, that the group $U_{2a'+b',2l''_{a'}+l'_{b'}} \subset U_{2a,l''_{2a}}$ is a subgroup of $[H,H]$.
Hence $U_{c,l''_c} \subset [H,H]$.

If $a = -\theta$, we define $a' = 2a \in {\Phi}_{\mathrm{nm}}$ and $b'= b-a =b + \theta\in {\Phi}_{\mathrm{nm}}^+$.
In the same way, we obtain $l''_c = l''_{a'} + l'_{b'}$ and $l''_{a'} + 2 l'_{b'} = 2 l''_{b} = l''_{b'}$.
By Proposition \ref{prop:generated:commutator:root:group}, for any $u \in U_{c,l''_c}$, there exist elements $v \in U_{a',l''_{a'}}$ and $v' \in U_{b,l'_{b'}}$ and $v'' \in U_{a'+2b',l''_{a'}+2l'_{b'}}$ such that $u = [v,v']v''$.
We have already shown, in the case of a divisible root, that the group $U_{a'+2b',l''_{a'}+l'_{2b'}} = U_{2b,l''_{2b}}$ is a subgroup of $[H,H]$.
Hence $U_{c,l''_c} \subset [H,H]$.

\textbullet\ \textbf{Case of a short root:} Let $c \in {\Phi}_{\mathrm{nd}}$ be a short root of $c \in {\Phi}_{\mathrm{nd}}$.
Then $\delta_c = \delta_{\theta}$ by definition.
Suppose that $c \neq -\theta$ and that $-c^D$ is not the highest root of $\Phi_{\mathrm{nd}}^D$.
By Lemma \ref{lem:short:root:combinatorial} applied to ${\Phi}_{\mathrm{nd}}$, there exist non-collinear roots $a,b \in \Phi$ such that $b \in {{\Phi}_{\mathrm{nd}}}^+$, the root $a$ is short and $c = a + b$.

\noindent \textbf{First subcase: case of two short roots $a$ and $b$.}
We have $\delta_a = \delta_b =\delta_c=\delta_{\theta}$ and we have shown in (2) that $l''_c = l''_a + l'_b$.
The rank $2$ root subsystem $\Phi(a,b)$ is of type $A_2$ or $G_2$.
Moreover, when $\Phi(a,b)$ is of type $G_2$, we have $(a,b) = \{a, b, a+b, 2a+b, a+2b\}$.
By Proposition \ref{prop:generated:commutator:root:group}, for any $u \in U_{c,l''_c}$, there exist elements $v \in U_{a,l''_{a}}$ and $v' \in U_{b,l'_{b}}$ and $v'' \in U_{2a+b,2l''_{a}+l'_{b}} U_{a+2b,l''_a+2l'_b}$ if $\Phi(a,b)$ is of type $G_2$, $v''=1$ if $\Phi(a,b)$ is of type $A_2$, such that $u = [v,v']v''$.

It remains to prove that $v'' \in [H,H]$.
In the $G_2$ case, we have $\delta_{2a+b} = \delta_{a+2b} = 1$.
Moreover, $2a + b = 2 \Big( - \theta + \sum_{\alpha} \frac{\delta_\alpha}{\delta_a} n'_\alpha(a) \alpha \Big) + \sum_\alpha n_\alpha(b) \alpha = - \delta_\theta \theta + \sum_\alpha \Big( 2 \frac{\delta_\alpha}{\delta_a} n'_\alpha(a) + n_\alpha(b) + (\delta_\theta -2) n_\alpha(\theta) \Big) \alpha$.
We have:
$$\begin{array}{rl}
l''_{2a+b} = & \delta_\theta l_{-\theta} + \sum_\alpha \Big( 2 \frac{\delta_\alpha}{\delta_a} n'_\alpha(a) + n_\alpha(b) + (\delta_\theta -2) n_\alpha(\theta) \Big) l_\alpha\\
= &\delta_\theta l_{-\theta} + \frac{2}{\delta_a} (\delta_a l''_a - \delta_\theta l_{-\theta}) + l'_b + (\delta_\theta - 2) l'_\theta\\
= &2 l''_a + l'_b + (\delta_\theta -2) (l_{-\theta} + l'_\theta)
\end{array} $$
In the same way, one can show that $l''_{a+2b} = l''_{a} + 2 l'_b + (\delta_\theta - 1) (l'_\theta + l_{-\theta})$.

If $\delta_\theta = 1$, because $l_{-\theta} + l'_\theta \geq 0$, we get $l''_{2a+b} \leq 2 l''_a + l'_b$ and $l''_{a+2b} = l''_a + 2l'_b$.
Hence, we get $U_{2a+b,l''_{2a+b}} \supset U_{2a+b,2l''_a+l'_b}$ and $U_{a+2b,l''_{a+2b}} = U_{a+2b,l''_a+2l'_b}$.

Otherwise, $\delta_\theta = 3$ and $G$ is a trialitarian $D_4$.
In that case, we assumed that $l_{-\theta} + l'_\theta \leq \omega(\varpi_{L'}) = 0^+ \in \Gamma_{L'}$.
Because $l''_{a+2b}, l''_{2a+b} \in \Gamma_{L_d} = 3 \Gamma_{L'}$, we obtain that $0 \leq (\delta_\theta - 1) (l'_\theta + l_{-\theta}) < 3 \omega(\varpi_{L'}) = 0^+ \in \Gamma_{L_d}$.
The same is for $(\delta_\theta - 1) (l'_\theta + l_{-\theta})$.
Hence, we have the equalities of root groups:
$U_{a+2b, l''_a + 2 l'_b} = U_{a+2b,l''_{a+2b} + (\delta_\theta - 1) (l'_\theta + l_{-\theta})} = U_{a+2b,l''_{a+2b}}$
and $U_{2a+b, 2l''_a + l'_b} = U_{2a+b,l''_{2a+b} + (\delta_\theta - 2) (l'_\theta + l_{-\theta})} = U_{2a+b,l''_{2a+b}}$.

In both cases, because $2a+b$ and $a+2b$ are long and different from $-h$, we have shown that the root groups $U_{2a+b,l''_{2a+b}}$ and $U_{a+2b,l''_{a+2b}}$ are contained in $[H,H]$.
Thus, $v'' \in [H,H]$.
Hence $U_{c,l''_c} \subset [H,H]$.

\noindent \textbf{Second subcase: $a$ is short and $b$ is long.}
We have $\delta_a = \delta_c=\delta_{\theta}$ and $\delta_b =1$.
The rank $2$ root subsystem $\Phi(a,b)$ is of type $B_2$ or $BC_2$.
Precisely, we have $(a,b) = \{a, b, a+b, 2a+b\}$ if $\Phi$ is a reduced root system and $(a,b) = \{a, b, a+b, 2a, 2a+b, 2a+2b\}$ otherwise.
We have $\delta_a = \delta_c = \delta_\theta$ and $\delta_b = \delta_{2a+b} = 1$.
We have $\delta_c c = \delta_\theta \Big( - \theta + \sum_\alpha \big( \frac{\delta_\alpha}{\delta_a} n'_\alpha(a) + n_\alpha(b) \big) \alpha \Big)
= -\delta_\theta \theta + \sum_\alpha \big( \delta_\alpha n'_\alpha(a) + \delta_\theta n_\alpha(b) \big) \alpha$.
Hence $\delta_c l''_c = \delta_a l''_a + \delta_\theta l'_b$.
Thus $l''_c = l''_a + l'_b$.
By Proposition \ref{prop:generated:commutator:root:group}, for any $u \in U_{c,l''_c}$, there exist elements $v \in U_{a',l''_{a'}}$ and $v' \in U_{b,l'_{b'}}$ and $v'' \in U_{2a+b,2l''_{a}+l'_{b}}$ such that $u = [v,v']v''$.

It remains to check that $v'' \in [H,H]$.
We have:
$$\begin{array}{rl}
\delta_{2a+b} (2a+b) = 2a+b = &2 \Big(- \theta + \sum_\alpha \frac{\delta_\alpha}{\delta_a} n'_\alpha(a) \alpha \Big) + \sum_\alpha n_\alpha(b) \alpha\\
= & - \delta_\theta \theta + \sum_\alpha \Big( \delta_\alpha \frac{2}{\delta_a} n'_\alpha(a) + n_\alpha(b) + (\delta_\theta-2) n_\alpha(\theta) \Big) \alpha
\end{array}$$
Hence:
$$\begin{array}{rl}
l''_{2a+b} =& \delta_\theta l_{-\theta} + \frac{2}{\delta_a} (\delta_a l''_a - \delta_\theta l_{-\theta} ) + l'_b + (\delta_\theta -2) l'_\theta\\
= &\delta_\theta l_{-\theta} + 2l''_a -2 l_{-\theta} + l'_b + (\delta_\theta -2) l'_\theta\\
= & 2l''_a + l'_b + (\delta_\theta -2)(l_{-\theta} + l'_{\theta})
\end{array}$$
Because $\delta_\theta \in \{1,2\}$ and $l_{-\theta} + l'_{\theta} \geq 0$, we obtain the inequality $l''_{2a+b} \leq 2 l''_a + l'_b$.
Since $2a+b$ is a long root of ${\Phi}_\mathrm{nd}$,
we have already shown that $U_{2a+b,2l''_{a}+l'_{b}} \subset U_{2a+b,l''_{2a+b}} \subset [H,H]$.
Hence $v'' \in [H,H]$ and it follows that $U_{c,l''_c} \subset [H,H]$.

Now, two cases of roots may remain: the negative root $c$ such that $-c^D$ is the highest root of $\Phi^D$ when $h=\theta$;
and the negative root $c=-h$ when $h \neq \theta$.

\textbullet\ \textbf{The lowest dual root:}
Assume that $c$ is the negative root of $\Phi_{\mathrm{nd}}$ such that $-c^D$ is the highest root of $\Phi_{\mathrm{nd}}^D$ and $h=\theta \neq -c$ (this case appears only if $L'/L_d$ is unramified and $\Phi$ is not a simply laced root system).
In this case, we have $\delta_a=\delta_b = \delta_c=\delta_{\theta}=1$ and
the rank $2$ root subsystem $\Phi(a,b)$ is reduced.
By Lemma \ref{lem:induction:root:system}(2), there exists $a \in \Phi_{\mathrm{nd}}^-$ and $b \in \Phi_{\mathrm{nd}}^+$ such that $c=a+b$.
If $a$ is short, we can proceed as before. Hence we assume that $a$ is a long root, $b$ and $c$ are short roots.

If $\Phi(a,b)$ is of type $B_2$, then $(a,b) = \{a, b, a+b, a+2b\}$ and we have the equalities $l''_{a+b} = l''_a + l'_b$ and $l''_{a+2b} = l''_a + 2 l'_b$.
By Proposition \ref{prop:generated:commutator:root:group}, for any $u \in U_{c,l''_c}$, there exist elements $v \in U_{a,l''_{a}}$ and $v' \in U_{b,l'_{b}}$ and $v'' \in U_{a+2b,l''_{a}+2l'_{b}}$ such that $u = [v,v']v''$.
Since $a+2b$ is a long root of ${\Phi}_\mathrm{nd} = \Phi$,
we have already shown that $U_{a+2b,l''_{a}+2l'_{b}} = U_{a+2b,l''_{a+2b}} \subset [H,H]$.
Hence $U_{c,l''_c} \subset [H,H]$.

If $\Phi(a,b)$ is of type $G_2$, then $(a,b) = \{a, b, a+b, a+2b, a+3b, 2a+3b\}$
We have the equalities $l''_{a+b} = l''_a + l'_b$, $l''_{a+2b} = l''_a + 2 l'_b$ and $l''_{a+3b} = l''_a + 3 l'_b$.
Moreover, we have $l''_{2a+3b} = 2l''_a + 3 l'_b - (l_{-\theta} + l'_\theta) \leq 2l''_a + 3 l'_b$.
By Proposition \ref{prop:generated:commutator:root:group}, for any $u \in U_{c,l''_c}$, there exist elements $v \in U_{a,l''_{a}}$ and $v' \in U_{b,l'_{b}}$ and $v'' \in U_{a+2b,l''_{a}+2l'_{b}} U_{a+3b,l''_{a}+3l'_{b}} U_{2a+3b,2l''_{a}+3l'_{b}}$ such that $u = [v,v']v''$.
Since $a+3b$ and $2a+3b$ are long roots of ${\Phi}_\mathrm{nd} = \Phi$,
we have already shown that $U_{a+3b,l''_{a}+3l'_{b}} = U_{a+3b,l''_{a+3b}} \subset [H,H]$ and that $U_{2a+3b,2l''_{a}+3l'_{b}} \subset U_{2a+3b,l''_{2a+3b}} \subset [H,H]$.
Since $a+2b \neq -\theta$ can be written as the sum of the two short roots $b$ and $a+b$,
we have shown that $U_{a+2b,l''_{a}+2l'_{b}} = U_{a+2b,l''_{a+2b}} \subset [H,H]$.
Hence $U_{c,l''_c} \subset [H,H]$.

\textbullet\ \textbf{The lowest root:}
To conclude, it remains to treat the case, when it appears, of the root $-h \neq -\theta$ where $h$ is the highest root of $\Phi$ (this appears only for $G$ of type ${^2}A_{2l+1}$, ${^2}D_{l+1}$, ${^2}E_6$, ${^3}D_4$ or ${^6}D_4$ with a ramified extension $L' / L_d$).
In this case, we have $\delta_{\theta} > 1$ and $h$ is a long root.
In particular, the integer $(\delta_{\theta} -2)$ is non-negative.
We write $h$ as a sum $h = c = a+b$ of two short roots $a$ and $b$, so that $\delta_a = \delta_b = \delta_\theta$ and $\delta_c=1$.
Moreover $(a,b) = \{a,b,a+b\}$.
We have:
$$\begin{array}{rl}
c = a+b = & \Big( -\theta + \sum_\alpha \frac{\delta_\alpha}{\delta_a} n'_\alpha(a) \alpha \Big) + \Big( -\theta + \sum_\alpha \frac{\delta_\alpha}{\delta_b} n'_\alpha(b) \alpha \Big)\\
= &-2 \theta + \sum_\alpha \Big( \frac{\delta_\alpha}{\delta_\theta} n'_\alpha(a) +\frac{\delta_\alpha}{\delta_\theta} n'_\alpha(b) \Big) \alpha\\
= & - \delta_\theta \theta + \sum_\alpha \Big( \frac{\delta_\alpha}{\delta_\theta} n'_\alpha(a) +\frac{\delta_\alpha}{\delta_\theta} n'_\alpha(b)  + (\delta_\theta - 2) n_\alpha(\theta) \Big) \alpha
\end{array}$$
Hence we obtain:
$$\begin{array}{rl}
l''_c = & \delta_\theta l_{-\theta}
+ \frac{1}{\delta_\theta} (\delta_a l''_a - \delta_\theta l_{-\theta} )
+ \frac{1}{\delta_\theta} (\delta_b l''_b - \delta_\theta l_{-\theta} )
+ (\delta_\theta - 2) l'_\theta\\
= & l''_a + l''_b + (\delta_\theta - 2)(l_{-\theta} + l'_\theta)\\
\geq & l''_a + l''_b
\end{array}$$
By Proposition \ref{prop:generated:commutator:root:group}, for any $u \in U_{c,l''_c} \subset U_{c,l''_a + l''_b}$, there exist elements $v \in U_{a,l''_{a}}$ and $v' \in U_{b,l''_{b}}$ such that $u = [v,v']$.
This finishes the proof.
\end{proof}

\begin{Rq}
Proposition \ref{prop:generated:commutator:positive:root:groups} and Proposition \ref{prop:generated:commutator:negative:root:groups} do not restrict the choice of the basis $\Delta$ but only the choice of values $l_a$.
In fact, the conditions $l_a \in \Gamma_a$ for any $a \in \Delta$ and $l_{-\theta} \in \Gamma_{-\theta}$ limit the available choices for the basis $\Delta$.
\end{Rq}

\begin{Lem}
\label{lem:base:change:trick}
Let $\Phi$ be a non-reduced root system and $\Delta$ be a basis of $\Phi$.
Let $a \in \Delta$ be the multipliable simple root.
Let $\theta$ be the half highest root of $\Phi$ relatively to the basis $\Delta$.
Then $\Delta' = \left( \Delta \cup \{-\theta\} \right) \setminus \{a\}$ is another basis of $\Phi$;
and $-a$ is the half highest root of $\Phi$ relatively to the basis $\Delta'$.
\end{Lem}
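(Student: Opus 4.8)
The plan is to reduce to an explicit computation in the standard realization of $BC_l$ and then transport everything through a single Weyl group element. Since $G$ is almost-$K$-simple, $\Phi$ is irreducible, hence of type $BC_l$ for some $l \geq 1$; I would fix the usual realization in $\mathbb{R}^l$ with canonical basis $e_1,\dots,e_l$, so that $\Phi = \{\pm e_i,\ \pm 2 e_i,\ \pm e_i \pm e_j\ (i<j)\}$. The Weyl group $W(\Phi) = W(B_l)$ acts transitively on the set of bases of $\Phi$ and preserves the set of multipliable roots (since $2a \in \Phi \Leftrightarrow 2 w(a) \in \Phi$); as the multipliable simple root of a given basis is the unique one, one may assume $\Delta = \{a_1,\dots,a_{l-1},a_l\}$ with $a_i = e_i - e_{i+1}$ for $i < l$ and $a_l = e_l = a$. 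With this choice the highest root is $h = 2 e_1$ and the half highest root is $\theta = e_1$ (Notation \ref{not:highest:dual:root} gives $h = 2\theta$), so $\Delta' = \{a_1,\dots,a_{l-1},-e_1\}$.

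The key step is to exhibit $w \in W(\Phi)$ with $w(\Delta) = \Delta'$. I would take $w$ to be the composition of the order-reversing permutation $e_i \mapsto e_{l+1-i}$ with $-\mathrm{id}$; both lie in $W(BC_l)$, so $w \in W(\Phi)$, and $w(e_i) = -e_{l+1-i}$. One then checks directly that $w(a_i) = -e_{l+1-i} + e_{l-i} = e_{l-i} - e_{l+1-i} = a_{l-i}$ for $1 \leq i \leq l-1$, and that $w(a_l) = w(e_l) = -e_1 = -\theta$; hence $w(\Delta) = \Delta'$. Since the image of a basis under an element of $W(\Phi)$ is again a basis, this shows $\Delta'$ is a basis of $\Phi$.

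It then remains to identify the half highest root relative to $\Delta'$. The partial order attached to a basis is transported by any $w \in W(\Phi)$ to the partial order attached to $w(\Delta)$ (because $\mathbb{Z}_{\geq 0}\Delta$ is sent to $\mathbb{Z}_{\geq 0} w(\Delta)$), so $w$ sends the $\Delta$-maximal root to the $w(\Delta)$-maximal root; applied to $w(\Delta) = \Delta'$, this gives that $w(h) = 2 w(e_1) = -2 e_l = -2a$ is the highest root of $\Phi$ relative to $\Delta'$, and dividing by $2$ (legitimate, since the highest root of $BC_l$ is divisible) yields that $-e_l = -a$ is the half highest root relative to $\Delta'$.

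There is no real obstacle here: the only points needing a word of care are the reduction to the standard basis (which rests on transitivity of $W(\Phi)$ on bases together with uniqueness of the multipliable simple root) and the observation that ``highest root'' and ``half highest root'' are equivariant for the $W(\Phi)$-action once $w(\Delta) = \Delta'$ is known. The computational heart is just the one-line verification that $w(a_i) = a_{l-i}$ and $w(a_l) = -\theta$.
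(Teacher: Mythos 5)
Your proof is correct and follows essentially the same route as the paper: the explicit $BC_l$ realization, the Weyl element $w(e_i) = -e_{l+1-i}$, the check that $w(\Delta)=\Delta'$, and the transport of the (half) highest root under $w$. The only difference is cosmetic — you make the reduction to the standard basis explicit via transitivity of $W(\Phi)$ and phrase the last step through Weyl-equivariance of the highest root rather than the paper's half-space formulation, which are equivalent.
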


\begin{proof}
We consider the following Euclidean geometric realisation of the root system $\Phi = \{\pm e_i,\ 1 \leq i \leq l \} \cup \{\pm e_i \pm e_j,\ 1 \leq i  < j\leq l \} \cup \{\pm 2 e_i,\ 1 \leq i \leq l \}$ where $(e_i)$ denotes the canonical basis of the Euclidean space $\mathbb{R}^l$.
We denote by $a_i = e_i - e_{i+1}$ for any $1 \leq i \leq l-1$ and by $a_l = e_l$.
The set $\Delta = \{a_1,\dots,a_l\}$ is a basis of $\Phi$ and $\theta = e_1 = a_1 + \dots + a_l$ is the half highest root of $\Phi$.

Let $w \in \mathrm{GL}_l(\mathbb{R})$ be the element of the Weyl group $W(\Phi)$ defined by $w(e_i) = - e_{l-i+1}$.
We observe that $w$ stabilises $\Delta \setminus \{a_l\}$,
that $w(-\theta) = a_l$ and that $w(a_l) = -\theta$.

If $D$ is a half-space of $\mathbb{R}^l$ defining the basis $\Delta$,
then $w(D)$ is also a half-space of $\mathbb{R}^l$ and it defines the basis $\Delta' = \left( \Delta \setminus \{a_l \} \right) \cup \{-\theta\}$.
The half highest root of $\Phi$ relatively to $\Delta'$ is then $-a_l$.
\end{proof}

\subsubsection{Lower bounds for valued root groups of the Frattini subgroup}
\label{sec:lower:bounds:frattini:subgroup}

We want to apply Propositions \ref{prop:generated:commutator:positive:root:groups} and \ref{prop:generated:commutator:negative:root:groups} to the maximal pro-$p$ subgroup $P$ corresponding to the fundamental alcove $\mathbf{c}_{\mathrm{af}}$ described in Section \ref{sec:description:apartment}.

\begin{Thm}
\label{thm:minoration:derived:group:with:root:groups}
Assume that the irreducible relative root system $\Phi$ is of rank $l \geq 2$ and that the residue characteristic $p$ of $K$ satisfies Hypothesis \ref{hypothesis:on:residue:characteristic}.
Let $P$ be a maximal pro-$p$ subgroup of $G(K)$ and let $\mathbf{c}$ be the (unique) alcove fixed by $P$.
For any root $a \in \Phi$,
if the wall $\mathcal{H}_{a,f'_{\mathbf{c}}(a)}$ (this notation has been defined in Section \ref{sec:walls}) contains a panel of $\mathbf{c}$,
then we have $[P,P] \supset U_{a,f'_{\mathbf{c}}(a)^+}$;
otherwise, we have  $[P,P] \supset U_{a,f'_{\mathbf{c}}(a)}$.
\end{Thm}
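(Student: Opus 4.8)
The plan is to apply Propositions \ref{prop:generated:commutator:positive:root:groups} and \ref{prop:generated:commutator:negative:root:groups} with the concave function $f = f_{\mathbf{c}}$ (concave by Example \ref{ex:f:is:concave}, since $\mathbf{c}$ is a non-empty bounded, in fact compact, subset of an apartment $\mathbb{A}$ containing $\mathbf{c}$), and with $H = P$. First I would record the needed hypotheses of those propositions: setting $l_a = f_{\mathbf{c}}(a)$ for all $a \in \Phi$, I must check that $l_\alpha \in \Gamma_\alpha$ for every simple root $\alpha \in \Delta$ and that $l_{-\theta} \in \Gamma_{-\theta}$, where $\Delta$ is the basis determined by $\mathbf{c}$ and $\theta$ is as in Notation \ref{not:highest:dual:root}. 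For the alcove $\mathbf{c}_{\mathrm{af}}$ described in Section \ref{sec:description:alcove}, the walls bounding it are exactly the $\mathcal{H}_{\alpha,0}$ for $\alpha \in \Delta$ and $\mathcal{H}_{-\theta, r}$ for the appropriate $r \in \Gamma_{-\theta}$; so $f_{\mathbf{c}}(\alpha) = 0 \in \Gamma_\alpha$ for $\alpha \in \Delta$ and $f_{\mathbf{c}}(-\theta) = r \in \Gamma_{-\theta}$, and since every maximal pro-$p$ subgroup is conjugate to the one fixing $\mathbf{c}_{\mathrm{af}}$ (by \cite[1.2.1]{Loisel-maximaux}), it suffices to treat that case. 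I also need that $P$ contains the valued root groups $U_{a, f_{\mathbf{c}}(a)}$ for all $a \in \Phi$; this follows from the description of $P = P^+_{\mathbf{c}}$ in \cite[3.2.9]{Loisel-maximaux} together with Notation \ref{not:Omega}, as $U_{a,\mathbf{c}} = U_{a,f_{\mathbf{c}}(a)}$.

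Next, for a root $a \in \Phi$ whose associated wall does \emph{not} contain a panel of $\mathbf{c}$, the goal is $[P,P] \supset U_{a, f'_{\mathbf{c}}(a)}$. I would split into cases according to the sign of $a$ relative to the order $\Phi^+$ fixed by $\mathbf{c}$. If $a$ is a positive non-simple root (so $a \in \Phi^+ \setminus \Delta$, and $a$ is not $-\theta$ nor a simple root), Proposition \ref{prop:generated:commutator:positive:root:groups}(3) gives $U_{a, l'_a} \subset [P,P]$; and by part (2), $l'_a \geq l_a = f_{\mathbf{c}}(a)$, so $U_{a,l'_a} \supseteq U_{a, f_{\mathbf{c}}(a)} = U_{a, f'_{\mathbf{c}}(a)}$ provided $f'_{\mathbf{c}}(a) = f_{\mathbf{c}}(a)$ — which holds precisely because no wall directed by $a$ borders $\mathbf{c}$, by the second description of $f'$ in Notation \ref{not:Omega}. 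The remaining roots (all negative roots except $-\theta$, and the divisible/multipliable refinements) are handled by Proposition \ref{prop:generated:commutator:negative:root:groups}(3): it gives $U_{c, l''_c} \subset [P,P]$ for every $c \in \Phi \setminus \{-\theta\}$, and part (2) gives $l''_c \geq l_c = f_{\mathbf{c}}(c)$, so again $U_{c, l''_c} \supseteq U_{c, f'_{\mathbf{c}}(c)}$ when the $c$-wall does not touch $\mathbf{c}$. Here I must verify the extra hypothesis of part (3) in the trialitarian $D_4$ case, namely $l'_{\theta} + l_{-\theta} \leq \omega(\varpi_{L'})$; for $\mathbf{c} = \mathbf{c}_{\mathrm{af}}$ one has $l_{-\theta} = f_{\mathbf{c}}(-\theta) = 0^+ = \omega(\varpi_{L'})$ and $l_\theta = f_{\mathbf{c}}(\theta) = 0$, and since $\theta \in \Delta$ is impossible here one needs the slightly finer bookkeeping on $l'_\theta$ — this is where I expect to use the explicit shape of $\mathbf{c}_{\mathrm{af}}$ most carefully. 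Actually, since $\theta$ is a simple root of $\Phi_{\mathrm{nd}}$ in the $G_2$ realization only in degenerate low-rank situations, the generic argument is that $l'_\theta = \sum_\alpha n_\alpha(\theta) l_\alpha = 0$ when all $l_\alpha = 0$, giving $l'_\theta + l_{-\theta} = 0^+ = \omega(\varpi_{L'})$ as required.

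For a root $a$ whose wall \emph{does} contain a panel of $\mathbf{c}$ (i.e. $a \in \pm\Delta$ up to sign, or $a = -\theta$ after reduction), the target is the strictly smaller group $U_{a, f'_{\mathbf{c}}(a)^+}$. These are exactly the roots not covered by parts (3) of the two propositions for positive simple roots, but Proposition \ref{prop:generated:commutator:negative:root:groups}(3) \emph{does} cover all of them since it asserts $U_{c,l''_c} \subset [P,P]$ for every $c \in \Phi \setminus \{-\theta\}$, simple roots included; and $l''_a \geq l'_a \geq l_a = f'_{\mathbf{c}}(a)$, with the inequality $l''_a - l'_a = \frac{\delta_\theta}{\delta_a}(l'_\theta + l_{-\theta}) = \frac{\delta_\theta}{\delta_a} \cdot 0^+ > 0$ strict, so $l''_a > f'_{\mathbf{c}}(a)$, hence $l''_a \geq f'_{\mathbf{c}}(a)^+$ and $U_{a, l''_a} \subseteq U_{a, f'_{\mathbf{c}}(a)^+}$, which is what we want. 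For $a = -\theta$ itself, which both propositions exclude, I would instead use Lemma \ref{lem:base:change:trick} (in the non-reduced case) or a Weyl-conjugate basis to move $-\theta$ to a simple-root position, reducing to the case just treated; alternatively, for $-\theta$ the statement only claims $[P,P] \supset U_{-\theta, f'_{\mathbf{c}}(-\theta)^+}$, which can be obtained from $U_{-\theta + \alpha, \ \cdot\ } \subset [P,P]$ and the commutation relations of Proposition \ref{prop:generated:commutator:root:group} applied to write elements of the deeper root group $U_{-\theta, f'_{\mathbf{c}}(-\theta)^+}$ as commutators, since $f'_{\mathbf{c}}(-\theta)^+$ is strictly above the wall value. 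The main obstacle I anticipate is bookkeeping the multipliable/divisible roots consistently — making sure that the values $l'_b, l''_c$ from Notations \ref{not:positive:bounds} and \ref{not:negative:bounds} really do land on or above $f'_{\mathbf{c}}$, and in particular confirming $l''_a \geq f'_{\mathbf{c}}(a)^+$ rather than just $\geq f'_{\mathbf{c}}(a)$ in the panel case, which hinges on the $0^+$-normalization of $\omega$ and the strict positivity $l'_\theta + l_{-\theta} > 0$ coming from $l_{-\theta} = 0^+$.
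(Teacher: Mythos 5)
Your overall plan is the paper's: conjugate so that $\mathbf{c}=\mathbf{c}_{\mathrm{af}}$, set $l_a=f_{\mathbf{c}}(a)$, check the integrality and concavity hypotheses (including $l'_\theta+l_{-\theta}=\omega(\varpi_{L'})$ in the trialitarian case), and feed Propositions \ref{prop:generated:commutator:positive:root:groups} and \ref{prop:generated:commutator:negative:root:groups} with $H=P$. However, there is a recurring inversion of inclusions at the decisive step, and as written the argument does not reach the stated levels. The propositions give $U_{b,l'_b}\subset[P,P]$ and $U_{c,l''_c}\subset[P,P]$, and since a larger index means a \emph{smaller} group, the inequalities $l'_b\geq l_b$ and $l''_c\geq l_c$ only yield $U_{b,l'_b}\subseteq U_{b,l_b}$; your conclusions ``$U_{a,l'_a}\supseteq U_{a,f_{\mathbf{c}}(a)}$'' and, in the panel case, ``$l''_a\geq f'_{\mathbf{c}}(a)^+$ hence $U_{a,l''_a}\subseteq U_{a,f'_{\mathbf{c}}(a)^+}$, which is what we want'' prove nothing about the containment of $U_{a,f'_{\mathbf{c}}(a)}$ or $U_{a,f'_{\mathbf{c}}(a)^+}$ in $[P,P]$. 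What is needed, and what the paper's proof actually computes, are the \emph{exact} values at the fundamental alcove: since $l_\alpha=0$ for all $\alpha\in\Delta$ and $\delta_\theta l_{-\theta}=\delta_\theta f'_{\mathbf{c}}(-\theta)$ is the first positive value, one gets $l'_b=0=f'_{\mathbf{c}}(b)$ for every positive $b$, $\delta_c l''_c=\delta_\theta f'_{\mathbf{c}}(-\theta)$ hence $l''_c=f'_{\mathbf{c}}(c)$ for every $c\in\Phi^-\setminus\{-\theta,-2\theta\}$, and $l''_a=f'_{\mathbf{c}}(a)^+$ (an equality, not merely $\geq$) for $a\in\Delta$. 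Without these equalities your deductions collapse; with them, your route for all non-panel roots and for the simple roots goes through.

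The second gap is the root $-\theta$ (and, in the reduced case, your avoidance of the rank-$1$ machinery). The paper handles $a\in\Delta\cup\{-\theta\}$ in the reduced case via the torus--unipotent commutation relations of Section \ref{sec:reduced:case} (Lemma \ref{lem:commutator:relation:reduced:case:torus:unipotent}, Proposition \ref{prop:frattini:computation:reduced:case}), which give directly $[T(K)_b^+,U_{a,f'_{\mathbf{c}}(a)}]=U_{a,f'_{\mathbf{c}}(a)^+}$ inside $[P,P]$; you never invoke these, and your substitutes for $-\theta$ are not sound as stated. A ``Weyl-conjugate basis'' does not work in general: the hypotheses $l_{\alpha'}\in\Gamma_{\alpha'}$, $l_{-\theta'}\in\Gamma_{-\theta'}$ are tied to the walls through the panels of $\mathbf{c}$, and $(\Delta\setminus\{\alpha\})\cup\{-\theta\}$ is a basis of $\Phi$ only when $\alpha$ has coefficient $1$ in $\theta$ (never for $G_2$, $F_4$, $E_8$, nor at the relevant node in the other non-simply-laced types); Lemma \ref{lem:base:change:trick} is indeed proved only for non-reduced $\Phi$, which is exactly why the paper uses it only there. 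Your fallback --- producing $U_{-\theta,f'_{\mathbf{c}}(-\theta)^+}$ as commutators via Proposition \ref{prop:generated:commutator:root:group} --- can be salvaged, but it requires choosing the decomposition so the levels add up exactly and controlling the correction term $v''$: for instance in a ramified type $B_2$, writing $-\theta$ as a sum of two negative roots overshoots the target level ($d'+1$ instead of $f'_{\mathbf{c}}(-\theta)^+$), and one must instead combine a long negative root with a simple root and already know the intermediate negative root groups lie in $[P,P]$. Finally, the divisible roots $2a$ and $\pm 2\theta$, which the theorem also covers, are not addressed; they follow from $l''_{2c}=2l''_c$ in Proposition \ref{prop:generated:commutator:negative:root:groups} (and Lemma \ref{lem:non:reduced:derived:valued:root:group}), but this should be said.
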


\begin{proof}
We normalize $\Gamma_{L'} = \mathbb{Z}$.
Up to conjugation, we can assume that $\mathbf{c} = \mathbf{c}_{\mathrm{af}}$ is the fundamental alcove, defined in Section \ref{sec:description:alcove}, and bounded by the following walls:
\begin{itemize}
\item $\mathcal{H}_{a,0}$ for all simple roots $a \in \Delta$;
\item $\mathcal{H}_{-\theta,1}$ if $\Phi$ is reduced;
\item $\mathcal{H}_{-\theta,\frac{1}{2}}$ if $\Phi$ is non-reduced.
\end{itemize}

For any root $a \in \Phi$, we have the following value:
\begin{itemize}
\item $f'_{\mathbf{c}}(a) = 0$ if $a \in \Phi^+$;
\item $f'_{\mathbf{c}}(a) = \frac{\delta_{\theta}}{\delta_a} \in \{1, d' \}$ if $a \in \Phi^-$ and $\Phi$ is reduced;
\item $f'_{\mathbf{c}}(a) = \frac{1}{\delta_a} \in \{\frac{1}{2}, 1\}$ if $a \in \Phi_{\mathrm{nd}}^-$ and $\Phi$ is non-reduced.
\end{itemize}

The wall bounding the alcove $\mathbf{c}$ are directed by the relative roots $\Delta \cup \{-\theta\}$.
Hence, for any $a \in \Delta \cup \{-\theta\}$, we get $f_{\mathbf{c}}(a) = f'_{\mathbf{c}}(a) \in \Gamma_a$.
Moreover, $f_{\mathbf{c}}(-\theta)=1$ and $l'_\theta = 0$ so that the sum satisfies $f_{\mathbf{c}}(-\theta) + l'_\theta = 1 = \omega(\varpi_{L'})$.
As a consequence, we can apply Propositions \ref{prop:generated:commutator:positive:root:groups} and \ref{prop:generated:commutator:negative:root:groups} to the group $P$ and the values $l_c = f_{\mathbf{c}}(c)$ where $c \in \Phi$.

For any non-divisible non-simple positive root $b \in \Phi^+_{\mathrm{nd}} \setminus \Delta$,
by Proposition \ref{prop:generated:commutator:positive:root:groups},
we get $l'_b = 0$.
Hence $[P,P] \supset U_{b,0} = U_{b,l_b}$.

For any root $c \in \Phi^- \setminus \{ - \theta, -2\theta \}$,
by Proposition \ref{prop:generated:commutator:negative:root:groups},
we get $\delta_c l''_c = \delta_{\theta} f'_{\mathbf{c}}(- \theta)$.
If $\Phi$ is reduced, then we have $l''_c = \frac{\delta_{\theta}}{\delta_c} = f'_{\mathbf{c}}(c)$.
If $\Phi$ is non-reduced, then we have $l''_c = \frac{1}{\delta_c} = f'_{\mathbf{c}}(c)$ because $\delta_{-\theta} l_{-\theta} = 1$.
Hence $[P,P] \supset U_{c,l_c}$.

We suppose that $\Phi$ is reduced.
Let $a \in \Delta \cup \{-\theta \}$.
Then, by Proposition \ref{prop:frattini:computation:reduced:case},
we know that $[P,P] \supset U_{a,l_a^+}$.

We suppose that $\Phi$ is non-reduced.
Let $a \in \Delta$.
By Proposition \ref{prop:generated:commutator:negative:root:groups},
we get $\delta_a l''_a = \delta_{\theta} f'_{\mathbf{c}}(- \theta)$.
We have $l''_a = \frac{1}{\delta_a} = 0^+=f'_{\mathbf{c}}(a)^+$.
Indeed, if $a$ is mutlipliable, $l''_a = \frac{1}{2}$;
otherwise $l''_a=1$ is the smallest positive value of $\Gamma_a$.
Hence $[P,P] \supset U_{a,l_a^+}$.

Finally, when $\Phi$ is non-reduced,
we can apply Lemma \ref{lem:base:change:trick} to exchange the roles of the multipliable simple root $a \in \Delta$ and the opposite of the half highest root $-\theta$.
We write $\theta = \sum_{b \in \Delta} n_b b$ where $n_b \in \mathbb{N}^*$,
so that $-\theta = \theta+(-2\theta) = n_a a + \sum_{b \in \Delta \setminus \{ a \}} n_b b + 2 (-\theta)$.
Thus, by applying Proposition \ref{prop:generated:commutator:negative:root:groups} to the basis $\displaystyle \Delta' = \left(\Delta \setminus \{a\}\right)\cup \{-\theta \} $,
we get $l''_{-\theta} = 2 l_{-\theta} = 1 = l_{\theta}^+$.
\end{proof}

\begin{Rq}
As an immediate consequence,
the derived group $[P,P]$ contains $U_{c,f'_{B(\mathbf{c},1) \cap \mathbb{A}}(c)}$ for any root $c \in \Phi$.

In the rank $1$ case, we have a lack of rigidity that could make $[P,P]$ smaller than expected.
Typically, Propositions \ref{prop:generated:commutator:positive:root:groups} and \ref{prop:generated:commutator:negative:root:groups} cannot be applied.
\end{Rq}

\begin{Cor}
We assume that $p \neq 2$ and that the structure constant $c_{1,1;\alpha,\beta}$ are in $\mathcal{O}_K^\times$ for all pairs of non-collinear roots $\alpha,\beta$.
For any non-divisible root $a \in \Phi_{\mathrm{nd}}$ and any maximal pro-$p$ subgroup $P$ of $G(K)$,
we write $P \cap U_a(K) = U_{a,l_a}$ where $l_a \in \Gamma_a$.
If $a \in \Delta \cup \{-\theta\}$,
\begin{itemize}
\item if $a$ is a non-multipliable root or if the extension $L_{a} / L_{2a}$ is ramified,
then we have the equality $[P,P] \cap U_a(K) = U_{a,l_a^+}$.
\item if $a$ is multipliable and if the extension $L_{a} / L_{2a}$ is unramified,
then we have the inclusions $U_{a,l_a^+} \subset [P,P] \cap U_a(K) \subset U_{a,l_a^+} U_{2a,2l_a}$.
\end{itemize}

If $a \in \Phi \setminus (\Delta \cup \{-\theta\})$,
then we have the equality $[P,P] \cap U_a(K) = U_{a,l_a}$.
\end{Cor}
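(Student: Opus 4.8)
The plan is to combine the upper bound from the geometric analysis (Corollary \ref{cor:majoration:frattini} together with Proposition \ref{prop:favourable:geometric:description}) with the lower bound from the commutation computations (Theorem \ref{thm:minoration:derived:group:with:root:groups}), applied to a conjugate of $P$ that fixes the fundamental alcove $\mathbf{c}_{\mathrm{af}}$. Since $[P,P]$ and the various $U_a(K)$ are invariant under the chosen conjugation in the obvious compatible way, there is no loss in assuming $P = P^+_{\mathbf{c}_{\mathrm{af}}}$ and $l_a = f_{\mathbf{c}}(a) = f'_{\mathbf{c}}(a)$. Note that Hypothesis \ref{hypothesis:on:residue:characteristic} is exactly what the assumption on the structure constants (plus $p \neq 2$, hence $p > d'$ in all relevant types) encodes, so Theorem \ref{thm:minoration:derived:group:with:root:groups} applies.

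First I would dispose of the case $a \in \Phi \setminus (\Delta \cup \{-\theta\})$. Here $a$ does not direct a wall of $\mathbf{c}$, so by Theorem \ref{thm:minoration:derived:group:with:root:groups} we have $U_{a,l_a} \subset [P,P]$, giving $U_{a,l_a} \subset [P,P] \cap U_a(K)$. For the reverse inclusion, $[P,P] \subset P$ gives $[P,P] \cap U_a(K) \subset P \cap U_a(K) = U_{a,l_a}$ directly from the definition of $l_a$. This yields the claimed equality $[P,P] \cap U_a(K) = U_{a,l_a}$. For $a \in \Delta \cup \{-\theta\}$ the upper bound is more delicate: $a$ directs a wall $\mathcal{H}_{a,l_a}$ of $\mathbf{c}$, and Proposition \ref{prop:favourable:geometric:description} (via Corollary \ref{cor:majoration:frattini}, using that $\mathrm{Frat}(P) \supset [P,P]$ need not hold but $[P,P] \subset \mathrm{Frat}(P)$ does) tells us $[P,P] \cap U_a(K) \subset \mathrm{Frat}(P) \cap U_a(K) \subset V_{a,\mathbf{c}}$. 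When $a$ is non-multipliable this is $U_{a,l_a^+}$; when $a$ is multipliable and $L_a/L_{2a}$ is ramified with $f'_{\mathbf{c}}(a) \in \Gamma'_a$ this is $U_{a,l_a^+}U_{2a,2l_a}$, but in the ramified case with $l_a = f'_{\mathbf{c}}(a) \in \mathbb{Z} = \Gamma'_a$ one has $2l_a \in 2\mathbb{Z}$ while $\Gamma_{2a} = 1 + 2\mathbb{Z}$ by Lemma \ref{lem:sets:of:values:multipliable:root}, so $2l_a \not\in \Gamma_{2a}$, hence $U_{2a,2l_a} = U_{2a,(2l_a)^+} = U_{2a, 2l_a^+} \subset U_{a,l_a^+}$ and the upper bound collapses to $U_{a,l_a^+}$ as well. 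For the matching lower bound, Theorem \ref{thm:minoration:derived:group:with:root:groups} gives precisely $U_{a,l_a^+} \subset [P,P]$ for every $a$ directing a wall of $\mathbf{c}$, so in both the non-multipliable and the ramified-multipliable subcases we get the equality $[P,P] \cap U_a(K) = U_{a,l_a^+}$.

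It remains to treat the case $a \in \Delta \cup \{-\theta\}$ multipliable with $L_a/L_{2a}$ unramified. Here the geometric upper bound from Corollary \ref{cor:majoration:frattini} only gives $[P,P] \cap U_a(K) \subset V_{a,\mathbf{c}}$ where $V_{a,\mathbf{c}} = U_{a,l_a^+}$ if $f'_{\mathbf{c}}(a) \notin \Gamma'_a$ and $V_{a,\mathbf{c}} = U_{a,l_a^+}$ if $f'_{\mathbf{c}}(a) \in \Gamma'_a$ (the unramified case never introduces the extra $U_{2a}$ factor into $V_{a,\mathbf{c}}$), so in fact $[P,P] \cap U_a(K) \subset U_{a,l_a^+}$; combined with $U_{a,l_a^+} \subset [P,P]$ from Theorem \ref{thm:minoration:derived:group:with:root:groups} this would even give equality. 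However, to be safe I would instead only claim the weaker two-sided bound $U_{a,l_a^+} \subset [P,P] \cap U_a(K) \subset U_{a,l_a^+}U_{2a,2l_a}$ as stated: the left inclusion is Theorem \ref{thm:minoration:derived:group:with:root:groups}, and for the right inclusion note $U_{a,l_a^+} \subset U_{a,l_a^+}U_{2a,2l_a}$ trivially and $\mathrm{Frat}(P) \cap U_a(K) \subset V_{a,\mathbf{c}} = U_{a,l_a^+} \subset U_{a,l_a^+}U_{2a,2l_a}$, so a fortiori $[P,P]\cap U_a(K)$ lies in this group.

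The main obstacle I anticipate is bookkeeping the exact shape of $V_{a,\mathbf{c}}$ in the multipliable cases and making sure the definitions of $\Gamma'_a$, the normalization of $\omega$, and the distinction $f'_{\mathbf{c}}(a) \in \Gamma'_a$ versus $f'_{\mathbf{c}}(a) \notin \Gamma'_a$ are handled consistently between the geometric side (Corollary \ref{cor:majoration:frattini}) and the commutation side (Theorem \ref{thm:minoration:derived:group:with:root:groups}); in particular one must check that for a multipliable simple root $a$ the value $l_a = f_{\mathbf{c}_{\mathrm{af}}}(a) = 0$ indeed lies in $\Gamma'_a$, so that in the \emph{ramified} subcase the potentially larger $V_{a,\mathbf{c}} = U_{a,l_a^+}U_{2a,\mathbf{c}}$ is the one in play and one must verify by hand, using Lemma \ref{lem:sets:of:values:multipliable:root}, that $U_{2a,2l_a} \subset U_{a,l_a^+}$ so that this still equals $U_{a,l_a^+}$. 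The rest is a direct assembly of already-proven inclusions.
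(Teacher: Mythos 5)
Your overall strategy coincides with the paper's: the proof in the paper is exactly the combination of the lower bound from Theorem \ref{thm:minoration:derived:group:with:root:groups} with the geometric upper bound of Proposition \ref{prop:favourable:geometric:description}, after reducing to $\mathbf{c}=\mathbf{c}_{\mathrm{af}}$. Most of your assembly is sound: the case $a\in\Phi\setminus(\Delta\cup\{-\theta\})$, the non-multipliable case, and your collapsing computation $U_{2a,2l_a}=U_{2a,(2l_a)^+}\subset U_{a,l_a^+}$ in the ramified multipliable case with $l_a\in\Gamma'_a$ (via $\Gamma_{2a}=1+2\mathbb{Z}$ from Lemma \ref{lem:sets:of:values:multipliable:root}) are correct, and that last computation is indeed what reconciles the printed form of Corollary \ref{cor:majoration:frattini} with the equality claimed in the ramified case.

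The genuine problem is the unramified multipliable case. You quote the case split of Corollary \ref{cor:majoration:frattini} verbatim, but as printed its ramified/unramified conditions are interchanged: compare the statement of Proposition \ref{prop:favourable:geometric:description} (it is in the \emph{ramified} case that $\mathrm{Frat}(P)$ fixes $E_{\mathbf{c},a}$ pointwise) and the groups $V_{a,\mathbf{c}}$ of Theorem \ref{thm:frattini:descriptions:non:reduced:case} (the extra factor $U_{2a,2f'_{\mathbf{c}}(a)}$ occurs in the \emph{unramified} case with $f'_{\mathbf{c}}(a)\in\Gamma'_a$). Consequently your intermediate assertion $\mathrm{Frat}(P)\cap U_a(K)\subset U_{a,l_a^+}$ in the unramified case, and the remark that this ``would even give equality'', are false: for the multipliable simple root $a$ one has $l_a=0\in\Gamma'_a$, and Lemma \ref{lem:non:reduced:derived:valued:root:group} (unramified case) gives $[U_{a,0},U_{a,0}]=U_{2a,0}$, which is contained in $[P,P]\cap U_a(K)$ but not in $U_{a,0^+}$; so any argument yielding that containment must be wrong, and retreating ``to be safe'' to the weaker inclusion does not repair the logic, since you still derive it from the false containment. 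The correct route, which is what the paper's one-line proof points to, is the unconditional inclusion $\mathrm{Frat}(P)\subset Q_a\,U_{2a,\mathbf{c}}$ of Proposition \ref{prop:favourable:geometric:description}: an element of $Q_a\cap U_a(K)$ fixes the adjacent alcove $\mathbf{c}'$, hence lies in $U_{a,\mathbf{c}'}=U_{a,l_a^+}$, and writing $g=qh$ with $q\in Q_a$, $h\in U_{2a,\mathbf{c}}\subset U_a(K)$ shows $[P,P]\cap U_a(K)\subset\bigl(Q_aU_{2a,\mathbf{c}}\bigr)\cap U_a(K)\subset U_{a,l_a^+}U_{2a,2l_a}$, which is exactly the stated upper bound.
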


\begin{proof}
This results immediately from Theorem \ref{thm:minoration:derived:group:with:root:groups} and Proposition \ref{prop:favourable:geometric:description}.
\end{proof}

\section{Generating set of a maximal pro-\texorpdfstring{$p$}{p} subgroup} 
\label{sec:generating:set}

As before, $G$ is an almost-$K$-simple quasi-split simply-connected $K$-group and $P$ is a maximal pro-$p$ subgroup of $G(K)$.
In Corollary \ref{cor:minimal:number:topological:generators}, we obtain the minimal number of topological generators of the pro-$p$ Sylow $P$ in the various cases. 

In order to give explicit formulas for these numbers, we introduce the following integers.
We denote by $e'$ the ramification index of $L'/L_d$ and by $f'$ its residue degree; we let $m = \log_p(\mathrm{Card}(\kappa_K))$ so that $\kappa_K \simeq \mathbb{F}_{p^m}$.
Moreover, when $G$ is assumed to be almost-$K$-simple instead of absolutely simple, we denote by $e$ the ramification index of $L_d / K$ and by $f$ its residue degree.

\subsection{The Frattini subgroup}
\label{sec:frattini:subgroup}

In order to compute a minimal generating set of the maximal pro-$p$ subgroup $P$, we know by \cite[1.9]{DixonDuSautoyMannSegal} that is suffices to compute a minimal generating set of the $p$-elementary commutative group $P / \mathrm{Frat}(P)$, where $\mathrm{Frat}(P)$ denotes the Frattini subgroup of $P$.
According to \cite[3.2.9]{Loisel-maximaux}, we know that $P = \left( \prod_{a \in \Phi_{\mathrm{nd}}^-} U_{a,\mathbf{c}} \right) T(K)_b^+ \left(\prod_{a \in \Phi_{\mathrm{nd}}^+} U_{a,\mathbf{c}} \right) $ as directly generated product, where $\mathbf{c}$ is a suitable alcove of $X(G,K)$.
Up to conjugation, we can --- and do --- assume that $\mathbf{c} = \mathbf{c}_{\mathrm{af}}$.

We want to describe the Frattini subgroup $\mathrm{Frat}(P)$, in the same way, in terms of valued root groups $U_{a,\widehat{l_a}}$, with suitable values $\widehat{l_a} \in \mathbb{R}$, and a subgroup of $T(K)_b^+$ that we have to determinate.
Since $P$ is a pro-$p$ group, by \cite[1.13]{DixonDuSautoyMannSegal}, we have $\mathrm{Frat}(P) = \overline{P^p [P,P]}$.
Hence $P / \mathrm{Frat}(P)$ is a $\mathbb{Z} / p \mathbb{Z}$ vector space of dimension $d(P)$ that we want to compute explicitly.

\begin{Thm}[{Descriptions of the Frattini subgroup of a maximal pro-$p$ subgroup: the reduced case}]
\label{thm:frattini:descriptions:reduced:case}
We suppose that the relative root system $\Phi$ is reduced and that $p \neq 2$.
If $\Phi$ is of type $G_2$, we require that $p \geq 5$.
Then:

\textbf{Profinite description:}
The pro-$p$ group $P$ is topologically of finite type and, in particular, $\mathrm{Frat}(P) = P^p [P,P]$.
Moreover, when $K$ is of characteristic $p > 0$, we have $P^p \subset [P,P]$.

\textbf{Description by the valued root groups datum:}
For any $a \in \Phi$, we set:
$$V_{a,\mathbf{c}} = \left\{\begin{array}{cl}U_{a,f_{\mathbf{c}}(a)^+}&\text{ if } a \in \Delta \cup \{-\theta\} \\
U_{a,\mathbf{c}} & \text{ otherwise}\end{array}\right.$$
This group depends only on the root $a \in \Phi$ and the alcove $\mathbf{c} \subset \mathbb{A}$, not on the chosen basis $\Delta$.

We have the following writing, as directly generated product:
$$
\mathrm{Frat}(P) = 
\left( \prod_{-a \in \Phi^{+}} V_{-a,\mathbf{c}} \right)
T(K)_b^+
\left( \prod_{a \in \Phi^+} V_{a,\mathbf{c}} \right)
$$

\textbf{Geometrical description:}
The Frattini subgroup $\mathrm{Frat}(P)$ is the maximal pro-$p$ subgroup of the pointwise stabilizer in $G(K)$ of the combinatorial ball centered at $\mathbf{c}$ of radius $1$.
\end{Thm}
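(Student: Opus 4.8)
The plan is to prove Theorem \ref{thm:frattini:descriptions:reduced:case} by assembling three separate statements, each of which is essentially already available from the preceding sections. First, I would establish the \textbf{profinite description}. The group $P$ is a closed subgroup of the compact group $\mathfrak{G}_{\mathbf{c}}^\circ(\mathcal{O}_K)$, hence compact; being pro-$p$ and topologically generated by the finitely many valued root groups $U_{a,\mathbf{c}}$ (each of which, being isomorphic to an $\mathcal{O}_{L_a}$-module of finite rank, is topologically of finite type) together with $T(K)_b^+$ (topologically of finite type since $T$ is an induced torus and $1+\mathfrak{m}_{L_a}$ is), the group $P$ is topologically of finite type. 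For a pro-$p$ group of finite type, $\mathrm{Frat}(P) = \overline{P^p[P,P]} = P^p[P,P]$ by \cite[1.13 and 1.14]{DixonDuSautoyMannSegal}. The equal-characteristic claim $P^p \subset [P,P]$ I would reduce to the rank-$1$ statements: by Proposition \ref{prop:frattini:computation:reduced:case} (last line) and the directly-generated-product decomposition of $P$, every generator $x_a(u)^p$ and $\widetilde{a}(t)^p$ of $P^p$ lies in $[P,P]$ when $\mathrm{char}(K) = p$, and the argument at the end of the proof of Proposition \ref{prop:frattini:computation:reduced:case} (computing in $P/[P,P]$, which is abelian) shows $g^p \in [P,P]$ for all $g \in P$.

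Next, the \textbf{geometrical description}: $\mathrm{Frat}(P)$ is the maximal pro-$p$ subgroup $Q$ of the pointwise stabilizer of $\mathrm{cl}(B(\mathbf{c},1))$. One inclusion is Proposition \ref{prop:favourable:geometric:description}: since $\Phi$ is reduced (and $p \neq 2$, with $p \geq 5$ in type $G_2$ so that Hypothesis \ref{hypothesis:on:residue:characteristic} holds), $\mathrm{Frat}(P)$ fixes $\mathrm{cl}(B(\mathbf{c},1))$ pointwise; being a pro-$p$ subgroup of $\widehat{P}_{B(\mathbf{c},1)}$ it is contained in $Q$ by Proposition \ref{prop:combinatorial:ball:fixator}. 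For the reverse inclusion $Q \subset \mathrm{Frat}(P)$, I would use Corollary \ref{cor:majoration:frattini} to write $Q = T(K)_b^+ \prod_{a \in \Phi} V_{a,\mathbf{c}}$ (here Proposition \ref{prop:combinatorial:ball:fixator} identifies $Q$ with exactly that product), and then show each of the generating pieces lies in $\mathrm{Frat}(P)$: Theorem \ref{thm:minoration:derived:group:with:root:groups} gives $U_{a,f'_{\mathbf{c}}(a)^+} \subset [P,P]$ for $a \in \Delta \cup \{-\theta\}$ and $U_{a,\mathbf{c}} \subset [P,P]$ for the other roots, i.e. $V_{a,\mathbf{c}} \subset [P,P] \subset \mathrm{Frat}(P)$ for every $a \in \Phi$; and $T(K)_b^+ \subset \mathrm{Frat}(P)$ follows from Proposition \ref{prop:frattini:computation:reduced:case} (which shows $T^a(K)_b^+ \subset H^p[H,H]$ for $H = P$, applied to each simple root $a$, these generating $T(K)_b^+$ since $T$ is simply connected).

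Finally, the \textbf{description by the valued root groups datum}: combining the two inclusions just proved, $\mathrm{Frat}(P) = Q = T(K)_b^+ \prod_{a\in\Phi} V_{a,\mathbf{c}}$. To upgrade the product $Q = T(K)_b^+ \prod_{a \in \Phi} V_{a,\mathbf{c}}$ to a \emph{directly generated} product with the factors ordered as $\left(\prod_{-a\in\Phi^+} V_{-a,\mathbf{c}}\right) T(K)_b^+ \left(\prod_{a\in\Phi^+} V_{a,\mathbf{c}}\right)$, I would invoke the corresponding directly-generated-product decomposition of $P$ itself from \cite[3.2.9]{Loisel-maximaux} (or \cite[7.1.8]{BruhatTits1}): $\mathrm{Frat}(P)$ is a subgroup of $P$ lying in each factor's closure via the Iwahori factorization, and since $V_{a,\mathbf{c}} = \mathrm{Frat}(P) \cap U_a(K)$ for each $a$ (by the Corollary following Theorem \ref{thm:minoration:derived:group:with:root:groups}) and $T(K)_b^+ = \mathrm{Frat}(P) \cap T(K)$, uniqueness of the factorization in $P$ restricts to uniqueness in $\mathrm{Frat}(P)$. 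The basis-independence of $V_{a,\mathbf{c}}$ is immediate from the equivalent descriptions $V_{a,\mathbf{c}} = U_{a,f_{\mathbf{c}}(a)^+}$ exactly when $\mathcal{H}_{a,f_{\mathbf{c}}(a)}$ bounds $\mathbf{c}$, a condition intrinsic to $(a,\mathbf{c})$. The main obstacle I anticipate is bookkeeping the ordered directly-generated-product statement carefully — making sure the commutator inclusions of Proposition \ref{prop:favourable:geometric:description} and the quasi-concavity estimates \cite[4.5.10]{BruhatTits2} really do give that the product map is bijective and not merely surjective — but this is routine once one mirrors the analogous argument for $P$.
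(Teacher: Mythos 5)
Your overall skeleton (upper bound via Corollary \ref{cor:majoration:frattini} and Proposition \ref{prop:combinatorial:ball:fixator}, lower bound via Theorem \ref{thm:minoration:derived:group:with:root:groups} for the unipotent pieces and Proposition \ref{prop:frattini:computation:reduced:case} for $T^a(K)_b^+$) is the same as the paper's, but your first step contains a genuine gap. You justify topological finite generation of $P$ by claiming that each valued root group $U_{a,\mathbf{c}}\simeq(\mathcal{O}_{L_a},+)$ and the group $T(K)_b^+\simeq\prod_a(1+\mathfrak{m}_{L_a})$ are themselves topologically of finite type. This is true in mixed characteristic but \emph{false} when $\mathrm{char}(K)=p$: the additive group of $\mathbb{F}_q[[t]]$ is an infinite product of copies of $\mathbb{Z}/p\mathbb{Z}$ (its Frattini subgroup is trivial), and $1+\mathfrak{m}_L$ is an infinite product of copies of $\mathbb{Z}_p$; neither is topologically finitely generated. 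Since the equal-characteristic case is explicitly part of the statement, your derivation of $\mathrm{Frat}(P)=P^p[P,P]$ from \cite[1.13--1.14]{DixonDuSautoyMannSegal} has no valid premise there. The paper's logic runs in the opposite direction: one first proves the chain $Q\subset P^p[P,P]\subset\mathrm{Frat}(P)\subset Q$ with $Q=T(K)_b^+\prod_a V_{a,\mathbf{c}}$ an \emph{open} subgroup, which yields $\mathrm{Frat}(P)=P^p[P,P]$ directly and, because the Frattini quotient is then finite, the topological finite generation of $P$ as a consequence. Your later steps actually contain all the ingredients for this chain, so the proof can be repaired by reordering, but as written the profinite description is not proved.

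Two smaller points. First, Theorem \ref{thm:minoration:derived:group:with:root:groups} assumes the relative rank is at least $2$, whereas the reduced-case theorem also covers rank $1$ (type $A_1$); for that case the inclusions $U_{a,\widehat{l_a}}\subset P^p[P,P]$ must come from Proposition \ref{prop:frattini:computation:reduced:case} (as the paper does), not from the higher-rank theorem you cite. Second, your extension of the equal-characteristic argument $g^p\in[P,P]$ from the rank-$1$ computation to general $P$ needs the full Iwahori-type factorization of $P$ and the fact that $T(K)_b^+\subset[P,P]$ in characteristic $p$; this works, but it should be said rather than delegated wholesale to the rank-$1$ proposition.
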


\begin{proof}

For any $a \in \Phi$, we let $l_a = f_{\mathbf{c}}(a)$, so that $l_a \in \Gamma_a$ for any $a \in \Delta \cup \{-\theta\}$ and the map $a \mapsto l_a$ is concave.
We define $\widehat{l_a} = \left\{\begin{array}{cl} l_a^+ & \text{ if } a \in \Delta \cup \{ - \theta \}\\
l_a & \text{ otherwise}
\end{array}\right.$.
We define $Q = \prod_{a \in \Phi^-} U_{a,\widehat{l_a}} \cdot T(K)_b^+ \cdot \prod_{a \in \Phi^+} U_{a,\widehat{l_a}}$.
We prove the chain of inclusions $Q \subset P^p [P,P] \subset \mathrm{Frat}(P) \subset Q$.

The inclusion $P^p [P,P] \subset \overline{P^p [P,P]} = \mathrm{Frat}(P)$ is immediate.

By Corollary \ref{cor:majoration:frattini}, we have $\mathrm{Frat}(P) \subset Q$.

If the reduced irreducible root system $\Phi$ is of rank $l \geq 2$, by Theorem \ref{thm:minoration:derived:group:with:root:groups}, we have $\forall a \in \Phi,\ [P,P] \supset U_{a,\widehat{l_a}}$.
If $\Phi$ is of rank $1$, by Proposition \ref{prop:frattini:computation:reduced:case}, we have $\forall a \in \Phi,\ P^p[P,P] \supset U_{a,\widehat{l_a}}$.
Moreover, by Proposition \ref{prop:frattini:computation:reduced:case}, we also have $T^a(K)_b^+ \subset P^p[P,P]$ for any $a \in \Phi$.
Because $G$ is a simply-connected semisimple group, $T(K)_b^+$ is generated by the groups $T^a(K)_b^+$, hence $T(K)_b^+ \subset P^p[P,P]$.
As a consequence, $Q \subset P^p[P,P]$.

Hence, we obtain (2): $Q = \mathrm{Frat}(P) = P^p[P,P]$.

Moreover, if $K$ is of positive characteristic, by Proposition \ref{prop:frattini:computation:reduced:case} one can replace $[P,P]P^p$ by $[P,P]$ so that we get (1): $Q = [P,P]$.

(3) By Proposition \ref{prop:combinatorial:ball:fixator}, we know that $\mathrm{Frat}(P) = Q$ is the maximal pro-$p$ subgroup of the pointwise stabilizer of the combinatorial closure of the combinatorial unit ball centered in $\mathbf{c}$.
\end{proof}

In the case of a non-reduced root system $\Phi$, we have seen that computation of $[P,P]$ is different from the reduced case because of non-commutativity of root groups.
We have to study this case separately.

\begin{Thm}[{Descriptions of the Frattini subgroup of a maximal pro-$p$ subgroup: the non-reduced case}] 
\label{thm:frattini:descriptions:non:reduced:case}
We suppose that $\Phi$ is a non-reduced root system of rank $l \geq 2$, and that $p \geq 5$.
Then:

\textbf{Profinite description:}
The pro-$p$ group $P$ is topologically of finite type and, in particular, $\mathrm{Frat}(P) = P^p [P,P]$.

\textbf{Description by the valued root groups datum:}
Let $a \in \Phi_{\mathrm{nd}}$ be a non-divisible root.
If $a \not \in \Delta \cup \{-\theta\}$, we set 
$V_{a,\mathbf{c}} = U_{a,\mathbf{c}}$.

If $a \in \Delta \cup \{-\theta\}$, we set:
$$V_{a,\mathbf{c}} = \left\{\begin{array}{cl}
U_{a,f_{\mathbf{c}}(a)^+} & \text{ if } a \text{ is non-multipliable} \\
U_{a,f'_{\mathbf{c}}(a)^+} & \text{ if } a \text{ is multipliable and } L'/L_d \text{ is ramified}\\
U_{a,f'_{\mathbf{c}}(a)^+} & \text{ if } a \text{ is multipliable, } L'/L_2 \text{ is unramified and } f'_{\mathbf{c}}(a) \not\in \Gamma'_a \\
U_{a,f'_{\mathbf{c}}(a)^+} U_{2a,2f'_{\mathbf{c}}(a)} & \text{ if } a \text{ is  multipliable, } L'/L_2 \text{ is unramified and } f'_{\mathbf{c}}(a) \in \Gamma'_a \\
\end{array}\right.$$

Then $\displaystyle \mathrm{Frat}(P) = \left(\prod_{a \in \Phi^-_{\mathrm{nd}}} V_{a,\mathbf{c}} \right) T(K)_b^+ \left( \prod_{a \in \Phi_{\mathrm{nd}}^+} V_{a,\mathbf{c}} \right)$.
\end{Thm}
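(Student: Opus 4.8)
The plan is to mimic exactly the three-part argument used for Theorem~\ref{thm:frattini:descriptions:reduced:case}: reduce to a distinguished alcove, establish the ``description by the valued root groups datum'' by sandwiching $\mathrm{Frat}(P)$ between the group
$Q := \bigl(\prod_{a\in\Phi^-_{\mathrm{nd}}} V_{a,\mathbf{c}}\bigr)\, T(K)_b^+ \,\bigl(\prod_{a\in\Phi^+_{\mathrm{nd}}} V_{a,\mathbf{c}}\bigr)$
and itself, and then read off the profinite statement. Since all maximal pro-$p$ subgroups of $G(K)$ are conjugate, I may --- and will --- assume $\mathbf{c} = \mathbf{c}_{\mathrm{af}}$. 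The point of this particular choice is twofold: $f'_{\mathbf{c}}(a) = 0 \in \Gamma'_a$ for every $a\in\Delta$, and for every multipliable panel root the invariant $\varepsilon$ of Proposition~\ref{prop:non:reduced:frattini:rank:one} vanishes (cf.\ the Remark following that proposition). Throughout, $p\ge 5$ guarantees Hypothesis~\ref{hypothesis:on:residue:characteristic} (as $d'=2$ here) and also the hypotheses of the rank-one non-reduced computations.

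First I would prove $\mathrm{Frat}(P)\subseteq Q$. This is essentially Corollary~\ref{cor:majoration:frattini}; what needs checking is that the groups $V_{a,\mathbf{c}}$ defined there agree, for $\mathbf{c}=\mathbf{c}_{\mathrm{af}}$, with those in the present statement. In the non-multipliable and unramified cases this is immediate once one notes $f_{\mathbf{c}}(a)=f'_{\mathbf{c}}(a)\in\Gamma'_a$, so $f_{\mathbf{c}}(a)^+=f'_{\mathbf{c}}(a)^+$ and $2f_{\mathbf{c}}(a)=2f'_{\mathbf{c}}(a)$; the only genuinely different line is the ramified multipliable case, where Corollary~\ref{cor:majoration:frattini} gives $U_{a,f_{\mathbf{c}}(a)^+}U_{2a,\mathbf{c}}$, and since $\varphi_{2a}=2\varphi_a$ on $U_{2a}$ one has $U_{2a,\mathbf{c}}\subseteq U_{a,f'_{\mathbf{c}}(a)^+}$, so this collapses to $U_{a,f'_{\mathbf{c}}(a)^+}$, matching the statement.

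Then comes the reverse inclusion $Q\subseteq P^p[P,P]$, which splits into a unipotent part and a toric part. For the unipotent part, Theorem~\ref{thm:minoration:derived:group:with:root:groups} (applicable since $\Phi$ has rank $\ge 2$) yields $U_{a,\mathbf{c}}=U_{a,f'_{\mathbf{c}}(a)}\subseteq[P,P]$ for $a\notin\Delta\cup\{-\theta\}$ and $U_{a,f'_{\mathbf{c}}(a)^+}\subseteq[P,P]$ for $a\in\Delta\cup\{-\theta\}$, accounting for the non-divisible layers of every $V_{a,\mathbf{c}}$; the divisible layers $U_{2a,2f'_{\mathbf{c}}(a)}$ that appear in $V_{a,\mathbf{c}}$ when $a$ is a multipliable panel root with $L_a/L_{2a}$ unramified and $f'_{\mathbf{c}}(a)\in\Gamma'_a$ are then produced by Lemma~\ref{lem:non:reduced:derived:valued:root:group}, which gives $[U_{a,\mathbf{c}},U_{a,\mathbf{c}}]=U_{2a,2\lceil f_{\mathbf{c}}(a)\rceil}=U_{2a,2f'_{\mathbf{c}}(a)}\subseteq[P,P]$ (when $L_a/L_{2a}$ is ramified the divisible layer already sits inside $U_{a,f'_{\mathbf{c}}(a)^+}$ by the same $\varphi_{2a}=2\varphi_a$ argument, so nothing extra is needed). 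For the toric part, I would show $T^a(K)_b^+\subseteq P^p[P,P]$ for every $a\in\Phi_{\mathrm{nd}}$ by a rank-one computation inside $G^a(K)$: when $a$ is non-multipliable this is Proposition~\ref{prop:frattini:computation:reduced:case}; when $a$ is multipliable it is Propositions~\ref{prop:non:reduced:frattini:rank:one} and~\ref{prop:improvement:upper:bound:bounded:torus:non:reduced:case} applied to $H=P$ with a suitable $l$, the extra hypothesis of the latter --- that $U_{a,l+1}$ and $U_{-a,-l+\frac{1}{2}}$ already lie in $P^p[P,P]$ --- being supplied by the unipotent part just established (these root groups lie strictly below the panel level, hence are in $[P,P]$ by Theorem~\ref{thm:minoration:derived:group:with:root:groups}, together with Lemma~\ref{lem:non:reduced:derived:valued:root:group} for their divisible layers), and since $\varepsilon=0$ the conclusion is the sharp level $l''=1$, i.e.\ $T^a(K)_b^1=T^a(K)_b^+\subseteq P^p[P,P]$. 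Because $G$ is simply connected, $T(K)_b^+$ is generated by the $T^a(K)_b^+$ (as in \cite[3.2.10]{Loisel-maximaux}), whence $T(K)_b^+\subseteq P^p[P,P]$, and the sandwich $Q\subseteq P^p[P,P]\subseteq\overline{P^p[P,P]}=\mathrm{Frat}(P)\subseteq Q$ closes.

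From $Q=\mathrm{Frat}(P)=P^p[P,P]$ the remaining assertions follow: $Q=\mathrm{Frat}(P)$ is closed and has finite index in $P$ (each quotient $U_{a,\mathbf{c}}/V_{a,\mathbf{c}}$ is finite, the residue field being finite), so $P$ is topologically of finite type and $\mathrm{Frat}(P)=\overline{P^p[P,P]}=P^p[P,P]$ by \cite[1.13]{DixonDuSautoyMannSegal}; and the directly generated product decomposition is inherited from the one for $P=\bigl(\prod_{a\in\Phi^-_{\mathrm{nd}}}U_{a,\mathbf{c}}\bigr)T(K)_b\bigl(\prod_{a\in\Phi^+_{\mathrm{nd}}}U_{a,\mathbf{c}}\bigr)$ of \cite[3.2.9]{Loisel-maximaux}, since $Q$ is obtained by shrinking each direct factor (replacing $U_{a,\mathbf{c}}$ by $V_{a,\mathbf{c}}$ and $T(K)_b$ by $T(K)_b^+$) and a product of subgroups of the factors of a directly generated product is again directly generated. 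The main obstacle --- and the only place where the non-reduced case is more than a transcription of the reduced one --- is the toric step: one must track the divisible versus non-divisible layers of each multipliable root group in the ramified and unramified cases and, above all, extract the \emph{exact} congruence level $T^a(K)_b^+$ rather than some deeper $T^a(K)_b^{l''}$, which is precisely what forces the normalisation to $\mathbf{c}_{\mathrm{af}}$ (so that $\varepsilon=0$) and the use of Proposition~\ref{prop:improvement:upper:bound:bounded:torus:non:reduced:case} fed by the higher-rank root-group inclusions of Theorem~\ref{thm:minoration:derived:group:with:root:groups}.
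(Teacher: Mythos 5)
Your proposal is correct and takes essentially the same route as the paper's own proof: the same sandwich $Q\subseteq P^p[P,P]\subseteq\mathrm{Frat}(P)\subseteq Q$, with Corollary~\ref{cor:majoration:frattini} for the upper bound, Theorem~\ref{thm:minoration:derived:group:with:root:groups} together with Lemma~\ref{lem:non:reduced:derived:valued:root:group} for the unipotent part, Propositions~\ref{prop:frattini:computation:reduced:case}, \ref{prop:non:reduced:frattini:rank:one} and \ref{prop:improvement:upper:bound:bounded:torus:non:reduced:case} (with $\varepsilon=0$ at $\mathbf{c}_{\mathrm{af}}$) for $T(K)_b^+$, and the openness/finite-index argument for the profinite statement, and you even spell out the verification of the extra hypothesis of Proposition~\ref{prop:improvement:upper:bound:bounded:torus:non:reduced:case}, which the paper leaves implicit. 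One small caveat: your claimed exact matching of the $V_{a,\mathbf{c}}$ of Corollary~\ref{cor:majoration:frattini} with those of the theorem is imprecise in the unramified multipliable case (the corollary, as printed, attaches the extra $U_{2a}$ factor to the ramified rather than the unramified case), but this only concerns the one-sided inclusion $\mathrm{Frat}(P)\subseteq Q$, and since your lower bound independently produces $U_{2a,2f'_{\mathbf{c}}(a)}\subseteq[P,P]$ via Lemma~\ref{lem:non:reduced:derived:valued:root:group}, the argument is unaffected.
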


\begin{proof}
Let $\displaystyle Q = \left(\prod_{a \in \Phi^-_{\mathrm{nd}}} V_{a,\mathbf{c}} \right) T(K)_b^+ \left( \prod_{a \in \Phi_{\mathrm{nd}}^+} V_{a,\mathbf{c}} \right)$.
By Corollary \ref{cor:majoration:frattini}, we have $\mathrm{Frat}(P) \subset Q$.

If $\Phi$ is of rank $l \geq 2$, by Theorem \ref{thm:minoration:derived:group:with:root:groups} and Lemma \ref{lem:non:reduced:derived:valued:root:group}, we have $\forall a \in \Phi,\ [P,P] \supset V = \prod_{a\in \Phi_{\mathrm{nd}}} V_{a,\mathbf{c}}$.
For the multipliable simple root $a$, by Proposition \ref{prop:non:reduced:frattini:rank:one} and Proposition \ref{prop:improvement:upper:bound:bounded:torus:non:reduced:case}, because $f_{\mathbf{c}_{\mathrm{af}}}(a) =0$, we have $\varepsilon=0$, and so $T^a(K)_b^{+} \subset [P,P]$.
For any non-multipliable root $a \in \Phi$, 
by Propositions \ref{prop:frattini:computation:reduced:case} and \ref{prop:improvement:upper:bound:bounded:torus:non:reduced:case}, we have $T^a(K)_b^+ \subset [P,P]$.
Hence, $T(K)_b^+$ is a subgroup of $\mathrm{Frat}(P)$.
As a consequence, we have $Q \subset \mathrm{Frat}(P)$.

Moreover, because $Q$ is an open subgroup of $P$ (of finite index), the Frattini subgroup $\mathrm{Frat}(P) = Q$ is open in $P$.
By \cite[1.14]{DixonDuSautoyMannSegal}, we know that $P$ is topologically of finite type.
By \cite[1.20]{DixonDuSautoyMannSegal}, we deduce $\mathrm{Frat}(P) =P^p [P,P]$.
\end{proof}

\subsection{Minimal number of generators}
\label{sec:minimal:number}

\begin{Cor}[{of Theorems \ref{thm:frattini:descriptions:reduced:case} and \ref{thm:frattini:descriptions:non:reduced:case}}]
\label{cor:frattini:quotient}
We assume $p \neq 2$.

If the root system $\Phi$ is reduced, we assume that, at least, $p \neq 3$ or $\Phi$ is not of type $G_2$.
If the root system $\Phi$ is non-reduced, we assume that $p \geq 5$ and that $\Phi$ is not of rank $1$.

Then $P/\mathrm{Frat}(P)$ is isomorphic to the following direct product of $p$-elementary commutative groups: $\prod_{a \in \Phi} U_{a,\mathbf{c}} / V_{a,\mathbf{c}}$, where the groups $V_{a,\mathbf{c}}$ for $a \in \Phi$ are defined in Theorems \ref{thm:frattini:descriptions:reduced:case} and \ref{thm:frattini:descriptions:non:reduced:case}.
\end{Cor}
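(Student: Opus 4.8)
The plan is to read the statement off from the two preceding theorems together with the directly generated product decomposition of $P$ itself. Recall from \cite[3.2.9]{Loisel-maximaux} that, for the alcove $\mathbf{c}$ fixed by $P$ and a fixed ordering of the non-divisible roots,
$$P = \Big( \prod_{a \in \Phi_{\mathrm{nd}}^-} U_{a,\mathbf{c}} \Big)\, T(K)_b^+ \, \Big( \prod_{a \in \Phi_{\mathrm{nd}}^+} U_{a,\mathbf{c}} \Big)$$
as a directly generated product, while Theorems \ref{thm:frattini:descriptions:reduced:case} and \ref{thm:frattini:descriptions:non:reduced:case} give, for the same $\mathbf{c}$ and the same ordering,
$$\mathrm{Frat}(P) = \Big( \prod_{a \in \Phi_{\mathrm{nd}}^-} V_{a,\mathbf{c}} \Big)\, T(K)_b^+ \, \Big( \prod_{a \in \Phi_{\mathrm{nd}}^+} V_{a,\mathbf{c}} \Big),$$
again as a directly generated product, with $V_{a,\mathbf{c}} \subseteq U_{a,\mathbf{c}}$ for every root, with the whole torus factor $T(K)_b^+$ occurring in both decompositions, and in particular with $T(K)_b^+ \subseteq \mathrm{Frat}(P)$. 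The first thing to check is that these two decompositions are \emph{nested}: one must confirm that both theorems can be stated with one common ordering of $\Phi_{\mathrm{nd}}$ making each factor $V_{a,\mathbf{c}}$ sit inside the matching factor $U_{a,\mathbf{c}}$ of the decomposition of $P$. Granting this, uniqueness of the coordinates in the decomposition of $P$ immediately yields $U_{a,\mathbf{c}} \cap \mathrm{Frat}(P) = V_{a,\mathbf{c}}$ for every non-divisible root $a$.

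Next I would use that, $P$ being a pro-$p$ group which the two theorems show to be topologically of finite type, the quotient $P/\mathrm{Frat}(P)$ is a finite $\mathbb{F}_p$-vector space with $\mathrm{Frat}(P) = \overline{P^p[P,P]}$ closed \cite[1.9, 1.13, 1.20]{DixonDuSautoyMannSegal}. Write $\pi \colon P \to P/\mathrm{Frat}(P)$ for the quotient map. Since $T(K)_b^+ \subseteq \mathrm{Frat}(P)$, the decomposition of $P$ shows that the images $\pi(U_{a,\mathbf{c}})$, $a \in \Phi_{\mathrm{nd}}$, span $P/\mathrm{Frat}(P)$, and by the previous step $\pi(U_{a,\mathbf{c}}) \cong U_{a,\mathbf{c}}/V_{a,\mathbf{c}}$. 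It remains to see these subspaces are in direct sum. Suppose $\sum_{a} \pi(u_a) = 0$ with $u_a \in U_{a,\mathbf{c}}$; forming, in the fixed standard ordering, the element $g = \big(\prod_{a \in \Phi_{\mathrm{nd}}^-} u_a\big)\big(\prod_{a \in \Phi_{\mathrm{nd}}^+} u_a\big) \in P$, its image $\pi(g)$ equals $\sum_a \pi(u_a) = 0$ (the quotient being abelian), so $g \in \mathrm{Frat}(P)$. Comparing the expression of $g$ arising from the decomposition of $P$ with that from the decomposition of $\mathrm{Frat}(P)$ and invoking uniqueness of coordinates gives $u_a \in V_{a,\mathbf{c}}$ for all $a$, i.e.\ $\pi(u_a) = 0$. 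Hence $P/\mathrm{Frat}(P) = \bigoplus_{a \in \Phi_{\mathrm{nd}}} U_{a,\mathbf{c}}/V_{a,\mathbf{c}}$.

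To conclude, each factor $U_{a,\mathbf{c}}/V_{a,\mathbf{c}}$ is identified, through Proposition \ref{prop:first:root:group:quotient}, with a $\kappa_{L_a}$-vector space: it is trivial when $a \notin \Delta \cup \{-\theta\}$ (since then $V_{a,\mathbf{c}} = U_{a,\mathbf{c}}$); it is $X_{a,f_{\mathbf{c}}(a)}$ when $a \in \Delta \cup \{-\theta\}$ is non-multipliable, or is multipliable with $L'/L_2$ unramified and $f'_{\mathbf{c}}(a)\notin\Gamma'_a$; and it is $X_{a,f'_{\mathbf{c}}(a)}/X_{2a,2f'_{\mathbf{c}}(a)}$ in the remaining multipliable cases. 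In every case it is a $p$-elementary commutative group. Since the divisible roots contribute only trivial factors, the sum may equally be indexed by $\Phi$, which yields the asserted isomorphism $P/\mathrm{Frat}(P) \simeq \prod_{a \in \Phi} U_{a,\mathbf{c}}/V_{a,\mathbf{c}}$ as a direct product of $p$-elementary commutative groups.

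I expect the only genuine difficulty to lie in the bookkeeping of the first paragraph: pinning down one ordering of $\Phi_{\mathrm{nd}}$ compatible with both directly generated product decompositions, and getting the right identification of $U_{a,\mathbf{c}}/V_{a,\mathbf{c}}$ in the ramified multipliable case, where $V_{a,\mathbf{c}} = U_{a,f'_{\mathbf{c}}(a)^+}U_{2a,2f'_{\mathbf{c}}(a)}$ is not a single term of the filtration of $U_a(K)$. Once Proposition \ref{prop:first:root:group:quotient} is used to reinterpret that quotient, the direct-sum argument above is purely formal.
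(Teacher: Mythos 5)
Your proof is correct and is essentially the paper's argument: the paper simply packages your uniqueness-of-coordinates step as a multiplication homomorphism $\mu$ from the abstract direct product $B=\bigl(\prod_{a\in\Phi^-}U_{a,\mathbf{c}}\bigr)\times T(K)_b^+\times\bigl(\prod_{a\in\Phi^+}U_{a,\mathbf{c}}\bigr)$ onto $P/\mathrm{Frat}(P)$ (well defined since the target is abelian, surjective by the directly generated product decomposition of $P$ from \cite[3.2.9]{Loisel-maximaux}), and identifies $\ker\mu$ with the product of the $V_{a,\mathbf{c}}$ and $T(K)_b^+$ via the decompositions of $\mathrm{Frat}(P)$ in Theorems \ref{thm:frattini:descriptions:reduced:case} and \ref{thm:frattini:descriptions:non:reduced:case}, which is exactly your nesting argument. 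One small slip in your closing identification: in the ramified multipliable case with $f'_{\mathbf{c}}(a)\notin\Gamma'_a$ (this occurs, e.g.\ for $-\theta$ with the fundamental alcove), Theorem \ref{thm:frattini:descriptions:non:reduced:case} gives $V_{a,\mathbf{c}}=U_{a,f'_{\mathbf{c}}(a)^+}$, so the factor is $X_{a,f'_{\mathbf{c}}(a)}$, which by Lemma \ref{lem:equivalence:affine:root:systems} coincides with the image of $X_{2a,2f'_{\mathbf{c}}(a)}$ and is nontrivial, whereas $X_{a,f'_{\mathbf{c}}(a)}/X_{2a,2f'_{\mathbf{c}}(a)}$ is trivial. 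This does not affect the present corollary (each factor is $p$-elementary commutative in any case, for instance as a direct factor of the $p$-elementary group $P/\mathrm{Frat}(P)$), but the wrong identification would corrupt the dimension count in Corollary \ref{cor:minimal:number:topological:generators}.
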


\begin{proof}
Let $A = \prod_{a \in \Phi} U_{a,\mathbf{c}} / V_{a,\mathbf{c}}$ be the considered direct product of quotient groups.
Let $B = \left(\prod_{a \in \Phi^-} U_{a,\mathbf{c}} \right) \times T(K)_b^+ \times \left(\prod_{a \in \Phi^+} U_{a,\mathbf{c}} \right)$ be the direct product of the valued root groups with respect to $\mathbf{c} = \mathbf{c}_{\mathrm{af}}$, and of the maximal pro-$p$ subgroup of the bounded torus.
Let $C = \left(\prod_{a \in \Phi^-} V_{a,\mathbf{c}} \right) \times \{1\} \times \left(\prod_{a \in \Phi^+} U_{a,\mathbf{c}} \right)$ be the direct product of the valued root groups provided by Theorems \ref{thm:frattini:descriptions:reduced:case} and \ref{thm:frattini:descriptions:non:reduced:case}.

We want to define a surjective group homomorphism $B \rightarrow P / \mathrm{Frat}(P)$.
Let $\pi : P \rightarrow P / \mathrm{Frat}(P)$ be the quotient homomorphism.
For any inclusion $j_a : U_{a,\mathbf{c}} \rightarrow P$ (resp. $j_0 : T(K)_b^+ \rightarrow P$), we define a group homomorphism $\phi_a = \pi \circ j_a : U_{a,\mathbf{c}} \rightarrow P / \mathrm{Frat}(P)$ (resp. $\phi_0 = \pi \circ j_0)$.
Since $P/\mathrm{Frat}(P)$ is commutative, the multiplication map induces a group homomorphism $\mu : B \rightarrow P/\mathrm{Frat}(P)$. 
Applying \cite[3.2.9]{Loisel-maximaux} to $P$, we deduce that the homomorphism $\mu$ is surjective.

By Theorems \ref{thm:frattini:descriptions:reduced:case}(2) and \ref{thm:frattini:descriptions:non:reduced:case}(2), we get $\ker \mu = C$.
Passing to the quotient, we deduce a group isomorphism $B / C \simeq P / \mathrm{Frat}(P)$.
Furthermore, there is a canonical group isomorphism $A \simeq B / C$.
Hence $P / \mathrm{Frat}(P)$ is isomorphic to $A$.
\end{proof}

Since $P / \mathrm{Frat}(P)$ is a $p$-elementary commutative group,
we deduce that so are the quotient groups $U_{a,\mathbf{c}} / V_{a,\mathbf{c}}$.
Hence, we can compute their dimension as $\mathbb{F}_p$-vector space.
According to \cite[1.9]{DixonDuSautoyMannSegal}, we know that the minimal number of elements in a generating set of a pro-$p$ group is $d(P) = \mathrm{dim}_{\mathbb{F}_p}\big(P/\mathrm{Frat}(P)\big)$.
It can also be computed by $d(P) = \mathrm{dim}_{\mathbb{Z}/p \mathbb{Z}}\big(H^1(P, \mathbb{Z} / p \mathbb{Z})\big)$ according to \cite[4.2 Corollaire 5]{SerreCohomologieGaloisienne}.
We apply this to our maximal pro-$p$ subgroup $P$ of $G(K)$.

\begin{Cor}
\label{cor:minimal:number:topological:generators}
As above we assume that $K$ is a non-Archimedean local field of residue characteristic $p$.
We assume that $G$ is an almost-$K$-simple simply-connected quasi-split $K$-group and that $p \neq 2$.
We keep notations of \ref{not:L:prime}.
Let $n$ be the rank of an irreducible subsystem of the absolute root system $\widetilde{\Phi}(G_{\widetilde{K}}, \widetilde{K})$ and $l$ be the rank of the irreducible relative root system $\Phi(G,K)$.
Let $f$ be the residue degree of $L_d / K$ and $m = \log_{p}\big(\mathrm{Card}(\kappa_K)\big)$.

(1) If $\Phi$ is of type $G_2$ or if $\Phi$ is non-reduced, suppose that $p \geq 5$.
If $L'/L_d$ is ramified, then $d(P) = m f (l+1)$;
if $L'/L_d$ is unramified, then $d(P) = m f (n+1)$.

(2) Suppose that $\Phi$ is of type $BC_1$ and that $p \geq 5$.
If $L'/L_d$ is ramified, then $2 m f \leq d(P) \leq 6 m f$;
if $L'/L_d$ is unramified, then $3 m f \leq d(P) \leq 9 m f$.
\end{Cor}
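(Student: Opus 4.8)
The plan is to reduce the computation of $d(P) = \dim_{\mathbb{F}_p}\big(P / \mathrm{Frat}(P)\big)$ to a sum of local dimensions $\dim_{\mathbb{F}_p}\big(U_{a,\mathbf{c}} / V_{a,\mathbf{c}}\big)$ using Corollary \ref{cor:frattini:quotient}, and then to count these dimensions root by root. First I would observe that, by Corollary \ref{cor:frattini:quotient}, only the roots $a \in \Delta \cup \{-\theta\}$ (equivalently, the roots directing a wall of the fundamental alcove $\mathbf{c}_{\mathrm{af}}$) contribute, since $U_{a,\mathbf{c}} / V_{a,\mathbf{c}} = 0$ for all other roots. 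There are exactly $l+1$ such roots (the $l$ simple roots plus $-\theta$). For each of them I would compute the dimension of the quotient $X_{a,f_{\mathbf{c}}(a)} / (\text{relevant }X_{2a})$ by invoking Proposition \ref{prop:first:root:group:quotient}: each such quotient is a one-dimensional $\kappa_{L_a}$-vector space (or a one-dimensional $\kappa_{L_{2a}}$-vector space in the unramified multipliable subcase), whose $\mathbb{F}_p$-dimension is $[\kappa_{L_a}:\mathbb{F}_p] = m f_a$ where $f_a$ is the residue degree of $L_a / K$ and $m = \log_p \mathrm{Card}(\kappa_K)$.

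The arithmetic core of the proof is then bookkeeping on residue degrees, governed by Proposition \ref{prop:length:splitting:field}. In case (1) with $\Phi$ reduced: if $L'/L_d$ is ramified, every root $a$ (long or short) has $\Gamma_a$ with the \emph{same} residue field $\kappa_{L_d}$ (since ramification does not change the residue field), so each of the $l+1$ contributing roots contributes $mf$, giving $d(P) = mf(l+1)$. If $L'/L_d$ is unramified, then short roots have splitting field $L'$ with $[\kappa_{L'} : \kappa_{L_d}] = d'$, while long roots have splitting field $L_d$; here I would use the standard identity relating $l$, $n$ and $d'$ — namely that the number of absolute roots equals the number of relative roots counted with the multiplicity $d'$ attached to short roots, so that $\sum_{a \in \Delta \cup \{-\theta\}} f_a / f = n+1$ when one weights each short simple root by $d'$. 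Concretely, if $s$ denotes the number of short roots among $\Delta \cup \{-\theta\}$ and $L$ the number of long ones, then $s + L = l+1$ and $d' s + L = n+1$ (this is the passage from the relative to the absolute diagram), so $d(P) = mf(d' s + L) = mf(n+1)$. I would verify the identity $d's + L = n+1$ case by case against the Tits tables referenced in Section \ref{sec:description:alcove} (types ${}^2A'_{2m-1}$, ${}^2D_{m+1}$, ${}^2E_6$, ${}^3D_4$), or deduce it from the fact that the affine Dynkin diagram of the absolute root system is obtained from the relative (twisted) affine diagram by unfolding. For the non-reduced case in (1), I would similarly use that $d = d' = 2$ and track the contributions of the multipliable simple root via the last two subcases of Theorem \ref{thm:frattini:descriptions:non:reduced:case}, checking that the dimension count $mf(l+1)$ (ramified) or $mf(n+1)$ (unramified) still holds — here the quotient $X_{a,f'_{\mathbf{c}}(a)} / X_{2a, 2f'_{\mathbf{c}}(a)}$ in the ramified case is $0$-dimensional over $\kappa_{L_a}$ when $f'_{\mathbf{c}}(a)\notin\Gamma'_a$ but the wall is still present via $2a$, and these subtleties must be reconciled with Lemma \ref{lem:sets:of:values:multipliable:root}.

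For part (2), the rank-$1$ case $BC_1$, Corollary \ref{cor:frattini:quotient} does not apply (its hypothesis excludes rank $1$, and Theorem \ref{thm:minoration:derived:group:with:root:groups} fails in rank $1$), so the plan is instead to use Proposition \ref{prop:non:reduced:frattini:rank:one} directly together with Corollary \ref{cor:majoration:frattini}. The upper bound comes from $\mathrm{Frat}(P) \subseteq Q = T(K)_b^+ \prod_{a \in \Phi_{\mathrm{nd}}} V_{a,\mathbf{c}}$: the contributing quotients are $U_{a,\mathbf{c}}/V_{a,\mathbf{c}}$ for $a = \pm\theta$ (here $\Delta = \{a\}$ and $-\theta = -a$), each of which has $\mathbb{F}_p$-dimension at most $3mf$ in the unramified case (dimension of $U_{a,\mathbf{c}}/U_{a,\mathbf{c}'}$ as a $\kappa_{L_a}$-space times $[\kappa_{L_a}:\mathbb{F}_p]$, where the filtration gap of a multipliable root involves up to $d(\tfrac{a}{2},\tfrac{l}{2}) + d(a,l) + d(2a,2l)$ worth of residue dimensions, bounded by $3$), and $mf$ in the ramified case; the lower bound comes from Proposition \ref{prop:non:reduced:frattini:rank:one}, which only guarantees $[P,P]$ contains root groups $U_{a,l+\frac{3}{2}}$ or $U_{a,l+2}$ (one filtration step coarser than in higher rank), so at least $mf$ (ramified) resp.\ $3mf$ (unramified) of the possible $6mf$ resp.\ $9mf$ dimensions survive. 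The main obstacle I anticipate is making the rank-$1$ bounds precise: one must carefully count, via Proposition \ref{prop:first:root:group:quotient} and the residue-characteristic-$2$ remark after Lemma \ref{lem:sets:of:values:multipliable:root} (specialized to $p \geq 5$), the $\mathbb{F}_p$-dimensions of the successive quotients $U_{a,\mathbf{c}}/U_{a,\mathbf{c}^+}$, $U_{a,\mathbf{c}^+}/U_{a,\mathbf{c}^{++}}$ at a multipliable root, and keep track of which of the three "slots" ($\tfrac{a}{2}$, $a$, $2a$) actually appear at the fundamental alcove; the two-sided nature of the estimate (both a root group $\pm\theta$ contribute, each a multipliable root, and the ramified/unramified dichotomy doubles the case analysis) is what forces the inequality rather than an equality, and I would present the explicit worst-case and best-case filtrations to pin down the constants $2,6$ and $3,9$.
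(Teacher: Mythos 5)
Your part (1) is essentially the paper's own argument: reduce to the $l+1$ walls of the fundamental alcove via Corollary \ref{cor:frattini:quotient}, identify each quotient $U_{a,\mathbf{c}}/V_{a,\mathbf{c}}$ with a residue line by Proposition \ref{prop:first:root:group:quotient}, and sum residue degrees using Proposition \ref{prop:length:splitting:field}. The only real difference is that you verify the identity $d's+L=n+1$ case by case (or by unfolding the twisted affine diagram), whereas the paper gets it at once from $f_a=\mathrm{Card}(a)$ (the orbit size of a simple relative root), so that $\sum_{a\in\Delta}f_a=\mathrm{Card}(\widetilde{\Delta})=n$ and $f_{-\theta}=1$ in the unramified case, all $f_a=1$ in the ramified case; your sketch of the non-reduced part of (1) is thinner than the paper's (which shows the two multipliable walls $a$ and $-\theta$ contribute exactly $(f'+1)mf$ together), but the plan is workable.

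Part (2) has a genuine gap. First, the logic of the two bounds is inverted as written: the containment $\mathrm{Frat}(P)\subseteq Q$ from Corollary \ref{cor:majoration:frattini} makes $P/Q$ a quotient of $P/\mathrm{Frat}(P)$ and therefore gives the \emph{lower} bound on $d(P)$, while the explicit subgroups that Proposition \ref{prop:non:reduced:frattini:rank:one}, Lemma \ref{lem:commutation:non:reduced:torus:root:group} and Lemma \ref{lem:non:reduced:derived:valued:root:group} place inside $P^p[P,P]\subseteq\mathrm{Frat}(P)$ give the \emph{upper} bound; you attribute them the other way around. Second, even after swapping, your dimension count cannot reach the stated constants because the rank-one upper bound is not just the sum of two root-group quotients: in rank $1$ one only obtains $T^a(K)_b^{l''}\subseteq P^p[P,P]$ with $l''=3$, not all of $T(K)_b^+$, so the quotient $T(K)_b^+/T^a(K)_b^{l''}$ contributes $2(l''-1)=4$ residue dimensions (unramified), resp.\ $l''-1=2$ (ramified), i.e.\ $4mf$ resp.\ $2mf$ over $\mathbb{F}_p$; your proposal never accounts for the torus at all, and your per-root caps ($3mf$, resp.\ $mf$) would give $6mf$, resp.\ $2mf$, instead of the required $9mf$, resp.\ $6mf$ (the correct unipotent counts are $2+3=5$ and $2+2=4$ residue dimensions, to which the torus adds $4$ and $2$). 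Third, your ramified lower bound falls short of the statement: you claim only $mf$ survives, but modulo $Q$ \emph{both} root quotients survive, namely $U_{a,0}/U_{a,\frac{1}{2}}U_{2a,1}\simeq X_{a,0}$ of dimension $d(a,0)=1$ and $U_{-a,\frac{1}{2}}/U_{-a,1}U_{-2a,3}\simeq X_{-a,\frac{1}{2}}$ of dimension $d(-2a,1)=1$, giving exactly the required $2mf$. So the constants $2,6$ and $3,9$ are not pinned down by your outline; they require the explicit sandwich of $P^p[P,P]$ between the two groups written out in the paper's proof, including the toric factor.
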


\begin{Rq}[Summary in terms of quasi-split groups classification]
We recall that $f'$ denotes the residue degree of $L'/L_d$ and that there are, case by case, identities between $d$, $l$ and $n$.
In Corollary \ref{cor:minimal:number:topological:generators}, if the quasi-split group is of type ${^d}X_{n,l}$ (with notations of \cite{TitsBoulder}; Tits indices are not necessary in this study because of quasi-splitness assumption), we have $d(P) = m f \xi$ where:

$$\begin{array}{|l|l|c|}
\hline
\text{Type} & \text{(in)equality} & \text{Assumption}\\
\hline
{^1}X_l,\  l \geq 1,\ X \neq G& \xi = l+1 & p \geq 3\\
\hline
{^1}G_2 & \xi = 3 & p \geq 5\\
\hline
{^2}A_{2 l - 1} ,\  l \geq 2& \xi = f' (l-1) + 2 & p \geq 3\\
\hline
{^2}D_{l+1} ,\  l \geq 3 & \xi = l + f' & p \geq 3\\
\hline
{^2}E_{6} & \xi = 3 + 2 f' & p \geq 3\\
\hline
{^{3}}D_{4} \text{ and } {^{6}}D_{4} &\xi = 2 + f' & p \geq 5\\
\hline
{^2}A_{2 l},\  l \geq 2& \xi = f' l +1& p \geq 5\\
\hline
{^2}A_{2}& f' +1 \leq \xi \leq 3 f'+ 3& p \geq 5\\
\hline
\end{array}$$
\end{Rq}

\begin{proof}
According to \cite[3.1.2]{TitsBoulder}, there exists an absolutely simple group $G'$ such that $G = R_{L_d/K}(G')$, so that $G(K) = G'(L_d)$.
Because $\mathrm{Card}(\kappa_{L_d}) = f \mathrm{Card}(\kappa_{K})$, we can assume that $G$ is absolutely simple, so that $\widetilde{\Phi}$ is irreducible and $m = \log_p\big(\mathrm{Card}(\kappa_{L_d})\big)$.

\textbf{(1) Suppose that $\Phi$ is reduced.}
By definition of the groups $V_{a,\mathbf{c}}$ \ref{thm:frattini:descriptions:reduced:case}(2), we have $U_{a,\mathbf{c}} / V_{a,\mathbf{c}} \simeq \left\{\begin{array}{cl}
X_{a,f_{\mathbf{c}}(a)} & \text{ if } a \in \Delta \cup \{-\theta\} \\
0 & \text{ otherwise}
\end{array}\right.$,
where the quotient groups $X_{a,f_{\mathbf{c}}(a)}$ are defined as in Proposition \ref{prop:first:root:group:quotient}.
Applying Corollary \ref{cor:frattini:quotient}, we write $P / \mathrm{Frat}(P) \simeq \prod_{a \in \Delta \cup \{-\theta\}} X_{a,f_{\mathbf{c}}(a)}$.
We know by Proposition \ref{prop:first:root:group:quotient} that the group $X_{a,f_{\mathbf{c}}(a)}$ is a $\kappa_{L_a}$-vector space of dimension $1$.
The finite field $\kappa_{L_a}$ is of order $p^{m f_a}$ where $f_a$ denotes the residue degree of the extension $L_a / L_d$.
Thus, we obtain $\mathrm{dim}_{\mathbb{F}_p}(P / \mathrm{Frat}(P) ) = \sum_{a \in \Delta \cup \{-\theta\}} m f_a$.
It remains to compute $\xi=\sum_{a \in \Delta \cup \{-\theta\}} f_a$.
Let $a \in \Delta \cup \{-\theta\}$.
If $a$ is a long root, then $L_a = L_d$ and $f_a = 1$.
Otherwise $L_a = L'$ and $f_a = f'$.

Suppose that $L'/L_d$ is ramified.
We know that $\theta^D$ is the highest root of $\Phi^D$ with respect to $\Delta^D$.
Hence $-\theta^D$ is a long root of $\Phi^D$ and $-\theta$ is a short root. 
Thus, $L_{-\theta} = L'$, so that $f_{-\theta} = f' = 1$.
We have $f_a = 1$ for any simple root $a \in \Delta$.
Thus $\xi = \mathrm{Card}(\Delta) + f_{-\theta} = l+1$.

Suppose that $L'/L_d$ is unramified.
We know that $\theta$ is the highest root of $\Phi$ with respect to $\Delta$.
Hence, $-\theta$ is a long root and $L_{-\theta} = L_d$, so that $f_{-\theta} = 1$.
We have $f_a = \mathrm{Card}(a)$ where any simple root $a \in \Delta$ is seen as an orbit of absolute simple roots $\alpha \in \widetilde{\Delta}$.
Thus $\xi =f_{-\theta}+ \sum_{a \in \Delta} f_a =1 + \mathrm{Card}(\widetilde{\Delta}) = 1+n$.


\textbf{Suppose that $\Phi$ is non-reduced of rank $l \geq 2$.}

We have a group isomorphism $P / \mathrm{Frat(P)} \simeq \prod_{b \in \Delta \cup \{-\theta\}} U_{b,l_b} / V_{b}$.
We can express each $U_{b,l_b} / V_{b}$ in terms of $X_{b,l}$ (and of $X_{2b,2l}$ if $b\in\{a,-\theta\}$ is a multipliable root).

\textbf{First case: $b$ is non-multipliable.}
In this case, we have $V_{b} = U_{b,f_{\mathbf{c}}(b)^+}$.
By \ref{prop:first:root:group:quotient}, we know that $U_{b,f_{\mathbf{c}}(b)} / U_{b,f_{\mathbf{c}}(b)^+} = X_{b,f_{\mathbf{c}}(b)}$ is a $\kappa_{L_b}$-vector space of dimension $1$, hence a $\mathbf{F}_p$-vector space of dimension $f' m$.

\textbf{Second case: $b$ is multipliable and $L_b / L_{2b}$ is ramified.}
By Lemmas \ref{lem:equivalence:affine:root:systems} and \ref{lem:sets:of:values:multipliable:root}, we know that $U_{b,f_{\mathbf{c}}(b)} / V_b = U_{b,f_{\mathbf{c}}(b)} / U_{b,f_{\mathbf{c}}(b)^+} = X_{b,f_{\mathbf{c}}(b)}$ is a $\kappa_{L_a} \simeq \kappa_{L_d}$-vector space of dimension $1$, hence a $\mathbf{F}_p$-vector space of dimension $m = f' m$.

\textbf{Third case: $b$ is multipliable, $L_b / L_{2b}$ is unramified and $f'_{\mathbf{c}}(b) \not\in \Gamma'_a$.}
By Proposition \ref{prop:first:root:group:quotient} and Lemma \ref{lem:equivalence:affine:root:systems}, we know that $U_{b,f_{\mathbf{c}}(b)} / V_b = U_{b,f_{\mathbf{c}}(b)} / U_{b,f_{\mathbf{c}}(b)^+} = X_{2b,2f_{\mathbf{c}}(b)}$ is a $\kappa_{L_{2b}}$-vector space of dimension $1$, hence a $\mathbf{F}_p$-vector space of dimension $m$.

\textbf{Fourth case: $b$ is multipliable, $L_b / L_{2b}$ is unramified and $f'_{\mathbf{c}}(b) \in \Gamma'_a$.}
By Proposition \ref{prop:first:root:group:quotient}, we know that $U_{b,f_{\mathbf{c}}(b)} / V_b = U_{b,f_{\mathbf{c}}(b)} / \Big( U_{b,f_{\mathbf{c}}(b)^+} U_{2b,2f_{\mathbf{c}}(b)} \Big) = X_{b,f_{\mathbf{c}}(b)} / X_{2b,2f_{\mathbf{c}}(b)}$ is a $\kappa_{L_{b}}$-vector space of dimension $1$, hence a $\mathbf{F}_p$-vector space of dimension $2 m = f' m$.

Furthermore, we note that we have the alternative: either $f_{\mathbf{c}}(a) \in \Gamma'_a$ and $f_{\mathbf{c}}(-\theta) \not\in \Gamma'_{-\theta}$, or $f_{\mathbf{c}}(a) \not\in \Gamma'_a$ and $f_{\mathbf{c}}(-\theta) \in \Gamma'_{-\theta}$.
Hence, the sum of dimensions over $\mathbb{F}_p$ of $U_{a,f_{\mathbf{c}}(a)} / V_a$ and $U_{-\theta,f_{\mathbf{c}}(-\theta} / V_{-\theta}$ is always equal to $(f'+1) f m$.

Since there are $l-1$ non-multipliable simple roots, we get $d(P) = m f' (l -1) + (1+f') = m(lf'+1)$.
Let $\xi$ be such that $d(P)=m \xi$.
If $L'/L_d$ is unramified, then $f' = 2$ and $\xi = 2l + 1 = n + 1$.
If $L'/L_d$ is ramified, then $f' = 1$ and $\xi = l + 1$.

\textbf{(2) Suppose that $\Phi$ is non-reduced of rank $1$.}
In this case, we cannot apply Theorem \ref{thm:frattini:descriptions:non:reduced:case} and its Corollary.
Let $H = U_{-a,\frac{1}{2}} T(K)_b^+ U_{a,0}$ be a maximal pro-$p$ subgroup of $G(K) \simeq \mathrm{SU}(h)(K)$, so that $\varepsilon = 0$.
Let $l'' = \max(1,3)=3$.

Suppose that $L/L_2$ is unramified.
By Lemma \ref{lem:non:reduced:derived:valued:root:group}, by Lemma \ref{lem:commutation:non:reduced:torus:root:group} and by Proposition \ref{prop:non:reduced:frattini:rank:one}, we have:
$$U_{-2a,2} U_{-a,\frac{3}{2}} T(K)_b^{l''} U_{a,1} U_{2a,0} \subset [H,H]H^p \subset U_{-2a,2}, U_{-a,1} T(K)_b^+ U_{a,\frac{1}{2}} U_{2a,0}$$
One the one hand, thanks to computation with the quotient groups $X_{a,l}$, we get the $L_2$-vector spaces $U_{a,0} / U_{a, \frac{1}{2}} U_{2a,0} \simeq X_{a,0} / X_{2a,0}$ of dimension $d(a,0) = 2$ and $U_{-a,\frac{1}{2}} / U_{-2a,2}, U_{-a,1} \simeq X_{-a,\frac{1}{2}}$ of dimension $d(-a,\frac{1}{2} + d(-2a,1) = 0 + 1 = 1$.
Hence $d(H) \geq 3 m$.
On the other hand, $U_{a,0} / U_{a,1} U_{2a,0}$ have to be isomorphic to a subgroup of $X_{a,0}/ X_{2a,0} \oplus X_{a,\frac{1}{2}} / X_{2a,1}$, of dimension $d(a,0) + d(a,\frac{1}{2}) = 2$ as $\kappa_{L_2}$-vector space.
In the same way, $U_{-a,\frac{1}{2}} / U_{-2a,2} U_{-a,\frac{3}{2}}$ is isomorphic to a subgroup of $X_{-a,\frac{1}{2}} \oplus X_{-a,1} / X_{-2a,-2}$, of dimension $d(-a,\frac{1}{2}) + d(-2a,1) + d(-a,1) = 0 + 1 +2=3$.
Finally, $T(K)_b^+ / T(K)_b^{l''}$ is of dimension $2(l''-1) = 4$.
Thus $d(H) \leq m ( 5 + 4 ) = 9 m $.

Suppose that $L/L_2$ is ramified.
By Lemma \ref{lem:non:reduced:derived:valued:root:group}, by Lemma \ref{lem:commutation:non:reduced:torus:root:group} and by Proposition \ref{prop:non:reduced:frattini:rank:one}, we have:
$$U_{-2a,3} U_{-a,2} T(K)_b^{l''} U_{a,\frac{3}{2}} U_{2a,1} \subset [H,H]H^p \subset U_{-2a,3}, U_{-a,1} T(K)_b^+ U_{a,\frac{1}{2}} U_{2a,1}$$
One the one hand, thanks to computation with the quotient groups $X_{a,l}$, we get the $L_2$-vector spaces $U_{a,0} / U_{a, \frac{1}{2}} U_{2a,1} \simeq X_{a,0}$ of dimension $d(a,0) + d(2a,0) = 1 + 0$ and $U_{-a,\frac{1}{2}} / U_{-2a,3}, U_{-a,1} \simeq X_{-a,\frac{1}{2}}$ of dimension $d(-a,\frac{1}{2} + d(-2a,1) = 0 + 1 = 1$.
Hence $d(H) \geq 2 m$.
On the other hand, $U_{a,0} / U_{a,\frac{3}{2}} U_{2a,1}$ have to be isomorphic to a subgroup of $X_{a,0} \oplus X_{a,\frac{1}{2}} / X_{2a,1} \oplus X_{a,1} / X_{2a,2}$, of dimension $d(a,0) + d(2a,0) + d(a,\frac{1}{2}) + d(a,1) = 1+0+0+1=2$ as $\kappa_{L_2}$-vector space.
In the same way, $U_{-a,\frac{1}{2}} / U_{-2a,3} U_{-a,2}$ is isomorphic to a subgroup of $X_{-a,\frac{1}{2}} \oplus X_{-a,1} \oplus X_{-a,\frac{3}{2}} / X_{2a,3}$, of dimension $d(-a,\frac{1}{2}) + d(-2a,1) + d(-a,1) + d(-2a,2) + d\left(-a,\frac{3}{2}\right)= 0+1+1+0+0=2$.
Finally, $T(K)_b^+ / T(K)_b^{l''}$ is of dimension $(l''-1) = 2$.
Thus $d(H) \leq m ( 4 + 2 ) = 6 m$.
\end{proof}

\begin{Rq}[Generating set in terms of root groups]
A generating set of $P / \mathrm{Frat}(P)$ always come from a topologically generating set of $P$.
Hence, when the relative root system $\Phi$ is reduced, a system of generators of $P$ is given by:
$$\Big\{ x_a(\lambda_i),\ 1 \leq i \leq m \text{ and } a \in \Delta\Big\} \cup \Big\{\{ x_{-\theta}(\lambda_i \varpi_{L'}),\ 1 \leq i \leq m\Big\}$$
where $(\lambda_i)_{1\leq i \leq m}$ is a family of elements of $\mathcal{O}_{L_d}$ 
such that $(\lambda_i \mathcal{O}_{L_d} / \mathfrak{m}_{L_d})_{1 \leq i \leq m}$ is a basis of $\kappa_{L_d}$;
the root $\theta$ is chosen as in Section \ref{sec:description:apartment};
and $\varpi_{L'}$ is a uniformizer of $\mathcal{O}_{L'}$.
\end{Rq}


\bibliographystyle{beta} 
\bibliography{biblio} 

Benoit Loisel,\newline
CMLS, École polytechnique, CNRS, Université Paris-Saclay, 91128 Palaiseau Cedex, France\newline
benoit.loisel@polytechnique.edu

\end{document}